\theoremstyle{plain}
\newtheorem{thm}{Theorem}[section]
\newtheorem{cor}[thm]{Corollary}
\newtheorem{lem}[thm]{Lemma}
\newtheorem{prop}[thm]{Proposition}
\newtheorem{conj}[thm]{Conjecture}
\newtheorem{notation}[thm]{Notation}
\theoremstyle{definition}
\newtheorem{defn}[thm]{Definition}
\newtheorem{remark}[thm]{Remark}
\def\@rst #1 #2other{#1}
\newcommand\MR[1]{\relax\ifhmode\unskip\spacefactor3000 \space\fi
  \MRhref{\expandafter\@rst #1 other}{#1}}
\newcommand{\MRhref}[2]{\href{http://www.ams.org/mathscinet-getitem?mr=#1}{MR#2}}
\newcommand{\dsb}{\begin{adjustwidth}{2.5em}{0pt}
\begin{footnotesize}}
\newcommand{\dse}{\end{footnotesize}
\end{adjustwidth}}
\newcommand{\ssb}{\begin{adjustwidth}{2.5em}{0pt}}
\newcommand{\sse}{\end{adjustwidth}}
\newcommand{\aryb}{\begin{eqnarray*}}
\newcommand{\arye}{\end{eqnarray*}}
\def\alb#1\ale{\begin{align*}#1\end{align*}}
\def\allb#1\alle{\begin{align}#1\end{align}}
\newcommand{\eqb}{\begin{equation}}
\newcommand{\eqe}{\end{equation}}
\newcommand{\eqbn}{\begin{equation*}}
\newcommand{\eqen}{\end{equation*}}
\newcommand{\BB}{\mathbbm}
\newcommand{\ol}{\overline}
\newcommand{\ul}{\underline}
\newcommand{\op}{\operatorname}
\newcommand{\la}{\langle}
\newcommand{\ra}{\rangle}
\newcommand{\frk}{\mathfrak}
\newcommand{\eqD}{\overset{d}{=}}
\newcommand{\ep}{\epsilon}
\newcommand{\rta}{\rightarrow}
\newcommand{\wt}{\widetilde}
\newcommand{\wh}{\widehat} 
\newcommand{\mcl}{\mathcal}
\newcommand{\bdy}{\partial}
\newcommand{\E}{\BB E}  
\newcommand{\rng}{\mathring}
\let\originalleft\left
\let\originalright\right
\renewcommand{\left}{\mathopen{}\mathclose\bgroup\originalleft}
\renewcommand{\right}{\aftergroup\egroup\originalright}
\newcommand*\patchAmsMathEnvironmentForLineno[1]{  \expandafter\let\csname old#1\expandafter\endcsname\csname #1\endcsname
  \expandafter\let\csname oldend#1\expandafter\endcsname\csname end#1\endcsname
  \renewenvironment{#1}     {\linenomath\csname old#1\endcsname}     {\csname oldend#1\endcsname\endlinenomath}}\newcommand*\patchBothAmsMathEnvironmentsForLineno[1]{  \patchAmsMathEnvironmentForLineno{#1}  \patchAmsMathEnvironmentForLineno{#1*}}\AtBeginDocument{\patchBothAmsMathEnvironmentsForLineno{equation}\patchBothAmsMathEnvironmentsForLineno{align}\patchBothAmsMathEnvironmentsForLineno{flalign}\patchBothAmsMathEnvironmentsForLineno{alignat}\patchBothAmsMathEnvironmentsForLineno{gather}\patchBothAmsMathEnvironmentsForLineno{multline}}
\numberwithin{equation}{section}
\title{A distance exponent for Liouville quantum gravity}
\date{   }
\author{
\begin{tabular}{c} Ewain Gwynne\\[-5pt]\small MIT \end{tabular}
\begin{tabular}{c} Nina Holden\\[-5pt]\small MIT \end{tabular}
\begin{tabular}{c} Xin Sun\\[-5pt]\small Columbia University \end{tabular}
}
\begin{document}

\maketitle


\begin{abstract} 
Let $\gamma \in (0,2)$ and let $h$ be the random distribution on $\mathbb C$ which describes a $\gamma$-Liouville quantum gravity (LQG) cone. Also let $\kappa = 16/\gamma^2 >4$ and let $\eta$ be a whole-plane space-filling SLE$_\kappa$ curve sampled independent from $h$ and parametrized by $\gamma$-quantum mass with respect to $h$. 
We study a family $\{\mathcal G^\epsilon\}_{\epsilon>0}$ of planar maps associated with $(h, \eta)$ called the \textit{LQG structure graphs} (a.k.a.\ \textit{mated-CRT maps}) which we conjecture converge in probability in the scaling limit with respect to the Gromov-Hausdorff topology to a random metric space associated with $\gamma$-LQG. 

In particular, $\mathcal G^\epsilon$ is the graph whose vertex set is $\epsilon \mathbb Z$, with two such vertices $x_1,x_2\in \epsilon \mathbb Z$ connected by an edge if and only if the corresponding curve segments $\eta([x_1-\epsilon , x_1])$ and $\eta([x_2-\epsilon,x_2])$ share a non-trivial boundary arc. 
Due to the peanosphere description of SLE-decorated LQG due to Duplantier, Miller, and Sheffield (2014), the graph $\mathcal G^\epsilon$ can equivalently be expressed as an explicit functional of a correlated two-dimensional Brownian motion, so can be studied without any reference to SLE or LQG.  

We prove non-trivial upper and lower bounds for the cardinality of a graph-distance ball of radius $n$ in $\mathcal G^\epsilon$ which are consistent with the prediction of Watabiki (1993) for the Hausdorff dimension of LQG.
Using subadditivity arguments, we also prove that there is an exponent $\chi > 0$ for which the expected graph distance between generic points in the subgraph of $\mathcal G^\epsilon$ corresponding to the segment $\eta([0,1])$ is of order $\epsilon^{-\chi + o_\epsilon(1)}$, and this distance is extremely unlikely to be larger than $\epsilon^{-\chi + o_\epsilon(1)}$. 
\end{abstract}
 
\tableofcontents  

\section{Introduction}
\label{sec-intro}

\subsection{Context: distances in $\gamma$-Liouville quantum gravity}
\label{sec-overview}

Let $\gamma \in (0,2)$ and let $D\subset \BB C$ be a simply connected domain. Heuristically speaking, a \emph{$\gamma$-Liouville quantum gravity (LQG) surface} is the surface parametrized by $D$ whose Riemannian metric tensor is given by
\eqb \label{eqn-lqg-metric0} 
e^{\gamma h} (dx^2+ dy^2)
\eqe 
where $h$ is some variant of the Gaussian free field~\cite{shef-gff} on $D$, $dx^2+dy^2$ is the Euclidean metric tensor, and $d_\gamma$ is the so-called dimension of $\gamma$-LQG. LQG is a natural model of a continuum random surface. One reason for this is that LQG is the conjectured scaling limit of various random planar map models, the most natural discrete random surfaces. The case when $\gamma = \sqrt{8/3}$ corresponds to pure gravity, which is the scaling limit of uniform random planar maps. Other values of $\gamma$ arise from random planar maps weighted by the partition function of some statistical mechanics model, e.g., the uniform spanning tree ($\gamma=\sqrt 2$), the Ising model ($\gamma = \sqrt3$), or a bipolar orientation ($\gamma = \sqrt{4/3}$). For many such models, it is expected that the scaling limit of the statistical mechanics model on the planar map is described by an $\op{SLE}_\kappa$-type curve~\cite{schramm0} or a family of such curves, independent from the LQG surface, for $\kappa = \gamma^2$ or $\kappa = 16/\gamma^2$. 

Since $h$ is a distribution, or generalized function, and is not well-defined pointwise, the formula~\eqref{eqn-lqg-metric0} does not make rigorous sense. However, one can rigorously construct the volume form associated with the metric~\eqref{eqn-lqg-metric0}, which should be a regularized version of $e^{\gamma h(z)} \, dz$, where $dz$ is the Euclidean volume form. This was accomplished in~\cite{shef-kpz}, where it was shown that several different regularization procedures for $e^{\gamma h(z)} \, dz$ converge to the same limiting measure $\mu_h$, the \emph{$\gamma$-quantum area measure induced by $h$}. See also~\cite{rhodes-vargas-review} and the references therein for a more general theory of regularized random measures. The procedure used in~\cite{shef-kpz} also allows one to define a length measure $\nu_h$ on certain curves in $D\cup \bdy D$ (including $\bdy D$ and independent SLE$_\kappa$-type curves for $\kappa = \gamma^2$). 

A major problem in the study of LQG is to make sense of~\eqref{eqn-lqg-metric0} as a random metric (distance function). This has recently been accomplished in the special case when $\gamma = \sqrt{8/3}$ by Miller and Sheffield in the series of works~\cite{qle,sphere-constructions,tbm-characterization,lqg-tbm1,lqg-tbm2,lqg-tbm3}, using a random growth process called quantum Loewner evolution. 
For certain special types of quantum surfaces defined in~\cite{wedges}, the resulting metric space is isometric to a certain \emph{Brownian surface}, a random metric space which locally looks like the Brownian map~\cite{legall-uniqueness,miermont-brownian-map} and which arises as the scaling limit of certain uniform random planar maps. 
For example, the quantum sphere is isometric to the Brownian map and the $\sqrt{8/3}$-quantum cone is isometric to the Brownian plane~\cite{curien-legall-plane}.
In the case when $\gamma \not= \sqrt{8/3}$, the problem of constructing a LQG metric remains open. 

Another major problem is to determine the Hausdorff dimension of the $\gamma$-LQG metric, assuming that it exists. In the case when $\gamma  = \sqrt{8/3}$ it is known that this dimension is 4~\cite{legall-topological}. For general $\gamma$ it is predicted by Watabiki~\cite{watabiki-lqg} that the dimension $d_\gamma$ of $\gamma$-LQG is a.s.\ given by
\eqb \label{eqn-watabiki}
d_\gamma = 1 + \frac{\gamma^2}{4} + \frac14 \sqrt{(4+\gamma^2)^2 + 16\gamma^2} .
\eqe
There have been several works which support the Watabiki prediction. 
The authors of~\cite{ambjorn-budd-lqg-dist} perform numerical simulations using the discrete GFF which agree with the formula~\eqref{eqn-watabiki}.
In~\cite[Section 3.3]{qle}, the authors give an alternative non-rigorous derivation of~\eqref{eqn-watabiki} using so-called quantum Loewner evolution processes.
The works~\cite{mrvz-heat-kernel,andres-heat-kernel} prove upper and lower bounds for the Liouville heat kernel. If one assumes a certain relationship between two exponents (which the authors of the mentioned papers are not able to verify), these estimates suggest upper and lower bounds for the LQG dimension; this will be discussed further in Section~\ref{sec-related}. 
There is also a related quantity for LQG, called the \emph{spectral dimension}, which is expected to be equal to 2 for all values of $\gamma$~\cite{ambjorn-spec-dim}. This prediction is confirmed in the context of the Liouville heat kernel in~\cite{rhodes-vargas-spec-dim,andres-heat-kernel} and for a class of random planar maps in~\cite{gm-spec-dim}. 

In contrast to the above results, the recent work~\cite{ding-goswami-watabiki} proves estimates for several natural approximations of the $\gamma$-LQG metric (different from the approximations considered in this paper) for small values of $\gamma $ which contradict the Watabiki prediction; c.f.\ Section~\ref{sec-related}.
 
If the dimension of LQG is $d_\gamma$, it is expected that the diameter (with respect to the graph distance) of a random planar map with $n$ edges which converges in the scaling limit to LQG is typically of order $n^{1/d_\gamma + o_n(1)}$. Hence computing the dimension of LQG is expected to be equivalent to computing the tail exponent for the diameter of a random planar map in the LQG universality class.  

The goal of this article is to present some small progress toward the above two problems. 
Miller and Sheffield's approach in the case $\gamma=\sqrt{8/3}$ does not have a direct generalization to other values of $\gamma$, since it relies on special symmetries which are specific to $\gamma=\sqrt{8/3}$. 
Instead, we will use a different approach based on the peanosphere construction of~\cite{wedges}, which applies for all $\gamma \in (0,2)$ and which we will now describe.

\subsection{The LQG structure graph}
\label{sec-structure-graph}

A \emph{peanosphere} is a random pair $(M ,\eta)$ consisting of a topological space $M$ and a space-filling curve $\eta$ on $M$ (with a specified parametrization) which is constructed from a correlated two-sided two-dimensional Brownian motion (see Figure~\ref{fig-peanosphere}). A peanosphere has a natural volume measure, which is defined by the condition that $\eta$ traces one unit of mass in one unit of time.  

It is shown in~\cite[Theorems 1.13 and 1.14]{wedges} that there is a canonical (up to rotation) way to embed a peanosphere into $\BB C$ in such a way that the following is true. The peanosphere volume measure is mapped to the $\gamma$-quantum area measure corresponding to a particular type of $\gamma$-LQG surface $(\BB C ,h , 0, \infty)$ called a \emph{$\gamma$-quantum cone}~\cite[Definition 4.9]{wedges}. Note that here $h$ is a variant of the GFF on $\BB C$. The curve $\eta$ is mapped to a space-filling variant\footnote{Our $\kappa$ corresponds to the parameter $\kappa'$ in~\cite{ig4,wedges}.} of SLE$_\kappa$, $\kappa = 16/\gamma^2 > 4$ from $\infty$ to $\infty$~\cite[Sections 1.2.3 and 4.3]{ig4} which is sampled independently from $h$ and then parametrized in such a way that $\eta(0) = 0$ and the $\gamma$-quantum area $\mu_h(\eta([t_1,t_2]))$ is equal to $t_2-t_1$ for each $t_1<t_2$. The correlation of the peanosphere Brownian motion is given by $-\cos(\pi \gamma^2/4)$ (for $\gamma  < \sqrt 2$, this is proven in~\cite{kappa8-cov}). The two coordinates of this Brownian motion give the net change in the quantum lengths of the left and right sides of $\eta$ relative to time 0, respectively, so are denoted by $L_t$ and $R_t$ for $t\in\BB R$. We write $Z_t = (L_t , R_t)$ for the peanosphere Brownian motion. See Section~\ref{sec-sle-prelim} and the references therein for more background on the above objects. 

In this article, we will study a family of planar maps $\{\mcl G^\ep\}_{\ep>0}$, called the \emph{LQG structure graphs}(also called \emph{mated-CRT maps}) associated with the pair $(h,\eta)$, which we expect converges in the scaling limit in the Gromov-Hausdorff sense (when equipped with their graph distances) to an LQG metric induced by $h$.

The vertices of the structure graph $\mcl G^\ep$ are the elements of $\ep\BB Z$. Two such vertices $x_1$ and $x_2$ are connected by an edge if the corresponding \emph{cells} $\eta([x_1-\ep , x_1])$ and $\eta([x_2-\ep , x_2])$ intersect along a non-trivial boundary arc (i.e., a connected set with more than one point).\footnote{It is easy to see that $\mcl G^\ep$ is a planar map with the embedding given by mapping each vertex to the corresponding cell. In fact, $\mcl G^\ep$ is a triangulation provided we draw two edges instead of one between the cells $\eta([x_1-\ep , x_1])$ and $\eta([x_2-\ep ,x_2])$ whenever $|x_1 - x_2| > \ep$ and these cells intersect along a non-trivial arc of each of their left and right boundaries (equivalently, both conditions in~\eqref{eqn-inf-adjacency} hold); see, the introduction of~\cite{gms-tutte} for more details. In this paper we only care about graph distances in $\mcl G^\ep$, so these facts will not be relevant for us. }
Note that this means that $x\in\ep\BB Z$ corresponds to the time interval $[x-\ep , x]$ for $\eta$.   
See Figure~\ref{fig-structure-graph} for an illustration of the above definition.

\begin{figure}[ht!]
 \begin{center}
\includegraphics[scale=.6]{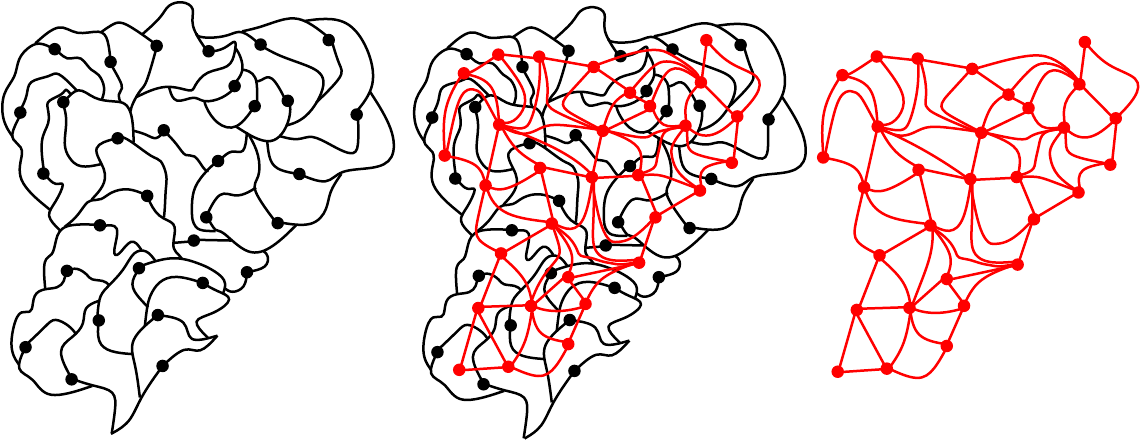} 
\caption[The structure graph of a space-filling SLE curve]{\textbf{Left:} the set $\eta([0,T])$ for some $T>0$, divided into cells $\eta([x-\ep , x])$ for $x \in (0,T]_{\ep \BB Z}$ (the black dots are the points $\eta(x)$). Here $\kappa \geq 8$, so the cells are homeomorphic to the disk. \textbf{Middle:} the restricted structure graph $\mcl G^{\ep}|_{(0,T]}$ (Definition~\ref{def-structure-graph-restrict}) is the graph whose vertex set is $(0,T]_{\ep \BB Z}$, with $x,y\in (0,T]_{\ep \BB Z}$ connected by an edge if and only if the corresponding cells share a non-trivial boundary arc. This graph has a natural embedding into $\BB C$, where each vertex is mapped to the corresponding cell. This embedded graph is shown in red. \textbf{Right:} the structure graph without the underlying collection of cells.  }\label{fig-structure-graph}
\end{center}
\end{figure}

\begin{remark}
Figure~\ref{fig-sle-segment}, left, illustrates segments of space-filling SLE in the two phases $\kappa\geq 8$ and $\kappa \in (4,8)$. 
If $\kappa \geq 8$, then the cells $\eta([x-\ep,x])$ for $x\in\ep\BB Z$ are each homeomorphic to the closed disk, and a.s.\ any two cells which intersect do so along a non-trivial connected boundary arc.
If $\kappa \in (4,8)$, however, the interiors of the cells are not connected since these cells can have ``bottlenecks". Moreover, a.s.\ there exist $x_1 , x_2 \in \ep  \BB Z $ such that $\eta([x_1 -\ep , x_1 ])$ and $\eta([x_2-\ep , x_2])$ intersect along a totally disconnected fractal set, but do not share a non-trivial connected boundary arc. 
In this case the $x_1$ and $x_2$ are \emph{not} adjacent in $\mcl G^\ep$.  
\end{remark}

\begin{figure}[ht!]
 \begin{center}
\includegraphics[scale=.6]{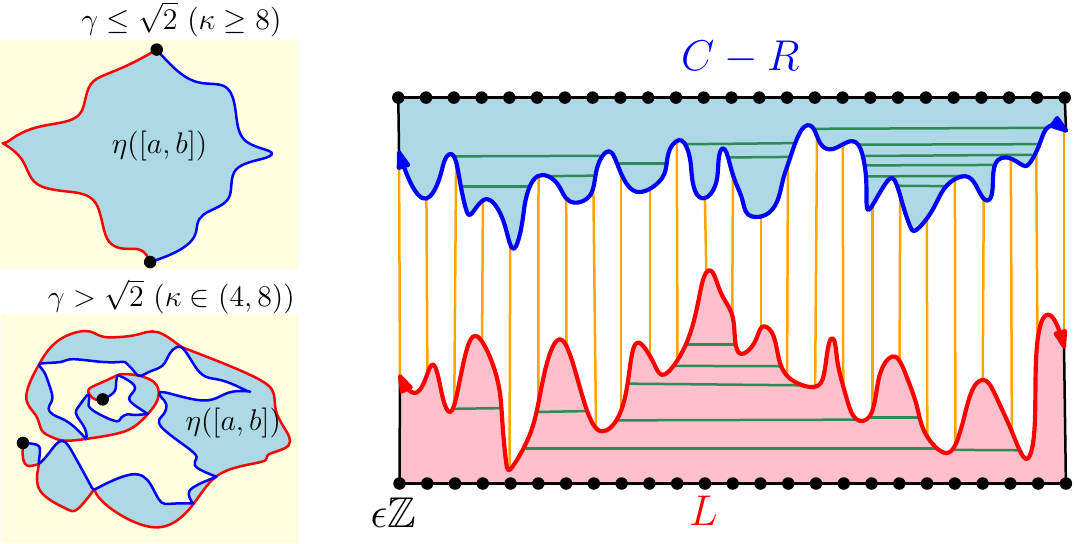} 
\caption[Segments of space-filling SLE, Brownian motion definition of the structure graph]{\textbf{Left:} Typical segments of the space-filling SLE$_\kappa$ curve $\eta$ in the case when $\kappa \geq 8$ and the case when $\kappa \in (4,8)$. For $\kappa \geq 8$, the interior of the segment is simply connected and its complement is connected, but neither of these statements hold for $\kappa \in (4,8)$. The ``exterior" self-intersections of the outer boundary of $\eta([a,b])$ in the case $\kappa \in (4,8)$ (which separate connected components of $\BB C\setminus \eta([a,b])$) do not correspond to intersections along a non-trivial connected boundary arcs, so do not give rise to edges of $\mcl G^\ep$. 
\textbf{Right:} A geometric interpretation of the adjacency condition~\eqref{eqn-inf-adjacency}. Suppose we draw the graph of $L$ (red) and the graph of $C-R$ (blue) for some large constant $C > 0$ chosen so that the parts of the graphs over some time interval of interest do not intersect. Each vertex $x\in\ep\BB Z$ of $\mcl G^\ep$ corresponds to a vertical strip between the graphs (orange). Vertices $x_1,x_2\in \BB Z$ are connected by an edge if and only if the corresponding strips are connected by a horizontal line segment which lies under the graph of $L$ or above the graph of $C-R$ (this is equivalent to~\eqref{eqn-inf-adjacency}). One such segment is shown in green in the figure for each pair of strips for which this latter condition holds. }\label{fig-sle-segment}
\end{center}
\end{figure}
 
The peanoshpere construction of~\cite{wedges} implies that a.s.\ $\mcl G^\ep$ can equivalently be defined as the graph with vertex set $\ep\BB Z$, with $x_1,x_2 \in \ep \BB Z$ with $x_1 < x_2$ are connected by an edge if and only if either
\begin{align} \label{eqn-inf-adjacency}
\left( \inf_{s \in [x_1-\ep , x_1] }L_s  \right)    \vee  \left( \inf_{s\in [x_2-\ep , x_2] } L_s  \right) \leq \inf_{s\in [x_1    , x_2-\ep]} L_s   \quad \op{or} \notag\\
\qquad  \left( \inf_{s \in [x_1-\ep , x_1]}R_s  \right) \vee  \left( \inf_{s\in [x_2-\ep , x_2] } R_s  \right) \leq \inf_{s\in [x_1    , x_2-\ep]} R_s  .
\end{align}  
See Figure~\ref{fig-sle-segment}, right, for an illustration of this adjacency condition. We note that Brownian scaling shows that the law of $\mcl G^\ep$ (as a graph) does not depend on $\ep$, but it is convenient to view the family of graphs $\{\mcl G^\ep\}_{\ep>0}$ as being coupled together with the same Brownian motion.

The formula~\eqref{eqn-inf-adjacency} tells us that $\mcl G^\ep$ is a discretization of the mating of the two continuum random trees constructed from the Brownian motions $L$ and $R$ (which is the reason for the term ``mated-CRT map"). 
In particular, the LQG structure graphs can be defined using only Brownian motion, without any reference to LQG or SLE. In fact, all of the arguments of this paper except for the ones in Section~\ref{sec-exponent-bound} can be phrased entirely in terms of Brownian motion.

\subsection{Structure graphs and distances in LQG}
\label{sec-sg-dist}

One of the main reasons for our interest in the graphs $\mcl G^\ep$ is the following conjecture. 

\begin{conj} \label{conj-scaling-limit}
The appropriately normalized graph distances in the graphs $\mcl G^\ep$ converge in the scaling limit to a metric on $\gamma$-LQG. This metric is the scaling limit in the Gromov-Hausdorff topology of any random planar map model which converges to SLE-decorated $\gamma$-LQG in the peanosphere sense, including those studied in~\cite{shef-burger,kmsw-bipolar,gkmw-burger}. In the case when $\gamma = \sqrt{8/3}$, the limiting metric coincides with the one in~\cite{lqg-tbm1,lqg-tbm2,lqg-tbm3} (which is itself isometric to the Brownian map~\cite{legall-sphere-survey,miermont-survey}). 
Furthermore, the random walk on $\mcl G^\ep$ converges in the scaling limit to Liouville Brownian motion~\cite{berestycki-lbm,grv-lbm} on the limiting LQG surface.
\end{conj}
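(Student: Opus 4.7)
Since the final statement is Conjecture~\ref{conj-scaling-limit} (a genuinely open problem), the plan below should be read as a multi-stage program rather than a proof I expect to complete; the uniqueness of the limiting metric for general $\gamma$ is precisely the LQG metric problem, and I want to flag it as the fundamental obstacle from the start.

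The first step is \emph{tightness}. Using the estimates for ball cardinalities in $\mcl G^\ep$ (which are sandwiched between the exponents matching Watabiki's prediction) together with the distance-exponent $\chi$ constructed via subadditivity, I would rescale the graph distance by $\ep^{\chi}$ and argue via a standard entropy/covering argument that the metric measure spaces $(\mcl G^\ep, \ep^\chi d_{\mcl G^\ep}, \ep \cdot \#)$ restricted to $\eta([0,1])$ are tight in the Gromov--Hausdorff--Prokhorov topology. The diameter bound (proven in the paper for $\gamma = \sqrt 2$ and conjectured for all $\gamma$) is what one really needs here; for general $\gamma$ an extra input, such as an upper tail bound on distances between typical points uniform over scales, would be needed to rule out spatial concentration.

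The second step is to \emph{identify the subsequential limit as a metric on the LQG surface}. Any subsequential limit $(X,D,\mu)$ comes coupled with $(h,\eta)$, and by construction each cell $\eta([x-\ep,x])$ collapses to the point $\eta(x)\in\BB C$ while carrying $\mu_h$-mass $\ep$. Thus the limit canonically identifies with $(\BB C, D, \mu_h)$ for some random metric $D$ measurable with respect to $h$. The harder part is \emph{uniqueness}. In the special case $\gamma = \sqrt{8/3}$ I would try to verify the Miller--Sheffield axiomatic characterization of the Brownian map metric (conformal covariance, the scaling relation $D_{h+c}=e^{c/d_\gamma}D_h$, independence across complementary domains in a specific sense); the fact that $\mcl G^\ep$ is built intrinsically from $(h,\eta)$ via~\eqref{eqn-inf-adjacency} makes the scaling and Markov properties at the discrete level tractable, and one would pass them to the limit. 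For general $\gamma$, no such characterization exists; one avenue is to first \emph{axiomatize} a would-be LQG metric via a Weyl-type relation $D_{h+\phi}\approx \int_\bullet e^{\phi/d_\gamma}\,|dD_h|$ for smooth $\phi$, show that subsequential limits of $\mcl G^\ep$-metrics satisfy it, and then prove that at most one such metric exists. This axiomatization is, to my mind, the hardest single step of the entire program and is essentially equivalent to constructing the metric in the first place.

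The third step addresses \emph{universality and the random walk}. For universality, if $M^n$ is a random planar map in the $\gamma$-LQG universality class with space-filling exploration encoded by processes $(L^n, R^n)\to (L,R)$ in the peanosphere sense (as in~\cite{shef-burger,kmsw-bipolar,gkmw-burger}), then one couples $M^n$ to $\mcl G^{1/n}$ via the common Brownian scaling limit and compares graph distances cell-by-cell using~\eqref{eqn-inf-adjacency}; the main technical point is to rule out large discrepancies coming from the fact that peanosphere convergence is much weaker than metric convergence, which one would handle via the same tightness/exponent estimates as above applied now to $M^n$. For the random walk, the natural plan is to prove Mosco convergence of the discrete Dirichlet forms on $\mcl G^\ep$ to the Liouville Brownian motion Dirichlet form of~\cite{berestycki-lbm,grv-lbm}, using that the reversible measure (vertex counting measure rescaled by $\ep$) converges to $\mu_h$; this is conceptually separate from the metric question and might actually be accessible before the metric part, since Mosco convergence requires only bounds on heat kernels and the spectral dimension 2 prediction, rather than a full distance scaling limit.
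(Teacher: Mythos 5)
This statement is Conjecture~\ref{conj-scaling-limit}; the paper does not prove it and does not claim to. There is therefore no proof to compare yours against, and your proposal is correctly framed as a program rather than a proof. As a program it is well calibrated: you identify the uniqueness of the limiting metric as the central obstruction, and your proposed route (tightness from the exponent $\chi$ and ball-volume bounds, identification of subsequential limits via a Weyl-type axiomatization, universality via strong coupling of the encoding walks, and the random walk statement via Dirichlet-form convergence decoupled from the metric question) matches both what the paper itself suggests and how the subsequent literature actually proceeded. In particular, the paper's companion work~\cite{ghs-map-dist} carries out exactly the KMT-coupling comparison you describe for transferring estimates between $\mcl G^\ep$ and the discrete models, and~\cite{gms-tutte} resolves the random-walk statement modulo time parametrization, independently of the metric.

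One point where your sketch is too optimistic: in Step 1 you suggest that tightness in the Gromov--Hausdorff--Prokhorov topology follows from the paper's estimates by a ``standard entropy/covering argument,'' with extra input needed only for general $\gamma$. The paper's own remark following Theorem~\ref{thm-pos-dist8} makes clear that the obstruction is present for \emph{every} $\gamma$, including $\sqrt 2$: all of the distance bounds (Theorems~\ref{thm-dist-bound} and~\ref{thm-pos-dist8}) carry multiplicative errors of the form $\ep^{\pm u}$ and probabilities of the form $\ep^{o_\ep(1)}$, so the ratio of the upper to the lower bound on $\ep^{\chi}\op{diam}(\mcl G^\ep|_{(0,1]})$ diverges as $\ep \rta 0$. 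Gromov compactness requires covering numbers at scale $\delta$ that are bounded uniformly in $\ep$ by a constant $C(\delta)$, and the $\ep^{o_\ep(1)}$ errors are exactly what prevents this. Removing them is a genuine additional difficulty, not a routine step. A second, smaller gap: in Step 2 you assert that any subsequential limit ``canonically identifies with $(\BB C, D, \mu_h)$ for some random metric $D$ measurable with respect to $h$''; measurability with respect to $h$ (i.e., ruling out extra randomness in the limit, including dependence on $\eta$) is itself something that must be proved, typically via an Efron--Stein or resampling argument, and should not be taken for granted.
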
 

The recent work~\cite{gms-tutte} shows that random walk on $\mcl G^\ep$ converges to Brownian motion modulo time parametrization, which partially resolves the last part of Conjecture~\ref{conj-scaling-limit}.

One a priori reason to expect that the LQG structure graphs should yield a metric on LQG in the scaling limit comes from comparison to discrete models. Indeed, many natural combinatorial random planar map models which belong to the $\gamma$-LQG universality class for some $\gamma \in (0,2)$ can be encoded by means of a two-dimensional random walk via a discrete analogue of the above encoding of the LQG structure graph in terms of a correlated two-dimensional Brownian motion. 
Planar maps for which this is the case include the uniform infinite planar triangulation (UIPT)~\cite{bernardi-dfs-bijection,bhs-site-perc} ($\gamma=\sqrt{8/3}$) spanning tree-decorated planar maps~\cite{mullin-maps,bernardi-maps,shef-burger} ($\gamma=\sqrt 2$), bipolar-oriented planar maps~\cite{kmsw-bipolar} ($\gamma=\sqrt{4/3}$), and Schnyder wood-decorated planar maps~\cite{lsw-schnyder-wood} ($\gamma=1$).
In the subsequent work~\cite{ghs-map-dist}, we use this relationship to transfer the estimates proven here for LQG structure graphs to these discrete models (see also Section~\ref{sec-related-mated-crt}). 
 
In~\cite[Section 1.1]{lqg-tbm1}, the structure graphs are suggested as a possible approach for constructing a metric on LQG. These graphs can also be viewed as a potential approach to~\cite[Question 13.1]{wedges}---which asks for a direct construction of a metric on the peanosphere---since they depend only on the Brownian motion $Z$. 
  
In the present paper, we will prove non-trivial upper and lower bounds for the cardinality of a graph-distance ball of radius $n$ in $\mcl G^\ep$ (Theorem~\ref{thm-ball-scaling}), which we expect should scale like $n^{d_\gamma + o_n(1)}$, where $d_\gamma$ is the dimension of $\gamma$-LQG. Hence our bounds can be interpreted as upper and lower bound for $d_\gamma$. As we will explain in Section~\ref{sec-related} below, these bounds for $d_\gamma$ are consistent with both the Watabiki prediction~\eqref{eqn-watabiki} and the estimates of~\cite{ding-goswami-watabiki} and sharper than what can be obtained from~\cite{mrvz-heat-kernel,andres-heat-kernel} (although there is presently no direct rigorous connection between our results and those of~\cite{mrvz-heat-kernel,andres-heat-kernel,ding-goswami-watabiki}).    

We also prove that there is an exponent $\chi = \chi(\gamma) > 0$ such that the expected graph distance between two typical vertices of the sub-graph of $\mcl G^\ep$ corresponding to the SLE segment $\eta([-1,1])$ is of order $\ep^{-\chi + o_\ep(1)}$ and the distance between two such vertices is extremely unlikely to be greater than $\ep^{-\chi + o_\ep(1)}$ (Theorems~\ref{thm-chi-exists} and~\ref{thm-dist-bound}). 

Although $\gamma = \sqrt{8/3}$ is special from the perspective of~\cite{lqg-tbm1,lqg-tbm2,lqg-tbm3}, none of our results are specific to the case $\gamma =\sqrt{8/3}$. We will, however, obtain stronger estimates than the ones in this paper for $\gamma=\sqrt{8/3}$ (which give, in particular, the correct exponent for the cardinality of a metric ball) in~\cite{ghs-map-dist} by comparing $\mcl G^\ep$ to a uniform random triangulation.

\subsection{Basic notations}
\label{sec-basic}

Here we record some basic notations which we will use throughout this paper.

\begin{notation} \label{def-discrete-intervals}
For $a < b \in \BB R$ and $c > 0$, we define the discrete intervals $[a,b]_{c \BB Z} := [a, b]\cap (c \BB Z)$ and $(a,b)_{c\BB Z} := (a,b)\cap (c\BB Z)$. 
\end{notation}

\begin{notation}\label{def-asymp}
If $a$ and $b$ are two quantities, we write $a\preceq b$ (resp. $a \succeq b$) if there is a constant $C$ (independent of the parameters of interest) such that $a \leq C b$ (resp. $a \geq C b$). We write $a \asymp b$ if $a\preceq b$ and $a \succeq b$. 
\end{notation}

\begin{notation} \label{def-o-notation}
If $a$ and $b$ are two quantities which depend on a parameter $x$, we write $a = o_x(b)$ (resp. $a = O_x(b)$) if $a/b \rta 0$ (resp. $a/b$ remains bounded) as $x \rta 0$ (or as $x\rta\infty$, depending on context). We write $a = o_x^\infty(b)$ if $a = o_x(b^{ s})$ for each $s >0$ (resp. $s  <0$) as $x \rta 0$ (resp. $x \rta \infty$). The regime we are considering will be clear from the context. 
\end{notation}

Unless otherwise stated, all implicit constants in $\asymp, \preceq$, and $\succeq$ and $O_x(\cdot)$ and $o_x(\cdot)$ errors involved in the proof of a result are required to depend only on the auxiliary parameters that the implicit constants in the statement of the result are allowed to depend on.  

\begin{remark} \label{remark-ep^o(1)}
For $\ep \rta 0$, we allow errors of the form $o_\ep(1)$ to be infinite for large values of $\ep$. In particular, the statement ``$f(\ep) \geq \ep^{o_\ep(1)}$" for some function $f : (0,\infty) \rta [0,\infty)$ means that for each $\zeta>0$, there exists $\ep_0 > 0$ such that for each $\ep \in (0,\ep_0]$ we have $f(\ep) \geq \ep^\zeta$. 
\end{remark}

We next introduce some notation concerning graphs.

\begin{defn} \label{def-graph-parts}
For a graph $G$, we write $\mcl V(G)$ for the set of vertices of $G$ and $\mcl E(G)$ for the set of edges of $G$.
\end{defn}
 
\begin{defn}\label{def-path}
Let $G$ be a graph and let $n\in \BB N \cup \{\infty\}$. A \emph{path} of length $n$ in $G$ is a function $P:[0,n]_{\BB Z} \rta  \mcl V(G) $ for some $n\in\BB N$ such that either $P(i) = P(i-1)$ or $P(i)$ is connected to $P(i-1)$ by an edge in $G$ for each $i\in [1,n]_{\BB Z}$. We write $|P| = n$ for the length of $P$.
\end{defn}
  
\begin{defn} \label{def-dist}
For a graph $G$ and vertices $x,y \in \mcl V(G)$, we write $\op{dist}(x,y; G)$ for the graph distance from $x$ to $y$ in $G$, i.e.\ the infimum of the lengths of paths in $G$ joining $x$ to $y$. For a set $V\subset \mcl V(G)$, we write
\eqbn
\op{diam}(V ; G) : = \sup_{x,y\in V} \op{dist}(x,y; G) .
\eqen
We abbreviate $\op{diam}(G) := \op{diam} (\mcl V(G) ; G)$. For $x \in \mcl V(G)$ and $r \geq 0$, the \emph{ball of radius $r$ centered at $x$ in $G$} is
\eqbn
\mcl B_r(x ; G) := \left\{ y \in \mcl V(G) \,:\, \op{dist}(x,y; G) \leq r\right\} .
\eqen
\end{defn} 
 
For many of the statements in this paper, the choice of graph $G$ in Definition~\ref{def-dist} will be important.
Note that if $G'$ is a subgraph of $G$, then $\op{dist}(x,y ; G') \geq \op{dist}(x,y ; G)$ for each $x,y\in\mcl V(G)$.

\subsection{Main results}
\label{sec-main-results}

Throughout this subsection we assume we are in the setting of Section~\ref{sec-structure-graph}, so that $\{\mcl G^\ep\}_{\ep>0}$ is the family of LQG structure graphs constructed from an SLE$_\kappa$-decorated $\gamma$-quantum cone for $\gamma \in (0,2)$ and $\kappa = 16/\gamma^2 > 4$. We make frequent use of Definition~\ref{def-dist}.
Our first main result gives upper and lower bounds for the scaling dimension of $\mcl G^\ep$. 

\begin{thm} \label{thm-ball-scaling}
Let $\gamma \in (0,2)$ and let
\begin{align} \label{eqn-ball-scaling-exponent}
d_- :=  \frac{2\gamma^2}{4 + \gamma^2 - \sqrt{16 + \gamma^4}}   
\quad \op{and}\quad 
d_+ :=  2 + \frac{\gamma^2}{2} + \sqrt2 \gamma .
\end{align}
For each $u > 0$, there exists $c = c(u,\gamma) > 0$ such that 
\eqbn
  \BB P\left[ n^{d_-   - u} \leq  \# \mcl B_n \left( 0 ; \mcl G^1 \right)  \leq n^{d_+ + u} \right] \geq 1 - O_n(n^{-c}) \quad \text{as $n\rta\infty$},
\eqen
where here $\mcl B_n (0 ; \mcl G^1)$ is as the graph distance ball of radius $n$ (Definition~\ref{def-dist}). 
\end{thm}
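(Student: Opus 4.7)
By the Brownian scaling invariance noted in Section~\ref{sec-structure-graph}, the law of $\mcl G^\ep$ as a graph does not depend on $\ep$, so I would fix $\ep=1$ and study $D(x) := \op{dist}(0,x;\mcl G^1)$ for $x \in \BB Z$. The two bounds on $\#\mcl B_n(0;\mcl G^1)$ are then (modulo polynomial corrections) equivalent to pointwise distance bounds of the form $|x|^{1/d_+ - o(1)} \preceq D(x) \preceq |x|^{1/d_- + o(1)}$ for typical $x$, and I would prove these by two essentially independent arguments, each exploiting the Brownian adjacency criterion~\eqref{eqn-inf-adjacency} together with the quantum cone area scaling $\mu_h(B_r(0)) \asymp r^{2+\gamma^2/2 + o(1)}$.

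For the upper bound on $\#\mcl B_n$ (exponent $d_+$), I would rule out short paths from $0$ to distant vertices. Every length-$m$ path $0=i_0,i_1,\dots,i_m=x$ in $\mcl G^1$ forces, at each step, one of the coordinates $L$ or $R$ to exhibit a ``pinch point'' as in~\eqref{eqn-inf-adjacency}. Applying Gaussian estimates for Brownian infima over intervals of length $|i_j-i_{j-1}|$ and summing over the choice of intermediate vertices should yield $\BB P[D(x)\leq m] \preceq m^{d_+}/|x|$ up to $|x|^{o(1)}$ factors, which implies the claimed upper bound on $\#\mcl B_n$. The Gaussian tail, which decays like $e^{-c k}$ for a jump of Brownian-displacement $k$, should contribute the $\sqrt 2\,\gamma$ term in $d_+$, while the remaining $2+\gamma^2/2$ enters through the LQG area scaling.

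For the lower bound on $\#\mcl B_n$ (exponent $d_-$), I would instead construct many vertices that are reachable in $n$ steps. The adjacency condition~\eqref{eqn-inf-adjacency} automatically produces a long-range edge whenever one of $L$ or $R$ attains a new running extremum, so I would set up a dyadic scale hierarchy and, at each scale, identify a positive fraction of vertices joined to a previously reached vertex by such an extremum-induced edge. Chaining these jumps across $O(\log|x|)$ scales should give short paths to many targets, with $d_-$ emerging as the exponent that balances the Gaussian cost of locating simultaneous extrema against the LQG count of vertices at each Euclidean scale. I expect the lower bound to be the main obstacle, since constructing such paths requires simultaneous control of two correlated Brownian coordinates (with correlation $-\cos(\pi\gamma^2/4)$) and a multiscale second-moment estimate to ensure that enough candidate shortcut paths survive to produce $n^{d_- - o(1)}$ distinct reachable vertices.
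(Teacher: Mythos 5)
Your reduction by scale invariance to pointwise bounds on $\op{dist}(0,x;\mcl G^1)$ is fine, but both of your proposed mechanisms miss the actual engine of the proof, which is the Euclidean embedding $x\mapsto \eta([x-\ep,x])$ rather than the Brownian motion $Z$ alone. For the upper bound on $\#\mcl B_n$ (exponent $d_+$), a union bound over length-$m$ paths using ``Gaussian estimates for Brownian infima'' does not close: the adjacency events~\eqref{eqn-inf-adjacency} for successive pairs $(i_{j-1},i_j)$ involve overlapping infima of the same Brownian path and are strongly correlated, vertex degrees are unbounded so the entropy of candidate paths is uncontrolled, and--most importantly--the term $\sqrt2\gamma$ in $d_+$ is not produced by displacement tails of $Z$. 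In the paper it comes from the Gaussian fluctuation of the circle average $h_\ep(z)$, i.e.\ from the multifractal spectrum of the LQG measure (the quadratic term in $f(\alpha)$ of Lemma~\ref{prop-cell-diam-count}, via Lemma~\ref{prop-ball-mass}). The actual argument is geometric and takes no union bound over paths: each cell meeting $B_r(0)$ with Euclidean diameter $\geq \ep^{\alpha}$ contains a Euclidean ball of comparable radius and $\mu_h$-mass $\leq \ep$ (a space-filling SLE regularity input from~\cite{ghm-kpz}), so the multifractal estimate bounds the number of such cells by $\ep^{-f(\alpha)-u}$; summing diameters shows the cells of diameter $\geq \ep^{\xi_- - u/2}$ have total Euclidean diameter $<r/2$, whence any path in $\mcl G^\ep$ from $0$ to a cell outside $B_r(0)$ must use at least $\ep^{-\xi_-+u/2}$ cells (Proposition~\ref{prop-diam-lower}).

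For the lower bound, $d_-$ is exactly the KPZ image of Euclidean dimension $1$: Proposition~\ref{prop-kpz0} shows the number of cells needed to cover a fixed line segment (and a circle) is at most $\ep^{-1/d_--u}$ off an event of polynomially small probability, and a chain of covering cells is a path in $\mcl G^\ep$, which directly yields $n^{d_--u}$ reachable vertices. Your ``extremum-induced shortcut'' hierarchy with a second-moment estimate would have to rediscover this multifractal exponent from $Z$ alone, and nothing in your outline forces the balance to come out at $d_-$. Moreover, the genuinely delicate points in the paper's lower bound are ones your sketch does not touch: the $\gamma$-log singularity of $h$ at the origin (handled by the multi-scale argument of Lemma~\ref{prop-line-kpz}), and the fact that $\eta(x)$ is \emph{not} independent of $h$ (so the KPZ estimate cannot be applied directly at $\eta(x)$), which is resolved by the re-rooting/rescaling identity~\eqref{eqn-embedding-shift} together with the comparison of $\mu_h$-masses of balls in Lemma~\ref{prop-embedding-shift}. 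As written, your proposal has a genuine gap on both sides.
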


\begin{figure}[ht!]
 \begin{center}
\includegraphics[scale=.6]{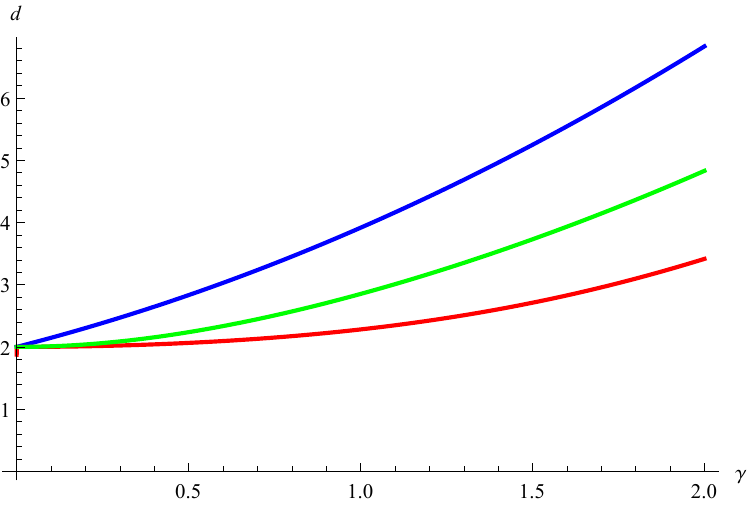}  \hspace{15pt}
\includegraphics[scale=.6]{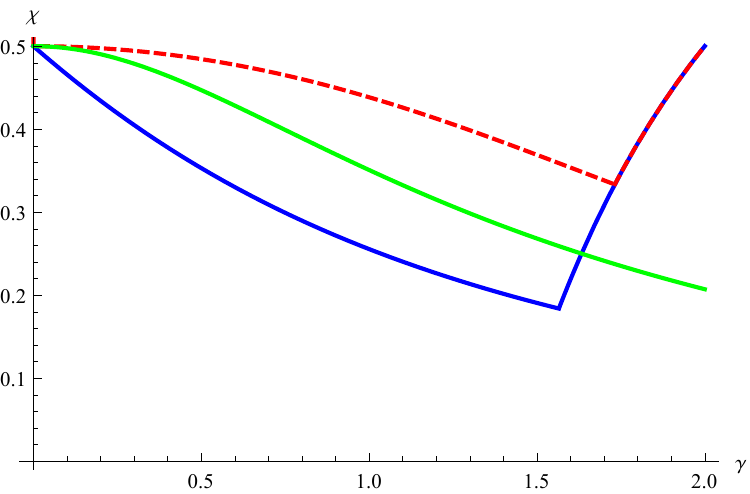} 
\caption[Upper and lower bounds for the dimension of $\gamma$-LQG and the exponent $\chi$]{\textbf{Left:} graph of the upper and lower bounds $d_+$ and $d_-$ for the dimension $d_\gamma$ of $\gamma$-LQG (blue and red, respectively) from Theorem~\ref{thm-ball-scaling} along with Watabiki prediction~\eqref{eqn-watabiki} for $d_\gamma$ (green), as functions of $\gamma$. \textbf{Right:} graph of the lower and upper bounds for $\chi$ (blue and dashed red, respectively) from Theorem~\ref{thm-chi-exists} and Remark~\ref{remark-chi-upper}, respectively, along with the reciprocal of the Watabiki prediction (green), as a functions of $\gamma$. The reason the red line is dotted is that the upper bound for $\chi$ is not proven rigorously; see Remark~\ref{remark-chi-upper}. The blue and green curves cross at $(\sqrt{8/3}, 1/4)$ and the red and blue curves meet at $(\sqrt 3 , 1/3)$. The ``kink" in the blue curve is located at approximately $(1.56542,  0.183854)$. Conjecture~\ref{conj-chi} states that there is a $\gamma_* \in (\sqrt 2 , \sqrt{8/3})$ such that $\chi  =1/d_\gamma$ for $\gamma \in (0, \gamma_*]$. These graphs are produced using Mathematica. }\label{fig-ball-bounds}
\end{center}
\end{figure}

See Figure~\ref{fig-ball-bounds}, left panel, for a graph of the bounds appearing in Theorem~\ref{thm-ball-scaling}. By Brownian scaling, the laws of the graphs $\mcl G^1$ and $\mcl G^\ep$ for $\ep > 0$ agree, so the statement of Theorem~\ref{thm-ball-scaling} is also true with $\mcl G^\ep$ in place of $\mcl G^1$.  

By Conjecture~\ref{conj-scaling-limit} we expect that the graph distance of $\mcl G^\ep$ (appropriately re-scaled) is a good approximation for the $\gamma$-LQG metric when $\ep$ is small. Hence it should be the case that in fact $\# \mcl B_n \left( 0 ; \mcl G^\ep \right) = n^{ d_\gamma + o_n(1)}$ with high probability, where $d_\gamma$ is the dimension of $\gamma$-LQG. Therefore Theorem~\ref{thm-ball-scaling} can be interpreted as giving upper and lower bounds for $d_\gamma$, namely $d_-\leq d_\gamma \leq d_+$. These bounds are consistent with the Watabiki prediction~\eqref{eqn-watabiki}. In the special case when $\gamma =\sqrt{8/3}$, we know that $d_\gamma=4$, so in this case we expect that $\#\mcl B_n(0; \mcl G^1) = n^{4+o_n(1)}$ with high probability when $n$ is large. This will be proven in~\cite{ghs-map-dist} by comparing $\mcl G^\ep$ to a uniform triangulation.

Our next main result is the existence of an exponent for distances in the LQG structure graphs. More precisely, we will consider distances in the sub-graphs of $\mcl G^\ep$ defined as follows.

\begin{defn} \label{def-structure-graph-restrict}
For a set $A \subset \BB R$, we write $\mcl G^\ep|_A $ for the subgraph of $\mcl G^\ep$ whose vertex set is $  \ep\BB Z \cap A$, with two vertices connected by an edge if and only if they are connected by an edge in $\mcl G^\ep$. 
\end{defn}

\begin{thm} \label{thm-chi-exists}
For $\gamma \in (0,2)$, the limit
\eqb \label{eqn-chi-exists}
\chi = \chi(\gamma) := \lim_{\ep\rta 0} \frac{\log \BB E\left[\op{diam} \left(\mcl G^\ep|_{(0,1]} \right) \right]}{\log \ep^{-1} }
\eqe 
exists. Furthermore, we have 
\eqb \label{eqn-chi-bound}
\xi_- \vee \left(1 - \frac{2}{\gamma^2} \right) \leq \chi \leq  \frac12 \quad \text{where} \quad \xi_- := \frac{1}{d_+} = \frac{1}{ 2 + \gamma^2/2 + \sqrt2 \gamma}  .
\eqe 
\end{thm}

The exponent $\chi$ will be proven to exist via a sub-additivity  argument. 
The reason for the quantity $1-2/\gamma^2$ appearing in~\eqref{eqn-chi-bound} is as follows. In the case when $\gamma > \sqrt 2$, the left and right outer boundaries of $\eta'([0,1])$ touch each other to form ``bottlenecks" (c.f.\ Figure~\ref{fig-sle-segment}). These bottlenecks correspond to simultaneous running infima of $L$ and $R$ relative to time 0 (which a.s.\ do not exist for a non-positively correlated Brownian motion). The dimension of the time set for $Z$ such that this is the case is $1-2/\gamma^2$~\cite{evans-cone}, so we expect that there are typically of order $\ep^{-(1-2/\gamma^2)}$ cells $\eta'([x-\ep , x])$ for $x\in \ep\BB Z$ which intersect a bottleneck. Any path from $\ep$ to 1 in $\mcl G^\ep|_{(0,1]}$ must pass through each of these cells.  
 
If we allow for paths between vertices of $\mcl G^\ep|_{(0,1]}$ which traverse vertices in all of $\mcl G^\ep$ (rather than just in $\mcl G^\ep|_{(0,1]}$) then bottlenecks do not pose a problem. Hence we still expect that (in the notation of Definition~\ref{def-dist}) 
\eqb \label{eqn-relative-diam-conj}
 \op{diam} \left( \mcl G^\ep|_{(0,1]} ; \mcl G^\ep \right)   = \ep^{-1/d_\gamma + o_\ep(1)} \quad \text{with high probability} ,
\eqe  
where $d_\gamma$ is the dimension of $\gamma$-LQG; and that $\chi = 1/d_\gamma$ when there are either few or no bottlenecks.  
There are no bottlenecks for $\gamma \leq \sqrt 2$, and according to the Watabiki prediction~\eqref{eqn-watabiki}, we have $1/d_\gamma \leq 1-2/\gamma^2$ when $\gamma \geq \sqrt{8/3}$. 
This leads to the following conjecture.

\begin{conj} \label{conj-chi}
Let $d_\gamma$ be the dimension of $\gamma$-LQG. Then there exists $\gamma_* \in (\sqrt 2 , \sqrt{8/3}]$ such that for $\gamma \in (0,\gamma_*]$ we have $\chi = 1/d_\gamma$. 
\end{conj}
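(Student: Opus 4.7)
The plan has two natural halves. The inequality $\chi \geq 1/d_\gamma$ should come essentially for free once $d_\gamma$ is pinned down, since any path inside $\mcl G^\ep|_{(0,1]}$ is also a path in $\mcl G^\ep$, and we expect the extrinsic diameter $\op{diam}(\mcl G^\ep|_{(0,1]}; \mcl G^\ep)$ to behave like $\ep^{-1/d_\gamma + o_\ep(1)}$ by Conjecture~\ref{conj-scaling-limit} together with the LQG-volume normalization of $\eta$. The real content of Conjecture~\ref{conj-chi} is thus the reverse inequality: that paths may be restricted to $\mcl G^\ep|_{(0,1]}$ without inflating distances by more than a subpolynomial factor, provided $\gamma \leq \gamma_*$.

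I would start with the easier regime $\gamma \in (0, \sqrt 2]$, where the Brownian correlation $-\cos(\pi \gamma^2/4)$ is non-positive. A two-dimensional Brownian motion with such correlation almost surely has no simultaneous running infima of $L$ and $R$, so bottlenecks are absent and the left and right outer boundaries of $\eta([0,1])$ never pinch together inside $(0,1)$. In this setting one should be able to argue, directly from~\eqref{eqn-inf-adjacency}, that any adjacency between vertices $x_1, x_2 \in (0,1]_{\ep \BB Z}$ in $\mcl G^\ep$ is already realized inside $\mcl G^\ep|_{(0,1]}$, up to bounded detours through cells adjacent to the boundary $\{0,1\}$. This would upgrade the one-sided comparison to $\op{diam}(\mcl G^\ep|_{(0,1]}) \asymp \op{diam}(\mcl G^\ep|_{(0,1]}; \mcl G^\ep)$, giving $\chi = 1/d_\gamma$ on $(0, \sqrt 2]$.

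To push past $\sqrt 2$, I would try to show that bottleneck detours are cheap. The set of bottleneck times in $[0,1]$ has Hausdorff dimension $1 - 2/\gamma^2$, so at most $\ep^{-(1 - 2/\gamma^2) + o_\ep(1)}$ cells touch a bottleneck. The idea is to reroute an extrinsic geodesic around each bottleneck at a cost bounded by the typical graph-distance diameter of the bottleneck's local neighborhood, and to sum these costs via a multi-scale decomposition of the bottleneck set. So long as the total cost is $o(\ep^{-1/d_\gamma + o_\ep(1)})$, the rerouted path lies in $\mcl G^\ep|_{(0,1]}$ with the desired length bound; $\gamma_*$ would then be the supremum of $\gamma$ for which this cost budget closes, which under the Watabiki prediction~\eqref{eqn-watabiki} is exactly $\sqrt{8/3}$ (since $1 - 2/\gamma^2 = 1/d_\gamma = 1/4$ there).

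The hard step will be controlling the rerouting cost for $\gamma > \sqrt 2$. A single bottleneck has small Euclidean and Brownian footprint, but it may separate two otherwise distant parts of $\eta([0,1])$, so that an in-subgraph detour around it could in the worst case traverse a macroscopic portion of the segment. The proof therefore needs a quantitative version of the statement \emph{bottlenecks are locally benign}, presumably via tail bounds on bottleneck diameters in the spirit of the estimates in Section~\ref{sec-exponent-bound}, combined with a KPZ-style comparison between the Brownian dimension $1 - 2/\gamma^2$ of bottleneck times and the LQG dimension of the corresponding spatial set. A separate, more structural obstacle is that $d_\gamma$ has not been rigorously defined for $\gamma \neq \sqrt{8/3}$, so any proof of the conjecture will need to either adopt a provisional definition of $d_\gamma$ (for instance the volume-growth exponent in $\mcl G^\ep$, assuming it exists and refining Theorem~\ref{thm-ball-scaling}) or wait for the LQG metric to be constructed in greater generality.
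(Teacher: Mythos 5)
This statement is Conjecture~\ref{conj-chi}: the paper does not prove it, and offers only the heuristic discussion surrounding~\eqref{eqn-relative-diam-conj} (bottlenecks correspond to simultaneous running infima of $L$ and $R$, a set of dimension $1-2/\gamma^2$, which forces $\chi \geq 1-2/\gamma^2$ and is expected to dominate $1/d_\gamma$ once $\gamma \geq \sqrt{8/3}$, while being empty for $\gamma \leq \sqrt 2$). Your proposal is essentially an expanded version of that same heuristic, so as a research plan it is aligned with the authors' thinking; but it is not a proof, and two of its steps contain genuine gaps beyond the one you acknowledge (that $d_\gamma$ is not rigorously defined for $\gamma \neq \sqrt{8/3}$).

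First, the ``easy'' inequality $\chi \geq 1/d_\gamma$ is not free: it requires knowing that $\op{diam}(\mcl G^\ep|_{(0,1]};\mcl G^\ep) = \ep^{-1/d_\gamma+o_\ep(1)}$, which is itself at the level of Conjecture~\ref{conj-scaling-limit} and is not established anywhere in the paper (Theorem~\ref{thm-ball-scaling} only brackets the volume-growth exponent between $d_-$ and $d_+$). Second, and more seriously, your argument for the reverse inequality on $(0,\sqrt 2]$ confuses adjacency with connectivity. That every edge of $\mcl G^\ep$ between two vertices of $(0,1]_{\ep\BB Z}$ is an edge of $\mcl G^\ep|_{(0,1]}$ is true by definition (it is an induced subgraph) and has nothing to do with bottlenecks. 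The obstruction to $\op{diam}(\mcl G^\ep|_{(0,1]}) \asymp \op{diam}(\mcl G^\ep|_{(0,1]};\mcl G^\ep)$ is that a $\mcl G^\ep$-geodesic between two vertices of $(0,1]_{\ep\BB Z}$ may make long excursions through cells $\eta([x-\ep,x])$ with $x \notin (0,1]$, i.e.\ it may shortcut through the exterior of $\eta([0,1])$. Ruling out that such exterior shortcuts save more than an $\ep^{o_\ep(1)}$ factor is a quantitative geodesic-confinement statement that does not follow from the absence of pinch points and is not supplied by any estimate in the paper. Finally, your assertion that $\gamma_*$ ``is exactly $\sqrt{8/3}$'' under Watabiki overreaches: the crossing of $1-2/\gamma^2$ with $1/d_\gamma$ at $\sqrt{8/3}$ only shows the bottleneck lower bound ceases to be dominant there; whether the rerouting cost actually stays subdominant all the way up to that value is precisely what the authors decline to predict (``We do not have a prediction for the value of $\gamma_*$'').
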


We do not have a prediction for the value of $\gamma_*$ in Conjecture~\ref{conj-chi}.

\begin{remark} \label{remark-chi-upper}
We expect that it is possible to prove using the same estimates used to prove the lower bound in Theorem~\ref{thm-ball-scaling} plus some additional argument that in fact 
\eqb \label{eqn-chi-upper}
\chi \leq \frac{1}{d_-} \vee \left(1 - \frac{2}{\gamma^2} \right) = \frac{4 + \gamma^2 - \sqrt{16 + \gamma^4}}{2\gamma^2} \vee  \left(1 - \frac{2}{\gamma^2} \right) 
\eqe 
where $d_-$ is as in~\eqref{eqn-ball-scaling-exponent}. However, the argument needed to deduce~\eqref{eqn-chi-upper} from the results in this paper requires some rather technical estimates for SLE and LQG and we do not find it to be particularly illuminating. Furthermore, we expect that if $\gamma_*$ is as in Conjecture~\ref{conj-chi}, then for $\gamma \in(0, \gamma_*]$ the expected distance between generic points in $\mcl G^\ep$ along paths which are allowed to traverse any vertex of $\mcl G^\ep$ (not just vertices in $\mcl G^\ep|_{(0,1]}$) is of order $\ep^{-\chi  +o_\ep(1)}$. Once this is known,~\eqref{eqn-chi-upper} for $\gamma \in [0, \gamma_*]$ becomes a trivial consequence of the estimates of this paper. We will say more about what is needed to prove~\eqref{eqn-chi-upper} in Remark~\ref{remark-chi-upper'}. See Figure~\ref{fig-ball-bounds}, right panel, for a graph of our upper and lower bounds for $\chi$. 
\end{remark}

Our next result estimates the probability that distances between vertices of $\mcl G^\ep|_{(0,1]}$ are of order $\ep^{-\chi + o_\ep(1)}$. We get an upper bound which holds except on an event of probability decaying faster than any power of $\ep$ and a lower bound which holds on an event of probability decaying slower than any power of $\ep$. 

\begin{thm} \label{thm-dist-bound}
Let $\gamma \in (0,2)$ and let $\chi$ be as in Theorem~\ref{thm-chi-exists}. There is a constant $c = c(\gamma) > 0$ such that  
\eqb \label{eqn-dist-upper0}
\BB P\left[ \op{diam}\left( \mcl G^\ep|_{(0,1]} \right) \leq \ep^{-\chi -u} \right] \geq 1 - O_\ep\left( e^{-c u^2 (\log \ep^{-1})^2} \right)   \qquad \text{for $u>0$}.
\eqe 
Furthermore, for $s, t \in [0,1]$ with $s < t$, let $x_s^\ep$ (resp. $x_t^\ep$) be the element of $(0,1]_{\ep\BB Z}$ closest to $s$ (resp. $t$). Then 
\eqb \label{eqn-dist-lower0}
\BB P\left[ \op{dist}\left(  x_s^\ep , x_t^\ep ; \mcl G^\ep|_{(0,1]}           \right) \geq \ep^{-\chi + u} \right] \geq \ep^{o_\ep(1)}  .
\eqe
\end{thm}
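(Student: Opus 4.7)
The upper bound \eqref{eqn-dist-upper0} I would obtain by bootstrapping the first moment estimate $\BB E[\op{diam}(\mcl G^\ep|_{(0,1]})] = \ep^{-\chi + o_\ep(1)}$ (implicit in Theorem~\ref{thm-chi-exists}) to a Gaussian-in-$\log\ep^{-1}$ concentration. Markov alone gives only the polynomial tail $\BB P[\op{diam}(\mcl G^\ep|_{(0,1]}) > \ep^{-\chi - u}] \leq \ep^{u + o_\ep(1)}$. To upgrade this, I would exploit the elementary subadditive inequality, valid for any integer $M \geq 2$,
\[
\op{diam}(\mcl G^\ep|_{(0,1]}) \;\leq\; \sum_{k=1}^M \op{diam}\!\left(\mcl G^\ep|_{((k-1)/M,\,k/M]}\right) + O(M),
\]
in which each summand has the law of $\op{diam}(\mcl G^{M\ep}|_{(0,1]})$ by Brownian scaling and is measurable with respect to a disjoint increment of $Z$. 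Plugging this into a Bernstein/Azuma-type concentration estimate for sums of near-independent summands and iterating over $K \asymp \log_M \ep^{-1}$ dyadic scales yields, after telescoping, a tail of the form $\exp(-c u^2 K^2) = \exp(-c u^2 (\log\ep^{-1})^2)$. This is the standard mechanism for turning sub-additivity into Gaussian concentration in first-passage-percolation-style problems.

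For \eqref{eqn-dist-lower0} I would run a Paley--Zygmund argument. First, integrating the sharp tail from \eqref{eqn-dist-upper0} via layer-cake yields the second moment bound $\BB E[\op{diam}(\mcl G^\ep|_{(0,1]})^2] = O(\ep^{-2\chi + o_\ep(1)})$, so combining with $\BB E[\op{diam}(\mcl G^\ep|_{(0,1]})] = \ep^{-\chi + o_\ep(1)}$, Paley--Zygmund at threshold $\ep^{u}\cdot\BB E[\op{diam}]$ produces
\[
\BB P\!\left[\op{diam}(\mcl G^\ep|_{(0,1]}) \geq \ep^{-\chi + u}\right] \;\geq\; \ep^{o_\ep(1)}.
\]
To pass from the diameter to the specific distance it suffices to establish the matching first-moment lower bound $\BB E[\op{dist}(x_s^\ep, x_t^\ep; \mcl G^\ep|_{(0,1]})] \geq c(s,t)\,\ep^{-\chi + o_\ep(1)}$: since $\op{dist}(x_s^\ep, x_t^\ep; \mcl G^\ep|_{(0,1]}) \leq \op{diam}(\mcl G^\ep|_{(0,1]})$, the second moment of the former is automatically $O(\ep^{-2\chi + o_\ep(1)})$, so the same Paley--Zygmund step applied to $\op{dist}(x_s^\ep, x_t^\ep; \mcl G^\ep|_{(0,1]})$ directly gives \eqref{eqn-dist-lower0}.

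The main obstacle is the first-moment lower bound $\BB E[\op{dist}(x_s^\ep, x_t^\ep; \mcl G^\ep|_{(0,1]})] \geq c(s,t) \ep^{-\chi + o_\ep(1)}$. The obvious triangle decomposition of every pair of vertices through the pivots $x_s^\ep, x_t^\ep$ yields
\[
\op{diam}(\mcl G^\ep|_{(0,1]}) \;\leq\; \op{dist}(x_s^\ep, x_t^\ep; \mcl G^\ep|_{(0,1]}) + 2\op{diam}(\mcl G^\ep|_{(0,s]}) + 2\op{diam}(\mcl G^\ep|_{(s,t]}) + 2\op{diam}(\mcl G^\ep|_{(t,1]}),
\]
but because $\chi \leq 1/2$ the sub-interval diameters scale like $|I|^\chi\,\ep^{-\chi + o_\ep(1)}$ by Brownian scaling and $s^\chi + (t-s)^\chi + (1-t)^\chi > 1$, so upon taking expectations the pivot distance is swamped by the sub-interval contributions rather than isolated. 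Resolving this requires either (i) a finer multi-scale argument which telescopes and cancels the sub-interval diameter contributions across many nested partitions, or (ii) a translation/averaging argument exploiting the stationarity of the two-sided Brownian motion $Z$ to average $\op{dist}(x_{s'}^\ep, x_{t'}^\ep)$ over small shifts $(s',t')$ of $(s,t)$ while controlling how much the restriction to $(0,1]$ distorts the law. Both routes require careful tracking of the Brownian-scaling exponents, and this is where the bulk of the technical work for \eqref{eqn-dist-lower0} will lie.
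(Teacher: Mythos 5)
Your mechanism for the upper bound \eqref{eqn-dist-upper0} does not work as stated. In your decomposition of $(0,1]$ into $M$ equal sub-intervals, each summand is indeed an independent copy of $\op{diam}(\mcl G^{M\ep}|_{(0,1]})$, but its expectation is $(M\ep)^{-\chi+o(1)}$, so the sum has mean of order $M^{1-\chi}\ep^{-\chi}$ --- already far above the target $\ep^{-\chi-u}$ unless $M\leq \ep^{-u/(1-\chi)}$, and iterating over $\asymp\log_M\ep^{-1}$ scales compounds the loss to a factor $\ep^{-(1-\chi)}$, which destroys the exponent entirely. Concentration of a sum of independent blocks around the wrong mean cannot help. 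The diameter is not additive over a fixed partition at the correct order; it is only sub-\emph{multiplicative} along a geodesic. The paper's route (Section~\ref{sec-subadditivity}) subdivides only the $D_n$ cells visited by a $\mcl G^{2^{-n}}$-geodesic into $2^m$ subcells each, giving $\BB E[D_{n+m}]\preceq n^5\,\BB E[D_n]\,\BB E[D_m]$; this requires Proposition~\ref{prop-diam-given-bdy} to compare the conditional law of a subdivided cell given the coarse graph to its unconditional law, Proposition~\ref{prop-subdivide-conc} (an Azuma bound for the cell-by-cell subdivision martingale, with increments $\leq 2^m$ over $\leq 2^m D_n$ steps) to replace the conditional expectation by the actual distance, and the restricted subadditivity lemma (Lemma~\ref{prop-restricted-sub}) to extract $\chi$. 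The tail $\exp(-cu^2(\log\ep^{-1})^2)$ in Proposition~\ref{prop-upper-conc} then comes from iterating this one-scale estimate over a bounded number of carefully chosen scales, not from a Bernstein bound on independent blocks.

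For the lower bound \eqref{eqn-dist-lower0}, your Paley--Zygmund skeleton is correct and is essentially Lemma~\ref{prop-weak-pos}, but you have identified and then left unresolved the only substantive step --- passing from the diameter to a fixed pair of endpoints --- so this half of the argument is incomplete. Note that the paper never proves the first-moment lower bound $\BB E[\op{dist}(x_s^\ep,x_t^\ep;\mcl G^\ep|_{(0,1]})]\geq\ep^{-\chi+o_\ep(1)}$ for fixed $s<t$; it circumvents it. For a \emph{uniformly random} target vertex, Paley--Zygmund is combined with a volume lower bound for metric balls (Lemma~\ref{prop-min-ball}): on the event that the diameter is large there is an entire metric ball of at least $\ep^{-1+o_\ep(1)}$ vertices lying far from $x_0^\ep$, so a uniform vertex is far with probability at least a small power of $\ep$, yielding the first-moment bound of Proposition~\ref{prop-gen-exp} and hence, after a translation/decomposition argument, the case $s=0$, $t=1$ in Proposition~\ref{prop-no2-dist}. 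For general fixed $s<t$ the paper then forces $x_s^\ep$ and $x_t^\ep$ to be pinch points: by the cone estimate Lemma~\ref{prop-cone-prob}, with probability at least $\ep^{O(\zeta)}$ the boundary-length vectors $\ol\Delta^{L}_{[0,x_s^\ep]},\ol\Delta^{R}_{[0,x_s^\ep]},\ul\Delta^{L}_{[x_t^\ep,1]},\ul\Delta^{R}_{[x_t^\ep,1]}$ are all $\leq\ep^{\zeta(1/2+\zeta)}$ while the corresponding displacements just inside $(x_s^\ep,x_t^\ep]$ exceed this threshold; on that event every path from $x_s^\ep$ to $x_t^\ep$ in $\mcl G^\ep|_{(0,1]}$ must essentially stay in $(x_s^\ep,x_t^\ep]$, so the restricted-interval lower bound applies. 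Since the cost is only $\ep^{O(\zeta)}$ with $\zeta$ arbitrary, this is compatible with the $\ep^{o_\ep(1)}$ in \eqref{eqn-dist-lower0} and sidesteps the swamping problem you describe.
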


Theorem~\ref{thm-dist-bound} implies that for each $p > 0$,
\eqb \label{eqn-chi-moment}
    \lim_{\ep\rta 0} \frac{\log \BB E\left[\op{diam} \left(\mcl G^\ep|_{(0,1]} \right)^p \right]}{\log \ep^{-1} }  = \lim_{\ep\rta 0} \frac{\log\BB E \left[\op{dist}\left(  x_s^\ep , x_t^\ep ; \mcl G^\ep|_{(0,1]}           \right)^p \right]}{\log \ep^{-1}}   = \chi p .
\eqe 
Note that Theorem~\ref{thm-dist-bound} does not state that the diameter of $\mcl G^\ep|_{(0,1]}$ is at least $\ep^{-\chi + o_\ep(1)}$ with uniformly positive probability. 
We expect, but do not prove, that this holds with probability tending to 1 as $\ep \rta 0$.

\begin{remark}
The main difference between the estimates in this paper and the sorts of estimates which would be needed to obtain a non-trivial subsequential limit of the rescaled structure graphs $\ep^{ \chi} \mcl G^\ep|_{(0,1]}$ in the Gromov-Hausdorff topology is the presence of ``$\ep^{o_\ep(1)}$"-type errors. Indeed, if we could replace ``$\ep^{-\chi -u}$" with ``$C \ep^{-\chi}$" and ``$o_\ep^\infty(\ep)$" with ``$o_C^\infty(C)$" in~\eqref{eqn-dist-upper0} we would have tightness due to the Gromov compactness criterion~\cite[Theorem 7.4.15]{bbi-metric-geometry}. If we could also replace ``$\ep^u$" with ``$C^{-1}$" and ``$\ep^{o_\ep(1)}$" with ``$C^{o_C(1)}$" in~\eqref{eqn-dist-lower0}, then we would get that any subsequential limit is a non-trivial metric space with positive probability. 
We do not currently have a particular approach in mind for removing the $\ep^{o_\ep(1)}$ errors in our estimates for general $\gamma$, but we expect that it is possible to get non-trivial subsequential scaling limits in our setting for small values of $\gamma$ by adapting the arguments of~\cite{ding-dunlap-lqg-fpp}. 
\end{remark}

\subsection{Related works}
\label{sec-related}


\subsubsection{Liouville heat kernel}
\label{sec-heat-kernel}

The papers~\cite{mrvz-heat-kernel,andres-heat-kernel} study the Liouville heat kernel, i.e.\ the heat kernel of the Liouville Brownian motion~\cite{berestycki-lbm,grv-lbm,grv-heat-kernel}. As explained in~\cite{mrvz-heat-kernel}, if one assumes a certain relationship between two exponents which the authors are unable to verify, then the estimates of~\cite{mrvz-heat-kernel} suggest that the dimension $d_\gamma$ of $\gamma$-LQG should satisfy 
\eqb \label{eqn-mrvz-prediction}
2 + \frac{\gamma^2}{4} \leq d_\gamma \leq \frac{ (4 + \gamma^2)^2}{2 (2- \gamma)^2} .
\eqe 
In~\cite{andres-heat-kernel}, a sharper upper bound is obtained which (subject to the same assumption about relationships between exponents) suggests that $d_\gamma \leq \frac12(\gamma+2)^2$. Our upper and lower bounds from Theorem~\ref{thm-ball-scaling} are sharper (closer to the Watabiki prediction) than the upper and lower bounds of~\cite{mrvz-heat-kernel,andres-heat-kernel} for all $\gamma \in (0,2)$. There are currently no rigorous mathematical relationships between Theorem~\ref{thm-ball-scaling} and the results of~\cite{mrvz-heat-kernel,andres-heat-kernel}. But, we conjecture that the random walk on $\mathcal G^\ep$ converges to Liouville Brownian motion, which will link the two approaches.

\subsubsection{Liouville first passage percolation}
\label{sec-liouville-fpp}

In addition to the structure graph, another natural approach to constructing a $\gamma$-LQG metric is to consider weighted graph distances on $\BB Z^2$ where each  $z\in\BB Z^2$ is assigned the weight $e^{\wt\gamma h(z)}$, for $h$ a discrete GFF and $\wt\gamma$ a positive constant.
This weighted graph distance is sometimes called \emph{Liouville first passage percolation (LFPP)}. 
It is natural to expect that LFPP converges in the scaling limit to the $\gamma$-LQG metric induced by a continuum GFF for 
$\wt\gamma = \gamma / d_\gamma$ (see, e.g.,~\cite[Footnote 1]{ding-goswami-watabiki}).\footnote{The relation between $\wt\gamma$ and $\gamma$ can be explained by observing that for a surface of dimension $d_\gamma$, rescaling areas by a constant $c$ should correspond to rescaling lengths by $c^{1/d_\gamma}$, and we can obtain such a rescaling by replacing $h$ by $h+\gamma^{-1}\log c$ and $h+(d_\gamma \wt\gamma)^{-1}\log c$, respectively.}
LFPP and its variants are studied in~\cite{ding-zhang-fpp-gff,ding-goswami-fpp-brw,ding-dunlap-lqg-fpp,ding-goswami-lqg-fpp1,ding-zhang-geodesic-dim,ding-goswami-watabiki}. Here we highlight some aspects of this work which are most relevant to the topic of the present paper.

The paper~\cite{ding-dunlap-lqg-fpp} proves the existence of non-trivial subsequential scaling limits of LFPP (with respect to the Gromov-Hausdorff topology) for very small values of $\wt\gamma$. In a sense, the results of~\cite{ding-dunlap-lqg-fpp} are orthogonal to the results of the present paper, since the present paper focuses on existence and bounds for scaling exponents and most of the results apply for all values of $\gamma \in (0,2)$, but we do not prove existence of non-trivial subsequential limiting metrics; whereas~\cite{ding-dunlap-lqg-fpp} proves the existence of non-trivial subsequential limits for small values of $\wt\gamma$ but does not explicitly describe the scaling factors. 
 
The work~\cite{ding-goswami-watabiki} (c.f.~\cite{ding-goswami-lqg-fpp1}) shows that the expected distance between typical points for the Liouville first passage percolation metric is at most a constant times $N^{1-c \frac{\wt\gamma^{4/3}}{\log\wt\gamma^{-1} }}$ with $c >0$ a universal constant for small enough $\wt\gamma$; and also proves similar upper bounds for related metrics defined using circle averages or balls of fixed $\wt\gamma$-LQG mass for a continuum GFF. As pointed out in~\cite{ding-goswami-watabiki}, this estimate is inconsistent with the Watabiki prediction~\eqref{eqn-watabiki} since it suggests that $2/d_\gamma \leq 1 - c \frac{ \gamma^{ 4/3} }{ \log \gamma^{-1} } $ for small enough $\gamma$ whereas~\eqref{eqn-watabiki} implies that $2/d_\gamma = 1-O_\gamma(\gamma^2)$.

By contrast, our estimates are consistent with the Watabiki prediction for all $\gamma\in (0,2)$. Even though the estimates of~\cite{ding-goswami-watabiki} are not consistent with the Watabiki prediction, said estimates \emph{are} consistent with the estimates of the present paper. In particular, our lower bound for $\chi$ (which we expect to be equal to $1/d_\gamma$ at least for $\gamma \leq \sqrt 2$) in Theorem~\ref{thm-chi-exists} suggests that $2/d_\gamma  \geq 1 - O_\gamma(\gamma)$ as $\gamma \rta 0$, which does not contradict the upper bound $2/d_\gamma \leq 1 - c \frac{ \gamma^{ 4/3} }{ \log \gamma^{-1} }$.

At present, there is no rigorous mathematical relationship between LFPP but we expect that it may be possible to establish such a relationship. We plan to investigate this further in future work. 

\subsubsection{Other results for LQG structure graphs}
\label{sec-related-mated-crt}

The LQG structure graphs/mated-CRT maps $\mcl G^\ep$ are also studied in~\cite[Section 10]{wedges} (but not referred to as such), where they are used to prove that the peanosphere Brownian motion $Z$ a.s.\ determines the pair $(h , \eta)$.

In~\cite{ghs-map-dist}, we use a strong coupling result for random walk and Brownian motion together with encodings of random planar maps in terms of random walk to prove that graph distances in the structure graph $\mcl G^\ep$ are comparable (up to polylogarithmic factors) to graph distances in a class of other natural random planar map models, including the uniform infinite planar triangulation and planar maps sampled with probability proportional to the number of spanning trees, bipolar-orientations, or Schnyder woods they admit. This allows us to transfer all of the results stated in Section~\ref{sec-main-results} to these other random planar maps. 

The paper~\cite{gms-tutte} shows that random walk on $\mcl G^\ep$ converges to Brownian motion modulo time parametrization, which implies that the so-called \emph{Tutte embedding} of $\mcl G^\ep$ (which, unlike the a priori embedding $x\mapsto \eta(x)$ is an explicit functional of the graph) is asymptotically the same as the a priori embedding. 
The papers~\cite{gm-spec-dim,gh-displacement} prove quantitative bounds for the simple random walk on $\mcl G^\ep$ and (via strong coupling) the aforementioned other random planar map models.  


\subsection{Outline}
\label{sec-outline}

Here we give a moderately detailed overview of the remainder of the paper. 
In Section~\ref{sec-prelim}, we will review the definitions of LQG, space-filling SLE, and the peanosphere construction; and prove some basic facts about the structure graphs and about correlated two-dimensional Brownian motion. 

In Section~\ref{sec-exponent-bound}, we will use estimates for SLE-decorated LQG to prove quantitative estimates for distances in the $\gamma$-LQG structure graph, which will eventually lead to a proof of Theorem~\ref{thm-ball-scaling}. These estimates are the only arguments in this paper which require non-trivial facts about LQG and SLE. In particular, to prove the upper bound in Theorem~\ref{thm-ball-scaling}, we will prove bounds for the Euclidean diameter of structure graph cells and thereby a lower bound for the minimal number of cells in a path in $\mcl G^\ep$ from 0 to the boundary of $\eta([-1,1])$. The lower bound in Theorem~\ref{thm-ball-scaling} will be deduced from a KPZ-type formula (Proposition~\ref{prop-kpz0}) which gives us an upper bound for the number of cells needed to cover a line segment. 

In Sections~\ref{sec-cond-diam} and~\ref{sec-subadditivity}, we will prove the existence of the limit $\chi$ in~\eqref{eqn-chi-exists} for $\ep$ restricted to powers of 2 using a subadditivity argument. We will also show that the diameter of $\mcl G^{2^{-n}}|_{(0,1]}$ is very unlikely to be larger than $2^{-n(\chi + o_n(1))}$ (Proposition~\ref{prop-upper-conc}).
The key idea of the proof is that if we set $D_n := \op{diam}(\mcl G^{2^{-n}}|_{(0,1]} ) $ then $\BB E[D_n]$ is approximately sub-multiplicative in the sense $\BB E[D_{n+m}] \lesssim \BB E[D_n ] \BB E[D_m]$ for a certain range of values of $n,m\in\BB N$. This gives the existence of $\chi$ due to a variant of Fekete's subaddivity lemma (Lemma~\ref{prop-restricted-sub}). 

Roughly speaking, to prove the sub-multiplicativity relation for $D_n$ we start with a path $P$ in $\mcl G^{2^{-n}}|_{(0,1]}$ of length $|P| = D_n$ and divide each of the cells hit by $P$ into $2^{m}$ sub-cells. Concatenating paths within each of these sub-divided cells gives us a path in $\mcl G^{2^{-n-m}}|_{(0,1]}$ whose length is at most the sum of $D_n$ terms with the same law as $D_m$. The estimates of Section~\ref{sec-cond-diam} are needed to allow us to compare the conditional law of the sub-divided cells given $\mcl G^{2^{-n}}|_{(0,1]}$ to their unconditional law. 
 
In Section~\ref{sec-general-dist}, we will deduce Theorems~\ref{thm-chi-exists} and~\ref{thm-dist-bound} from the results of Section~\ref{sec-subadditivity}.

Appendix~\ref{sec-technical} contains the proofs of some technical lemmas.

\bigskip

\noindent \textbf{Acknowledgements}
We thank Jian Ding, Subhajit Goswami, Jason Miller, and Scott Sheffield for helpful discussions. E.G.\ was supported by the U.S. Department of Defense via an NDSEG fellowship. N.H.\ was supported by a doctoral research fellowship from the Norwegian Research Council. X.S.\ was supported by the  Simons Foundation as a Junior Fellow at Simons Society of Fellows. We thank two anonymous referees for helpful comments on an earlier version of this article.

\section{Preliminaries}
\label{sec-prelim}

\subsection{Backgound on LQG and SLE}
\label{sec-basic-prelim}

In this subsection we give a brief review of some basic properties of Liouville quantum gravity, space-filling SLE, and the peanosphere construction. Although these objects are the main motivation of this work, their non-trivial properties will only be used explicitly in Section~\ref{sec-exponent-bound}. The proofs in the other sections can be phrased in terms of Brownian motion by means of the peanosphere construction. 
We refer to the cited references for further background.

\subsubsection{Liouville quantum gravity}
\label{sec-lqg-prelim}

Fix $\gamma \in (0,2)$. A \emph{$\gamma$-Liouville quantum gravity (LQG) surface}, as defined in~\cite{shef-kpz,shef-zipper,wedges,sphere-constructions} is an equivalence class of pairs $(D, h)$ where $D\subset \BB C$ is a simply connected domain and $h$ is a distribution on $D$ (typically some variant of the Gaussian free field on $D$~\cite{shef-gff,ss-contour,shef-zipper,ig1,ig4}). Two such pairs $(\wt D , \wt h)$ and $(D, h)$ are declared to be equivalent if there is a conformal map $\phi : \wt D \rta D$ such that 
\eqb \label{eqn-lqg-coord}
\wt h = h \circ \phi + Q\log |\phi'| ,\quad \op{for\,\,} Q = \frac{2}{\gamma} + \frac{\gamma}{2} .
\eqe 
As shown in~\cite{shef-kpz}, an LQG surface comes equipped with a natural volume measure $\mu_h$, which is a limit of regularized versions of $e^{\gamma h(z)} \, dz$ and is invariant under coordinate changes of the form~\eqref{eqn-lqg-coord}. That is, if $h$ and $\wt h$ are related as in~\eqref{eqn-lqg-coord} then a.s.\ $\mu_h (\phi(A) ) = \mu_{\wt h}(A)$ for each Borel set $A\subset \wt D$~\cite[Proposition 2.1]{shef-kpz}. Similarly, an LQG surface has a natural length measure $\nu_h$ which is defined on certain curves in $D\cup \bdy D$~\cite[Section 6]{shef-kpz}, including $\bdy D$ and SLE$_\kappa$-type curves for $\kappa = \gamma^2$~\cite{shef-kpz}. For $k\in\BB N$, one can define quantum surfaces with $k$ marked points $(D , h , x_1 , \dots , x_k)$ for $ x_1 , \dots , x_k  \in D\cup \bdy D$ by requiring that the map $\phi$ in~\eqref{eqn-lqg-coord} preserves the marked points. 

In this paper we will mostly be interested in a particular type of LQG surface called an \emph{$\alpha$-quantum cone} for $\alpha < Q$ (in fact we will almost always take $\alpha = \gamma$). This is an infinite-volume doubly-marked quantum surface $(\BB C , h , 0, \infty)$ introduced in~\cite[Section 4.3]{wedges}. The distribution $h$ is obtained from $h^0 -\alpha\log|\cdot|$, where $h^0$ is a whole-plane GFF, by ``zooming in" near the origin. 

The distribution $h$ is called the \emph{embedding} of the quantum surface into $(\BB C , 0, \infty)$ and is not uniquely determined by the equivalence class of $(\BB C , h , 0, \infty)$. Indeed, by~\eqref{eqn-lqg-coord} one obtains another embedding into $(\BB C , 0, \infty)$ by replacing $h$ with $h(\rho \cdot) +Q\log|\rho| $ for $\rho\in \BB C$. There is a natural choice of embedding for a quantum cone called a \emph{circle average embedding}, which is the embedding used in~\cite[Definition 4.9]{wedges} and is defined as follows. For $r > 0$, let $h_r(0)$ be the circle average of $h$ over $\bdy B_r(0)$ (as defined in~\cite[Section 3.1]{shef-kpz}). Then a circle average embedding is one for which 
$ \sup\left\{r > 0 \,:\, h_r(0) + Q \log r = 0 \right\} = 1$ 
and $h|_{\BB D}$ agrees in law with $(h^0 - \alpha \log|\cdot|)|_{\BB D}$, where $h^0$ is a whole-plane GFF with additive constant chosen so that its circle average over $\bdy\BB D$ is 0. We note that if $h$ is an arbitrary embedding into $(\BB C , 0 , \infty)$ of an $\alpha$-quantum cone then there exists a random $\rho \in \BB C$ such that $h(\rho\cdot) + Q\log|\rho|$ is a circle average embedding of $(\BB C ,h , 0, \infty)$. The modulus $|\rho|$ is a deterministic function of $h$ but $\op{arg} \rho$ is not.

\subsubsection{Space-filling SLE}
\label{sec-sle-prelim}

For $\kappa > 4$, a \emph{whole-plane space-filling SLE$_\kappa$ from $\infty$ to $\infty$} is a variant of SLE$_\kappa$ which fills all of $\BB C$, even in the case when $\kappa \in (4,8)$ (so that ordinary SLE$_\kappa$ does not fill space). This variant of SLE is defined in~\cite[Footnote 9]{wedges}, using chordal versions of space-filling SLE constructed in~\cite[Sections 1.2.3 and 4.3]{ig4}. 
We will not need many specific facts about space-filling SLE in this paper, since we will primarily study the structure graphs from the Brownian motion (i.e., peanosphere) perspective. So, we only give a brief description of this object here and refer the reader to the above cited papers for more details. 

In the case when $\kappa \geq 8$, whole-plane space-filling SLE$_\kappa$ is just a two-sided version of chordal SLE$_\kappa$.
In the case when $\kappa \in (4,8)$, whole-plane space-filling SLE$_\kappa$ is obtained by iteratively filling in the ``bubbles" disconnected from $\infty$ by a two-sided variant of chordal SLE$_\kappa$ with SLE$_\kappa$-type curves. 
If $z\in\BB C$ and $\tau_z$ is the (a.s.\ unique) time that $\eta$ hits $z$, then the interface between $\eta((-\infty , \tau_z])$ and $\eta([\tau_z,\infty))$ is the union of two coupled whole-plane SLE$_{16/\kappa}(2-16/\kappa)$ curves from $z$ to $\infty$ (\cite[Theorem 1.1]{ig4} and~\cite[Footnote 9]{wedges}) which intersect each other at points different from $z$ if and only if $\kappa \in (4,8)$. These curves comprise the left and right outer boundaries of $\eta((-\infty , \tau_z])$.  

\subsubsection{Peanosphere construction}
\label{sec-peanosphere}

Let $\gamma \in (0,2)$ and let $\kappa = 16/\gamma^2$.
Let $(\BB C , h , 0, \infty)$ be a $\gamma$-quantum cone and let $\eta$ be a whole-plane space-filling SLE$_\kappa$ independent from $h$. Suppose we parametrize $\eta$ by $\gamma$-quantum area with respect to $h$, so that $\eta(0) = 0$ and $\mu_h(\eta([s,t])) = t-s$ for each $s,t \in \BB R$ with $s<t$. For $t > 0$, let $L_t$ (resp. $R_t$) be the net change in the quantum length (with respect to $h$) of the left (resp.\ right) outer boundary of $\eta$ relative to time 0. 
In other words, $L_t$ is the quantum length of the set of points in the left outer boundary of $\eta((-\infty , t])$ which do not belong to the left outer boundary of $\eta((-\infty , 0])$ minus the quantum length of the set of points in the left outer boundary of $\eta((-\infty ,0])$ which do not belong to the left outer boundary of $\eta((-\infty , t])$, and similarly for $R_t$. Then by~\cite[Theorem 1.13]{wedges} there is a deterministic constant $\alpha > 0$ depending only on $\gamma$ such that $Z_t := (L_t ,R_t)$ evolves as a correlated two-sided two-dimensional Brownian motion with
\eqb \label{eqn-bm-cov}
\op{Var} L_t = \op{Var} R_t = \alpha |t |\quad \op{and} \quad \op{Cov}(L_t , R_t) =  -\alpha \cos\left(\frac{\pi \gamma^2}{4} \right) |t | \quad \forall t\in \BB R .
\eqe 

It is shown in\cite[Theorem 1.14]{wedges} that $Z$ a.s.\ determines $(h , \eta)$, modulo rotation, but not in an explicit way. However, one can explicitly describe many functionals of $(h, \eta)$ in terms of $Z$. For example, $\eta$ hits the left (resp.\ right) outer boundary of $\eta((-\infty ,0])$ and subsequently covers up a boundary arc of non-zero quantum length at time $t > 0$ if and only if $t$ is a running infimum for $L$ (resp. $R$) relative to time 0. Furthermore, if $t>0$ then the left and right outer boundaries of $\eta((-\infty ,0])$ intersect at $\eta(t)$ if and only if $t$ is a simultaneous running infimum for $L$ and $R$ relative to time 0. Such simultaneous running infima occur if and only if $Z$ is positively correlated~\cite{shimura-cone} which corresponds precisely to the case when $\kappa \in (4,8)$. 

Figure~\ref{fig-peanosphere} describes how to construct a topological space decorated by a space-filling curve from $Z$. This object can be thought of as the structure graph with $\ep = 0$. 

\begin{defn} \label{def-peanosphere}
A \emph{peanosphere} is a random pair $(M ,\eta)$ consisting of a topological space $M$ and a parametrized space-filling curve on $M$, constructed from a correlated two-dimensional Brownian motion in the manner described in Figure~\ref{fig-peanosphere}.
\end{defn}

It follows from the above discussion that a $\gamma$-quantum cone decorated by a whole-plane space-filling SLE$_\kappa$ is a canonical embedding of an infinite-volume peanosphere into $\BB C$.

\begin{figure}[ht!]
\begin{center}
\includegraphics[scale=0.75]{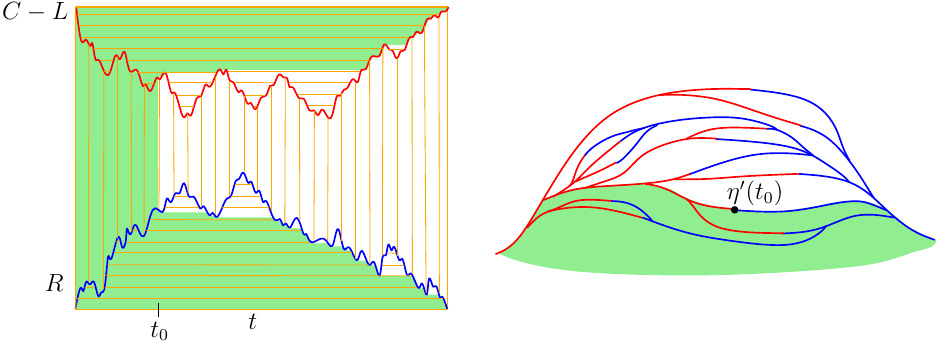}
\end{center}
\caption[Peanosphere construction]{\label{fig-peanosphere} 
The peanosphere construction of \cite{wedges} shows how to obtain a topological sphere by gluing together two correlated Brownian excursions $L,R \colon [0,1] \to [0,\infty)$ (a similar construction works when $L,R$ are two-sided Brownian motions, see \cite[Footnote~4]{wedges}). We draw horizontal lines which lie entirely above the graph of $C-L$ or entirely below the graph of $R$, in addition to vertical lines between the two graphs.  We choose $C>0$ so large that the graphs of $C-L$ and $R$ do not intersect. We then define an equivalence relation by identifying points which lie on the same horizontal or vertical line segment.  As explained in \cite{wedges}, it is possible to check using Moore's theorem \cite{moore} that the resulting object is a topological sphere decorated with a space-filling path $\eta $ where $\eta (t)$ for $t \in [0,1]$ is the equivalence class of $(t , R_t) $.  The pushforward of Lebesgue measure on $[0,1]$ under $\eta$ induces a measure $\mu$ on the sphere which is non-atomic and assigns positive mass to each open set. The curve $\eta $ is parameterized so that $\mu(\eta ([s,t])) = t-s$ for all $0 \leq s < t \leq 1$.  In \cite{wedges}, the resulting structure is referred to as a \emph{peanosphere} because the space-filling path $\eta $ is the peano curve between the continuum random trees~\cite{aldous-crt1,aldous-crt2,aldous-crt3} encoded by $L$ and $R$. As explained in Section~\ref{sec-peanosphere}, a $\gamma$-quantum cone decorated by an independent whole-plane space-filling SLE$_\kappa$ parametrized by quantum mass is an embedding of the infinite-volume analog of the peanosphere into $\BB C$. A finite-volume analogue of this statement appears as~\cite[Theorem 1.1]{sphere-constructions}. This figure together with a similar caption also appears in~\cite{ghm-kpz}.}
\end{figure}

\subsection{Basic properties of the structure graph}
\label{sec-structure-graph-properties}

Throughout this subsection, we fix $\gamma \in (0,2)$ and use the notation of Section~\ref{sec-structure-graph}, so in particular $(\BB C ,h , 0,\infty)$ is a $\gamma$-quantum cone, $\eta$ is a whole-plane space-filling SLE$_\kappa$ for $\kappa = 16/\gamma^2$ independent from $\gamma$ and parametrized by $\mu_h$-length, $Z  = (L,R)$ is the correlated Brownian motion from~\cite[Theorem 1.13]{wedges}, and $\{\mcl G^\ep\}_{\ep > 0}$ are the associated $\ep$-structure graphs, with vertex set $\mcl V(\mcl G^\ep ) = \ep\BB Z$. 

\subsubsection{Boundary lengths}
\label{sec-structure-graph-length}

For $a, b \in \BB R$ with $a < b$, the cell $\eta ([a,b])$ has four natural marked boundary arcs, corresponding to the set of points in $\eta ([a,b])$ which lie on either the left or right outer boundary of either $\eta ((-\infty , b])$ or $\eta'([a,\infty))$. We call these boundary arcs the lower left, lower right, upper left, and upper right boundary arcs. In terms of the peanosphere Brownian motion $Z = (L,R)$, the lower left (resp.\ right) boundary arc of $\eta([a,b])$ is the image under $\eta$ of the set of $t\in [a,b]$ such that $L$ (resp. $R$) attains a running infimum at time $t$ when running forward from time $a$. Similarly, the upper left (resp.\ right) boundary arc of $\eta([a,b])$ is the image of the set of $t\in [a,b]$ such that $L$ (resp. $R$) attains a running infimum at time $t$ when running backward from time $b$.

We will have occasion to consider four marked subsets of the vertex set of $\mcl G^\ep|_{(0,T]}$ which correspond to the four marked boundary arcs of $\eta([0,T])$ discussed above. We emphasize that these four subsets are not necessarily disjoint. 
See Figure~\ref{fig-graph-bdy} for an illustration. 

\begin{defn} \label{def-graph-bdy}
For $\ep > 0$ and an (open, closed, or half-open) interval $I   \subset \BB R$, we define the \emph{lower left boundary of $\mcl G^\ep|_I$} to be the set $\ul\bdy_\ep^L I$ of $x\in  \ep\BB Z\cap I$ such that the following is true. There is a $y \in \ep\BB Z\setminus I$ with $y < x$ such that the left boundaries of $\eta([x-\ep , x])$ and $\eta([y-\ep , y])$ share a non-trivial arc. We define the \emph{lower right boundary} $\ul \bdy^R_\ep I$ in the same manner with ``right" in place of ``left". We define the \emph{upper left} and \emph{upper right} boundaries $\ol\bdy^L_\ep I$ and $\ol\bdy^R_\ep I$ similarly but with ``$y > x$" in place of ``$y<x$". We define
\alb
\ul\bdy_\ep I := \ul\bdy_\ep^L I \cup \ul\bdy_\ep^R I  ,\quad \ol\bdy_\ep I  := \ol\bdy_\ep^L I  \cup \ol\bdy_\ep^R I  ,\quad \bdy_\ep I  := \ul\bdy_\ep I \cup \ol\bdy_\ep I  ,
\ale
so that $\bdy_\ep I $ is the set of all $x\in I\cap \ep\BB Z$ such that $x$ is adjacent to some element of $\ep\BB Z\setminus I$ in $\mcl G^\ep $.
\end{defn}

By~\eqref{eqn-inf-adjacency}, if $x\in \ep\BB Z\cap I$ then $x\in \ul\bdy_\ep^L I$ (resp. $x\in \ol\bdy_\ep^L I$) if and only if the Brownian motion $L$ (resp. its time reversal) attains a running infimum relative to the left (resp. right) endpoint of $I$ at time $x$. 
The same holds with ``$R$" in place of ``$L$".

\begin{figure}[ht!]
 \begin{center}
\includegraphics[scale=.75]{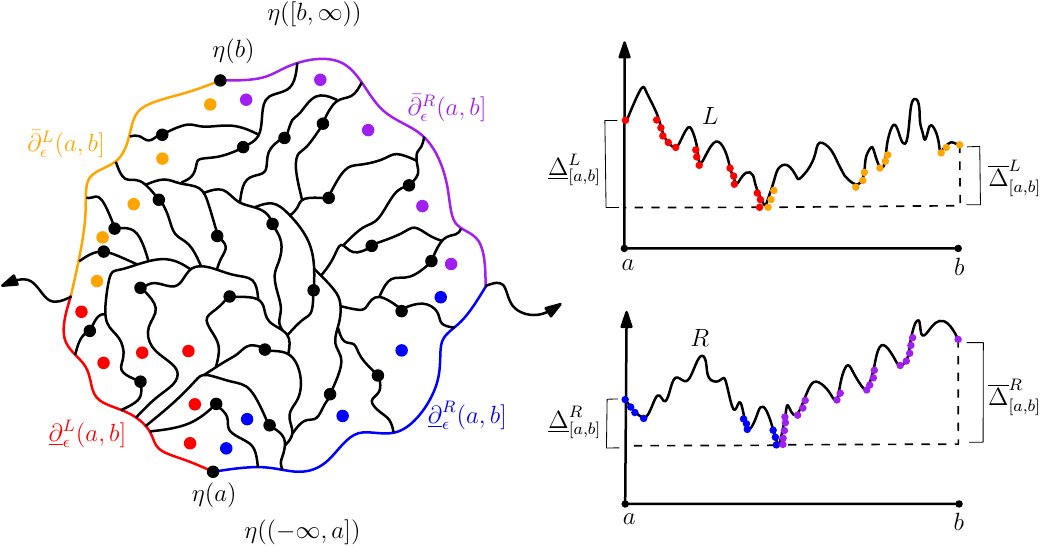} 
\caption[Four boundary arcs for structure graph segments]{
\textbf{Left:} The set $\eta([a,b])$ for $a,b \in \BB R$ with $a<b$, divided into cells of quantum mass $\ep$ to obtain the structure graph $\mcl G^\ep|_{(a,b]}$. The lower left, lower right, upper left, and upper right boundary arcs of $\eta([a,b])$ are shown in red, blue, orange, and purple, respectively. The cells corresponding to vertices in $\ul\bdy^L_\ep(a,b]  $, $\ul\bdy^R_\ep(a,b]$, $\ol\bdy^L_\ep(a,b] $, and $\ol\bdy^R_\ep(a,b]$ are indicated by red, blue, orange, and purple dots, respectively. 
The set $\ul \bdy_\ep(a,b] = \ul \bdy^L_\ep(a,b] \cup \ul \bdy_\ep^R(a,b]$ is the set of those vertices of $\mcl G^\ep|_{(a,b]}$ which are adjacent to vertices of $\mcl G^\ep|_{(-\infty,a]}$ and $\ol \bdy_\ep(a,b] = \ol \bdy^L_\ep(a,b] \cup \ol \bdy_\ep^R(a,b]$ is the set of those vertices of $\mcl G^\ep|_{(0,T]}$ which are adjacent to vertices of $\mcl G^\ep|_{[b,\infty)}$. 
\textbf{Right:} The corresponding Brownian motion picture. Times when $L$ (resp.\ its time reversal) attains a running infimum relative to time $a$ (resp.\ $b$) are indicated with red (resp.\ orange) dots. The boundary arc $\ul\bdy^L_\ep(a,b]  $ (resp.\ $\ul\bdy^R_\ep(a,b]$) consists of those $x\in\ep\BB Z$ for which the interval $[x-\ep , x]$ contains one of these times. Similar statements hold with $R$ in place of $L$.  
The quantum lengths of the four marked boundary arcs of $\eta (a,b]$ are given by the four coordinates of the boundary length vector $\Delta^Z_{[a,b]}$, whose values are also indicated in the figure. }\label{fig-graph-bdy}
\end{center}
\end{figure}
 
 The following is a convenient way of encoding the LQG lengths of the four marked boundary arcs of $\eta([a,b])$. 
  
\begin{defn} \label{def-interval-inf}
For an interval $I = [a,b] \subset \BB R$ and a path $X : I \rta \BB R$, we define the \emph{initial displacement} and the \emph{final displacement} of $X$ over $I$ by 
\eqbn
\ul\Delta^X_I  :=  X_a  - \inf_{s \in I} X_s  \quad \op{and} \quad \ol\Delta^X_I := X_b - \inf_{s \in I} X_s .
\eqen 
For the peanosphere Brownian motion $Z = (L,R)$, we define the \emph{boundary length vector} of $Z$ over $I$ by
\eqbn
\Delta_I^Z := \left(\ul\Delta^L_I ,\, \ol\Delta^L_I ;\, \ul\Delta^R_I ,\, \ol\Delta^R_I  \right) \in [0,\infty)^4 .
\eqen 
\end{defn}

The reason for the notation $\ul\Delta^L_I$, etc., in Definition~\ref{def-interval-inf} is that these quantities give the quantum lengths of the four marked boundary arcs of the cell $  \eta(I)$ introduced above (this is immediate from the definition of $Z$; see Section~\ref{sec-peanosphere}). 
The main reason for our interest in the objects of Definition~\ref{def-interval-inf} is the following lemma. 
  
\begin{lem} \label{prop-adjacency-length}
Let $I \subset \BB R$ be an interval (possibly infinite or all of $\BB R$) and $\ep  > 0$. The graph $\mcl G^\ep|_I$ is measurable with respect to the $\sigma$-algebra generated by the boundary length vectors (in the notation of Definition~\ref{def-interval-inf}) 
\eqb \label{eqn-vector-set}
\left\{ \Delta^Z_{[x -\ep , x ]} \,:\, x \in \ep \BB Z \cap I \right\}.
\eqe 
\end{lem}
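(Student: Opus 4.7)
The plan is to express each inequality appearing in the adjacency condition~\eqref{eqn-inf-adjacency} as a deterministic function of the boundary length vectors in~\eqref{eqn-vector-set}; since the vertex set $\ep\BB Z \cap I$ of $\mcl G^\ep|_I$ is deterministic, measurability of $\mcl G^\ep|_I$ follows at once. I would proceed in three short steps.

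First, from a single vector $\Delta^Z_{[x-\ep,x]}$ one reads off the increment $L_x - L_{x-\ep} = \ol\Delta^L_{[x-\ep,x]} - \ul\Delta^L_{[x-\ep,x]}$ and, writing $m_x^L := \inf_{s \in [x-\ep,x]} L_s$, the differences $L_x - m_x^L = \ol\Delta^L_{[x-\ep,x]}$ and $L_{x-\ep} - m_x^L = \ul\Delta^L_{[x-\ep,x]}$ (and analogously for $R$). Since $I$ is an interval containing $x_1$ and $x_2$, the chain $(x_1, x_2] \cap \ep\BB Z$ lies in $I \cap \ep\BB Z$, so by telescoping the single-cell increments one recovers $L_{x_2} - L_{x_1}$ and $R_{x_2} - R_{x_1}$ from~\eqref{eqn-vector-set} alone, for every pair $x_1 < x_2$ in $\ep\BB Z \cap I$. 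Similarly, $L_y - L_{x_1}$ is \eqref{eqn-vector-set}-measurable for each intermediate $y \in \ep\BB Z \cap (x_1, x_2]$.

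Second, I would rewrite the three infima in the $L$-line of~\eqref{eqn-inf-adjacency}: the outer infima are cell-wise, i.e.\ $\inf_{s \in [x_i - \ep, x_i]} L_s = m_{x_i}^L$ for $i = 1,2$, while if $x_2 - x_1 \geq 2\ep$ the middle interval decomposes into cells and
\eqbn
\inf_{s \in [x_1, x_2 - \ep]} L_s = \min_{y \in \ep\BB Z \cap [x_1 + \ep, x_2 - \ep]} m_y^L .
\eqen
In the degenerate case $x_2 - x_1 = \ep$, the interval $[x_1, x_2-\ep]$ collapses to $\{x_1\}$, so the middle infimum equals $L_{x_1}$; since both $m_{x_1}^L \leq L_{x_1}$ and $m_{x_2}^L \leq L_{x_1}$ hold automatically, the $L$-inequality is then satisfied trivially, consistent with the fact that consecutive cells always share a boundary arc.

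Third, I would subtract $L_{x_1}$ from both sides of the $L$-inequality. Each term becomes \eqref{eqn-vector-set}-measurable by the previous two steps: $m_{x_1}^L - L_{x_1} = -\ol\Delta^L_{[x_1-\ep,x_1]}$, $m_{x_2}^L - L_{x_1} = (L_{x_2} - L_{x_1}) - \ol\Delta^L_{[x_2-\ep,x_2]}$, and $m_y^L - L_{x_1} = (L_y - L_{x_1}) - \ol\Delta^L_{[y-\ep,y]}$ for each intermediate $y$. The same manipulation handles the $R$-line. Hence the edge status of each pair $\{x_1, x_2\}$ is \eqref{eqn-vector-set}-measurable, yielding the lemma. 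There is no genuinely hard step here: the argument is essentially bookkeeping, and the only point requiring a moment of care is the degenerate consecutive case $x_2 - x_1 = \ep$, where the middle interval fails to decompose into whole cells but where adjacency can be verified by hand.
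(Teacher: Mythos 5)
Your proof is correct and follows essentially the same route as the paper's: both telescope the per-cell increments $\ol\Delta^L_{[y-\ep,y]}-\ul\Delta^L_{[y-\ep,y]}$ to recover relative values of $L$ (and $R$) at the grid points, express the infima in~\eqref{eqn-inf-adjacency} as minima of cell-wise infima, and conclude that the adjacency of each pair is a deterministic function of the vectors in~\eqref{eqn-vector-set}. The paper packages this as an explicit formula for $\ul\Delta^L_{[x_1,x_2]}$ together with the equivalent condition~\eqref{eqn-displacement-cond}, but the content is the same bookkeeping, and your explicit handling of the case $x_2-x_1=\ep$ is a fine (if unnecessary-to-belabor) addition.
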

\begin{proof}
Let $\mcl H$ be the $\sigma$-algebra generated by the set~\eqref{eqn-vector-set}. 
We first observe that for each $x_1 , x_2 \in \ep \BB Z \cap I$ with $x_1 < x_2$, we have
\eqbn
\ul\Delta^L_{[x_1 , x_2]} =   - \inf_{y\in (x_1 , x_2]_{\ep\BB Z} } \left( - \ul \Delta^L_{[y-\ep , y]} + \sum_{z \in [x_1 + \ep , y-\ep]_{\ep\BB Z}} \left( \ol\Delta^L_{[z-\ep , z]} - \ul\Delta^L_{[z-\ep , z]} \right) \right)   
\eqen  
Consequently, $\ul\Delta^L_{[x_1 , x_2]} \in \mcl H$. Similarly, $\ol\Delta^L_{[x_1 , x_2]}$, $\ul\Delta^L_{[x_1 , x_2]}$, and $\ol\Delta^R_{[x_1,x_2]}$ are all $\mcl H$-measurable. On the other hand, the condition~\eqref{eqn-inf-adjacency} for $x_1 , x_2 \in \ep \BB Z \cap I$ is equivalent to the condition that either
\eqb \label{eqn-displacement-cond}
\ul\Delta^L_{[x_2-\ep , x_2] } > \ol\Delta^L_{[x_1 , x_2 - \ep]} \quad\op{and}\quad \ul\Delta^L_{[x_1  , x_2 - \ep]}  < \ol\Delta^L_{[x_1-\ep , x_1] }
\eqe 
or the same holds with $R$ in place of $L$. 
Thus the event that $x_1$ and $x_2$ are adjacent in $\mcl G^\ep$ is $\mcl H$-measurable, and we conclude.
\end{proof}

\subsubsection{Comparison of distances}
\label{sec-structure-graph-distance}

In this subsection we record some elementary observations which allow us to compare distances in the graphs $\mcl G^\ep|_{(0,T]}$ for different values of $\ep$ and $T$.

\begin{lem} \label{prop-dist-mono}
Suppose $n\in\BB N$, $x_0 , x_1 \in (0,1]_{2^{-n}\BB Z}$, $y_0 \in \{x_0 -2^{-n-1} , x_0\}$, and $y_1 \in \{x_1 -2^{-n-1} , x_1\}$. Then
\eqb \label{eqn-dist-mono}
\op{dist}\left( x_0 , x_1 ; \mcl G^{2^{-n}} |_{(0,1]} \right) \leq \op{dist}\left( y_0 , y_1 ; \mcl G^{2^{-n-1}} |_{(0,1]} \right)      \leq  2\op{dist}\left( x_0 , x_1 ; \mcl G^{2^{-n}} |_{(0,1]} \right)   .
\eqe 
In particular, $\op{diam}(\mcl G^{2^{-n-1}}|_{(0,1]} )$ stochastically dominates $\op{diam}(\mcl G^{2^{-n}}|_{(0,1]})$ and for any $s,t \in [0,1]$ with $s < t$, $\op{dist}(2^{-n-1} \lceil 2^{n+1} s \rceil , 2^{-n-1} \lfloor 2^{n+1} t \rfloor ; \mcl G^{2^{-n-1}}|_{(0,1]} )$ stochastically dominates $\op{dist}(2^{-n} \lceil 2^{n+1} s \rceil , 2^{-n } \lfloor 2^{n+1} t \rfloor ; \mcl G^{2^{-n }}|_{(0,1]} )$.
\end{lem}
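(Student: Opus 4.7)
My plan is to handle the two inequalities separately and then deduce the stochastic-domination assertions as immediate corollaries. The left inequality should come from a natural projection of the fine graph onto the coarse one, while the right inequality should come from a lifting procedure that turns each edge of a path at level $2^{-n}$ into at most two edges of a path at level $2^{-n-1}$.

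For the left inequality, I would introduce the projection $\phi : 2^{-n-1}\BB Z \to 2^{-n}\BB Z$ defined by $\phi(y) = y$ if $y \in 2^{-n}\BB Z$ and $\phi(y) = y + 2^{-n-1}$ otherwise; this sends each level-$2^{-n-1}$ sub-cell to the level-$2^{-n}$ parent cell containing it, so in particular $\phi(y_i) = x_i$. The key observation is that if $y, y' \in 2^{-n-1}\BB Z$ are adjacent in $\mcl G^{2^{-n-1}}$ with $\phi(y) \neq \phi(y')$, then $\phi(y)$ and $\phi(y')$ are adjacent in $\mcl G^{2^{-n}}$. Geometrically this is immediate, since a non-trivial shared boundary arc of two sub-cells is also a non-trivial shared boundary arc of their parent cells; alternatively one may verify it directly from~\eqref{eqn-inf-adjacency} by noting that replacing the length-$2^{-n-1}$ intervals there by the length-$2^{-n}$ parent intervals only decreases the left-hand side and increases the right-hand side of that condition. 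Applying $\phi$ vertex-by-vertex to any path in $\mcl G^{2^{-n-1}}|_{(0,1]}$ from $y_0$ to $y_1$ then produces a (possibly lazy) path in $\mcl G^{2^{-n}}|_{(0,1]}$ from $x_0$ to $x_1$ of no greater length, proving the left inequality.

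For the right inequality, I would take a shortest path $x_0 = u_0, u_1, \ldots, u_k = x_1$ in $\mcl G^{2^{-n}}|_{(0,1]}$ of length $k$ and lift it. Writing $S_i := \{u_i - 2^{-n-1}, u_i\}$ for the pair of sub-cells of $u_i$, the construction rests on two facts: (a) the two elements of each $S_i$ are consecutive in $2^{-n-1}\BB Z$ and are therefore adjacent in $\mcl G^{2^{-n-1}}$, and (b) for every edge $(u_i, u_{i+1})$ of the coarse path there exists at least one pair $(\alpha_i, \beta_{i+1}) \in S_i \times S_{i+1}$ adjacent in $\mcl G^{2^{-n-1}}$, since the non-trivial boundary arc witnessing the coarse adjacency must lie on the boundary of some pair of sub-cells. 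I would then build the refined path by starting at $y_0$, switching within $S_0$ to $\alpha_0$ if necessary, crossing to $\beta_1$, switching within $S_1$ to $\alpha_1$ if necessary, and so on, possibly ending with a final switch within $S_k$ from $\beta_k$ to $y_1$. Each of the $k$ crossings contributes one edge and each switch contributes at most one edge, yielding a path of length at most $2k$ after careful bookkeeping.

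The stochastic-domination assertions then follow from the left inequality by specializing $y_i = x_i$, which is valid because $2^{-n}\BB Z \subset 2^{-n-1}\BB Z$: this gives $\op{dist}(x_0, x_1; \mcl G^{2^{-n}}|_{(0,1]}) \leq \op{dist}(x_0, x_1; \mcl G^{2^{-n-1}}|_{(0,1]})$ as a deterministic pointwise inequality, from which both the stochastic domination of diameters and the pairwise domination for the specific endpoints in the statement are immediate. The step I expect to require the most care is the bookkeeping of switches in the lifting construction for the right inequality: the witnesses $\alpha_i, \beta_i$ must be chosen coherently across consecutive coarse edges (for instance by always taking $\alpha_i$ to be a sub-cell of $u_i$ containing a portion of the boundary arc realizing the coarse adjacency $(u_i, u_{i+1})$) so that the total number of crossings plus switches is controlled by $2k$.
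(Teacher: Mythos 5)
Your proposal is correct and follows essentially the same route as the paper: the left inequality via projecting each fine path vertex-by-vertex to the parent cells (using that a shared non-trivial boundary arc of sub-cells is also one of the parent cells), and the right inequality via lifting a coarse geodesic, alternating at most one within-cell switch and one crossing per coarse step. The only delicate point you flag — whether the endpoint bookkeeping gives $2k$ or $2k+1$ — is present in the paper's own construction as well and is immaterial to every application of the lemma.
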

\begin{proof}
Suppose $P : [0,|P|]_{\BB Z} \rta (0,1]_{2^{-n-1}\BB Z}$ is a path in $\mcl G^{2^{-n-1}}|_{(0,1]}$ with $P(1) = y_0$ and $P(|P|) = y_1$. For $i\in [0 , |P|]_{\BB Z}$, let $P'(i) \in (0,1]_{2^{-n}\BB Z}$ be defined so that $P(i) \in \{ P'(i) -2^{-n-1} , P'(i)\}$. By the definition of $\mcl G^{2^{-n-1}}$, $\eta([P(i-1)-2^{-n-1} , P(i-1)])$ and $\eta([P(i)-2^{-n-1} , P(i)])$ are either equal or share a non-trivial boundary arc for each $i\in [1,|P|]_{\BB Z}$, so also $\eta([P'(i-1)-2^{-n} , P'(i-1)])$ and $\eta([P'(i)-2^{-n} , P'(i)])$ are either equal or share a non-trivial boundary arc for each such $i$. 
It follows that $P'$ is a path in $\mcl G^{2^{-n}} |_{(0,1]}$ of length at most $|P|$ from $x_0$ to $x_1$, which gives the left inequality in~\eqref{eqn-dist-mono}.

For the right inequality, suppose $P : [0,|P|]_{\BB Z} \rta (0,1]_{2^{-n }\BB Z}$ is a path in $\mcl G^{2^{-n }}|_{(0,1]}$ with $P(0) = x_0$ and $P(|P|) = x_1$. Let $P'(0) = y_0$ and let $P'(1) \in\{x_0 -2^{-n-1} , x_0\}$ be chosen so that $\eta([P'(1)-2^{-n-1} , P'(1)])$ shares a non-trivial boundary arc with $\eta([P(1)-2^{-n} , P(1)])$. For $i\in [1, |P|]_{\BB Z}$, inductively let $P'(2i) \in \{P(i) - 2^{-n-1} , P(i)\}$ be chosen so that $\eta([P'(2i )-2^{-n-1} , P'(2i )])$ shares a non-trivial boundary arc with $\eta([P'(2i-1 )-2^{-n-1} , P'(2i-1)])$ and let $P'(2i+1)\in \{P(i) - 2^{-n-1} , P(i)\}$ be chosen so that $\eta([P'(2i+1 )-2^{-n-1} , P'(2i +1)])$ shares a non-trivial boundary arc with $\eta([P (i+1 )-2^{-n } , P'(i+1)])$ (unless $i= |P|$, in which case we take $P'(2i) = y_1$). Then $P'$ is a path in $\mcl G^{2^{-n-1}}|_{(0,1]}$ from $y_0$ to $y_1$ with length $2|P|$, and we obtain the right inequality in~\eqref{eqn-dist-mono}. 
\end{proof}

Lemma~\ref{prop-dist-mono} allows us to compare the expected diameters of graphs of the form $\mcl G^\ep|_{(0,T]}$ whenever the number of vertices $T/\ep$ is a non-negative integer power of 2. Our next lemma allows us to extend a weaker form of this comparison to the case when $T/\ep$ is not a power of 2. 

\begin{lem} \label{prop-dist-mono-no2}
Suppose $\ep > 0$ and $T >\ep$. Let $m := \lfloor \log_2 (T/\ep) \rfloor$ and write
$\lfloor T/\ep \rfloor = \sum_{j=1}^k 2^{n_j}$
where $n_1 , \dots , n_k \in [0,  m ]_{\BB Z}$ with $n_1< \dots < n_k$. Then we have the following two estimates: 
\begin{align} \label{eqn-dist-mono-no2}
\BB E\left[ \op{diam} \left( \mcl G^\ep|_{(0,T]}  \right) \right] 
&\leq \sum_{j=1}^k \BB E\left[ \op{diam} \left(\mcl G^{2^{-n_j}}|_{(0,1]} \right) \right]  
\leq m \BB E\left[ \op{diam} \left(\mcl G^{2^{-m }}|_{(0,1]} \right) \right];  \\
 \label{eqn-dist-mono-no2'}
\BB E\left[ \op{dist} \left(\ep , \ep \lfloor T/\ep \rfloor ;  \mcl G^\ep|_{(0,T]}  \right) \right]
&\leq \sum_{j=1}^k \BB E\left[ \op{dist} \left(2^{-n_j} , 1 ; \mcl G^{2^{-n_j}}|_{(0,1]} \right) \right]  
\leq  m \BB E\left[   \op{dist} \left(2^{-m} , 1 ;  \mcl G^{2^{-m }}|_{(0,1]} \right)    \right] .
\end{align} 
\end{lem}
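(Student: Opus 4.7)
The plan is to decompose the interval $(0,T]$ according to the binary expansion of $N := \lfloor T/\ep \rfloor$ and exploit stationarity and Brownian scaling of the peanosphere Brownian motion $Z$. Since $\mcl G^\ep|_{(0,T]} = \mcl G^\ep|_{(0,\ep N]}$ as graphs, and Lemma~\ref{prop-adjacency-length} together with Brownian scaling shows that the law of the unlabelled graph $\mcl G^\ep|_{(0,\ep N]}$ depends only on $N$, I will work with $\mcl G^1|_{(0,N]}$. Setting $T_0 := 0$ and $T_j := \sum_{i=1}^{j} 2^{n_i}$ for $j = 1,\dots,k$, so that $T_k = N$, I partition the vertex set $[1,N]_{\BB Z}$ into consecutive sub-blocks $(T_{j-1}, T_j]_{\BB Z}$ of length $2^{n_j}$. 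By stationarity of the increments of $Z$ together with Brownian scaling, each restricted graph $\mcl G^1|_{(T_{j-1}, T_j]}$ has the same law, as an unlabelled graph, as $\mcl G^{2^{-n_j}}|_{(0,1]}$; this follows immediately from the characterisation of $\mcl G^\ep$ via the boundary-length vectors in Lemma~\ref{prop-adjacency-length}.

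The key topological observation is that any two consecutive integer vertices $r, r+1$ are always adjacent in $\mcl G^1$: specialising~\eqref{eqn-inf-adjacency} to $x_2 = x_1 + 1$ with $\ep = 1$ makes the middle infimum degenerate to the single point $\{x_1\}$, so both displayed inequalities are automatic. This lets me stitch paths across consecutive sub-blocks using the bridge-edge $\{T_j, T_j+1\}$. Given any $x \in (T_{i-1}, T_i]_{\BB Z}$ and $y \in (T_{j-1}, T_j]_{\BB Z}$ with $i \leq j$, the candidate path goes from $x$ to $T_i$ inside $\mcl G^1|_{(T_{i-1}, T_i]}$, crosses to $T_i + 1$ via a bridge-edge, traverses $\mcl G^1|_{(T_i, T_{i+1}]}$ from $T_i + 1$ to $T_{i+1}$, and so on, ending with a sub-path in $\mcl G^1|_{(T_{j-1}, T_j]}$ from $T_{j-1}+1$ to $y$. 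Its total length is at most $\sum_{\ell=1}^k \op{diam}(\mcl G^1|_{(T_{\ell-1}, T_\ell]})$ plus the at most $k-1$ bridge-edges, and the latter can be absorbed into the diameter terms because every non-trivial sub-block has diameter at least $1$. Taking expectations and applying the law equivalence from the first paragraph gives the first inequality in~\eqref{eqn-dist-mono-no2}; specialising to $x = 1, y = N$ yields the first inequality in~\eqref{eqn-dist-mono-no2'}.

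For the second inequalities in~\eqref{eqn-dist-mono-no2} and~\eqref{eqn-dist-mono-no2'}, the plan is to iterate Lemma~\ref{prop-dist-mono}. Its left-hand inequality, applied to $(x_0,x_1) = (2^{-n},1)$ with $(y_0, y_1) = (2^{-n-1}, 1)$, gives the deterministic bounds $\op{diam}(\mcl G^{2^{-n}}|_{(0,1]}) \leq \op{diam}(\mcl G^{2^{-n-1}}|_{(0,1]})$ and $\op{dist}(2^{-n}, 1; \mcl G^{2^{-n}}|_{(0,1]}) \leq \op{dist}(2^{-n-1}, 1; \mcl G^{2^{-n-1}}|_{(0,1]})$ in the natural coupling. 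Since $n_j \leq m$ for each $j$, iterating shows that each summand on the right of~\eqref{eqn-dist-mono-no2} or~\eqref{eqn-dist-mono-no2'} is stochastically dominated by its $n = m$ counterpart, and the number of summands is bounded by the number of binary digits of $N$. The main obstacle is the careful bookkeeping of the bridge-edges and the endpoint conventions when passing between sub-blocks; these produce only small additive corrections that are harmless because any sub-block of length $\geq 2$ has diameter at least $1$, so the bridge-edges are subsumed by the corresponding diameter summands.
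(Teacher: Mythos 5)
Your argument is correct and follows essentially the same route as the paper: reduce to $\ep=1$ by scaling, split $(0,\lfloor T/\ep\rfloor]_{\BB Z}$ into consecutive blocks of sizes $2^{n_1},\dots,2^{n_k}$, identify each block's law with that of $\mcl G^{2^{-n_j}}|_{(0,1]}$ by translation and scale invariance, and deduce the second inequalities from the monotonicity in Lemma~\ref{prop-dist-mono}. You are in fact slightly more explicit than the paper about the bridge edges joining consecutive blocks (the paper's proof silently drops this additive $O(k)$ correction, which is harmless for every subsequent application of the lemma).
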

\begin{proof}
By scaling we can assume without loss of generality that $\ep = 1$. With $n_1,\dots , n_k$ as in the statement of the lemma, we can write $(0,T]_{\BB Z} = \bigsqcup_{j=1}^k I_j$, where $I_1,\dots , I_j$ are disjoint and each $I_j$ is the intersection of $\BB Z$ with some interval and satisfies $\# I_j = 2^{n_j}$. By translation and scale invariance, 
\alb
\BB E\left[ \op{diam} \left( \mcl G^1|_{(0,T]}  \right) \right] \leq \sum_{j=1}^k \BB E\left[ \op{diam} \left( \mcl G^1|_{I_j}  \right) \right] = \sum_{j=1}^k \BB E\left[ \op{diam} \left(\mcl G^{2^{-n_j}}|_{(0,1]} \right) \right] .
\ale
This proves the first inequality in~\eqref{eqn-dist-mono-no2}. The second inequality follows from the stochastic domination statement for diameters in Lemma~\ref{prop-dist-mono}. The estimate~\eqref{eqn-dist-mono-no2'} is proven similarly. 
\end{proof}

\subsection{Brownian motion estimates}
\label{sec-bm-estimate}

In this subsection we record some miscellaneous elementary estimates for Brownian motion which we will need several times in the remainder of this article. Throughout, we fix $\gamma \in (0,2)$ and we let $Z = (L,R)$ be a correlated two-dimensional Brownian motion with variances $\alpha $ and covariance $-\alpha \cos(\pi \gamma^2/4)$, with $\alpha = \alpha(\gamma)$ as in~\eqref{eqn-bm-cov}. We start with a basic continuity estimate. 

\begin{lem} \label{prop-bm-cont}
There are constants $a , c_0 , c_1 > 0$, depending only on $\gamma$, such that the following is true.  Let
\eqbn
F_n := \left\{\sup_{s_1 ,s_2 \in [t_1,t_2]} |Z_{s_1} - Z_{s_2}| \leq n (t_2 - t_1)^{1/2} , \: \forall t_1, t_2 \in [0,1] \, \op{with} \, t_2 - t_1 \geq 2^{- a n^2 } \right\}, \qquad  \forall n\in  \BB N .
\eqen
Then $\BB P\left[ F_n^c \right] \leq c_0 e^{-c_1 n^2}$.
\end{lem}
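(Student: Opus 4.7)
The plan is to prove this via a standard dyadic chaining argument, using Gaussian tail bounds on the oscillation of $Z$ over dyadic sub-intervals together with a union bound, with $a$ chosen small enough that the union-bound cost is dominated by the Gaussian tail.

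First I would record the one-interval estimate: for each dyadic interval $I \subset [0,1]$ of length $\ell$, the oscillation $\op{osc}(Z; I) := \sup_{s_1, s_2 \in I}|Z_{s_1} - Z_{s_2}|$ satisfies a Gaussian tail bound of the form
\eqbn
\BB P\!\left[\op{osc}(Z;I) \geq x \ell^{1/2}\right] \leq C_0 e^{-c_0 x^2}, \qquad x \geq 0,
\eqen
for constants $C_0, c_0$ depending only on $\gamma$ (equivalently on $\alpha$). This follows by bounding the oscillation of each coordinate separately by twice its sup on $I$, applying the reflection principle to each coordinate Brownian motion (which has variance $\alpha$), and a trivial union bound over the two coordinates.

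Next, for each integer $k \in [0, \lfloor a n^2\rfloor]$ (with $a > 0$ to be chosen), partition $[0,1]$ into the $2^k$ dyadic intervals $I_{k,j} = [(j-1)2^{-k}, j 2^{-k}]$, $j = 1, \ldots, 2^k$. Apply the one-interval estimate with $x = \beta n$ for a fixed constant $\beta > 0$ to be tuned, and take a union bound over all $k \leq a n^2$ and all $j \leq 2^k$:
\eqbn
\BB P\!\left[\exists k \leq a n^2, j \leq 2^k : \op{osc}(Z; I_{k,j}) > \beta n \, 2^{-k/2}\right]
\leq \sum_{k=0}^{\lfloor a n^2\rfloor} 2^k \cdot C_0 e^{-c_0 \beta^2 n^2}
\leq 2 C_0 \, 2^{a n^2} e^{-c_0 \beta^2 n^2}.
\eqen
Choosing $a$ small enough so that $a \log 2 < c_0 \beta^2$ makes this bound of the form $c_0' e^{-c_1 n^2}$ for some $c_1 > 0$.

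On the complementary event call it $E_n$, I would run the standard chaining: given $t_1 < t_2$ in $[0,1]$ with $t_2 - t_1 \geq 2^{-an^2}$, let $k_0 = \lceil \log_2 1/(t_2-t_1)\rceil$, so $k_0 \leq a n^2$ and $2^{-k_0} \leq t_2 - t_1$. The interval $[t_1, t_2]$ is covered by a bounded number (at most, say, 3) of adjacent dyadic intervals at scale $k_0$, and each of those endpoints can be connected to any point in the interval by a nested sequence of dyadic intervals at scales $k_0, k_0 + 1, \ldots$. Summing the oscillation bounds on $E_n$ gives
\eqbn
\sup_{s_1, s_2 \in [t_1,t_2]} |Z_{s_1} - Z_{s_2}| \leq C \sum_{k=k_0}^{\infty} \beta n \, 2^{-k/2} \leq C' \beta n \, 2^{-k_0/2} \leq C' \beta n (t_2-t_1)^{1/2},
\eqen
for a universal $C'$. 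Finally, choosing $\beta$ so that $C'\beta \leq 1$ (and shrinking $a$ accordingly to keep the exponent inequality $a\log 2 < c_0 \beta^2$) yields $E_n \subset F_n$, which completes the proof with the required exponential bound $\BB P[F_n^c] \leq c_0' e^{-c_1 n^2}$.

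The only real subtlety is coordinating the constants: one must pick $\beta$ small enough to absorb the chaining factor $C'$ into the bound $n(t_2-t_1)^{1/2}$, and only then pick $a$ small enough that $a\log 2 < c_0\beta^2$. I do not expect any genuine obstacle beyond this bookkeeping, since both the one-interval Gaussian bound and the dyadic chaining are entirely standard; the estimate is essentially Lévy's modulus of continuity with explicit quantitative control on the scale $2^{-an^2}$.
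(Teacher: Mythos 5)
Your proof is correct and follows essentially the same route as the paper, which simply cites "the Gaussian tail bound, the reflection principle, and a union bound over all dyadic intervals... for $m \in [0, an^2]_{\BB Z}$"; your write-up fills in exactly those details. The only cosmetic remark is that the infinite chaining sum is unnecessary: since the oscillation bound at scale $k_0$ already controls all pairs of points within each dyadic interval, covering $[t_1,t_2]$ by at most three adjacent scale-$k_0$ intervals and applying the triangle inequality suffices.
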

\begin{proof}
This is a straightforward consequence of the Gaussian tail bound, the reflection principle, and a union bound over all dyadic intervals of the form $[(k-1) 2^{-m} , 2^{-m}]_{\BB Z}$ for $m \in [0, a n^2]_{\BB Z}$ and $k \in [1,2^m]_{\BB Z}$. 
\end{proof}

Next, we extract an estimate from~\cite{shimura-cone} for the probability of an ``approximate $\pi/2$-cone time" for $Z$.  In the next lemma, for a set $A\subset\BB C$, we  let $B_\ep(A)\subset\BB C$ be the set of points with Euclidean distance $<\ep$ to $A$. 
  
\begin{lem} \label{prop-cone-prob}
Let $T > 0$ and $\delta_L , \delta_R > 0$ and set $\ol\delta_L := \delta_L \wedge T^{1/2}$ and $\ol \delta_R := \delta_R \wedge T^{1/2}$. Then
\eqb \label{eqn-cone-prob}
\BB P\left[\inf_{t \in [0,T]} L_t \geq -\delta_L ,\, \inf_{t \in [0,T]} R_t \geq -\delta_R \right] \asymp T^{-2/\gamma^2} ( \ol\delta_L \wedge \ol\delta_R ) (\ol\delta_L \vee \ol\delta_R )^{4/\gamma^2-1}
\eqe 
with implicit constant depending only on $\gamma$.  Furthermore, suppose that $A$ is the image of a smooth path $[0,1] \rta [0,\infty)^2$ starting from 0 and ending at $z\in [0,\infty)^2$, and let $\ep > 0$. Then 
\eqb \label{eqn-cone-prob-pos}
\BB P\left[ Z([0,T]) \subset B_\ep(A) ,\, Z(T) \in B_\ep(z) \,|\,  \inf_{t \in [0,T]} L_t \geq -\delta_L ,\, \inf_{t \in [0,T]} R_t \geq -\delta_R     \right] \succeq 1  
\eqe 
with the implicit constant depending on $A$, $\ep$, $T$, and $\gamma$ but not $\delta_L$ or $\delta_R$.
\end{lem}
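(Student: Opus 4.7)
The plan is to reduce both estimates to classical wedge-survival computations for planar Brownian motion via a linear whitening transformation of $Z$ combined with Brownian scaling. The core input for~\eqref{eqn-cone-prob} is the Shimura cone-probability estimate~\cite{shimura-cone}; for~\eqref{eqn-cone-prob-pos}, after~\eqref{eqn-cone-prob} has been established, a Markov decomposition at an intermediate time reduces the positivity assertion to a Gaussian tube estimate along the bulk of $A$ together with an apex-tube comparison near the origin.

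For~\eqref{eqn-cone-prob}, Brownian scaling $(Z_{Tt}/\sqrt T)_{t\in[0,1]} \eqD (Z_t)_{t\in[0,1]}$ reduces the general statement to the case $T=1$: the constraints rescale as $\delta_L,\delta_R \mapsto \delta_L/\sqrt T,\delta_R/\sqrt T$, and since $\ol\delta_L/\sqrt T = (\delta_L/\sqrt T)\wedge 1$, undoing the rescaling restores the prefactor $T^{-2/\gamma^2}$. In the $T=1$ case, a linear whitening transformation turns the event into the event that a standard planar Brownian motion $\wt Z$, started at an explicit point $w_0$, stays in a wedge $W$ of opening angle $\theta = \pi\gamma^2/4$ up to time $1$. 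The angle $\theta$ comes from the identity $\cos\theta = -\rho$ with $\rho = -\cos(\pi\gamma^2/4)$ the correlation of $Z$; and an elementary calculation shows that when $\delta_L,\delta_R \leq 1$, the starting radius $r = |w_0|$ satisfies $r \asymp \delta_L \vee \delta_R$ and the angular distance $\phi$ from $w_0$ to the nearest side of $W$ satisfies $\phi \asymp (\delta_L \wedge \delta_R)/(\delta_L \vee \delta_R)$. Shimura's wedge-survival estimate then gives a survival probability comparable to $\sin(\pi\phi/\theta)\, r^{\pi/\theta}$; substituting $\pi/\theta = 4/\gamma^2$ and $\sin x \asymp x$ for small $x$ yields~\eqref{eqn-cone-prob}. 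The case $\delta_L \vee \delta_R > 1$ reduces to a one-sided half-plane reflection estimate in the direction of the closer axis, and the truncation by $T^{1/2}$ in the definitions of $\ol\delta_L, \ol\delta_R$ is precisely the bookkeeping needed to interpolate between these two regimes.

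For~\eqref{eqn-cone-prob-pos}, it suffices to prove the stronger unconditional bound
\[
\BB P\left[Z([0,T])\subset B_\ep(A),\,Z(T)\in B_\ep(z)\right] \succeq \BB P\left[\inf_{[0,T]}L\geq-\delta_L,\,\inf_{[0,T]}R\geq-\delta_R\right]
\]
with implicit constant depending on $A,\ep,T,\gamma$ but not on $\delta_L,\delta_R$; for small $\delta_L,\delta_R$ we implicitly intersect the path event with the cone event, which is what~\eqref{eqn-cone-prob-pos} asks. Parametrize $A$ by $[0,T]$, fix $\tau\in(0,T)$ such that $A(\tau)$ lies at macroscopic Euclidean distance $r_0>0$ from both coordinate axes, and apply the Markov property at $\tau$. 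On $[\tau,T]$ the curve $A|_{[\tau,T]}$ lies at distance $\geq r_0/2$ from the axes, so for any starting point $y\in B_{\ep/4}(A(\tau))$ a standard Gaussian tube estimate around a smooth curve gives a positive constant lower bound, depending only on $A,\ep,T,\gamma$, on the unconditioned probability that $(Z_s-y)_{s\in[\tau,T]}$ follows $A|_{[\tau,T]}-A(\tau)$ within $\ep$ and ends in $B_\ep(z)-y$. It remains to show that with probability at least a constant times the cone-event probability, $Z$ stays in $B_\ep(A|_{[0,\tau]})\cap\{L\geq-\delta_L,\,R\geq-\delta_R\}$ and lies in $B_{\ep/4}(A(\tau))$ at time $\tau$.

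The main obstacle is this last apex-tube comparison: when $\delta_L,\delta_R$ are small, most of the cone event's mass concentrates on paths that linger near the origin, so one must verify that a uniformly positive fraction of these paths actually follow $A$ near the apex. My plan is to split $[0,\tau]$ at $t_0 \asymp (\delta_L\vee\delta_R)^2 \wedge (\tau/2)$. Using the explicit form of the Shimura asymptotics (equivalently, the Doob $h$-transform with harmonic function $h(r,\phi)=r^{\pi/\theta}\sin(\pi\phi/\theta)$), conditionally on the cone event $Z_{t_0}$ has entered a fixed compact subset of the open wedge with probability bounded below by a constant. After $t_0$, a standard Anderson-type tube inequality then steers $Z$ along $A|_{[t_0,\tau]}$ into $B_{\ep/4}(A(\tau))$ at constant multiplicative cost. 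For the initial segment $[0,t_0]$, the scaling invariance of the wedge-conditioned process (which rescales by $\delta_L\vee\delta_R$) reduces the remaining tube probability to a scale-invariant quantity depending only on the near-origin behavior of $A$ together with $\ep$ and $\gamma$, and this quantity is bounded below by a positive constant.
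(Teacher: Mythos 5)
Your proof of \eqref{eqn-cone-prob} is exactly the paper's: whiten $Z$ into an uncorrelated Brownian motion so that the quadrant constraint becomes a wedge of opening angle $\theta = \pi\gamma^2/4$, apply Shimura's cone estimate, and restore general $T$ by Brownian scaling; the identifications $r \asymp \delta_L \vee \delta_R$ and $\sin(\pi\phi/\theta) \asymp (\delta_L\wedge\delta_R)/(\delta_L\vee\delta_R)$ are precisely the computation the paper performs, and $\pi/\theta = 4/\gamma^2$ gives the right exponents. For \eqref{eqn-cone-prob-pos} your strategy also matches the paper's, which invokes Shimura's Theorem 2 (convergence of the cone-conditioned process started near the apex) together with tube estimates for the conditioned and unconditioned processes. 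One claim in your apex step is false as written and needs repair: with $t_0 \asymp (\delta_L\vee\delta_R)^2$, the cone-conditioned process at time $t_0$ is at distance of order $\delta_L\vee\delta_R$ from the apex, so it cannot lie in a \emph{fixed} (i.e.\ $\delta$-independent) compact subset of the open wedge with uniformly positive probability. The correct statement is that after rescaling space by $\delta_L\vee\delta_R$ and time by $(\delta_L\vee\delta_R)^2$ it lies in a fixed compact subset of the open wedge; and from such a point the remaining cone constraint on $[t_0,T]$ still has probability only of order $\left((\delta_L\vee\delta_R)/\sqrt T\right)^{4/\gamma^2}$, so the subsequent steering along $A$ cannot be done with an unconditioned Anderson-type bound ``at constant multiplicative cost'' --- it must be carried out for the $h$-transformed (cone-conditioned) process, whose drift away from the wedge boundary is what makes the tube event have uniformly positive conditional probability. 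With that correction, which is exactly what Shimura's Theorem 2 supplies, your argument goes through and coincides with the paper's.
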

\begin{proof}
The estimate~\eqref{eqn-cone-prob} follows from~\cite[Equation (4.3)]{shimura-cone} (applied with $z = \delta_L + i \delta_R$) after applying a linear transformation to $Z$ to get an uncorrelated Brownian motion (c.f.\ the proof of~\cite[Lemma 2.2]{gms-burger-cone}). The estimate~\eqref{eqn-cone-prob-pos} follows from~\cite[Theorem 2]{shimura-cone} together with the analogous statements for unconditioned Brownian motion and for Brownian motion conditioned to stay in a cone. 
\end{proof}

We also have an estimate for the cardinality of the boundary of the graph $\mcl G^\ep|_{(0,T]}$, as defined in Definition~\ref{def-graph-bdy}, which is really just an estimate for Brownian motion.

\begin{lem} \label{prop-bdy-count}
For $T >0$ and $0<\ep < T$, we have (in the notation of Definition~\ref{def-graph-bdy})
$\BB E\left[ \# \bdy_\ep (0,T] \right] \asymp  T^{1/2} \ep^{-1/2}$
with implicit constant depending only on $\gamma$.
\end{lem}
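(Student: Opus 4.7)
The plan is to reduce everything to a one-dimensional Brownian motion computation. By Definition~\ref{def-graph-bdy}, $\bdy_\ep(0,T]$ is the union of four sets $\ul\bdy^L_\ep(0,T]$, $\ul\bdy^R_\ep(0,T]$, $\ol\bdy^L_\ep(0,T]$, $\ol\bdy^R_\ep(0,T]$, so the upper bound will follow from bounding each one and the lower bound from bounding one of them from below. By the symmetry of the covariance structure~\eqref{eqn-bm-cov} under swapping the two coordinates, and by the time-reversal symmetry of Brownian motion, all four of these sets have the same expected cardinality; hence it suffices to compute $\BB E[\#\ul\bdy^L_\ep(0,T]]$.

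By the remark following Definition~\ref{def-graph-bdy}, $x \in \ul\bdy^L_\ep(0,T]$ iff $L$ attains a new running infimum (relative to time $0$) during $[x-\ep,x]$, i.e., $\inf_{s\in[0,x]} L_s < \inf_{s\in[0,x-\ep]} L_s$ (this is automatic when $x=\ep$). For fixed $x > \ep$, I would apply the time-reversal $\wt L_s := L_{x-s} - L_x$, which is again a Brownian motion with variance $\alpha$ on $[0,x]$; the event in question translates into the event that $\wt L$ attains its maximum on $[0,x]$ at a time lying in $[0,\ep)$. By Brownian scaling and the classical arcsine law for the location of the maximum, this event has probability
\eqbn
\int_0^\ep \frac{dt}{\pi\sqrt{t(x-t)}} \asymp \sqrt{\ep/x} \wedge 1.
\eqen

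Summing these probabilities over $x \in (0,T]_{\ep\BB Z}$ (there are $\lfloor T/\ep\rfloor$ terms) yields
\eqbn
\BB E[\#\ul\bdy^L_\ep(0,T]] \asymp \sum_{k=1}^{\lfloor T/\ep\rfloor} \left(k^{-1/2}\wedge 1\right) \asymp (T/\ep)^{1/2},
\eqen
which combined with the symmetry step above gives both the upper and lower bounds.

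There is no real obstacle here: the statement is essentially a one-dimensional Brownian motion fact, and the only mild care required is in justifying the symmetry reduction (observing that $L$ and $R$ each have the same marginal law as $\sqrt\alpha$ times a standard Brownian motion, so the cross-correlation is irrelevant for these single-coordinate counts) and in recognizing the relevant probability as an instance of the arcsine law.
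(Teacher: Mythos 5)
Your proof is correct and follows essentially the same route as the paper: reduce to a single boundary arc by symmetry, show that the probability that a given cell lies on it is $\asymp (\ep/x)^{1/2}\wedge 1$ (the paper works with $\ol\bdy^L_\ep$ and gets $\asymp \ep^{1/2}(T-x)^{-1/2}$ via independence of the two infima plus the reflection principle, while you work with $\ul\bdy^L_\ep$ and invoke the arcsine law after time reversal — an equivalent one-dimensional computation), and sum over the $\lfloor T/\ep\rfloor$ cells. The only cosmetic slip is that your time-reversed event concerns the location of the \emph{minimum} of $\wt L$, not the maximum, but by symmetry of Brownian motion this does not affect the probability.
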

\begin{proof} 
By symmetry, it suffices to show that $\BB E\left[ \# \ol\bdy^L_\ep (0,T] \right] \asymp T^{1/2} \ep^{-1/2}  $. If $x \in (0,T-\ep]_{\ep\BB Z}$, then $x \in \ol\bdy^L_\ep(0,T]$ if and only if 
\eqb \label{eqn-bdy-count-condition}
\inf_{t\in [x , T]} (L_t - L_x) >   \inf_{t\in [x-\ep , x]} (L_t  - L_x) .
\eqe 
The random variables on the left and right sides of~\eqref{eqn-bdy-count-condition} are independent. By the reflection principle, the random variable on the right has the law of $-1$ times the modulus of a centered Gaussian random variable with variance $\alpha \ep$. 
For each $r  > 0$, 
\eqbn
\BB P\left[ \inf_{t\in [x , T]} (L_t - L_x) > -r \right] \asymp (T-x)^{-1/2} \left( r \wedge (T-x)^{1/2} \right)  .
\eqen
By combining these observations, we find that
$\BB P\left[ x \in \ol\bdy^L_\ep(0,T] \right] \asymp (T-x)^{-1/2} \ep^{1/2}$ for all  $x \in (0,T-\ep]_{\ep\BB Z}$.
Clearly, $\BB P\left[ \ep \lfloor T/\ep \rfloor \in \ol\bdy^L_\ep(0,T] \right] = 1$. 
We conclude by summing over all $x \in (0, T]_{\ep\BB Z}$.
\end{proof}

\section{Quantitative distance bounds} 
\label{sec-exponent-bound}

In this section we will use space-filling SLE and LQG to prove estimates which will eventually lead to the bounds in Theorem~\ref{thm-ball-scaling} as well as the lower bound for $\chi$ in Theorem~\ref{thm-chi-exists}.  
This is the only section of the paper in which we directly use non-trivial facts about SLE and LQG; the rest of our arguments can be formulated solely in terms of Brownian motion. Some of the more standard LQG estimates used in this section are proven in Appendix~\ref{sec-technical}.  

\subsection{Upper bound for the cardinality of a ball}
\label{sec-exponent-lower}

In this subsection we will prove the following lower bounds for distances in the LQG structure graph, which will eventually lead to the upper bound in Theorem~\ref{thm-ball-scaling} and the lower bound for $\chi$ in Theorem~\ref{thm-chi-exists}. 
 
\begin{prop} \label{prop-diam-lower}
Let $\xi_- = (2 + \gamma^2/2 + \sqrt2 \gamma)^{-1}$ be as in~\eqref{eqn-chi-bound}.   
For each $u \in (0,\xi_-)$, there exists $c = c(u,\gamma) > 0$ such that
\eqb \label{eqn-diam-lower-max}
 \BB P\left[  \op{dist}\left( 0 , \bdy_\ep(-1,1] ; \mcl G^\ep \right) \geq \ep^{-\xi_- +u} \right] \geq 1- O_\ep(\ep^c) ,
\eqe  
with $\bdy_\ep$ as in Definition~\ref{def-graph-bdy}. Furthermore, it holds with probability tending to 1 as $\ep\rta 0$ that
\eqb \label{eqn-diam-lower-01}
\op{dist}\left(\ep , 1 ; \mcl G^\ep|_{(0,1]} \right) \geq \ep^{-\xi_- \vee (1-2/\gamma^2) + o_\ep(1)}   .
\eqe  
In particular,
\eqb \label{eqn-diam-lower-mean}
\BB E\left[ \op{diam}\left( \mcl G^\ep|_{(0,1]} \right) \right] \geq \ep^{-\xi_- \vee (1-2/\gamma^2) + o_\ep(1) } .
\eqe 
\end{prop}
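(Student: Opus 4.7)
The strategy is to reduce all three bounds to a single Euclidean-geometric estimate on the maximum Euclidean diameter of a cell. Write $M_\ep := \max\{\op{diam}(\eta([x-\ep,x])) : x \in (-1,1]_{\ep\BB Z}\}$ for this maximum, and observe that any path $P$ in $\mcl G^\ep$ from $0$ to $\bdy_\ep(-1,1]$ traces out a connected union $\bigcup_{i=0}^{|P|} \eta([P(i)-\ep,P(i)])$ whose Euclidean diameter is at most $(|P|+1) M_\ep$ but which contains both $\eta(0) = 0$ and a point of $\partial \eta([-1,1])$. Standard LQG regularity for the circle-average embedding of the $\gamma$-quantum cone (of the type proved in Appendix~\ref{sec-technical}) shows that with high probability $0$ lies at Euclidean distance at least some $c_0 = c_0(\gamma) > 0$ from $\partial \eta([-1,1])$; hence $|P| \geq c_0/M_\ep$, and \eqref{eqn-diam-lower-max} reduces to the polynomial tail bound $\BB P[M_\ep > \ep^{\xi_- - u/2}] = O(\ep^{c})$ for some $c = c(u,\gamma) > 0$.

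This cell-diameter tail bound is the main obstacle of the section, and I would prove it via a KPZ/multifractal union bound. Set $r := \ep^{\xi_- - u/2}$ and cover $B_2(0)$ by $O(r^{-2})$ Euclidean balls of radius $r$. If some cell $\eta([x-\ep,x])$ meeting $\eta([-1,1])$ has Euclidean diameter exceeding $r$, then one can locate a Euclidean ball $B = B_r(z)$ centered on a polynomial-density grid such that the cell covers most of $B$, forcing $\mu_h(B) \leq \ep$. Using the approximation $\mu_h(B_r(z)) \asymp r^{\gamma Q} e^{\gamma h_r(z)}$ (with $Q = 2/\gamma + \gamma/2$) together with a Gaussian tail bound on the circle average $h_r(z)$, the probability that a fixed ball has $\mu_h$-mass below $\ep$ decays like $\exp\!\bigl(-(\text{const})(\log(1/r))^{-1}(\log(1/\ep) - \gamma Q \log(1/r))^2\bigr)$. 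The critical scale at which this tail still beats the $r^{-2}$ number of balls in the cover corresponds precisely to a $-\sqrt{2}$-thick GFF point, which is why the target exponent is $1/\xi_- = 2 + \gamma^2/2 + \sqrt{2}\gamma$. A union bound then gives the desired polynomial estimate on $M_\ep$.

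Given the cell-diameter bound, \eqref{eqn-diam-lower-01} splits into two contributions. The first, of order $\ep^{-\xi_- + o_\ep(1)}$, is obtained by applying the above Euclidean argument to the cells of $\eta([0,1])$: the points $\eta(\ep)$ and $\eta(1)$ are at positive Euclidean distance with high probability under the circle-average embedding, and any path in $\mcl G^\ep|_{(0,1]}$ between them is a chain of cells whose union contains both endpoints, so its length is at least this Euclidean distance divided by $M_\ep$. The second, of order $\ep^{-(1-2/\gamma^2) + o_\ep(1)}$ and relevant only when $\gamma > \sqrt 2$, comes from bottlenecks: by Evans' cone-time theorem~\cite{evans-cone}, the set of simultaneous running infima of $L$ and $R$ in $[0,1]$ a.s.\ has Hausdorff dimension $1 - 2/\gamma^2$, so with probability tending to one the number of $x \in (0,1]_{\ep\BB Z}$ whose interval $[x-\ep,x]$ meets this set is at least $\ep^{-(1-2/\gamma^2) + o_\ep(1)}$. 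At each such time $t$ the point $\eta(t)$ is a topological pinch point of $\eta([0,1])$ separating $\eta([0,t])$ from $\eta([t,1])$, so any path in $\mcl G^\ep|_{(0,1]}$ from $\ep$ to $1$, whose cells form a connected subset of $\eta([0,1])$, must visit the cell whose time interval contains $t$. Taking the maximum of the two lower bounds yields~\eqref{eqn-diam-lower-01}, and~\eqref{eqn-diam-lower-mean} follows immediately from $\BB E[X] \geq \ep^{-\alpha + o_\ep(1)}\BB P[X \geq \ep^{-\alpha + o_\ep(1)}]$ applied with $X = \op{diam}(\mcl G^\ep|_{(0,1]})$ and $\alpha = \xi_- \vee (1 - 2/\gamma^2)$.
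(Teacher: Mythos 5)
There is a genuine gap in the first and central step: the reduction of \eqref{eqn-diam-lower-max} to a tail bound on the \emph{maximum} cell diameter $M_\ep$ cannot yield the exponent $\xi_-$, because the claimed estimate $\BB P[M_\ep > \ep^{\xi_- - u/2}] = O(\ep^c)$ is false. Your own union-bound sketch does not close: by the analogue of Lemma~\ref{prop-ball-mass}, the probability that a fixed ball of radius $r$ has $\mu_h$-mass below $\ep = r^{1/\xi_-} = r^{2+\gamma^2/2+\sqrt 2\gamma}$ is of order $r^{(\sqrt 2\gamma)^2/(2\gamma^2)+o_r(1)} = r^{1+o_r(1)}$, and multiplying by the $\asymp r^{-2}$ balls in the cover gives an expected count of order $r^{-1}\rta\infty$. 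Indeed $-\sqrt 2$-thick points of the field have positive multifractal dimension, so there typically \emph{are} about $\ep^{-\xi_-}$ cells of Euclidean diameter $\approx \ep^{\xi_-}$, and the largest cell has diameter $\approx \ep^{2/(2+\gamma)^2}$, where $2/(2+\gamma)^2 = \bigl(2+2\gamma+\gamma^2/2\bigr)^{-1} < \xi_-$ (the threshold corresponds to $-2$-thick points, for which the per-ball probability $r^{(2\gamma)^2/(2\gamma^2)} = r^2$ exactly balances the $r^{-2}$ entropy). A pure max-diameter argument therefore only proves $\op{dist}(0,\bdy_\ep(-1,1];\mcl G^\ep) \succeq \ep^{-2/(2+\gamma)^2}$, a strictly weaker exponent.

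The paper's proof closes this gap with a multifractal stratification rather than a single worst-case scale: Lemma~\ref{prop-cell-diam-count} bounds, for each $\alpha$, the \emph{number} of cells of diameter at least $\ep^\alpha$ by $\ep^{-f(\alpha)-u}$, and the key computation in Step 2 of the proof is that the \emph{total} Euclidean diameter contributed by all cells of diameter $\geq \ep^{\xi_- - u/2}$ is $\preceq \max_j \ep^{-f(\alpha_j)+\alpha_j+o(1)} = o_\ep(1)$, since $-f(\alpha)+\alpha>0$ for $\alpha<\xi_-$. Thus the rare large cells, although present, cannot carry a connected chain of cells across a macroscopic Euclidean distance; at least $r/2$ of that distance must be covered by cells of diameter $\leq \ep^{\xi_- -u/2}$, forcing $|P|\succeq \ep^{-\xi_-+u/2}$. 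Your connectivity observation (the union of cells along a path is connected and joins $0$ to $\bdy B_r(0)$) is exactly the right geometric input, and your treatment of the pinch-point contribution via~\cite{evans-cone} and the passage from \eqref{eqn-diam-lower-01} to \eqref{eqn-diam-lower-mean} are both correct and match the paper; it is the quantitative step from cell diameters to path length that must be replaced by the stratified count.
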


Proposition~\ref{prop-diam-lower} immediately implies the following corollary, which will be used in~\cite{ghs-map-dist,gm-spec-dim}.

\begin{cor} \label{prop-ball-contained}
For each $A > 0$, there exists $K  = K(A,\gamma ) >0$ such that for each $n \in\BB N$, 
\eqb \label{eqn-ball-contained}
\BB P\left[  B_n (0 ; \mcl G^1) \subset   [-n^K , n^K]_{\BB Z} \right] \geq 1 - O_n(n^{-A})  .
\eqe 
\end{cor}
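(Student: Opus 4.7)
The plan is to reduce the corollary to Proposition~\ref{prop-diam-lower} via Brownian scaling, applied at the scale $\ep = n^{-K}$ for a suitably large $K$.

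I would first record the deterministic observation: if $\mcl B_n(0 ; \mcl G^1) \not\subset [-n^K , n^K]_{\BB Z}$, then there is some $m \in \BB Z$ with $|m| > n^K$ and $\op{dist}(0 , m ; \mcl G^1) \leq n$. Any path from $0$ to $m$ in $\mcl G^1$ must contain a first vertex lying outside $(-n^K , n^K]$; the vertex immediately preceding it lies in $\bdy_1(-n^K , n^K]$ (Definition~\ref{def-graph-bdy}). Consequently, on the bad event, $\op{dist}\bigl(0 , \bdy_1(-n^K , n^K] ; \mcl G^1\bigr) \leq n$. (It is important here that Proposition~\ref{prop-diam-lower} controls distance in the \emph{full} graph, not merely in a restricted subgraph, since the shortest path in $\mcl G^1$ could in principle re-enter $(-n^K, n^K]$ after leaving it.)

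Next, I would invoke Brownian scaling of $Z=(L,R)$: the map $x \mapsto \ep x$ furnishes a graph isomorphism in distribution from $\mcl G^1$ to $\mcl G^\ep$ which sends $0$ to $0$ and the interval $(-1/\ep , 1/\ep]$ to $(-1,1]$. With $\ep = n^{-K}$ this yields the distributional identity
\begin{equation*}
\op{dist}\bigl(0 , \bdy_1(-n^K , n^K] ; \mcl G^1\bigr) \;\eqD\; \op{dist}\bigl(0 , \bdy_{n^{-K}}(-1,1] ; \mcl G^{n^{-K}}\bigr).
\end{equation*}
Fix any $u \in (0 , \xi_-)$. Applying the bound~\eqref{eqn-diam-lower-max} from Proposition~\ref{prop-diam-lower} to the right-hand side with $\ep = n^{-K}$ produces a constant $c = c(u,\gamma) > 0$ such that
\begin{equation*}
\BB P\!\left[ \op{dist}\bigl(0 , \bdy_1(-n^K , n^K] ; \mcl G^1\bigr) \geq n^{K(\xi_- - u)} \right] \;\geq\; 1 - O_n(n^{-Kc}).
\end{equation*}

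Finally, I would choose $K = K(A,\gamma)$ large enough that $K(\xi_- - u) \geq 1$ and $K c \geq A$; e.g.\ $K = \max\{1/(\xi_- - u) , A/c\}$. The first inequality guarantees that the high-probability event above implies $\op{dist}(0 , \bdy_1(-n^K , n^K] ; \mcl G^1) \geq n$, which by the deterministic observation rules out the bad event and gives $\mcl B_n(0 ; \mcl G^1) \subset [-n^K , n^K]_{\BB Z}$; the second gives the stated $O_n(n^{-A})$ error. There is no real obstacle here: the corollary is essentially just a repackaging of Proposition~\ref{prop-diam-lower} via scaling. The only mild care needed is to verify that the scaling identity is applicable, i.e.\ that the proposition's distance is taken in the ambient graph so that re-entries of the path into $(-n^K, n^K]$ cause no trouble.
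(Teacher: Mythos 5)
Your proposal is correct and follows essentially the same route as the paper: the paper's proof likewise combines the scale invariance $\mcl G^\ep \eqD \mcl G^1$ with the bound~\eqref{eqn-diam-lower-max} of Proposition~\ref{prop-diam-lower} applied at scale $m = n^K$, then chooses $K$ so that $n^{K(\xi_- - u)} \geq n$ and $n^{-Kc} \leq n^{-A}$. The deterministic step you spell out (a short path leaving $[-n^K, n^K]$ must hit the boundary set, so distance to the boundary in the \emph{full} graph controls containment of the ball) is exactly what the paper leaves implicit.
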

\begin{proof}
Since $\mcl G^\ep \eqD \mcl G^1$ as graphs,~\eqref{eqn-diam-lower-max} of Proposition~\ref{prop-diam-lower} implies that for $m\in\BB N$, $\BB P[ B_{m^{\xi_--u}}(0;\mcl G^1) \subset [-m,m]_{\BB Z}] \geq 1 - O_m(m^{-c})$. We obtain~\eqref{eqn-ball-contained} by choosing $m = n^K$ where $K > 0$ is chosen large enough so that $n^{K(\xi_- - u)} \geq n$ and $n^{-K c} \leq n^{-A}$. 
\end{proof}

To prove Proposition~\ref{prop-diam-lower}, we will first use basic estimates for LQG to prove an upper bound for the number of space-filling SLE cells $\eta([x-\ep , x]_{\ep\BB Z})$ for $x\in\ep\BB Z$ with at least a given \emph{Euclidean} diameter which are contained in a fixed Euclidean ball $B_r(0)$ (Lemma~\ref{prop-cell-diam-count}). By considering the $n$ largest cells contained in $B_r(0)$, this estimate will lead to a lower bound for the minimal number of cells in a path in $\mcl G^\ep$ from 0 to a cell which lies outside of $B_r(0)$. Since $\eta([-1,1])$ a.s.\ contains a Euclidean ball centered at the origin (and one can estimate the radius of this ball) this will allow us to conclude Proposition~\ref{prop-diam-lower}. The $1-2/\gamma^2$ in~\eqref{eqn-diam-lower-01} and~\eqref{eqn-diam-lower-mean} comes from the fact that there are typically at least $\ep^{-(1-2/\gamma^2) + o_\ep(1)}$ cells of $\mcl G^\ep$ which intersect the pinch points of $\eta([-1,1])$ (recall Figure~\ref{fig-sle-segment}, right panel). 

We note that the only properties of space-filling SLE$_\kappa$ used in the proof is~\cite[Proposition 3.4]{ghm-kpz}---which says that a segment of space-filling SLE$_\kappa$ typically contains a Euclidean ball whose diameter is comparable (up to an $o(1)$ exponent) to the diameter of the segment---and~\cite[Proposition 6.2]{hs-euclidean} (which is used in Lemma~\ref{prop-sle-segment} to get a polynomial bound for the probability in~\eqref{eqn-diam-lower-max}). Since these properties are true for every $\kappa > 4$, Proposition~\ref{prop-diam-lower} remains true, with the same exponents, e.g., if we replace $\eta$ by a space-filling SLE$_{\wt\kappa}$ for $\wt\kappa  \in (4,\infty) \setminus \{16/\gamma^2\}$, still sampled independently from $h$ and then parametrized by $\gamma$-LQG mass. 

We now commence with the proof of Proposition~\ref{prop-diam-lower}. 
We first prove our estimate for the number of structure graph cells having a given Euclidean diameter. 

\begin{lem} \label{prop-cell-diam-count}
Let $h$ be the circle average embedding of a $\gamma$-quantum cone in $(\BB C , 0, \infty)$. Let $\eta$ be a space-filling $\op{SLE}_{\kappa}$ from $\infty$ to $\infty$ in $\BB C$ sampled independently from $h$ and then parametrized by $\gamma$-quantum mass with respect to $h$. 
For $\alpha >0$, let
\eqb \label{eqn-cell-diam-exponent}
f(\alpha) := \begin{dcases}
2\alpha - \frac{\alpha}{2\gamma^2} \left(\frac{1}{\alpha} - 2 - \frac{\gamma^2}{2} \right)^2  ,\quad &\alpha \leq \frac{2}{4+\gamma^2 } \\
2\alpha   ,\quad &\alpha > \frac{2}{4+\gamma^2 } .
\end{dcases} 
\eqe 
For each $\alpha, u>0$, there exists $c = c(\alpha, u)  > 0$ such that for each $r\in (0,1)$, it holds with probability at least $1-O_\ep(\ep^{c })$ that the number of $x\in \ep\BB Z$ such that $\eta([x-\ep, x]) \cap B_r(0) \not=\emptyset$ and $\op{diam} \eta([x-\ep , x]) \geq \ep^{\alpha }$ is at most 
\eqbn
\begin{cases}
0 ,\quad  &\alpha  < \frac{2}{(2+\gamma)^2}  - u   \\
\ep^{-f(\alpha)  -u }   , \quad &\alpha  \geq \frac{2}{(2+\gamma)^2}  - u  .
\end{cases}
 \eqen 
\end{lem}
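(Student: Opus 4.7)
The argument combines three ingredients: (a) Proposition~3.4 of~\cite{ghm-kpz}, which implies that a space-filling SLE$_\kappa$ cell of Euclidean diameter at least $s$ contains, off an event of polynomially small probability, a Euclidean ball of radius $s^{1+o_s(1)}$; (b) the LQG mass-to-circle-average relation $\mu_h(B_s(z)) = s^{2+\gamma^2/2 + o_s(1)} e^{\gamma h_s(z)}$, valid uniformly for $z$ in a compact set on an event of polynomially high probability (see Appendix~\ref{sec-technical}); and (c) Gaussian concentration of the circle-average process of the $\gamma$-quantum cone, which at scale $s \to 0$ is approximately Gaussian of variance $\log s^{-1} + O(1)$.

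Fix a small $\delta > 0$ and set $\rho := \ep^{\alpha(1+\delta)}$. By (a) together with a union bound over the $\ep^{-O(1)}$ cells that can intersect $B_r(0)$, off an event of probability $O(\ep^{c})$ every cell $C_x := \eta([x-\ep,x])$ with $\op{diam} C_x \geq \ep^\alpha$ and $C_x \cap B_r(0) \neq \emptyset$ contains a Euclidean ball $B_\rho(z_x) \subset C_x$ with $z_x \in B_{r+1}(0)$. Choose a $\rho/4$-lattice $\mcl L \subset B_{r+1}(0)$ and associate to each such cell the nearest point $z_k \in \mcl L$ to $z_x$, so that $B_{\rho/2}(z_k) \subset B_\rho(z_x) \subset C_x$. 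Since distinct cells have disjoint interiors, the balls $\{B_\rho(z_x)\}$ are pairwise disjoint, so at most one cell can be associated with any given $z_k$. As $\mu_h(C_x) = \ep$, each $z_k$ chosen in this way satisfies $\mu_h(B_{\rho/2}(z_k)) \leq \ep$, so the desired count is bounded above by
\begin{equation*}
N := \#\bigl\{ z_k \in \mcl L : \mu_h(B_{\rho/2}(z_k)) \leq \ep \bigr\}.
\end{equation*}

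By (b), on a high-probability event the condition $\mu_h(B_{\rho/2}(z_k)) \leq \ep$ forces
\begin{equation*}
h_{\rho/2}(z_k) \leq -\gamma^{-1}\bigl(1 - \alpha(1+\delta)(2+\gamma^2/2)\bigr) \log\ep^{-1} + o_\ep(\log\ep^{-1}),
\end{equation*}
and by (c) the Gaussian probability of this event is at most $\ep^{a^2/(2\alpha(1+\delta)) - o_\ep(1)}$, where $a := \gamma^{-1}\bigl(1 - \alpha(2+\gamma^2/2)\bigr)^+$. Summing over the $\lesssim \rho^{-2}$ lattice points and sending $\delta \to 0$ gives
\begin{equation*}
\BB E[N] \leq \ep^{-2\alpha + a^2/(2\alpha) - o_\ep(1)} = \ep^{-f(\alpha) - o_\ep(1)},
\end{equation*}
and the two regimes of the piecewise definition of $f$ correspond precisely to whether $a > 0$ (small $\alpha$) or $a = 0$ (large $\alpha$, where the Gaussian factor contributes nothing and only the Euclidean-volume bound $\rho^{-2}$ survives). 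Markov's inequality then yields $N \leq \ep^{-f(\alpha)-u}$ off an event of probability $O(\ep^{u/2})$. In the regime $\alpha < 2/(2+\gamma)^2 - u$ one has $f(\alpha) \leq -c(u) < 0$ by the explicit formula (the threshold $2/(2+\gamma)^2$ is exactly the root of $f$), and the same Markov bound forces the integer-valued quantity $N$ to be $0$ off an event of probability $O(\ep^{c(u)})$, giving the ``no cells'' conclusion.

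The main technical hurdle is making (b) uniform and polynomially sharp enough to survive the union bound over $\ep^{-O(1)}$ lattice points; this requires a truncation argument controlling the maximum of the circle-average process over $B_{r+1}(0)$, together with separate treatment of the $O(1)$ lattice points within distance $O(\rho)$ of the origin (where the $-\gamma\log|\cdot|$ singularity of the quantum cone shifts the mean of $h_s(z)$). These refinements of standard LQG estimates are collected in Appendix~\ref{sec-technical}.
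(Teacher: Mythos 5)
Your proposal is correct and follows essentially the same route as the paper: reduce large-diameter cells to Euclidean balls they contain via \cite[Proposition 3.4]{ghm-kpz}, count lattice points whose surrounding balls have $\mu_h$-mass at most $\ep$ via the circle-average representation and the Gaussian tail bound (this is exactly Lemma~\ref{prop-ball-mass}), and conclude by Markov's inequality, with the same exponent arithmetic and the same identification of $2/(2+\gamma)^2$ as the root of $f$. The only remark worth making is that the ``uniformity'' hurdle you flag at the end is not actually needed: since the bound on $N$ is a first-moment computation, the per-point estimate of Lemma~\ref{prop-ball-mass} (with its $o_\ep^\infty(\ep)$ error at each fixed point) is simply summed in expectation, and the log singularity at the origin is handled by noting that adding $-\gamma\log|\cdot|$ only increases the mass, so the lower bound for the whole-plane GFF transfers directly.
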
 
\begin{proof}
The idea of the proof is to first argue that each cell of $\mcl G^\ep$ which intersects $B_{r_2}(0) \setminus B_{r_1}(0)$ must contain a Euclidean ball of radius slightly smaller than the diameter of the cell; then use Lemma~\ref{prop-ball-mass} with $\ep^\alpha$ in place of $\ep$ to upper bound the number of such Euclidean balls with radius at least $\ep^{\alpha }$. 
\medskip

\noindent\textit{Step 1: comparing space-filling SLE cells to Euclidean balls.} 
Fix $\zeta \in (0, (u \wedge \alpha) /100)$, to be chosen later. Also fix $r' \in (r,1)$.  
For $x \in \ep\BB Z$, let $\delta_x$ be the radius of the largest Euclidean ball contained in $\eta([x-\ep , x])$. Also let $z_x$ be the center of this ball. We claim that with probability at least $1-o_\ep^\infty(\ep)$, 
\eqb \label{eqn-diam-ball-bound}
\ep^{\zeta(1-\zeta)} \wedge \left( \op{diam} \eta([x-\ep,x]) \right) \leq \delta_x^{1-\zeta}  ,\quad \forall x \in  \ep\BB Z \: \op{with} \: \eta([x-\ep , x]) \cap B_{r' }(0) \not=\emptyset  .
\eqe  
To see this, let $\mcl E_{\ep^\zeta}$ be the event that for each $\delta \in (0,\ep^\zeta]$ and each $a,b \in \BB R$ with $a<b$, $\eta([a,b]) \subset \BB D$, and $\op{diam} \eta([a,b]) \geq  \delta^{1-\zeta}$, the set $\eta([a,b])$ contains a Euclidean ball of radius at least $\delta$.
By~\cite[Proposition 3.4 and Remark 3.9]{ghm-kpz}, we have $\BB P[\mcl E_{\ep^\zeta} ]  = 1-o_\ep^\infty(\ep)$.
By considering separately the values of $x $ for which $\op{diam} \eta([x-\ep,x]) > \ep^{\zeta(1-\zeta)}$ and $\op{diam} \eta([x-\ep,x]) \leq \ep^{\zeta(1-\zeta)}$, we find that for small enough $\ep > 0$ (depending only on $r$), the relation~\eqref{eqn-diam-ball-bound} holds whenever $\mcl E_{\ep^\zeta}$ occurs. 
\medskip

\noindent\textit{Step 2: bounding the number of Euclidean balls with $\mu_h$-mass at most $\ep$.} 
For $\alpha >0$ let $\mcl D^\ep_\alpha$ be the set of $w \in  B_r(0) \cap (\frac14 \ep^{\alpha} ) \BB Z^2)$ with $\mu_h(B_{\ep^{\alpha }/4}(w)) \leq \ep$. 
By Lemma~\ref{prop-ball-mass} (applied with $\ep^{\alpha }/4$ in place of $\ep$ and with $p = 1/\alpha -2 - \gamma^2/2  > 0 $) and a sum over $O_\ep(\ep^{-2\alpha})$ cells in the case when $\alpha   < 2/(4+\gamma^2)$; or the trivial bound $\# \mcl D_\alpha^\ep \leq O_\ep(\ep^{-2\alpha})$ in the case when $\alpha   \geq 2/(4+\gamma^2)$, 
we have $\BB E\left[ \# \mcl D_\alpha^\ep \right] \leq  \ep^{- f(\alpha)  - o_\ep(1) }$. 

Since $f$ is non-decreasing, piecewise continuously differentiable, and satisfies $f(\alpha) < 0$ for $\alpha \in (0,2/(2+\gamma)^2)$, there exists $b = b(\gamma)  >0$ such that $f(\alpha)  + \zeta < 0$ for $\alpha \in (0, 2/(2+\gamma)^2 - b \zeta)$. 
By the Chebyshev inequality and since $\#\mcl D_\alpha^\ep$ is a non-negative integer (so equals 0 whenever it is $< 1$), we find that with probability at least $1-O_\ep(\ep^\zeta)$, 
\eqb \label{eqn-small-ball-count}
 \# \mcl D_\alpha^\ep \leq \begin{cases}
0 ,\quad  &\alpha  < \tfrac{2}{(2+\gamma)^2} - b\zeta  \\
\ep^{-f(\alpha) -\zeta - o_\ep(1) }   , \quad &\alpha \geq  \tfrac{2}{(2+\gamma)^2}  - b \zeta  .
\end{cases}
\eqe   
\medskip

\noindent\textit{Step 3: conclusion.}
Now suppose that~\eqref{eqn-diam-ball-bound} holds and~\eqref{eqn-small-ball-count} holds with $\alpha/(1-\zeta)$ in place of $\alpha$, which happens with probability at least $1-O_\ep(\ep^\zeta)$. 
If $x \in  \ep\BB Z $ with $\eta([x-\ep,x]) \cap B_r(0) \not=\emptyset$ and $\op{diam} \eta([x-\ep , x]) \geq \ep^\alpha$, then by~\eqref{eqn-diam-ball-bound} and since $\zeta  < \alpha$, we have $\delta_x \geq \ep^{\alpha/(1-\zeta)}$. Since $\mu_h(\eta([x-\ep , x])) = \ep$ by definition, there is a $w \in \mcl D_{\alpha/(1-\zeta) }^\ep$ with $B_{\ep^{\alpha/(1-\zeta)}/4}(w) \subset B_{\delta_x}(z_x)$. By~\eqref{eqn-small-ball-count}, the number of such $x$ is 0 if $\alpha/(1-\zeta) < 2/(2+\gamma)^2 - b\zeta$ and is at most 
$\ep^{-f(\alpha)  - o_\zeta(1) }$ if $\alpha/(1-\zeta) \geq 2/(2+\gamma)^2 - b\zeta $,  
with the rate of the $o_\zeta(1)$ independent of $\ep$. 
We now conclude by choosing $\zeta$ sufficiently small (depending on $u$); and setting $c =\zeta$ for this choice of $\zeta$. 
\end{proof}

\begin{proof}[Proof of Proposition~\ref{prop-diam-lower}]
Fix $\zeta \in (0,u)$ to be chosen later, in a manner depending only on $u$ and $\gamma$. Also fix $r\in (0,1)$. 
We will apply Lemma~\ref{prop-cell-diam-count} to bound the number of cells of $\mcl G^\ep$ contained in $B_r(0)$ with $\mu_h$-mass in a given interval, deduce from this a lower bound for the minimum length of a path in $\mcl G^\ep$ from 0 to a vertex whose corresponding cell lies outside of $B_r(0)$, then use this and a basic estimate for space-filling SLE to obtain~\eqref{eqn-diam-lower-max}. The other estimates in the proposition statement will follow from~\eqref{eqn-diam-lower-max} and a lower bound for the Hausdorff dimension of the times for $Z$ corresponding to pinch points of $\eta([-1,1])$. 
\medskip

\noindent\textit{Step 1: application of Lemma~\ref{prop-cell-diam-count}.}
Let 
\eqbn
\frac{2}{(2+\gamma)^2}  - \zeta  = \alpha_0 < \dots < \alpha_N = \frac{2}{4+\gamma^2} + \zeta 
\eqen
 be a partition of $\left[\frac{2}{(2+\gamma)^2}  - \zeta ,  \frac{2}{4+\gamma^2} + \zeta  \right]$ with $\alpha_k - \alpha_{k-1} \leq \zeta$ for each $k\in [1,N]_{\BB Z}$. For $k\in [1,N]_{\BB Z}$, let $A_k^\ep$ be the set of $x\in \ep\BB Z$ with $\eta([x-\ep, x]) \cap B_r(0) \not=\emptyset$ and $\ep^{\alpha_k} \leq \op{diam} \eta([x-\ep , x]) < \ep^{\alpha_{k-1}}$. 
Also let $A_0^\ep$ be the set of $x\in \ep\BB Z$ with $\eta([x-\ep, x]) \cap B_r(0) \not=\emptyset$ and $ \op{diam} \eta([x-\ep , x]) \geq \ep^{\alpha_0}$.
By Lemma~\ref{prop-cell-diam-count} applied with $\alpha_k$ in place of $\alpha$ for each $k\in [0,N]_{\BB Z}$, it holds except on an event of probability decaying faster than some positive power of $\ep$ (the power depends on $\gamma$ and $\zeta$) that 
\eqb \label{eqn-big-cell-count}
A_0^\ep = \emptyset\quad \op{and} \quad \#  A_k^\ep \leq \ep^{ - f(\alpha_k)  + o_\zeta(1) }   \quad \forall k \in [1,N]_{\BB Z}
\eqe 
where here $f(\cdot)$ is as in~\eqref{eqn-cell-diam-exponent} and the $o_\zeta(1)$ error is independent of $\ep$. 
\medskip

\noindent\textit{Step 2: bounding the $\mcl G^\ep$-distance from 0 to $\bdy B_r(0)$.}
The condition~\eqref{eqn-big-cell-count} implies that the total Euclidean diameter of the cells corresponding to elements of $\bigcup_{j=0}^k A_j^\ep$ satisfies
\eqb \label{eqn-big-cell-sum}
\sum_{j=0}^k \sum_{x\in  A_k^\ep} \op{diam} \eta([x-\ep , x]) \preceq \max_{j \in [1,k]_{\BB Z}} \ep^{-f(\alpha_j) + \alpha_j  + o_\zeta(1) }   ,\quad \forall k\in [0,N]_{\BB Z}.
\eqe   
For $\xi_-$ as in the proposition statement, we have $-f(\alpha) + \alpha > 0$ for $\alpha < \xi_-$.  
Consequently, if we choose $\zeta$ sufficiently small (depending only on $u$ and $\gamma$) then the right side of the inequality in~\eqref{eqn-big-cell-sum} is smaller than $r/2$ for sufficiently small $\ep$ provided $\alpha_k \leq \xi_- - u/2$. 
 
If~\eqref{eqn-big-cell-sum} holds and $P$ is a path in $\mcl G^\ep$ from 0 to some $y\in \ep\BB Z$ with $\eta([y-\ep , y]) \subset \BB C\setminus B_{r }(0)$, then we can find distinct $y_1 , \dots , y_n \in  \ep\BB Z$ each of which is hit by $P$ and satisfies $\eta([y_i-\ep,y_i]) \cap B_r(0)  \not=\emptyset$ such that
\eqbn
\sum_{i=1}^n \op{diam} \eta([y_i -\ep , y_i]) \geq \frac{r}{2} \quad \op{and} \quad \op{diam} \eta([y_i -\ep , y_i]) \leq \ep^{\xi_- - u/2} \quad \forall i \in [1,n]_{\BB Z} .
\eqen
Therefore, $|P| \geq n \succeq \ep^{-\xi_- + u/2} $. Hence, except on an event of probability decaying faster than some positive power of $\ep$,
\eqb \label{eqn-dist-to-circle}
\op{dist}\left( 0 , y ; \mcl G^\ep \right) \geq \ep^{-\xi_- + u/2} ,\quad \forall y \in \ep\BB Z \: \op{with} \: \eta([y-\ep,y]) \subset \BB C\setminus B_{r }(0) .
\eqe 
\medskip

\noindent\textit{Step 3: bounding the $\mcl G^\ep$-distance from 0 to $\bdy_\ep(-1,1]$.}
To transfer from~\eqref{eqn-big-cell-sum} to a bound for the distance from 0 to $\bdy_\ep (-1,1]$, 
we use Lemma~\ref{prop-sle-segment} to get that except on an event of probability decaying faster than some positive, $\zeta,\gamma$-dependent power of $\ep$,
$\BB D \subset \eta([-\ep^{-\zeta} , \ep^{-\zeta} ]) $. 
By this and~\eqref{eqn-dist-to-circle}, it holds except on an event of probability decaying faster than some positive power of $\ep$ that $\op{dist}(0 , \bdy_\ep (-\ep^{-\zeta} , \ep^{-\zeta} ] ; \mcl G^\ep ) \geq \ep^{-\xi_- + u/2}$, which by scale invariance (i.e., the fact that the law of $\mcl G^\ep$ does not depend on $\ep$) implies~\eqref{eqn-diam-lower-max} upon choosing $\zeta$ sufficiently small, depending on $u$ and $\gamma$. 
\medskip 

\noindent\textit{Step 4: contribution of the pinch points.}
To prove~\eqref{eqn-diam-lower-01}, let $\mcl T$ be the set of times $t \geq 0$ at which $L$ and $R$ attain a simultaneous running infimum relative to time 0. 
Let $Y^\ep$ be the set of $x\in (0,1]_{\ep\BB Z}$ for which $(x-\ep , x]\cap \mcl T\not=\emptyset$. 
It is easy to see that the Hausdorff dimension of $\mcl T \cap [0,1]$ has the same law as the Hausdorff dimension of the set of $\pi/2$-cone times of $Z$. If we apply a linear transformation which takes $Z$ to a pair of independent Brownian motion, then a $\pi/2$-cone time for $Z$ is the same as a $\theta$-cone time for this pair of independent Brownian motions for $\theta = \pi\gamma^2/4$. Hence, we can deduce from~\cite[Theorem 1]{evans-cone} that the Hausdorff dimension of $\mcl T\cap [0,1]$ is a.s.\ equal to $(1-2/\gamma^2)\vee 0$. Consequently, it holds with probability tending to 1 as $\ep \rta \infty$ that the number of intervals of length $\ep$ needed to cover $\mcl T \cap [0,1]$ is at least $\ep^{-(1-2/\gamma^2 )  + u}$. In particular, 
\eqbn
\lim_{\ep\rta 0} \BB P\left[ \# Y^\ep \geq \ep^{-(1- 2/\gamma^2) + u} \right] = 1 .
\eqen
On the other hand, the adjacency condition~\eqref{eqn-inf-adjacency} implies that every path from 0 to 1 in $\mcl G^\ep|_{(0,1]}$ must pass through every element of $Y^\ep$ (equivalently, removing an element of $Y^\ep$ disconnects $\mcl G^\ep|_{(0,1]}$ into two pieces). Hence with probability tending to 1 as $\ep \rta 0$,
\eqbn
\op{dist}\left(\ep , 1 ; \mcl G^\ep|_{(0,1]} \right) \geq \ep^{-   (1-2/\gamma^2)     + o_\ep(1)} .
\eqen  
Combining this with~\eqref{eqn-diam-lower-max} yields~\eqref{eqn-diam-lower-01} and hence also~\eqref{eqn-diam-lower-mean}. 
\end{proof}

\subsection{Lower bound for the cardinality of a ball}
\label{sec-ball-lower}

In this subsection we will prove the following estimate, which implies the lower bound in Theorem~\ref{thm-ball-scaling}.

\begin{prop} \label{prop-ball-lower}
Let $d_-$ be as in~\eqref{eqn-ball-scaling-exponent}. There exists $p = p(\gamma) > 0$ such that for each $u \in (0,1)$, each $\ep >0$, and each $n\in\BB N$,  
\eqb \label{eqn-ball-lower}
\BB P\left[ \#\mcl B_n\left(0 ; \mcl G^\ep \right) \geq n^{ d_- - u} \right] \geq 1 - O_n(n^{- p u^2})  .
\eqe 
\end{prop}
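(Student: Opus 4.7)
The strategy is to reduce Proposition~\ref{prop-ball-lower} to a Euclidean estimate by showing that all cells of $\mcl G^\ep$ meeting a macroscopic Euclidean ball $B_r(0)$ are close in graph distance to $0$, and then to count those cells via the $\gamma$-quantum area measure $\mu_h$. Because the law of $\mcl G^\ep$ as an abstract graph does not depend on $\ep$, it suffices to choose one scale $\ep=\ep(n)$ per $n$; I would take $\ep \asymp n^{-d_-}$, so that the KPZ prediction for the number of mass-$\ep$ cells covering a fixed Euclidean line segment matches $n$.

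The first key step is a KPZ-type upper bound on the number of cells $\eta([x-\ep,x])$, $x\in\ep\BB Z$, intersecting a straight Euclidean segment $\sigma_z$ from $0$ to a fixed point $z \in B_r(0)$. Applying Proposition~\ref{prop-kpz0} to $\sigma_z$, which has Euclidean Hausdorff dimension $1$, produces a quantum scaling exponent equal to the smaller positive root of the KPZ quadratic $1 = (2+\gamma^2/2)\,x - (\gamma^2/2)\,x^2$, namely $x = 1/d_-$ with $d_-$ as in~\eqref{eqn-ball-scaling-exponent}. Upgrading this to a quantitative tail bound, the count of such cells should be at most $\ep^{-1/d_- - u/c}$ off an event of probability $O_\ep(\ep^{p_1 u^2})$, with the Gaussian-in-$u$ rate coming from the log-normal tails of the GFF circle averages on the $\gamma$-quantum cone. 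Consecutive cells along $\sigma_z$, ordered by their first-hitting times, share non-trivial boundary arcs (immediate for $\kappa \geq 8$ since cells are Jordan, and for $\kappa \in (4,8)$ handled by an auxiliary argument ruling out pinch-point contributions as in the remark following Definition~\ref{def-structure-graph-restrict}), so they form a bona fide path in $\mcl G^\ep$ from the cell containing $0$ to the cell containing $z$, of length at most $n^{1 - cu}$.

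A union bound over a polynomial-sized $\ep$-net of points $z$ in $B_r(0)$ then shows, outside an event of probability $O_\ep(\ep^{p_2 u^2})$, that every cell meeting $B_r(0)$ lies in $\mcl B_n(0;\mcl G^\ep)$. The number of such cells is bounded below by $\mu_h(B_r(0))/\ep$, and standard Gaussian tail estimates for GFF circle averages at $0$ yield $\mu_h(B_r(0)) \geq \ep^{u/c}$ outside an event of polynomial probability; combining and converting back via $\ep \asymp n^{-d_-}$ gives~\eqref{eqn-ball-lower} with some $p = p(\gamma) > 0$. The main obstacle is precisely this passage from the classical KPZ expected-count bound to a polynomial tail estimate with $u^2$-Gaussian concentration: the standard formulation only controls expectations up to an $\ep^{o_\ep(1)}$ correction, so the refined bound must isolate and handle the Gaussian-log-variance contribution of the GFF circle averages along $\sigma_z$ via a Markov/Chernoff argument applied to a suitable truncated moment of the cell count, in the spirit of the second-moment arguments behind Lemma~\ref{prop-cell-diam-count}.
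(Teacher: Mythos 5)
Your overall architecture (reduce to one scale $\ep=\ep(n)\approx n^{-d_-}$, use Proposition~\ref{prop-kpz0} on one-dimensional sets to get short paths, then count cells) is the right starting point, and your diagnosis that the $n^{-pu^2}$ rate comes from Gaussian tails of circle averages is correct. But there is a genuine gap at the step ``a union bound over a polynomial-sized $\ep$-net of points $z$ in $B_r(0)$ shows that every cell meeting $B_r(0)$ lies in $\mcl B_n(0;\mcl G^\ep)$.'' A cell $\eta([x-\ep,x])$ has quantum mass $\ep$ but can have arbitrarily small Euclidean diameter, so a generic cell meeting $B_r(0)$ need not intersect any of your finitely many deterministic segments $\sigma_z$; the union of those segments is a one-dimensional set, and by your own KPZ bound it is covered by only about $\ep^{-1/d_-}\approx n$ cells, which is far fewer than the $n^{d_-}$ cells you need to reach. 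To connect a generic cell to $0$ you would want to run the KPZ argument along a curve through $\eta(x)$ itself, but $\eta(x)$ is \emph{not} independent of $h$ (the parametrization of $\eta$ by quantum mass ties it to $h$), so Proposition~\ref{prop-kpz0} does not apply to that segment. This is the main obstacle, not the expectation-to-tail upgrade you flag.

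The paper's resolution is a translation/rescaling argument: by~\cite[Lemma 9.3]{wedges}, $(h(\cdot+\eta(x)),\eta(\cdot+x)-\eta(x))$ agrees in law with $(h,\eta)$ after rescaling by a random $\rho_x$, so Lemma~\ref{prop-embedding-dist} applies ``from the perspective of $\eta(x)$'' to the set $[\eta(x),\eta(x)+|\rho_x|/2]\cup\bdy B_{|\rho_x|/2}(\eta(x))$. One then controls $|\rho_x|$ by comparing $\mu_h$-masses of Euclidean balls (Lemma~\ref{prop-embedding-shift}) to guarantee $B_{\ep^{au}}(0)\subset B_{|\rho_x|/2}(\eta(x))$ for all $x\in(0,\ep^u]_{\ep\BB Z}$; any short path from $x$ out to $\bdy B_{|\rho_x|/2}(\eta(x))$ must then cross $\bdy B_{\ep^{au}}(0)$, where it can be concatenated with a short path from $0$ to $\bdy B_{\ep^{au}}(0)$. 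Finally, the paper does not count cells via $\mu_h(B_r(0))/\ep$: it simply counts the $\asymp\ep^{-(1-u)}$ vertices $x\in(0,\ep^u]_{\ep\BB Z}$ so treated, which after setting $n=\lfloor 2\ep^{-1/d_--u}\rfloor$ already gives $n^{d_--u}$. Without some substitute for the $\rho_x$ argument, your proof does not go through.
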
 

Note that by Brownian scaling, the left side of~\eqref{eqn-ball-lower} does not depend on $\ep$.

Throughout this subsection we assume that $(\BB C ,h , 0,\infty)$ is a $\gamma$-quantum cone, with the circle average embedding (Section~\ref{sec-lqg-prelim}), $\eta$ is a whole-plane space-filling SLE$_\kappa$ independent from $h$, parametrized by $\gamma$-quantum mass with respect to $h$, and $\mcl G^\ep$ is constructed from $(h,\eta)$ as in Section~\ref{sec-structure-graph}. 

The main idea of the proof of Proposition~\ref{prop-ball-lower} is to prove an upper bound for the number of cells of the form $\eta([y-\ep,y])$ needed to cover the line segment from 0 to a typical point of the form $\eta(x)$ for $x\in \ep\BB Z$ which is contained in $\BB D$. This yields an upper bound for the $\mcl G^\ep$-graph distance from $x$ to the origin, which in particular implies that $x$ is with high probability contained in $ \mcl B_n\left(0 ; \mcl G^\ep \right)$ for an appropriate value of $n$. 
 
Our upper bound for the number of cells needed to cover a line segment will be deduced from a variant of the KPZ formula~\cite{kpz-scaling,shef-kpz}
which gives an upper bound for the number of $\ep$-mass segments of $\eta$ needed to cover a general set $X\subset \BB C$ which is independent from $h$ (but not necessarily from $\eta$). The proof of the following proposition is given in Appendix \ref{sec-kpz}.

\begin{prop} \label{prop-kpz0}
Suppose we are in the setting described just above. Let $X$ be a random subset of $\BB C$ which is independent from $h$ (but not necessarily independent from $\eta$) and is a.s.\ contained in some deterministic bounded subset $D$ of $\BB C$. 
For $\delta>0$, let $N_\delta$ be the number of Euclidean squares of the form $[z_1 + \delta] \times [z_2+\delta]$ for $(z_1,z_2) \in \delta \BB Z^2$ which intersect $X$. 
Suppose that the \emph{Euclidean expectation dimension}
\eqb
\wh d_0:=\lim_{\delta\rta 0} \frac{\log \E[N_\delta]}{\log \delta^{-1}} \in [0,2] 
\eqe
exists and let $\wh d_\gamma \in [0,1]$ be the unique non-negative solution of 
\eqb \label{eqn-kpz0}
\wh d_0 = \left(2+\frac{\gamma^2}{2}\right) \wh d_\gamma - \frac{\gamma^2}{2} \wh d_\gamma^2  .
\eqe 
Also define
\eqb \label{eq:kpz15}
N^\ep := \# \{x \in \ep \BB Z \,:\,\eta([x-\ep , x] )\cap X \neq\emptyset \} .
\eqe
There is a constant $c=c(\gamma , D) >0$ such that for each choice of $X$ as above and each $u> 0$,
\eqb \label{eqn-kpz-prob}
\BB P\left[   N^\ep  > \ep^{-\wh d_\gamma  - u }    \right] \preceq \ep^{cu} ,\quad \forall \ep > 0
\eqe 
with the implicit constant independent from $\ep$. 
\end{prop}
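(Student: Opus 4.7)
The plan is to combine a first-moment (Markov) estimate for the cells of small Euclidean diameter with the deterministic cell-count bound from Lemma~\ref{prop-cell-diam-count} for the cells of large Euclidean diameter, choosing the cutoff scale $\delta = \ep^\alpha$ precisely so that the two contributions balance at exponent $\wh d_\gamma$. The independence of $X$ from $h$ is what allows the Euclidean covering estimate to be converted into a quantum-mass estimate via a factorization of expectations.

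First, I would fix $\zeta \in (0,u)$ small and choose a covering of $X$ by $N_\delta$ axis-aligned Euclidean squares $\{S_i\}$ of side $\delta = \ep^\alpha$, where by hypothesis $\E[N_\delta] \preceq \delta^{-\wh d_0 - \zeta}$ for small enough $\delta$. Every cell $\eta([x-\ep,x])$ meeting $X$ either (i) has Euclidean diameter $< \delta$, in which case it is contained in the $\delta$-enlargement $\hat S_i$ of some $S_i$ and contributes mass $\ep$ to $\mu_h(\hat S_i)$, or (ii) has Euclidean diameter $\geq \delta$. The count of type (i) cells is therefore at most $\ep^{-1}\sum_i \mu_h(\hat S_i)$, while the count of type (ii) cells is controlled by Lemma~\ref{prop-cell-diam-count} applied with parameter $\alpha$ after localizing to the bounded region $D$ in which $X$ lives.

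Using independence of $X$ (hence of the squares $\hat S_i$) from $h$, together with the standard GFF bound $\E[\mu_h(B_\delta(z))] \preceq \delta^2$ valid uniformly for $z$ in compact subsets of $\BB C$ (with the quantum cone singularity at the origin absorbed into a bounded factor, or handled by splitting $D$ into pieces near and away from $0$), I would obtain
\eqbn
\E\Bigl[\, \ep^{-1} \sum_i \mu_h(\hat S_i) \,\Bigr]
= \ep^{-1}\,\E\bigl[\,\textstyle\sum_i \E[\mu_h(\hat S_i)\mid X,\eta]\,\bigr]
\preceq \ep^{-1}\, \E[N_\delta]\cdot \delta^2
\preceq \ep^{-1}\,\delta^{2-\wh d_0 - \zeta}.
\eqen
Choosing $\alpha = (1-\wh d_\gamma)/(2-\wh d_0)$ turns this exponent into $-\wh d_\gamma - O(\zeta)$; Markov's inequality then bounds the probability that the type (i) count exceeds $\ep^{-\wh d_\gamma - u}$ by $\ep^{u - O(\zeta)}$. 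The type (ii) count is at most $\ep^{-f(\alpha) - \zeta}$ off an event of small polynomial probability by Lemma~\ref{prop-cell-diam-count}, and a union bound combined with taking $\zeta$ to be a small fixed fraction of $u$ completes the derivation of~\eqref{eqn-kpz-prob} with some $c = c(\gamma,D) > 0$.

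The main obstacle I anticipate is the algebraic verification that the same $\alpha$ balancing the type (i) exponent also makes $f(\alpha) \geq \wh d_\gamma$ for the type (ii) exponent---this is exactly where the KPZ relation~\eqref{eqn-kpz0} enters. Writing $d = \wh d_\gamma \in [0,1)$ and $Q = 2+\gamma^2/2$, and substituting $\wh d_0 = Qd - (\gamma^2/2)d^2$, direct computation yields $1/\alpha - Q = -(\gamma^2/2)(1+d) < 0$, so $\alpha > 1/Q = 2/(4+\gamma^2)$ and the second branch of~\eqref{eqn-cell-diam-exponent} gives $f(\alpha) = 2\alpha$. The desired $2\alpha \geq d$ then reduces, after clearing denominators, to the polynomial inequality $(1-d)\bigl(2(1-d) + (\gamma^2/2)d^2\bigr) \geq 0$, which is manifest for $d \in [0,1]$. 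The degenerate endpoint $d = 1$ (corresponding to $\wh d_0 = 2$) is handled separately by the trivial bound $N^\ep = O(\ep^{-1})$.
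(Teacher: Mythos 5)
Your argument breaks down at the type~(ii) estimate, and the failure is not repairable within the single-deterministic-scale framework you set up. First, the direction of the inequality you ``verify'' is backwards: Lemma~\ref{prop-cell-diam-count} gives an \emph{upper} bound $\ep^{-f(\alpha)-\zeta}$ on the number of large cells, so for that count to be at most the target $\ep^{-\wh d_\gamma - u}$ you need $f(\alpha) \leq \wh d_\gamma + u$, not $f(\alpha) \geq \wh d_\gamma$. And in fact $f(\alpha) = 2\alpha = 2/\bigl(2 - (\gamma^2/2)\wh d_\gamma\bigr) \geq 1 \geq \wh d_\gamma$, so the bound from Lemma~\ref{prop-cell-diam-count} is at least $\ep^{-1}$, i.e.\ worse than the trivial bound and far above $\ep^{-\wh d_\gamma - u}$ whenever $\wh d_\gamma < 1$. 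The underlying reason is that Lemma~\ref{prop-cell-diam-count} counts \emph{all} cells of diameter $\geq \ep^\alpha$ in a fixed ball (of which there are about $\ep^{-2\alpha}$ by area considerations), whereas only those meeting the lower-dimensional set $X$ are relevant; your argument never intersects the large cells with the Euclidean cover of $X$. (Separately, the algebraic identity you quote does not hold --- $(1-d)\bigl(2(1-d)+(\gamma^2/2)d^2\bigr)$ does not expand to $2-2d+(\gamma^2/2)d^2$ --- and for the quantum cone with $\gamma \geq \sqrt2$ the first moment $\BB E[\mu_h(B_\delta(0))]$ is infinite, so the log singularity cannot be ``absorbed into a bounded factor''; the paper's proof of Proposition~\ref{prop:kpz} handles the cone only for $D$ at positive distance from $0$, via absolute continuity with the whole-plane GFF.)

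More fundamentally, even the natural repair --- bounding the type~(ii) count by (number of covering squares meeting $X$) $\times$ (number of large cells per square), which by the SLE regularity input of \cite[Proposition 3.4]{ghm-kpz} is $\preceq N_\delta\,\delta^{-o(1)} \approx \ep^{-\alpha \wh d_0 - o(1)}$ --- cannot be balanced against your type~(i) exponent. You would need simultaneously $\alpha \geq (1-\wh d_\gamma)/(2-\wh d_0)$ and $\alpha \wh d_0 \leq \wh d_\gamma$, which together force $\wh d_0 \leq 2\wh d_\gamma$; but the KPZ relation~\eqref{eqn-kpz0} gives $\wh d_0 - 2\wh d_\gamma = (\gamma^2/2)\wh d_\gamma(1-\wh d_\gamma) > 0$ for $\wh d_\gamma \in (0,1)$. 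This is the familiar obstruction: a first-moment computation at a single deterministic Euclidean scale sees only the linear part of the KPZ relation; the quadratic correction comes from the fluctuations of $h$, i.e.\ from the fact that the Euclidean size of a quantum-mass-$\ep$ region near $X$ has a \emph{random} exponent. This is why the paper's proof (Appendix~\ref{sec-kpz}) instead establishes a box-counting KPZ relation for $(\mu_h,\ep)$-boxes whose Euclidean side length adapts to the field --- via Sheffield's rooted-measure argument, Lemma~\ref{prop:boxKPZ} --- and only then converts from adapted dyadic boxes to SLE cells using the regularity event that every cell crossing the annulus between a box and its neighborhood contains a macroscopic ball.
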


We remark briefly on how Proposition~\ref{prop-kpz0} relates to other KPZ-type formulas in the literature. 
The proposition is a variant of~\cite[Proposition 1.6]{shef-kpz} with the ``quantum dimension" defined in terms of cells of $\mcl G^\ep$ rather than dyadic squares with $\mu_h$-mass approximately $\ep$, and is a one-sided Minkowski dimension version of \cite[Theorem 1.1]{ghm-kpz} (which concerns Hausdorff dimension instead of Minkowski dimension). 
This paper will only use the one-sided bound of Proposition~\ref{prop-kpz0}, but the complementary one sided-bound is proven in~\cite[Section 4]{gwynne-miller-char}. There are also a number of other KPZ-type results in the literature, some of which concern Minkowski dimensions~\cite{aru-kpz,grv-kpz,gwynne-miller-char,bjrv-gmt-duality,benjamini-schramm-cascades,wedges,shef-renormalization,shef-kpz,rhodes-vargas-log-kpz}.  

The proof of Proposition~\ref{prop-kpz0} is similar to the proof of~\cite[Proposition 1.6]{shef-kpz} but with an extra step---based on regularity estimates for space-filling SLE segments from~\cite{ghm-kpz}---to transfer from dyadic squares to space-filling SLE segments. To avoid interrupting the main argument, the proof is given in Appendix~\ref{sec-kpz}.

Returning now to the proof of Proposition~\ref{prop-ball-lower}, we note that $\wh d_\gamma = 1/d_-$ is the solution to the KPZ equation~\eqref{eqn-kpz0} when the Euclidean dimension $\wh d_0$ is equal to 1.
Hence if $z , w \in \BB C$ are random points at positive distance from 0 which are chosen in a manner which does not depend on $h$ and $X$ is a smooth path from $z$ to $w$, then Proposition~\ref{prop-kpz0} implies an upper bound for the number of cells in $\mcl G^\ep$ needed to cover $X$, and hence an upper bound for the distance in $\mcl G^\ep$ between the cells containing $z$ and $w$. However, we cannot apply this statement directly with $(0, \eta(x))$ in place of $(z,w)$ since $h$ has a $\gamma$-log singularity at 0 and $\eta(x)$ is not sampled independently from $h$ (because $\eta$ is parametrized by quantum mass with respect to $h$).
The $\gamma$-log singularity is not a serious issue, and can be overcome by a multi-scale argument (Lemma~\ref{prop-line-kpz}). Getting around the fact that $\eta(x)$ is not independent from $h$, however, will require a bit more work.\footnote{This is not the first work to apply KPZ-type results to a set which is not independent from $h$; the paper~\cite{aru-kpz} proves a KPZ-type relation for flow lines of a GFF (in the sense of~\cite{ig1,ig2,ig3,ig4}), in which case the lack of independence is much more serious and (unlike in our setting) the KPZ relation differs from the ordinary KPZ relation for independent sets.}

To get around the lack of independence,
we will first apply Proposition~\ref{prop-kpz0} with $X$ equal to union of the segment $[0,r]$ and the circle $\bdy B_r(0)$ for fixed $r \in(0,1)$ to show that with high probability, the $\mcl G^\ep$-distance from $0$ to any $y \in\ep\BB Z$ for which $\eta([y-\ep,y])$ intersects $\bdy B_r(0)$ is at most $\ep^{-1/d_- + o_\ep(1)}$ (Lemmas~\ref{prop-line-kpz} and~\ref{prop-embedding-dist}). 

If $x \in (0,\ep^u]_{\ep\BB Z}$ for a small $u >0$ and $r\in (0,1/2)$ is small, then the sets $[0,r] \cup \bdy B_r(0)$ and $[\eta(x) , \eta(x)+1/2] \cup \bdy B_{1/2}(\eta(x))$ typically intersect, so if we could replace $0$ with $x$ in the preceding estimate we would get an upper bound of $\ep^{-1/d_- + o_\ep(1)}$ for the $\mcl G^\ep$-graph distance between 0 and $x$. Proposition~\ref{prop-ball-lower} would then follow from this, a union over all $x\in (0,\ep^u]_{\ep\BB Z}$, and the fact that the law of $\mcl G^\ep$ does not depend on $\ep$. 
 
We know from~\cite[Lemma 9.3]{wedges} that $(h(\cdot + \eta(x)) , \eta(\cdot) + \eta(x)) \eqD (h,\eta)$ modulo rotation and scaling for each fixed $x\in\ep\BB Z$ (this rotation/scaling converts $h(\cdot+\eta(x))$ into the circle average embedding of the corresponding quantum cone; c.f.~\eqref{eqn-embedding-shift} and the surrounding discussion). 
So, in order to apply the above estimate with $x$ in place of 0 we need to control the magnitude of this scaling factor.
This is accomplished in Lemma~\ref{prop-embedding-shift} by comparing the $\mu_h$-masses of certain Euclidean balls. 
The reader may wish to consult the caption of Figure~\ref{fig-ball-lower} to see how all of the various lemmas in this subsection fit together.

We start by dealing with the $\gamma$-log singularity of $h$ at 0 (note that we cannot ignore this log singularity like we do in, e.g., the proof of Lemma~\ref{prop-ball-mass} since here we need an \emph{upper} bound for the graph distance between 0 and another point).

\begin{lem} \label{prop-line-kpz} 
Let $X$ be the line segment from 0 to some deterministic point of $\bdy  \BB D$. 
There is a $q > 0$ depending only on $\gamma$ such that the following is true. For $\ep \in (0,1)$, let $N^\ep$ be the number of cells of the form $\eta([x-\ep , x])$ for $x\in\ep\BB Z$ needed to cover $X$ (as in~\eqref{eq:kpz15}). Then for $u \in (0,1)$, we have
\eqbn
\BB P\left[ N^\ep > \ep^{- 1/d_- - u} \right] \preceq \ep^{q u^2 } 
\eqen
with the implicit constant depending only on $\gamma$ and $u$. 
\end{lem}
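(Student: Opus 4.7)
The plan is a multi-scale argument that reduces the problem to dyadic scales, on each of which Proposition~\ref{prop-kpz0} can be applied after a rescaling that absorbs the $\gamma$-log singularity of $h$ at the origin. Fix a small parameter $\alpha = \alpha(u,\gamma) > 0$ to be chosen later, and decompose
\[
X = X_{\mathrm{tiny}} \cup \bigsqcup_{k=0}^{K} X_k, \quad X_k = X \cap \big(B_{2^{-k}}(0) \setminus B_{2^{-k-1}}(0)\big), \quad X_{\mathrm{tiny}} = X \cap B_{\ep^{\alpha}}(0),
\]
where $K = \lceil \log_2 \ep^{-\alpha} \rceil$. The annular pieces $X_k$ are bounded away from $0$ and $\infty$ at their respective scales, so each can be handled by a rescaled version of Proposition~\ref{prop-kpz0}.

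For each $k \in \{0,\dots,K\}$, I would use the scale invariance of the $\gamma$-quantum cone at the origin to produce a fresh circle-average-embedded cone adapted to scale $2^{-k}$. Concretely, choose a random scale $s_k > 0$ (essentially the largest $r$ at which the circle-average process $r \mapsto h_r(0) + Q\log r$ reaches a prescribed level, adjusted so that $s_k$ is of order $2^{-k}$) such that $\tilde h_k(z) := h(s_k z) + Q\log s_k$ is again the circle-average embedding of a $\gamma$-quantum cone; see \cite[Section~4.3]{wedges}. The LQG coordinate change~\eqref{eqn-lqg-coord} gives $\mu_{\tilde h_k}(A) = \mu_h(s_k A)$, so the $\ep$-mass cells of the structure graph for $h$ meeting $X_k$ are in bijection with the $\ep$-mass cells for $\tilde h_k$ meeting $s_k^{-1} X_k$. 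On an event of high probability, quantified by Gaussian concentration of the circle-average process, $s_k^{-1} X_k$ is contained in a fixed annulus $\overline{B_M(0)} \setminus B_{1/M}(0)$ for an absolute constant $M$.

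I would then apply Proposition~\ref{prop-kpz0} to $\tilde h_k$, conditionally on $s_k$ (on which $s_k^{-1} X_k$ is a deterministic bounded set, trivially independent of $\tilde h_k$ after conditioning). The set $s_k^{-1} X_k$ has Euclidean expectation dimension $\hat d_0 = 1$, and a direct algebraic check using~\eqref{eqn-ball-scaling-exponent} shows that the solution to the KPZ equation~\eqref{eqn-kpz0} is $\hat d_\gamma = 1/d_-$. Hence, except on an event of probability $O(\ep^{cu/2})$, the number of $\ep$-mass cells meeting $X_k$ is at most $\ep^{-1/d_- - u/2}$. Union-bounding over the $K = O(\log\ep^{-1})$ scales, and combining with the Gaussian concentration of $s_k$ around $2^{-k}$ (which contributes probability $O(\ep^{cu^2})$ per scale), the total count for $X \setminus X_{\mathrm{tiny}}$ is at most $\ep^{-1/d_- - u}$ except on an event of probability $\preceq \ep^{qu^2}$ for an appropriate $q > 0$.

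Finally, I would bound the contribution of $X_{\mathrm{tiny}}$ by the number of cells meeting $B_{\ep^{\alpha}}(0)$, which is at most $\mu_h(B_{2\ep^{\alpha}}(0))/\ep + 1$. Upper tail estimates for the quantum mass of a small ball (as supplied, e.g., by Lemma~\ref{prop-ball-mass}) make this at most $\ep^{-1/d_- - u}$ provided $\alpha$ is chosen sufficiently small depending on $\gamma$ and $u$. The main obstacle is managing the interplay between the Gaussian concentration of the rescaling radii $s_k$ over the $\log \ep^{-1}$ dyadic scales and the KPZ bound at each scale; this interplay is what forces the final probability bound to scale as $\ep^{qu^2}$, rather than the cleaner $\ep^{qu}$ one would obtain from a single application of Proposition~\ref{prop-kpz0}.
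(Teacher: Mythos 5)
Your skeleton matches the paper's: split $X$ into dyadic annuli plus a small neighborhood of the origin, rescale each annulus to unit scale, apply Proposition~\ref{prop-kpz0} (whose KPZ solution for $\wh d_0=1$ is indeed $1/d_-$), and union bound over $O(\log\ep^{-1})$ scales, with the $u^2$ in the probability exponent coming from Gaussian concentration of circle averages. However, your implementation of the rescaling has a genuine quantitative gap. You rescale by the \emph{random} radius $s_k$ furnished by the quantum cone's scale invariance and claim that, by Gaussian concentration, $s_k^{-1}X_k$ lies in a fixed annulus $\ol{B_M(0)}\setminus B_{1/M}(0)$ with high probability. This is false: the radial part of the cone is a drifted Brownian motion in $\log r$, so $\log s_k$ fluctuates around $-k\log 2$ with standard deviation of order $\sqrt k$. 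Hence $s_k/2^{-k}$ is only confined to $[\ep^{v},\ep^{-v}]$ with probability $1-\ep^{cv^2}$, and escapes any \emph{fixed} constant window with probability bounded away from $0$ (indeed roughly $1/2$ for one side). Since the implicit constant in Proposition~\ref{prop-kpz0} depends on the deterministic bounded set $D$ containing $X$, you cannot apply it to $s_k^{-1}X_k\subset B_{\ep^{-v}}(0)$ with uniform constants. The paper sidesteps this by rescaling with the \emph{deterministic} factor $e^{-k}$ applied to $h^G:=h+\gamma\log|\cdot|$ (exact scale invariance of the whole-plane GFF modulo additive constant), so that $e^k X_k=X_0$ is a fixed set; the random additive constant $h^G_{e^{-k}}(0)$ is then controlled explicitly on the event $|h^G_{e^{-k}}(0)|\le \frac{v}{\gamma}\log\ep^{-1}$, and the coordinate-change factor $e^{\gamma(Q-\gamma)k}\ge 1$ (note $Q-\gamma>0$) is used to convert intervals of $\mu_{h^k}$-mass $\tfrac12\ep^{1+v}$ into intervals of $\mu_h$-mass at most $\ep$.

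Your treatment of $X_{\mathrm{tiny}}$ also does not work as written. First, the number of cells \emph{meeting} $B_{\ep^{\alpha}}(0)$ is not bounded by $\mu_h(B_{2\ep^{\alpha}}(0))/\ep+1$: a cell intersecting the small ball need not be contained in the larger one (cells can have large Euclidean diameter), so disjointness of cells gives no bound on their number from the mass of a slightly larger ball. Second, Lemma~\ref{prop-ball-mass} is a \emph{lower}-tail estimate for the mass of a ball; the upper tail you need is Lemma~\ref{prop-quantum-mass-upper}, and even with it the direction of your choice of $\alpha$ is reversed: to make $\mu_h(B_{\ep^{\alpha}}(0))/\ep\le \ep^{-1/d_--u}$ you must take $\alpha$ \emph{large} (a smaller exceptional ball), not small, since $1/d_-<1$. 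The paper's proof avoids counting cells near the origin altogether: by \cite[Proposition~6.2]{hs-euclidean} together with Lemma~\ref{prop-quantum-mass-upper}, with probability $1-O_\ep(\ep^{q_1})$ one has $B_{\ep^{a}}(0)\subset\eta([-\ep,\ep])$, so the entire neighborhood of the origin is covered by two cells and only $X\setminus B_{\ep^a}(0)$ requires the multi-scale argument. Both gaps are repairable, but each requires replacing your step by a genuinely different argument rather than a refinement of it.
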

\begin{proof}
By~\cite[Proposition 6.2]{hs-euclidean} and Lemma~\ref{prop-quantum-mass-upper}, we can find $a > 0$ and $q_1 >0$ such that 
\eqb \label{eqn-initial-kpz-prob}
\BB P\left[ B_{\ep^a}(0) \not\subset \eta([-\ep , \ep] ) \right]  \preceq \ep^{q_1} .
\eqe 
Hence we only need to cover $X \setminus B_{\ep^a}(0)$. We do this using a multi-scale argument.
 
Let $h^G := h + \gamma \log|\cdot|$, so that (since $h$ is a circle average embedding) $h^G|_{\BB D}$ agrees in law with the restriction to $\BB D$ of a whole-plane GFF. For $r > 0$, let $h_r^G(0)$ be the circle average of $h^G$ over $\bdy B_r(0)$.
For $k\in\BB N_0$, let
\eqbn
h^k := h^G(e^{-k} \cdot)  - \gamma \log |\cdot|  - h_{e^{-k}}^G(0) .
\eqen
By the conformal invariance of the law of the whole-plane GFF (modulo additive constant) we have $h^k|_{\BB D} \eqD h|_{\BB D}$. Let $\eta^k$ be given by $e^{ k} \eta$, parametrized by $\mu_{h^k}$-mass instead of $\mu_h$-mass. 
Also let 
\eqbn
X_k :=  X \cap \left( B_{e^{-k }}(0) \setminus B_{e^{-k-1}}(0) \right)   .
\eqen

Let $v > 0$ (to be chosen later, depending on $u$) and let $E_k$ be the event that the following is true.
\begin{enumerate}
\item $|h_{e^{-k}}^G(0) | \leq \frac{v}{\gamma} \log (\ep^{-1}) $. 
\item There exists a collection $\mcl I_k$ of at most $ \ep^{- (1/d_- +v) (1+v) }$ intervals of length at most $ \frac12 \ep^{1 + v} $ such that $\bigcup_{I\in \mcl I_k} \eta^k(I)$ covers $X_0$. 
\end{enumerate}
The random variable $h_{e^{-k}}^G(0)$ is Gaussian with variance $k$~\cite[Section 3.1]{shef-kpz}, so by the Gaussian tail bound and Proposition~\ref{prop-kpz0},  
\eqbn
\BB P\left[   E_k^c  \right] \preceq \ep^{q_2 v^2} ,\quad \forall k \in [0, \lceil \log \ep^{-a} \rceil ]_{\BB Z}
\eqen
for appropriate $q_2 >0$ depending only on $\gamma$. Therefore,
\eqb   \label{eqn-all-scale-kpz-prob}
\BB P\left[   \bigcap_{k=0}^{\lceil \log \ep^{-a} \rceil} E_k  \right] \geq 1 - \ep^{q_2 v^2 + o_\ep(1)} 
\eqe 

Now suppose that $ \bigcap_{k=0}^{\lceil \log \ep^{-a} \rceil} E_k$ occurs. 
By~\cite[Proposition 2.1]{shef-kpz}, for each $k\in \BB N_0$ and each $A\subset \BB D$ we have
\alb
\mu_{h^k}( A ) 
&= \exp\left(  \gamma (Q-\gamma) k - \gamma h_{e^{-k}}^G(0) \right) \mu_h( e^{-k} A) ,\quad \op{for} \quad Q = \frac{2}{\gamma} + \frac{\gamma}{2} .
\ale
Note that we have a factor of $Q-\gamma$ instead of $Q$ due to the $\gamma$-log singularity. 
In particular, if $E_k$ occurs and $I\in \mcl I_k$, then 
\alb
\frac12 \ep^{1+v} \geq \op{len} I =  \mu_{h^k}( \eta^k(I ) )  \geq  e^{\gamma (Q-\gamma) k} \ep^{-v} \mu_h( e^{-k} \eta^k(I) ) .
\ale
Hence $e^{-k} \eta^k(I) \subset \eta(J)$ for an interval $J\subset \BB R$ with length at most $\ep$. If we let $\mcl J_k$ be the collection of all such intervals $J$, then $\bigcup_{J\in\mcl J_k} \eta(J)$ covers $e^{-k} X_0 = X_k$. Therefore, 
\eqbn
X \setminus B_{\ep^a}(0) \subset \bigcup_{k =  0}^{\lceil \log \ep^{-a} \rceil} \bigcup_{J\in\mcl J_k} \eta(J) .
\eqen
The total number of intervals in $\bigcup_{k =  0}^{\lceil \log \ep^{-a} \rceil} \mcl J_k$ is at most $(\log \ep^{-a}) \ep^{- (1/d_- +v) (1+v) }$. If we take $v = c u $ for an appropriate $c =c(\gamma)>0$, then this quantity is smaller than $\frac12 \ep^{-1/d_--u}  $ for small enough $\ep$. Recalling~\eqref{eqn-initial-kpz-prob} and~\eqref{eqn-all-scale-kpz-prob}, we obtain the statement of the lemma with $q = \min\{q_1 , q_2c^2  \}$.
\end{proof}

\begin{lem} \label{prop-embedding-dist} 
For $\ep  > 0$, $u>0$, and $r\in (0,1)$, let $E_\ep(r) = E_\ep(r , u)$ be the event that the following is true. For each $x\in \ep\BB Z$ such that $\eta([x-\ep , x])$ intersects $[0,r] \cup \bdy B_r(0)$, we have 
\eqbn
\op{dist}\left( 0  , x ; \mcl G^\ep \right)  \leq \ep^{-1/d_--u}. 
\eqen
There exists $q >0$ depending only on $\gamma$ such that for each $u > 0$ and each $r\in (0,1/2]$, 
\eqbn
\BB P\left[ E_\ep(r)^c \right] \preceq \ep^{q u^2} 
\eqen
with the implicit constant depending only on $\gamma$ and $u$ (not on $r$). 
\end{lem}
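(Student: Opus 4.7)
The plan is to apply the KPZ-type estimate Proposition~\ref{prop-kpz0} to the set $X := [0,r]\cup\bdy B_r(0)$ to show that, except on an event of probability $O(\ep^{qu^2})$, the number $N^\ep_X$ of cells of $\mcl G^\ep$ meeting $X$ is at most $\ep^{-1/d_- - u/2}$; I would then convert this covering bound into a bound on the graph distance from $0$ to each such cell. The set $X$ is a union of two smooth $1$-dimensional pieces, so its Euclidean expectation dimension is $\wh d_0 = 1$, and the KPZ relation~\eqref{eqn-kpz0} gives $\wh d_\gamma = 1/d_-$, matching the target exponent.

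First I would split $X$ into its two pieces. The segment $[0,r]$ is handled directly by Lemma~\ref{prop-line-kpz}, applied to the radial segment from $0$ to a fixed point on $\bdy\BB D$ that contains $[0,r]$. For the circle $\bdy B_r(0)$ (which is deterministic, hence independent of $h$), Proposition~\ref{prop-kpz0} applies in principle, but the constants must be controlled uniformly for $r$ arbitrarily close to $0$, where the log singularity of $h$ at the origin is felt. I would handle this with a one-scale rescaling in the spirit of the proof of Lemma~\ref{prop-line-kpz}: pick $k$ with $e^{-k-1}\le r< e^{-k}$, consider the field $h^k(z) := (h+\gamma\log|\cdot|)(e^{-k}z) - \gamma\log|z| - h^G_{e^{-k}}(0)$ which by conformal invariance of the whole-plane GFF has the law of $h$ on $\BB D$, together with the curve $\eta^k := e^k\eta$ reparametrized by $\mu_{h^k}$-mass. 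The circle $\bdy B_r(0)$ then becomes a circle of radius in $[e^{-1},1]$, independent of $h^k$, so Proposition~\ref{prop-kpz0} applies at unit scale. Converting back via the $\mu_h$-to-$\mu_{h^k}$ scaling identity and a Gaussian tail bound on $h^G_{e^{-k}}(0)$ (which has variance $k\le \log\ep^{-a}$ once one restricts to the event $B_{\ep^a}(0)\subset\eta([-\ep,\ep])$ of Lemma~\ref{prop-line-kpz}, which handles the regime $r<\ep^a$ deterministically) yields a cover of $\bdy B_r(0)$ by at most $\ep^{-1/d_--u/2}$ cells, with implicit constants independent of $r$.

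Finally I would convert the covering bound into a graph-distance bound. Let $C_X := \{y\in\ep\BB Z : \eta([y-\ep,y])\cap X\ne\emptyset\}$. Since $X$ is path-connected and $0\in X$, for each $x\in C_X$ there is a continuous path in $X$ from $0$ to a point of $\eta([x-\ep,x])$. The cells visited along such a path form, with probability $1-o_\ep^\infty(\ep)$, a path in $\mcl G^\ep$ from $0$ to $x$: for $\kappa\ge 8$ this is automatic from Figure~\ref{fig-sle-segment}, and for $\kappa\in(4,8)$ one either perturbs the path slightly to avoid the countable collection of bottleneck pinch points of $\eta$, or replaces $X$ by a small Euclidean neighborhood and applies the same KPZ cover with $u$ slightly smaller, absorbing the loss. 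Therefore $\op{dist}(0,x;\mcl G^\ep) \le |C_X| \le \ep^{-1/d_- - u}$ for every $x\in C_X$, which is exactly the event $E_\ep(r)$. The main obstacle in this plan is the uniformity in $r$ of the KPZ cover of $\bdy B_r(0)$; once that is established, taking $q$ to be the smaller of the exponents produced by the two applications of Proposition~\ref{prop-kpz0} completes the argument.
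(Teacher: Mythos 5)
Your proposal is correct and follows essentially the same route as the paper: the paper's (very terse) proof likewise handles the segment via Lemma~\ref{prop-line-kpz} and the circle $\bdy B_r(0)$ via Proposition~\ref{prop-kpz0} combined with ``a scaling argument as in the proof of Lemma~\ref{prop-line-kpz}'', which is precisely the one-scale rescaling you use to get uniformity in $r$. The only difference is that you make explicit the final conversion from a covering bound to a graph-distance bound (connectivity in $\mcl G^\ep$ of the set of cells meeting a connected set, including the $\kappa\in(4,8)$ caveat), a step the paper leaves implicit.
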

\begin{proof}
By Proposition~\ref{prop-kpz0} and a scaling argument as in the proof of Lemma~\ref{prop-line-kpz}, it holds except on an event of probability $\ep^{q_0 u^2}$ for $q_0 = q_0(\gamma) > 0$ that the number of cells $\eta([y-\ep , y])$ for $y\in\ep\BB Z$ needed to cover $\bdy B_r(0)$ is at most $\frac12 \ep^{-1/d_- - u}$. The lemma follows by combining this with Lemma~\ref{prop-line-kpz}.
\end{proof}

We next want to use translation invariance to apply Lemma~\ref{prop-embedding-dist} with $\eta(x)$ for appropriate $x\in\ep\BB Z$ in place of $ 0 = \eta(0)$. By~\cite[Lemma 9.3]{wedges}, if we set 
\eqb \label{eqn-shift-field}
(h^t , \eta^t) := (h(\cdot + \eta(t)) , \eta(\cdot + t)-\eta(t))  ,\quad \forall t \in \BB R, 
\eqe
then $(h^t, \eta^t) $ agrees in law with $ (h,\eta)$ modulo rotation and scaling, i.e., there exists a random $\rho_t \in\BB C$ for which
\eqb \label{eqn-embedding-shift}
\left(  h^t (\rho_t \cdot) + Q\log |\rho_t|  , \rho_t^{-1} \eta^t \right) \eqD (h,\eta)  ;
\eqe 
here we recall that $Q$ is as in~\eqref{eqn-lqg-coord}.
The parameter $\rho_t$ is determined by the requirement that $h^t (\rho_t \cdot) + Q\log |\rho_t| $ is a circle average embedding of $h^t$ (as defined in Section~\ref{sec-lqg-prelim}). 
Since the statement of Lemma~\ref{prop-embedding-dist} is only proven for $h$, which is assumed to have the circle average embedding, we need some lemmas to control how much $h^t$ differs from a circle average embedding, i.e., we need to control $\rho_t$. In particular, we will prove the following lemma.

\begin{lem} \label{prop-embedding-shift}
For $t\in\BB R$, define $(h^t,\eta^t)$ as in~\eqref{eqn-shift-field} and $\rho_t$ as in~\eqref{eqn-embedding-shift}. 
There exists $a , q >0$ depending only on $\gamma$ such that for each $\ep \in (0,1)$ and each $u \in (0,1)$, 
\eqb \label{eqn-embedding-shift-prob}
\BB P\left[ \eta([0, \ep^u ]) \subset B_{\ep^{a u}}(0) \subset B_{|\rho_x|/2}(\eta(x)) ,\: \forall x \in (0,\ep^u]_{\ep\BB Z} \right] \geq 1 - O_\ep(\ep^{q u^2}) .
\eqe
\end{lem}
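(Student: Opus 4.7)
I would treat the two containments in~\eqref{eqn-embedding-shift-prob} separately. The first, $\eta([0,\ep^u]) \subset B_{\ep^{au}}(0)$, follows directly from Lemma~\ref{prop-sle-segment} (the Euclidean diameter estimate for space-filling SLE segments of prescribed quantum mass that was already invoked in Step 3 of the proof of Proposition~\ref{prop-diam-lower}), applied with quantum mass scale $\ep^u$; this gives a polynomial-in-$\ep$ tail bound $\BB P[\eta([0,\ep^u]) \not\subset B_{\ep^{au}}(0)] \preceq \ep^{q_1 u^2}$ for some $a, q_1 > 0$. On this event, $\eta(x) \in B_{\ep^{au}}(0)$ for every $x \in (0,\ep^u]_{\ep\BB Z}$, so the triangle inequality yields $B_{\ep^{au}}(0) \subset B_{2\ep^{au}}(\eta(x))$, and the second containment reduces to the lower bound $|\rho_x| \geq 4\ep^{au}$ for all such $x$ simultaneously.

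To prove this lower bound I would unwind the circle-average-embedding condition~\eqref{eqn-circle-avg} applied to $\wt h := h^x(\rho_x \cdot) + Q\log|\rho_x|$. Writing $h_s(z)$ for the circle average of $h$ over $\bdy B_s(z)$, a direct change of variables yields the identity
\eqbn
|\rho_x| = \sup\bigl\{ s > 0 \,:\, \psi^x(s) = 0 \bigr\} ,\qquad \psi^x(s) := h_s(\eta(x)) + Q\log s .
\eqen
By the intermediate value theorem it therefore suffices to show that, except on an event of probability $O_\ep(\ep^{q_2 u^2})$, one has $\psi^x(4\ep^{au}) < 0 < \psi^x(r_0)$ simultaneously for all $x \in (0,\ep^u]_{\ep\BB Z}$, where $r_0 > 0$ is a fixed constant depending only on $\gamma$.

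For $\psi^x(4\ep^{au}) < 0$, I would decompose $h = h^0 - \gamma \log|\cdot|$ on $\BB D$, where $h^0$ is a whole-plane GFF normalized so that $h^0_1(0) = 0$; using $(\log|\cdot|)_s(z) = \log s$ whenever $|z| < s$ (which holds here since $|\eta(x)| < \ep^{au} < 4\ep^{au}$), we obtain
\eqbn
\psi^x(4\ep^{au}) = h^0_{4\ep^{au}}(\eta(x)) + (Q-\gamma)\log(4\ep^{au}) ,
\eqen
whose deterministic part equals $-(Q-\gamma) a u \log \ep^{-1} + O(1)$, which is large and negative since $Q - \gamma = (4-\gamma^2)/(2\gamma) > 0$. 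Combined with Gaussian tail plus modulus-of-continuity estimates for circle averages of the whole-plane GFF (of the sort used in the proof of Lemma~\ref{prop-line-kpz}), these bounds control $\sup_{z \in B_{\ep^{au}}(0)} h^0_{4\ep^{au}}(z)$ by a quantity far below $(Q-\gamma) au \log \ep^{-1}$ with the required probability. The complementary inequality $\psi^x(r_0) > 0$ is strictly easier: pick $r_0$ large enough that $(Q-\gamma)\log r_0$ dominates the fluctuations of $h^0_{r_0}$, then use continuity of circle averages in the spatial variable to transfer from $z = 0$ to any $z \in B_{\ep^{au}}(0)$.

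\textbf{Main obstacle.} The principal technical point is bounding $\sup_{z \in B_{\ep^{au}}(0)} h^0_{4\ep^{au}}(z)$ in a way that is uniform in $z$, which avoids a union bound over the roughly $\ep^{u-1}$ vertices $x \in (0,\ep^u]_{\ep\BB Z}$. This is handled by standard regularity of the map $(z,r) \mapsto h^0_r(z)$ together with Gaussian concentration, and no new probabilistic input is needed beyond what already underlies Lemma~\ref{prop-line-kpz} and Lemma~\ref{prop-sle-segment}.
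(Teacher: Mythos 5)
Your route to the key lower bound $|\rho_x| \geq 4\ep^{au}$ is genuinely different from the paper's. The paper never touches the circle-average process defining $\rho_x$: it shows that $\mu_h(B_{4\ep^{au}}(0)) \leq \ep^{abu}$ (Lemma~\ref{prop-quantum-mass-upper}) while $\mu_h(B_{|\rho_x|/2}(\eta(x))) = \mu_{h^x}(B_{|\rho_x|/2}(0))$ has the law of $\mu_h(B_{1/2}(0))$ and hence exceeds $\ep^{abu}$ for all $x\in (0,\ep^u]_{\ep\BB Z}$ simultaneously except with probability $o_\ep^\infty(\ep)$ (Lemma~\ref{prop-quantum-mass-lower} plus a union bound); the ball around $\eta(x)$ therefore cannot be contained in $B_{4\ep^{au}}(0)$, and the triangle inequality finishes. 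Your identity $|\rho_x| = \sup\{s>0 : h_s(\eta(x)) + Q\log s = 0\}$ is correct, and your analysis of $\psi^x(4\ep^{au})$ is sound: the harmonic-average identity, the sign of $Q-\gamma$, and the uniform control of $\sup_{z\in B_{\ep^{au}}(0)} h^0_{4\ep^{au}}(z)$ all check out (and in fact yield a tail of order $\ep^{cu}$ rather than $\ep^{qu^2}$). The mass-comparison argument buys a shorter proof from lemmas already in the appendix; yours is more hands-on but works directly with the quantity $|\rho_x|$.

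Two points need repair. First, the containment $\eta([0,\ep^u]) \subset B_{\ep^{au}}(0)$ does not follow from Lemma~\ref{prop-sle-segment}, which gives the reverse containment $\BB D \subset \eta([-M,M])$. What you need is that the segment of $\eta$ run until it first exits $B_{\ep^{au}}(0)$ already has quantum mass exceeding $\ep^u$; the paper obtains this by combining the fact that this exit segment contains a Euclidean ball of radius $\ep^{2au}$ (independent of $h$) with a polynomial lower tail for the $\mu_h$-mass of such a ball, and this is precisely where the $u^2$ in the exponent originates. Second, the step $\psi^x(r_0)>0$ is not ``strictly easier'': the decomposition $h = h^0 - \gamma\log|\cdot|$ is valid only on $\BB D$, so you cannot take $r_0$ large and argue via fluctuations of $h^0_{r_0}$; and at $s=1$ one has $\psi^0(1)=0$ exactly by the normalization~\eqref{eqn-circle-avg}, so positivity of $\psi^x(s)$ for $s$ near $1$ is borderline. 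You need a quantitative lower tail such as $\BB P[h_2(0) + Q\log 2 < \delta] \preceq \delta^c$ (which requires the description of the radial part of the quantum cone outside $\BB D$), together with spatial continuity of $z\mapsto h_2(z)$ to absorb the error $|h_2(\eta(x)) - h_2(0)|$ uniformly over $\eta(x)\in B_{\ep^{au}}(0)$. Both gaps are fixable, but the second requires input beyond what you cite.
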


If $B_{\ep^{au}}(0) \subset B_{|\rho_x|/2}(\eta(x))$, then every path in $\mcl G^\ep$ from $x$ to a vertex whose corresponding cell intersects $ B_{|\rho_x|/2}( \eta(x) )$ must pass through a vertex whose corresponding cell intersects $\bdy B_{\ep^{a u}}(0)$. This will allow us to apply Lemma~\ref{prop-embedding-dist} with $r = \ep^{a u}$; and with $(h^x ,\eta^x)$ in place of $(h,\eta)$ and $r =1/2$ to deduce Proposition~\ref{prop-ball-lower} (c.f.\ Figure~\ref{fig-ball-lower}).

\begin{proof}[Proof of Lemma~\ref{prop-embedding-shift}]
We will establish an upper bound for $\mu_h(B_{4\ep^{au}}(0))$ and a lower bound for $\mu_h(B_{|\rho_x|/2}(\eta(x))$, which together imply that the former ball cannot contain the latter ball and hence that $B_{\ep^{au}}(0) \subset B_{|\rho_x|/2}(\eta(x))$ provided $|\eta(x)| \leq \ep^{a u}$. This is done using some basic estimates for the LQG measure from Appendix~\ref{sec-lqg-estimate}.
 
Let $a > 0$ to be chosen later and let $\tau_\ep$ be the exit time of $\eta$ from $B_{\ep^{a u } }(0)$. By~\cite[Lemma 3.6]{ghm-kpz}, except on an event of probability $o_\ep^\infty(\ep)$ it holds that $\eta([0,\tau_\ep])$ contains a Euclidean ball of radius at least $\ep^{ 2a u}$. Note that $\eta([0,\tau_\ep])$ is independent from $h$. By~\cite[Lemma 3.12]{ghm-kpz}, if $a$ is chosen sufficiently small (depending only on $\gamma$) than we can find $q_1 > 0$ such that the probability that this Euclidean ball has quantum mass smaller than $\ep^u$ is $\preceq \ep^{q_1 u^2} $. Hence with probability at least $ 1 - O_\ep(\ep^{q_1 u^2})$, 
\eqb  \label{eqn-embedding-diam}
  \eta([0, \ep^u ]) \subset B_{\ep^{a u}}(0)   .
\eqe 
By Lemma~\ref{prop-quantum-mass-upper}, we can find $b , q_2 > 0$, depending only on $\gamma$, such that with probability at least $1 - O_\ep(\ep^{q_2 u^2})$, 
\eqb \label{eqn-embedding-mass} 
  \mu_h(B_{4\ep^{a u} }(0) ) \leq \ep^{a b  u} .
\eqe  
By the definition~\eqref{eqn-embedding-shift} of $\rho_x$ and~\cite[Proposition 2.1]{shef-kpz}, $\mu_{h^x}(B_{|\rho_x|/2}(0)) = \mu_h(B_{|\rho_x|/2}(\eta(x)))$ has the same law as $\mu_h(B_{1/2}(0) )$. By Lemma~\ref{prop-quantum-mass-lower} and a union bound over all $x\in (0,\ep^u]_{\ep\BB Z}$, it holds with probability $1-o_\ep^\infty(\ep)$ that 
\eqb \label{eqn-embedding-shift'}
  \mu_h(B_{|\rho_x|/2}(\eta(x)))   >  \ep^{a b u} ,\quad \forall  x\in (0,\ep^u]_{\ep\BB Z}   .
\eqe  

Henceforth assume that~\eqref{eqn-embedding-diam},~\eqref{eqn-embedding-mass}, and~\eqref{eqn-embedding-shift'} all hold, which happens with probability at least $1-O_\ep(\ep^{q u^2})$ for $q = q_1\wedge q_2$. The relations~\eqref{eqn-embedding-mass} and~\eqref{eqn-embedding-shift'} immediately imply that $B_{|\rho_x|/2}(\eta(x)) \not\subset B_{4\ep^{a u} }(0)$ for each $ x\in (0,\ep^u]_{\ep\BB Z} $. Since we are also assuming that $\eta([0,\ep^u]) \subset B_{\ep^{a u}}(0)$, this together with the triangle inequality shows that for each such $x$, 
\eqbn
|\rho_x|/2  \geq 4 \ep^{a u} - |\eta(x)| \geq 3\ep^{a u} \quad \text{and hence} \quad B_{\ep^{a u}}(0) \subset B_{|\rho_x|/2}(\eta(x)) .
\eqen
Hence~\eqref{eqn-embedding-shift-prob} holds.
\end{proof}

\begin{figure}[ht!]
 \begin{center}
\includegraphics[scale=.6]{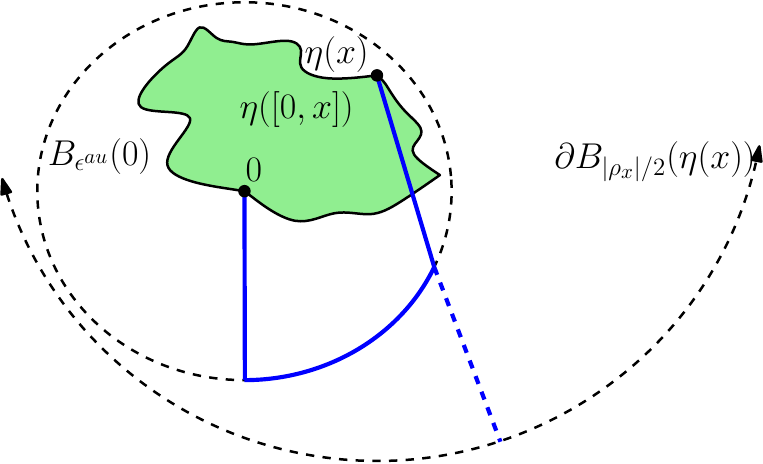} 
\caption[Illustration of the proof of Proposition~\ref{prop-ball-lower}]{Illustration of the proof of Proposition~\ref{prop-ball-lower}. Lemma~\ref{prop-embedding-dist} implies that with high probability, the $\mcl G^\ep$-distance from 0 to any cell which intersects the circle $\bdy B_{\ep^{a u}}(0)$, with $a$ as in Lemma~\ref{prop-embedding-shift}, is at most $\ep^{-1/d_- - u}$. For $x\in\ep\BB Z$ ball $B_{|\rho_x|/2}(\eta(x))$ is mapped to $B_{1/2}(0)$ when we re-scale as in~\eqref{eqn-embedding-shift} to get a circle average embedding of the $\gamma$-quantum cone $(\BB C , h^x , 0 , \infty)$. Hence Lemma~\ref{prop-embedding-dist} also implies that with high probability the $\mcl G^\ep$-distance from $x$ to any vertex whose corresponding cell intersects $\bdy B_{|\rho_x|/2}(\eta(x))$ is at most $\ep^{-1/d_- - u}$. Lemma~\ref{prop-embedding-shift} implies that if $x \in (0,\ep^u]_{\ep\BB Z}$, then with high probability $\eta(x) \in  B_{\ep^{a u }}(0) \subset  B_{|\rho_x|/2}(\eta(x))$, with $\rho_x$ as in~\eqref{eqn-embedding-shift}. If this is the case, then every path in $\mcl G^\ep$ from $x$ to a vertex whose corresponding cell intersects $ B_{|\rho_x|/2}( \eta(x) )$ must pass through $\bdy B_{\ep^{a u}}(0)$.  We obtain an upper bound for $\op{dist}(0  , x ; \mcl G^\ep)$ by concatenating part of a path from $x$ to a cell which intersects $\bdy B_{|\rho_x|/2}(\eta(x))$ with the reverse of a path from 0 to a cell which intersects $\bdy B_{\ep^{au}}(0)$ (the concatenated path is shown in solid blue). Note that our proof shows that we can take the paths to consist of line segments and arcs of the boundaries of the circles, but this is not necessary for our conclusion. 
}\label{fig-ball-lower}
\end{center}
\end{figure}

\begin{proof}[Proof of Proposition~\ref{prop-ball-lower}]
See Figure~\ref{fig-ball-lower} for an illustration of the proof.
For $x \in (0,\ep^u]_{\ep\BB Z}$, let $(h^x,\eta^x)$ and $\rho_x \in\BB C$ be as in~\eqref{eqn-shift-field} and~\eqref{eqn-embedding-shift}, respectively.
Let $a > 0$ be as in Lemma~\ref{prop-embedding-shift} and let $E_\ep^*$ be the event $E_\ep(\ep^{au})$ from Lemma~\ref{prop-embedding-dist} (with $r = \ep^{au}$). 
Also let $E_\ep^x$ be the event $E_\ep(1/2)$ from Lemma~\ref{prop-embedding-dist} with the re-scaled pair $(h^x(\rho_x \cdot) + Q\log |\rho_x| , \rho_x  \eta^x)$ (which has the same law as $(h,\eta)$) in place of $(h , \eta)$, i.e.,
\eqbn
E_\ep^x =  \left\{ \op{dist}\left( x,y ; \mcl G^\ep \right)  \leq \ep^{-1/d_--u} ,\: \forall y \in \ep\BB Z \: \op{with} \: \eta([y-\ep,y]) \cap B_{|\rho_x|/2}(\eta(x)) \not=\emptyset \right\} .
\eqen

Suppose now that $x\in  (0,\ep^u]_{\ep\BB Z}$ and the event
\eqb \label{eqn-ball-lower-event}
E_\ep^x \cap E_\ep^* \cap \left\{\eta(x) \in B_{\ep^{a u}}(0) \subset B_{|\rho_x|/2}(\eta(x)) \right\}
\eqe  
occurs. By definition of $E_\ep^x$, there is a path in $\mcl G^\ep$ of length at most $\ep^{-1/d_- -u}$ from $x$ to a vertex whose corresponding cell intersects $\bdy B_{|\rho_x|/2}(\eta(x))$. Since $B_{\ep^{au}}(0) \subset B_{|\rho_x|/2}(\eta(x))$ this path must pass through a vertex of $\mcl G^\ep$ whose corresponding cell intersects $\bdy B_{\ep^{a u}}(0)$. By definition of $E_\ep^*$, we thus have $\op{dist}\left( 0 , x ; \mcl G^\ep \right) \leq 2\ep^{-1/d_- - u}$. 

It follows from Lemmas~\ref{prop-embedding-dist} and~\ref{prop-embedding-shift} that there is a $q > 0$ depending only on $\gamma$ such that for each $x\in (0,\ep^{u}]_{\ep\BB Z}$, it holds with probability at least $1 - O_\ep(\ep^{q u^2})$ that the event~\eqref{eqn-ball-lower-event} occurs, with the $O_\ep(\ep^{qu^2})$ uniform over all $x\in (0,\ep^u]_{\ep\BB Z}$. Hence
\eqbn
\BB P\left[ \op{dist}\left( 0 , x ; \mcl G^\ep \right) \leq 2\ep^{-1/d_- - u} \right]  \geq 1 - O_\ep(\ep^{q u^2}) , \quad \forall x\in (0,\ep^{u}]_{\ep\BB Z}.
\eqen
By the Chebyshev inequality, with probability at least $1 - O_\ep(\ep^{q u^2})$, there are at least $(1-o_\ep(1)) \ep^{ -( 1-u ) }$ elements of $\ep\BB Z$ whose distance to 0 in $\mcl G^\ep$ is at most $2\ep^{-1/d_--u}$. 

Given $n\in\BB N$, choose $\ep  > 0$ such that $n = \lfloor 2\ep^{-1/d_--u} \rfloor$, so that $\ep \asymp n^{\frac{d_-}{1-d_-u}}$. Then the preceding paragraph implies that there is a $p_0 = p_0(\gamma ) > 0$ such that if $u$ is chosen sufficiently small, then except on an event of probability at most $O_n(n^{-p_0 u^2})$ there are at least $(1-o_n(1)) n^{\frac{d_- (1-u)}{1 - d_-u}}$ elements $x$ of $\ep\BB Z$ with $\op{dist}\left(0 , x ; \mcl G^\ep \right) \leq n$. By scale invariance, the law of $\#\mcl B_n\left(0  ;\mcl G^\ep \right)$ does not depend on $\ep$. The statement of the lemma for small enough $u$ follows by replacing $u$ with $c u $ where $c = c(\gamma)$ is chosen so that $(1-o_n(1)) n^{\frac{d_- (1-c u)}{1 - d_-c u}} \leq n^{d_- - u}$ for small enough $u$. The statement for general $u\in (0,1)$ follows by shrinking $p$. 
\end{proof}

\begin{proof}[Proof of Theorem~\ref{thm-ball-scaling}]
The upper bound for $\# B_n(0;\mcl G^1)$ follows from~\eqref{eqn-diam-lower-max} of Proposition~\ref{prop-diam-lower} and scale invariance.
The lower bound follows from Proposition~\ref{prop-ball-lower}. 
\end{proof}

\begin{remark}[Upper bound for $\chi$] \label{remark-chi-upper'}
In this remark we describe what is needed to extract the upper bound for the exponent $\chi$ of Theorem~\ref{thm-chi-exists} described in Remark~\ref{remark-chi-upper} from the results of this subsection and the other estimates in this paper. 

We first note that if the lower bound of Theorem~\ref{thm-dist-bound} held for distances in $\mcl G^\ep$ rather than in $\mcl G^\ep|_{(0,1]}$ (which we expect to be the case for $\gamma \in (0,\gamma_*]$, as defined in Conjecture~\ref{conj-chi}) then the upper bound for $\chi$ would follow from Proposition~\ref{prop-kpz0}, applied with $X$ equal to a straight line, plus a similar (but easier) argument to the one used to prove Proposition~\ref{prop-ball-lower}. 

In order to extract an upper bound for $\chi$ using only the results of this paper, we would need to apply Proposition~\ref{prop-kpz0} to a set $X$ which is contained in $\eta([0,\tau])$ for an appropriate choice of time $\tau$. 
We can reduce to the case when $\tau$ is a stopping time depending only on $\eta$, viewed modulo parametrization, by similar arguments to the ones used earlier in this subsection. Due to our strong upper bound for distances (\eqref{eqn-dist-upper0} of Theorem~\ref{thm-dist-bound}) we only need to consider paths between points near $\eta(0)$ and $\eta(\tau)$, not between $\eta(0)$ and $\eta(\tau)$ themselves.  
Hence we would need an upper bound for the minimal Euclidean length of a curve $X$ between appropriate points of $\eta([0,\tau])$ which is contained in $\eta([0,\tau])$. We expect that such a bound can be proven using SLE estimates, but we do not carry this out here. 
\end{remark}

\section{Expected diameter of a cell conditioned on its boundary lengths}
\label{sec-cond-diam}

Fix $\gamma \in (0,2)$ and assume we are in the setting of Section~\ref{sec-structure-graph}. 
In this section we will prove an estimate which shows that the conditional expected diameter of $\mcl G^{2^{-n}}|_{(0,1]}$ given a realization of the boundary length vector $\Delta^Z_{[0,1]}$ (Definition~\ref{def-interval-inf}) with $|\Delta^Z_{[0,1]}|$ not too large is not too much larger than its unconditional expected diameter (here and elsewhere $|\cdot|$ denotes the usual Euclidean norm). 
We note that conditioning on $\Delta^Z_{[0,1]}$ is equivalent to conditioning on $L_1$, $R_1$, and the infima of each of $L$ and $R$ over $[0,1]$.

\begin{prop} \label{prop-diam-given-bdy}
Let $n\in\BB N$ and let $F_n$ be the regularity event from Lemma~\ref{prop-bm-cont}. For each $a = (\ul a_L , \ul a_R , \ol a_L , \ol a_R) \in (0,\infty)^4$ with $| \Delta^Z_{[0,1]} | \leq n$, we have
\eqb \label{eqn-diam-given-bdy}
\BB E\left[ \op{diam}\left( \mcl G^{2^{-n}}|_{(0,1]} \right)  \BB 1_{F_n} \,|\, \Delta^Z_{(0,1]} = a \right] \preceq   n^5 \BB E\left[ \op{diam} \mcl G^{2^{-n}}|_{(0,1]} \right] 
\eqe 
with the implicit constant depending only on $\gamma$. 
\end{prop}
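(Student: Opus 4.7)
The plan is to compare the conditional expectation $\Psi(a) := \BB E[\op{diam}(\mcl G^{2^{-n}}|_{(0,1]}) \BB 1_{F_n} \mid \Delta^Z_{[0,1]} = a]$ at a general admissible $a$ to the unconditional average, via a Harnack-type inequality. First I would observe, using Lemma~\ref{prop-adjacency-length}, that $\op{diam}(\mcl G^{2^{-n}}|_{(0,1]})$ is a measurable functional of $Z|_{[0,1]}$, and that the condition $\Delta^Z_{[0,1]} = a$ is equivalent to prescribing the endpoint $Z_1$ and the pair of coordinate infima $(\inf_{[0,1]}L, \inf_{[0,1]}R)$. By Brownian scaling together with the Shimura-type estimate in Lemma~\ref{prop-cone-prob}, the joint density $p$ of $\Delta^Z_{[0,1]}$ at any point $a' \in (0,\infty)^4$ with $|a'|$ of order unity is bounded below by a positive constant, so in particular $\int_{|a'|\leq 1} p(a')\, da' \succeq 1$; the disintegration also gives $\int \Psi(a') p(a')\, da' \leq \BB E[\op{diam}(\mcl G^{2^{-n}}|_{(0,1]})]$. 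Consequently, the proposition reduces to the pointwise comparison
\[
\Psi(a) \;\leq\; C n^{5} \, \Psi(a') \qquad \text{for every $a$ with $|a| \leq n$ and every $|a'| \leq 1$,}
\]
since averaging the right-hand side against $p(a')$ over $\{|a'|\leq 1\}$ then produces \eqref{eqn-diam-given-bdy}.

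To establish the Harnack comparison I would use an explicit path description of the conditional law $\BB P[\,\cdot \mid \Delta^Z_{[0,1]} = a\,]$. Splitting the interval $[0,1]$ at the random times $\tau_L, \tau_R$ at which $L$ and $R$ attain their respective minima, a Williams-type path decomposition expresses this conditional law (conditionally on $\tau_L, \tau_R$) as a product of Bessel-bridge-like pieces with prescribed endpoints and ranges, whose densities with respect to unconditional Brownian bridges have known explicit formulas. Using these formulas, I would construct a coupling of the conditional processes at parameters $a$ and $a'$ driven by the same underlying Gaussian noise, so that the two processes differ by a deterministic drift of magnitude $O(|a-a'|) = O(n)$. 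On the regularity event $F_n$ from Lemma~\ref{prop-bm-cont}, the uniform distance between the coupled paths is therefore $O(n)$, and a short combinatorial argument bounds the ratio of structure-graph diameters by a polynomial in $n$, using that perturbing the four boundary-arc length profiles of each cell by at most $n$ can create or destroy only polynomially many adjacencies in $\mcl G^{2^{-n}}$ (the adjacency condition \eqref{eqn-displacement-cond} is a strict inequality between the various $\ul\Delta, \ol\Delta$).

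The main obstacle will be the correlated two-dimensional Williams decomposition: because $L$ and $R$ are correlated with $\tau_L \neq \tau_R$ in general, the conditional law given $(Z_1,\inf L, \inf R)$ does not factor as a tensor product of one-dimensional conditioned paths, and the joint density must be extracted from the two-dimensional cone-probability formula \eqref{eqn-cone-prob} together with standard Brownian bridge transition densities. Tracking the precise polynomial factor arising from (i) the Jacobian of the four-dimensional density $p$ on $\{|a| \leq n\}$, (ii) the change of measure between the conditional path laws at $a$ and at $a'$, and (iii) the event $F_n$ (which ensures that all Gaussian exponentials that appear contribute only polynomially in $n$ rather than $e^{\Omega(n^2)}$), should yield the stated polynomial loss of order $n^5$.
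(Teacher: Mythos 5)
Your high-level architecture (split $[0,1]$ at the times where $L$ and $R$ attain their minima, then compare the conditional law to the unconditional one) resembles the paper's, but the execution diverges at the decisive step, and that step does not work.

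The fatal gap is the claim that if two paths are coupled so as to differ by a deterministic drift of magnitude $O(n)$, then on $F_n$ the two structure-graph diameters differ by only a polynomial factor, because ``perturbing the four boundary-arc length profiles of each cell by at most $n$ can create or destroy only polynomially many adjacencies.'' At cell size $2^{-n}$ the entries of $\Delta^Z_{[x-2^{-n},x]}$ are of order $2^{-n/2}$, so a perturbation of size $n$ (or even of size $2^{-n/2}$) is not small relative to the quantities entering the adjacency condition \eqref{eqn-inf-adjacency}: it can alter essentially \emph{every} comparison of infima over sub-intervals and hence rearrange the adjacency structure wholesale. Moreover, even if only polynomially many edges changed, the diameter of a graph is not stable under edge deletion — removing a few edges can blow the diameter up by much more than a polynomial factor. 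The diameter is a global, highly non-Lipschitz functional of the path, and no pathwise perturbation argument of this kind can control it. A secondary problem is the coupling itself: conditioning on the exact values of $\inf_{[0,1]}L$ and $\inf_{[0,1]}R$ is not a Gaussian conditioning, so the two conditional laws at $a$ and $a'$ are not related by a deterministic drift; even the locations $\tau_L,\tau_R$ of the minima have different laws under the two conditionings, so the Williams-type decompositions do not line up piece by piece.

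The paper never compares diameters pathwise. Instead it shows (Lemma~\ref{prop-cond-diameter}) that on a sufficiently short time interval the Radon--Nikodym derivative of the conditioned increment law with respect to the unconditioned one is bounded on a high-probability event, so the \emph{conditional expected diameter of the restriction to that short interval} is comparable to an unconditional expected diameter; it then handles the infimum conditioning by the sandwiching Lemma~\ref{prop-mid-inf-pos} (further conditioning the middle piece to stay in the quadrant costs only a constant factor), and finally sums over $\operatorname{poly}(n)$ many such intervals, which is where the factor $n^5$ comes from. If you want to salvage your approach, you would need to replace the coupling/perturbation step with a change-of-measure estimate of this type — bounding densities rather than comparing realizations of the diameter.
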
 

The reason why Proposition~\ref{prop-diam-given-bdy} is useful is as follows. By Lemma~\ref{prop-adjacency-length}, the graph $\mcl G^{2^{-n }}|_{(0,1]}$ is determined by $\{\Delta_{[x-2^{-n} ,x]}^Z \,:\, x \in (0,1]_{2^{-n}\BB Z}\}$. Hence for $n,m\in\BB N$, Proposition~\ref{prop-diam-given-bdy} allows us to estimate the conditional expected diameter given $\{\Delta_{[x-2^{-n} ,x]}^Z \,:\, x \in (0,1]_{2^{-n}\BB Z}\}$ of $\mcl G^{2^{-n-m}}|_{(x-2^{-n} ,x]}$ for $x \in (0,1]_{2^{-n}\BB Z}$. Such an estimate will play a key role in the next section (see in particular Lemma~\ref{prop-dist-scale}).

To prove Proposition~\ref{prop-diam-given-bdy}, we will start in Section~\ref{sec-endpoint-cond} by proving an analogous estimate when we condition on only $Z_1 = (\ol\Delta^L_{(0,1]} - \ul\Delta^L_{(0,1]} , \ol\Delta^R_{(0,1]} - \ul\Delta^R_{(0,1]})$ instead of on the whole boundary length vector $\Delta^Z_{(0,1]}$ (which amounts to working with a correlated Brownian bridge). In Section~\ref{sec-cond-bridge}, we will improve this to an estimate where we condition on $Z_1$ and the event that the infimum of $L$ (resp. $R$) on $[0,1]$ is at least $-b_L$ (resp. $-b_R$) for some $b_L ,b_R \geq 0$, but not on the precise values of these infima. In Section~\ref{sec-cond-diam-proof}, we will conclude the proof of Proposition~\ref{prop-diam-given-bdy}. The reason for going through these intermediate steps is that we have simple, explicit formulas for quantities related to a Brownian bridge (e.g., the Radon-Nikodym derivative of an initial segment with respect to the corresponding segment of an unconditioned Brownian motion) but we do not have such nice formulas if we also condition on the exact values of the infima of the two coordinates of the bridge.

\subsection{Conditioning on just the endpoints}
\label{sec-endpoint-cond}

In this subsection we will prove a weaker version of Proposition~\ref{prop-diam-given-bdy} in which we condition only on $Z_1 = (\ol\Delta^L_{(0,1]} - \ul\Delta^L_{(0,1]} , \ol\Delta^R_{(0,1]} - \ul\Delta^R_{(0,1]})$ instead of on $\Delta^Z_{(0,1]}$. In this case, the proof of the proposition amounts to an elementary Radon-Nikodym calculation for a Brownian bridge.

\begin{lem} \label{prop-cond-diameter}
Let $\ep > 0$ and $w \in \BB R^2$. Also let $n\in\BB N$ such that $2^{-n} \leq \ep$. Then
\eqb \label{eqn-cond-diameter}
\BB E\left[ \op{diam}\left( \mcl G^\ep|_{(0,1]} \right) \,|\, Z_1 = w \right]  \preceq (1\vee |w|)^2 (\log \ep^{-1})^2 \BB E\left[ \op{diam} \left( \mcl G^{2^{-n}} |_{(0,1]} \right) \right] 
\eqe 
with the implicit constant depending only on $\gamma$. 
\end{lem}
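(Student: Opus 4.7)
The plan is to reduce the conditioned bound to an unconditional one via a divide-and-conquer argument, with a direct Radon-Nikodym comparison handling a ``base case'' $|w| \lesssim 1$. For the base case I would split $(0,1]=(0,1/2]\cup(1/2,1]$ and use the Markov property: for any bounded functional $f$ of $Z|_{[0,1/2]}$,
\[
\BB E[f \mid Z_1 = w] = \BB E\!\left[f \cdot \frac{p_{1/2}(w - Z_{1/2})}{p_1(w)}\right],
\]
where $p_t$ denotes the transition density of $Z$ at time $t$. The density ratio is bounded above by $2\exp(|w|^2_{\Sigma^{-1}}/2)$, with $\Sigma$ the covariance of $Z_1$, which is $O(1)$ when $|w|\lesssim 1$. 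Combined with the Brownian-scaling identity $\op{diam}(\mcl G^\ep|_{(0,1/2]})\eqD\op{diam}(\mcl G^{2\ep}|_{(0,1]})$ and Lemma~\ref{prop-dist-mono-no2}, this would give $\BB E[\op{diam}(\mcl G^\ep|_{(0,1/2]})\mid Z_1=w] \preceq (\log\ep^{-1})\, \BB E[\op{diam}(\mcl G^{2^{-n}}|_{(0,1]})]$, with the half $(1/2,1]$ treated symmetrically by time-reversal of the bridge.

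For general $w$, I would set $K:=\lceil 1\vee|w|^2\rceil$ and partition $(0,1]$ into the equal sub-intervals $I_k := ((k-1)/K, k/K]$. Because consecutive vertices of $\ep\BB Z$ are adjacent in $\mcl G^\ep$,
\[
\op{diam}(\mcl G^\ep|_{(0,1]}) \leq K + \sum_{k=1}^K \op{diam}(\mcl G^\ep|_{I_k}).
\]
The adjacency condition~\eqref{eqn-inf-adjacency} is invariant under adding constants to $L$ and $R$, so by shift-invariance and Brownian scaling, the conditional law of $\op{diam}(\mcl G^\ep|_{I_k})$ given $(Z_{t_{k-1}},Z_{t_k})$ agrees with the conditional law of $\op{diam}(\mcl G^{K\ep}|_{(0,1]})$ given $Z_1 = W_k$, where $W_k := K^{1/2}(Z_{t_k}-Z_{t_{k-1}})$. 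Under $\BB P[\cdot\mid Z_1=w]$ the variable $W_k$ is Gaussian with mean $w/K^{1/2}$ (of magnitude $\leq 1$ by the choice of $K$) and covariance $(1-1/K)\Sigma$, so all polynomial moments of $(1\vee|W_k|)$ are bounded by $\gamma$-dependent constants. Applying the base case to each $W_k$, integrating against the Gaussian law of $W_k$, and summing over $k$ would produce the prefactor $K\asymp (1\vee|w|)^2$ asserted by the lemma.

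The principal obstacle is that the Radon-Nikodym bound at the base case is only useful when $|W_k|$ is of order $1$, whereas $W_k$ can be large under $\BB P[\cdot\mid Z_1=w]$ on a small set. To handle this I would split the expectation in $k$ according to whether $|W_k|\leq M$ or $|W_k|>M$ for a threshold $M$ of order $\log\ep^{-1}$: on the bulk event the base case applies directly, while on the tail event one combines the trivial bound $\op{diam}(\mcl G^{K\ep}|_{(0,1]}) \leq \lfloor (K\ep)^{-1}\rfloor$ with the Gaussian tail estimate $\BB P[|W_k|>M\mid Z_1=w]\preceq\exp(-cM^2)$ to absorb the exponentially rare contributions. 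The extra factor of $\log\ep^{-1}$ in the claim beyond the one produced by the base case provides enough slack to accommodate these tail estimates and the losses incurred when invoking the stochastic-domination portion of Lemma~\ref{prop-dist-mono} inside Lemma~\ref{prop-dist-mono-no2}.
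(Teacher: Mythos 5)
Your outer decomposition into $K\asymp(1\vee|w|)^2$ sub-intervals and the rescaling to a bridge with endpoint $W_k$ of unit-order mean is sound, and matches the spirit of the paper's argument. The gap is in the inner step, where you invoke the half-split ``base case'' for each $W_k$. The Radon--Nikodym constant your base case produces for a bridge with endpoint $v$ is $2\exp\left(\tfrac12\la v,\Sigma^{-1}v\ra\right)$, i.e.\ $e^{\Theta(|v|^2)}$, and this is not an artifact of bounding $p_{1/2}$ by its supremum: for $|v|$ large the bridge's midpoint is near $v/2$, where $p_{1/2}(v-Z_{1/2})/p_1(v)\approx e^{\la v,\Sigma^{-1}v\ra/4}$, so the RN derivative is genuinely that large on the typical event. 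Consequently neither half of your bulk/tail split closes. On the bulk $\{|W_k|\leq M\}$ with $M\asymp\log\ep^{-1}$ the base-case constant is $e^{\Theta((\log\ep^{-1})^2)}$, which swamps the claimed $(\log\ep^{-1})^2$; integrating $e^{\la v,\Sigma^{-1}v\ra/2}$ against the Gaussian density of $W_k$ instead of using the crude sup does not save you, since the quadratic terms cancel only up to a factor $\tfrac{K}{K-1}$ and completing the square leaves $e^{\Theta(K)}=e^{\Theta(|w|^2)}$. Making the bulk acceptable forces $M\preceq\log\log\ep^{-1}$, but then $\BB P[|W_k|>M]$ decays slower than any power of $\ep$ and the trivial bound $\ep^{-1}$ on the tail is fatal. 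There is no threshold $M$ reconciling the two requirements, so the claim that the extra $\log\ep^{-1}$ ``provides enough slack'' is where the argument breaks.

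The fix, which is what the paper does, is to never compare the full conditional law of a unit-length rescaled piece to its unconditional law (that comparison inherently costs $e^{\Theta(|v|^2)}$). Instead one compares only the law of the increment over a \emph{short} interval of length $\delta=(1\vee|w|)^{-2}(\log\ep^{-1})^{-1}$ --- note the extra factor $(\log\ep^{-1})^{-1}$ relative to your $1/K$. The RN derivative of the conditional law of $\{Z_t-Z_{t_1}\}_{t\in[t_1,t_2]}$ given $Z_1=w$ with respect to its unconditional law has exponent essentially $\la z,\Sigma^{-1}w\ra$ with $z=Z_{t_2}-Z_{t_1}$; restricting to the event $|z|\leq(2KC\delta)^{1/2}+\delta|w|$ with $C=\log\ep^{-1}$ (whose complement has conditional probability $O(\ep)$, absorbed by the trivial bound $\ep^{-1}$), this exponent is $O\left(\left(\sqrt{\delta\log\ep^{-1}}+\delta|w|\right)|w|\right)=O(1)$ by the choice of $\delta$. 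The point is that the exponent is bilinear in the small increment and the fixed endpoint $w$, not quadratic in a unit-order rescaled variable with Gaussian tails. Summing over the $\delta^{-1}\asymp(1\vee|w|)^2\log\ep^{-1}$ intervals, each contributing $\log\ep^{-1}\,\BB E[\op{diam}(\mcl G^{2^{-n}}|_{(0,1]})]$ via Lemmas~\ref{prop-dist-mono} and~\ref{prop-dist-mono-no2}, yields the stated bound; the second logarithm comes from the interval count, not from tail absorption.
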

\begin{proof} 
Let $\Sigma$ be the covariance matrix of $Z$. Then for each $t_1 , t_2 \in [0,1]$ with $t_2 - t_1 = \delta >0$, the unconditional density of $Z_{t_2} - Z_{t_1}$ is given by
\eqb \label{eqn-bm-uncond-density}
z\mapsto  \frac{1}{2\pi \sqrt{\delta \det \Sigma}} \exp\left( - \frac{ \la   z   , \Sigma^{-1} z \ra    }{2\delta } \right)     
\eqe
where $\la \cdot , \cdot \ra$ denotes the Euclidean inner product on $ \BB R^2$. 
Furthermore, by a straightforward Gaussian calculation, the regular conditional law of $Z_{t_2} - Z_{t_1}$ given $\{Z_1 = w\}$ is bivariate Gaussian with mean $\delta w$ and covariance matrix $\delta(1-\delta) \Sigma$, i.e.\ the density of this regular conditional law with respect to Lebesgue measure is given by
\eqb \label{eqn-bm-cond-density}
z\mapsto  \frac{1}{2\pi \sqrt{ \delta (1-\delta) \det \Sigma}} \exp\left( - \frac{ \la   z - \delta w   , \Sigma^{-1}( z-\delta w ) \ra    }{2\delta(1-\delta) } \right)    .
\eqe

The ratio of the above two densities gives the Radon-Nikodym derivative of the conditional law of $Z_{t_2} -Z_{t_1}$ given $\{Z_1=w\}$ with respect to its unconditional law. Since $Z_1 = Z_{t_1} + (Z_{t_2} - Z_{t_1}) + (Z_1 - Z_{t_2})$
and the first and last summands are independent from $\{Z_t - Z_{t_1} \}_{ t \in [t_1 , t_2]}$, we infer that the conditional law of $\{Z_t - Z_{t_1} \}_{ t \in [t_1 , t_2]}$ given $\{Z_1=w\}$ depends only on $Z_{t_2} - Z_{t_1}$, so the Radon-Nikodym derivative of the conditional law of $\{Z_t - Z_{t_1} \}_{ t \in [t_1 , t_2]}$ given $\{Z_1 = w\}$ with respect to its unconditional law is also given by dividing~\eqref{eqn-bm-cond-density} by~\eqref{eqn-bm-uncond-density}.
If $ \delta \leq 1/2$, this Radon-Nikodym derivative is at most
\eqb \label{eqn-rn-deriv-upper}
2 \exp\left(   \frac{  2   \la   z     , \Sigma^{-1}  w \ra   -  \la  z , \Sigma^{-1} z   \ra   - \delta \la   w    , \Sigma^{-1}  w \ra     }{2  (1-\delta) } \right)   .
\eqe 

Let $K > 1$ be a constant (depending only on $\gamma$) such that
\eqbn
   K^{-1} |z|^2  \leq  \la z , \Sigma^{-1} z \ra \leq K |z|^2 ,\qquad \forall z\in \BB R^2  .
\eqen
By the Gaussian tail bound and the form of the density~\eqref{eqn-bm-cond-density}, we find that for $C > 0$, 
\eqb \label{eqn-bridge-density-tail}
\BB P\left[ |Z_{t_2} - Z_{t_1}| > (2 K C\delta)^{1/2} + \delta |w| \,|\, Z_1 =w\right] \preceq e^{-C}  
\eqe
with the implicit constant depending only on $\gamma$. Furthermore, whenever $|z| \leq (2 K  C\delta)^{1/2} + \delta |w| $, the quantity~\eqref{eqn-rn-deriv-upper} is at most 
\eqb \label{eqn-good-event-density-upper}
2\exp\left(  \frac{   K (  (2   K    C\delta)^{1/2} + \delta |w|) |w|     }{2 (1-\delta) }    \right) .
\eqe 

Now suppose we are given $\ep > 0$ and $w\in\BB R^2$. In the above estimates, take $C = \log \ep^{-1}$ and let $t_1, t_2 \in [0,1]$ be chosen so that  
\eqb \label{eqn-bridge-interval-length}
\delta = t_2-t_1  = (1\vee|w|)^{-2} (\log \ep^{-1})^{-1} .
\eqe 
 Let $E$ be the event that $|Z_{t_2} - Z_{t_1}| \leq  (2 K  \delta \log \ep^{-1})^{1/2 } + \delta |w| $. By~\eqref{eqn-bridge-density-tail} with this choice of $C$ and $\delta$ we have $\BB P\left[ E^c \,|\, Z_1 = w\right] \preceq \ep$. By~\eqref{eqn-good-event-density-upper}, on $E$ the Radon-Nikodym derivative of the conditional law of $\{Z_t - Z_{t_1} \,:\, t \in [t_1 , t_2]\}$ given $\{Z_1 = w\}$ with respect to its marginal law is at most a constant depending only on $K$. 
The graph $\mcl G^{\ep}|_{[t_1,t_2]}$ is determined by $\{Z_t - Z_{t_1} \,:\, t \in [t_1 , t_2]\}$ and the diameter of this graph is at most $\ep^{-1}$. Hence for any $\delta \in (0,1/2]$, 
\begin{align} \label{eqn-cond-interval}
\BB E\left[ \op{diam} \left( \mcl G^{\ep}|_{[t_1,t_2]} \right) \,|\, Z_1 = w \right] 
&\leq \BB E\left[ \op{diam} \left( \mcl G^{\ep}|_{[t_1,t_2]} \right) \BB 1_E \,|\, Z_1 = w \right] + \ep^{-1} \BB P\left[ E^c \,|\, Z_1 = w\right] \notag\\
&\preceq   \BB E\left[ \op{diam} \left( \mcl G^{\ep}|_{[0, t_2-t_1] } \right) \right]  
 \leq  \log\ep^{-1} \BB E\left[ \op{diam} \left( \mcl G^{2^{-n} }|_{(0,1]}  \right) \right]  ,
\end{align} 
where in the last inequality we have used Lemmas~\ref{prop-dist-mono} and~\ref{prop-dist-mono-no2}.
If we write $(0,1]$ as the union of $\lceil \delta^{-1} \rceil \asymp (1\vee |w|)^2  \log \ep^{-1} $ intervals of length $\delta$, then $\op{diam} \left( \mcl G^\ep|_{(0,1]}\right)$ is at most the sum of the diameters of the restrictions of $\mcl G^\ep$ to these intervals. We obtain~\eqref{eqn-cond-diameter} by summing~\eqref{eqn-cond-interval} over all of the intervals in this union.
\end{proof} 
 
\subsection{Estimates for conditioned Brownian bridge}
\label{sec-cond-bridge}

In this subsection we will prove an estimate which will serve as an intermediate step between Lemma~\ref{prop-cond-diameter} and Proposition~\ref{prop-diam-given-bdy}. Namely, we will bound the conditional expected diameter of the structure graph over $(0,1]$ when we condition on $Z_1$ and on lower bounds for the infima of $L$ and $R$ on $[0,1]$ (but not the precise values of these infima). To state the estimate, we first need to discuss precisely what we mean by this conditioning.

Let $b  = (b_L , b_R) \in \BB R^2$ with $b_L ,b_R \geq 0$, and $w = (w_L , w_R) \in \BB R^2$ with $w_L \geq - b_L$ and $w_R \geq -b_R$. Let $\wt Z = (\wt L , \wt R)$ have the law of a correlated Brownian bridge from 0 to $w$ in time $1$ with the same variances and covariances as $Z$ (recall~\eqref{eqn-bm-cov}), conditioned on the event that 
\eqb \label{eqn-all-inf}
\inf_{t\in [0,1]} \wt L_t \geq -b_L \quad \op{and} \quad \inf_{t\in [0,1]} \wt R_t \geq -b_R .
\eqe  
If at least one of $b_L ,b_R, w_L + b_L , $ or $w_R + b_R$ is 0, then the event~\eqref{eqn-ends-inf} has probability zero. However, one can still make sense of the law of $Z$, and in each case that law of $Z_t$ for each $t\in (0,1)$ is absolutely continuous with respect to Lebesgue measure.
In particular, we have the following. 
\begin{itemize}
\item In the case when $b_L$ and $w_L + b_L$ are non-zero but $b_R$ and $w_R + b_R$ are possibly zero, the law of $\wt Z$ is that of a correlated two-dimensional Brownian bridge conditioned to stay in the upper half plane, conditioned on the positive probability event that its first coordinate stays above $-b_L$. This law can be obtained by applying a linear transformation to a pair consisting of a one-dimensional Brownian bridge and an independent one-dimensional Brownian bridge conditioned to stay positive, conditioned on a certain positive probability event. A similar statement holds with ``$L$" and ``$R$" interchanged.
\item In the case when $b_L$ and $w_R + b_R$ are non-zero but $b_R$ and $w_L + b_L$ are zero, the law of $\wt Z|_{[0,1/2]}$ is that of a correlated Brownian motion conditioned to stay in the upper half plane and conditioned on the positive probability event that its first coordinate stays above $-b_L$, weighted by a smooth function. The conditional law of the time reversal of $\wt Z|_{[1/2,1]}$ given $\wt Z|_{[0,1/2]}$ is that of a correlated Brownian bridge conditioned to stay in the upper half plane. A similar statement holds with ``$L$" and ``$R$" interchanged.
\item In the case when $b_L = b_R = 0$ but $w_L + b_L$ and $w_R + b_R$ are non-zero, the law of $\wt Z$ is that of a correlated two-dimensional Brownian bridge conditioned to stay in the first quadrant. This law is rigorously defined, e.g., in~\cite[Section 1.3.1]{gms-burger-local} or~\cite{dw-cones}, building on~\cite{shimura-cone} (which constructs a correlated Brownian motion conditioned to stay in the first quadrant). The same applies to the time reversal of $\wt Z$ in the case when $w_L + b_L = w_R + b_R = 0$ but $b_L$ and $b_R$ are non-zero. 
\item In the case when at least three of $b_L$, $b_R$, $w_L + b_L$, and $w_R + b_R$ are zero, either $\wt Z$ or its time reversal has the law of a correlated Brownian $\pi/2$-cone excursion conditioned to spend one unit of time in the cone and exit at a particular point. See~\cite[Section 3]{sphere-constructions} or~\cite{dw-limit} for more detail. 
\end{itemize}
 
For $\ep  >0$, let $\wt{\mcl G}^\ep$ be defined in the same manner as the structure graph $\mcl G^\ep|_{(0,1]}$ with $\wt Z$ in place of $Z|_{[0,1]}$. For $C \geq |w|$, define the regularity event
\eqb \label{eqn-bridge-cont}
\wt F_C := \left\{\sup_{s,t \in [0,1]} |\wt Z_s - \wt Z_t| \leq C \right\} .
\eqe 
The main result of this subsection is the following lemma.

\begin{lem} \label{prop-inf-cond} 
For each choice of $b, w , t_1 , t_2$ as above, each $C > |w|$, each $\ep > 0$, and each $n\in\BB N$ with $2^{-n} \leq \ep$, we have
\eqbn
\BB E\left[ \op{diam}\left( \wt{\mcl G}^\ep|_{[t_1,t_2]} \right) \BB 1_{\wt F_C} \right] \preceq   (1\vee C)^2 (\log \ep^{-1})^2 n \BB E\left[ \op{diam}\left( \mcl G^{2^{-n}} |_{(0,1]} \right) \right] 
\eqen
with the implicit constant depending only on $\gamma$. 
\end{lem}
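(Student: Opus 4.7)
I would follow the template of the proof of Lemma~\ref{prop-cond-diameter}, adapting it to accommodate the extra conditioning on the infima of $\wt L$ and $\wt R$. Since $t_1$ and $t_2$ are not defined in the subsection preamble I will assume the intended statement concerns the whole interval $[0,1]$; the case of a general sub-interval follows from Brownian scaling and Lemmas~\ref{prop-dist-mono}--\ref{prop-dist-mono-no2}, with any extra logarithmic loss absorbed into the $n$ on the right side. Fix $\delta := (1\vee C)^{-2}(\log \ep^{-1})^{-1}$ and partition $[0,1]$ into $\lceil 1/\delta\rceil \asymp (1\vee C)^2 \log \ep^{-1}$ sub-intervals of length $\delta$. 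The diameter of $\wt{\mcl G}^\ep|_{(0,1]}$ is at most the sum of the diameters of its restrictions to these sub-intervals, so it is enough to bound $\BB E[\op{diam}(\wt{\mcl G}^\ep|_{[s,s+\delta]}) \BB 1_{\wt F_C}]$ for each such sub-interval $[s,s+\delta]$.

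For each such sub-interval, I would bound the Radon-Nikodym derivative of the law of $\{\wt Z_t - \wt Z_s\}_{t\in[s,s+\delta]}$ with respect to that of the corresponding increments of an unconditioned correlated Brownian motion. This derivative factorizes as a product of two pieces: (i) the Brownian-bridge correction coming from the conditioning $Z_1 = w$, which on $\wt F_C$ and with our choice of $\delta$ is bounded by a universal constant exactly as in Lemma~\ref{prop-cond-diameter}; and (ii) an $h$-transform correction coming from the infimum conditioning, which by the strong Markov property equals
\begin{equation*}
\BB 1_{\{\inf_{[s,s+\delta]} \wt L \ge -b_L,\,\inf_{[s,s+\delta]} \wt R \ge -b_R\}} \cdot \frac{\BB P\bigl(\inf_{[s+\delta,1]} L \ge -b_L,\,\inf_{[s+\delta,1]} R \ge -b_R \bigm| Z_{s+\delta}\bigr)}{\BB P\bigl(\inf_{[s,1]} L \ge -b_L,\,\inf_{[s,1]} R \ge -b_R \bigm| Z_{s}\bigr)}.
\end{equation*}
Using the cone-probability estimate of Lemma~\ref{prop-cone-prob} applied to the shifted processes starting from $\wt Z_s$ and $\wt Z_{s+\delta}$, together with the regularity of $\wt F_C$ to control how close these endpoints can get to the barriers $(-b_L,-b_R)$, I would show that the second factor is bounded by $O(n)$ uniformly on $\wt F_C$.

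Granted this Radon-Nikodym bound, the argument of Lemma~\ref{prop-cond-diameter} applies sub-interval by sub-interval to yield
\begin{equation*}
\BB E\bigl[\op{diam}(\wt{\mcl G}^\ep|_{[s,s+\delta]}) \BB 1_{\wt F_C}\bigr] \preceq n \cdot \log \ep^{-1} \cdot \BB E\bigl[\op{diam}(\mcl G^{2^{-n}}|_{(0,1]})\bigr],
\end{equation*}
where Lemmas~\ref{prop-dist-mono}--\ref{prop-dist-mono-no2} are used to pass from $\mcl G^\ep|_{[0,\delta]}$ (with $\delta \ge \ep \ge 2^{-n}$) to $\mcl G^{2^{-n}}|_{(0,1]}$. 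Summing over the $\lceil 1/\delta\rceil \asymp (1\vee C)^2 \log\ep^{-1}$ sub-intervals then gives the full bound. The main obstacle will be the cone-probability step: establishing the $O(n)$ control on the $h$-transform ratio, particularly in the delicate regime where the endpoints $\wt Z_s$ or $\wt Z_{s+\delta}$ approach the barriers $(-b_L,-b_R)$. This will require an adaptation of Lemma~\ref{prop-cone-prob} to the bridge setting combined with the regularity provided by $\wt F_C$, and is the unique place where the hypothesis $2^{-n}\le\ep$ enters to ensure the bound remains polynomial in $n$.
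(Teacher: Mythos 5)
Your proposal has a genuine gap at the step you yourself flag as the main obstacle: the claim that the $h$-transform ratio
\begin{equation*}
\frac{\BB P\bigl(\inf_{[s+\delta,1]} L \ge -b_L,\ \inf_{[s+\delta,1]} R \ge -b_R \bigm| Z_{s+\delta}\bigr)}{\BB P\bigl(\inf_{[s,1]} L \ge -b_L,\ \inf_{[s,1]} R \ge -b_R \bigm| Z_{s}\bigr)}
\end{equation*}
is bounded by $O(n)$ uniformly on $\wt F_C$ is false. By the cone estimate (Lemma~\ref{prop-cone-prob}), the probability of staying in the quadrant for a unit of time starting at distance $\eta$ from the corner $(-b_L,-b_R)$ decays like a positive power of $\eta$, so the ratio blows up as $\wt Z_s$ approaches the corner. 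The event $\wt F_C$ only controls the global oscillation of $\wt Z$, not its distance to the corner, and the setup explicitly allows degenerate cases (e.g.\ $b_L=b_R=0$, where $\wt Z$ starts \emph{at} the corner, or $w_L+b_L=w_R+b_R=0$, where it ends there); your uniform partition treats the sub-intervals abutting $0$ and $1$ the same as interior ones, which is exactly where the conditioning is singular. Moreover, $n$ enters the lemma only through $2^{-n}\le\ep$ and the comparison of $\mcl G^\ep$ to $\mcl G^{2^{-n}}$; there is no mechanism by which the Radon--Nikodym ratio, a property of the conditioned Brownian law alone, could be controlled by $n$. The approach could in principle be salvaged by integrating the RN derivative against the (trivially bounded) diameter and splitting on the distance of $\wt Z_s$ to the corner, but that requires quantitative control on the marginal of $\wt Z_s$ near the corner and is substantially harder than what you describe.

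The paper avoids this entirely by a different comparison: it introduces the process $\mathring Z$, which is the bridge conditioned on the infimum constraints only over $[0,t_1]\cup[t_2,1]$, and shows (Lemma~\ref{prop-mid-inf-pos}) that the remaining event $\mathring E$ (the constraints also holding on $[t_1,t_2]$) has conditional probability bounded below by a constant, provided $t_2-t_1\le t_1\wedge(1-t_2)$. Since conditioning on an event of probability $\ge c_1$ changes expectations by at most a factor $c_1^{-1}$, and since the middle piece $\mathring Z|_{[t_1,t_2]}$ given its endpoints is an \emph{unconditioned} Brownian bridge, Lemma~\ref{prop-cond-diameter} applies directly. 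The factor $n$ then arises from summing over the $O(n)$ dyadic intervals $[2^{-k},2^{-k+1}]$ and $[1-2^{-k+1},1-2^{-k}]$, $k\le n$, each of which satisfies the length constraint — a decomposition that, unlike your uniform one, shrinks adaptively near the singular endpoints. This one-sided lower bound on a conditional probability is much more robust than the two-sided Radon--Nikodym control your argument requires.
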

 
For the proof of Lemma~\ref{prop-inf-cond}, we need the following estimate for a slightly different conditioned Brownian motion. The lemma will eventually allow us to compare $Z|_{[t_1,t_2]}$ for $0<t_1<t_2<1$ to a correlated Brownian bridge with no additional conditioning, which will in turn allow us to apply Lemma~\ref{prop-cond-diameter}.
 
\begin{lem} \label{prop-mid-inf-pos}
Let $b  = (b_L , b_R) \in \BB R^2$ with $b_L ,b_R \geq 0$ and $w = (w_L , w_R) \in \BB R^2$ with $w_L \geq - b_L$ and $w_R \geq -b_R$. Also let $t_1 , t_2 \in (0,1)$ with $0 \leq t_2 - t_1 \leq t_1 \wedge (1-t_2)$. Let $\mathring Z = (\mathring L , \mathring R)$ have the law of a Brownian bridge from 0 to $w$ in time $1$, with covariance matrix $\Sigma$, conditioned on the event that
\eqb \label{eqn-ends-inf}
\inf_{t \in [0,t_1] \cup [t_2 , 1] } \mathring L_t \geq -b_L  \quad \op{and} \quad \inf_{ t \in [0,t_1] \cup [t_2 , 1] } \mathring R_t \geq -b_R .
\eqe
Let 
\eqb \label{eqn-mid-inf}
\mathring E := \left\{\inf_{t\in [t_1,t_2]} \mathring L_t \geq -b_L ,\, \inf_{t\in [t_1,t_2]} \mathring R_t \geq -b_R \right\} = \left\{\inf_{t\in [0,1]} \mathring L_t \geq -b_L ,\, \inf_{t\in [0,1]} \mathring R_t \geq -b_R \right\} .
\eqe
There is a constant $c_1 > 0$ depending only on $\gamma$ such that for any choice of $b , w , t_1 , t_2$ as above, $\BB P\left[ \mathring E \right] \geq c_1$.
\end{lem}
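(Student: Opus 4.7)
\medskip
\noindent\textit{Proof plan for Lemma~\ref{prop-mid-inf-pos}.}

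The plan is to express the probability as a ratio under the unconditioned bridge measure. Let $\mathbb{Q}$ denote the law of a correlated 2D Brownian bridge from $0$ to $w$ of duration 1 with covariance $\Sigma$, let $A$ be the event that this bridge stays $\geq -b$ coordinate-wise on $[0,t_1]\cup[t_2,1]$, and let $E$ be the event that it stays $\geq -b$ on all of $[0,1]$. Then $\mathbb{P}[\mathring E] = \mathbb{Q}[E]/\mathbb{Q}[A]$, so it suffices to prove $\mathbb{Q}[E] \geq c_1 \mathbb{Q}[A]$. Under $\mathbb{Q}$, conditional on the endpoints $(z_1,z_2):=(\mathring Z_{t_1},\mathring Z_{t_2})$, the three segments over $[0,t_1]$, $[t_1,t_2]$, $[t_2,1]$ are independent Brownian bridges. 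Write the centered middle bridge as $B^0_s := \mathring Z_{t_1+s} - \mathring Z_{t_1} - (s/\delta)(\mathring Z_{t_2}-\mathring Z_{t_1})$ for $s\in[0,\delta]$ where $\delta := t_2-t_1$. Then $B^0$ is a correlated 2D bridge from $0$ to $0$ of duration $\delta$, and is independent (under $\mathbb{Q}$) of $(z_1,z_2)$ and of the segments over $[0,t_1]\cup[t_2,1]$.

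Set $\sigma := \sqrt{\delta}$, let $K>0$ be a constant to be chosen depending only on $\gamma$, and define the ``margin'' and ``small fluctuation'' events
\[
G_K := \bigl\{z_1 + b \geq K\sigma\mathbf{1},\ z_2 + b \geq K\sigma\mathbf{1}\ \text{coordinate-wise}\bigr\},\qquad
F_K := \Bigl\{\sup_{s\in[0,\delta]}\max\bigl(|B^0_s|_L,|B^0_s|_R\bigr)\leq K\sigma\Bigr\}.
\]
On $G_K\cap A\cap F_K$, the linear interpolation between $z_1$ and $z_2$ stays coordinate-wise $\geq -b + K\sigma$, so adding $B^0_s$ keeps the middle bridge $\geq -b$; hence $G_K\cap A\cap F_K\subseteq E$. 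Since $F_K$ depends only on $B^0$, it is independent of $G_K\cap A$, so
\[
\mathbb{Q}[E]\ \geq\ \mathbb{Q}[F_K]\cdot \mathbb{Q}[G_K\cap A]\ =\ c_5(K)\cdot\mathbb{Q}[G_K\mid A]\cdot \mathbb{Q}[A],
\]
with $c_5(K):=\mathbb{Q}[F_K]>0$ by standard bounds on 2D Brownian bridge fluctuations. Thus it suffices to prove the key claim $\mathbb{Q}[G_K\mid A]\geq c_3$ for some $K,c_3>0$ depending only on $\gamma$, and then set $c_1 := c_5(K)c_3$.

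For the key claim, note that under $\mathbb{Q}$ the joint law $\nu$ of $(z_1,z_2)$ is 2D$\times$2D Gaussian with mean $(t_1 w,t_2 w)$ and marginal covariances $t_j(1-t_j)\Sigma$; from the hypotheses $t_2-t_1\leq t_1\wedge(1-t_2)$ one extracts $t_1(1-t_1),\,t_2(1-t_2)\geq \delta/2$, so the marginal standard deviations of $z_1$ and $z_2$ are both of order at least $\sigma$. Under $\mathbb{Q}[\cdot\mid A]$, the joint density of $(z_1,z_2)$ equals $\phi_1(z_1)\phi_2(z_2) f_\nu(z_1,z_2)/\mathbb{Q}[A]$, where $\phi_1(z_1)$ (resp.\ $\phi_2(z_2)$) is the probability that the correlated 2D bridge from $0$ to $z_1$ over duration $t_1$ (resp.\ from $z_2$ to $w$ over duration $1-t_2$) stays coordinate-wise $\geq -b$. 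The key input is that each $\phi_j$ exhibits a Rayleigh-type vanishing at the boundary of $\{z\geq -b\}$: up to diagonalizing $\Sigma$, the explicit 1D bridge formula $1-\exp(-2a c/(\alpha s))$ shows that $\phi_j(z_j)$ vanishes linearly in $z_j^L+b_L$ as $z_j^L\to -b_L$ (and analogously in the $R$-coordinate). Since the Gaussian density $f_\nu$ is of order its maximum on any $\sigma$-scale ball inside the support, this linear vanishing implies that for each coordinate, the $\phi_j$-weighted marginal of $z_j$ places mass of order $O(K^2)$ (times the total) on the slab $\{z_j^L+b_L<K\sigma\}$, and similarly for the other coordinate and the other endpoint. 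Integrating out and taking $K$ small enough (but depending only on $\gamma$) yields $\mathbb{Q}[G_K^c\mid A]\leq 1-c_3$.

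The main obstacle is making the last step uniform across the entire range of parameters, in particular in the degenerate cases in which some of $b_L$, $b_R$, $w_L+b_L$, $w_R+b_R$ vanish. In these cases $\phi_j$ vanishes at the boundary of $\{z\geq -b\}$ not linearly but rather as a power law with exponent depending on $\gamma$ via the $\pi/2$-cone exponent $2/\gamma^2$ of Lemma~\ref{prop-cone-prob} (itself coming from~\cite{shimura-cone}); the same scaling argument still gives a bound of the form $O(K^{\text{const}(\gamma)})$ on the slab mass, but a case analysis (and the use of Lemma~\ref{prop-cone-prob} in place of the explicit bridge formula) is needed to treat the limits uniformly. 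The comparison of the conditional variance of $z_1$ given $z_2$ with the margin scale $K\sigma$ also needs to be handled carefully when $t_1/t_2$ is close to $1$, but this is straightforward from the explicit Gaussian formulas once the marginal estimates are in hand.
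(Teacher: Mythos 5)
Your reduction is clean and correct as far as it goes: writing $\BB P[\mathring E]=\BB Q[E]/\BB Q[A]$, inserting the independent ``margin'' event $G_K$ for the endpoints $(z_1,z_2)=(\mathring Z_{t_1},\mathring Z_{t_2})$ and the fluctuation event $F_K$ for the centered middle bridge, and observing $G_K\cap A\cap F_K\subset E$ with $F_K$ independent of $G_K\cap A$, correctly reduces everything to the single claim $\BB Q[G_K\mid A]\geq c_3$ uniformly in $b,w,t_1,t_2$. This is a genuinely different route from the paper's. But that claim is exactly where your argument stops being a proof: you need a quantitative statement about the conditional \emph{density of the process at the fixed time} $t_1$ (and $t_2$) given $A$, namely that the slab $\{z_1^L+b_L<K\sigma\}$ carries relative mass $O(K^{\alpha})$ uniformly. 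Your sketch for this rests on a Rayleigh-type linear vanishing of $\phi_1$ at the boundary, which is only the behavior in the fully non-degenerate regime; when $b_L$, $b_R$, or the corresponding margins of $w$ vanish (all of which are allowed, and are in fact the cases the paper cares about, cf.\ the bullet list preceding Lemma~\ref{prop-inf-cond}), the vanishing is governed by cone exponents and the two coordinates no longer decouple. You acknowledge this and defer it to ``a case analysis,'' so the key uniform estimate is asserted rather than proven. One also needs a matching \emph{lower} bound on the mass of the complement of the slab at scale $\sigma$, which again degenerates when, e.g., $w$ sits at the corner.

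The paper avoids this difficulty entirely with a stopping-time device: rather than demanding that the process be at distance $\gtrsim\sigma$ from the boundary \emph{at} time $t_1$, it lets $\tau_1$ be the first time in $[0,t_1]$ at which $\mathring Z$ enters the shifted quadrant $[-b_L+t_1^{1/2},\infty)\times[-b_R+t_1^{1/2},\infty)$ (and $\tau_2$ the analogous last time in $[t_2,1]$). The event $\{\tau_1<t_1\}$ only requires the conditioned bridge to visit the shifted quadrant at \emph{some} time, which has uniformly positive probability by a soft, scale-invariant argument and needs no density asymptotics at a fixed time. Given $\{\tau_1<t_1,\tau_2>t_2\}$, the bridge over $[\tau_1,\tau_2]$ has endpoints with margins $t_1^{1/2}$ and $(1-t_2)^{1/2}$, both at least $\sigma$ by the hypothesis $t_2-t_1\leq t_1\wedge(1-t_2)$, and one concludes as in your final step. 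If you want to salvage your fixed-time approach, you would either have to carry out the uniform boundary-density analysis in all degenerate cases (essentially re-deriving quantitative versions of Lemma~\ref{prop-cone-prob} for bridges), or replace your deterministic-time margin event $G_K$ with a stopping-time version along the paper's lines.
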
 

The reason for our interest in the objects of Lemma~\ref{prop-mid-inf-pos} is that the conditional law of $\mathring Z$ given $\mathring E$ is the same as the law of $\wt Z$; and the conditional law of $\mathring Z|_{[t_1,t_2]}$ given $\mathring Z|_{[0,t_1] \cup [t_2,1]}$ is that of a Brownian bridge. These facts (applied for varying choices of $t_1$ and $t_2$) will allow us to reduce Lemma~\ref{prop-mid-inf-pos} to Lemma~\ref{prop-cond-diameter}. 

\begin{proof}[Proof of Lemma~\ref{prop-mid-inf-pos}]
Let $\tau_1$ be the smallest $t \in [0,t_1]$ for which $Z_t \in  [-b_L + t_1^{1/2} , \infty) \times [-b_R + t_1^{1/2} , \infty)$ or $\tau_1 = t_1$ if no such $t$ exists. Also let $\tau_2$ be the largest $t  \in [t_2,1]$ for which $Z_t \in  [-b_L + (1-t_2)^{1/2} , \infty) \times [-b_R + (1-t_2)^{1/2} , \infty)$. 
The regular conditional law of $\mathring Z|_{[0,t_1]}$ given $\mathring Z|_{[t_1,1]}$ is that of a Brownian bridge from $0$ to $\mathring Z_{t_1}$ in time $t_1$ conditioned to stay in $[-b_L , \infty)\times [-b_R , \infty)$. Such a Brownian bridge has uniformly positive probability to enter $[-b_L + t_1^{1/2} , \infty) \times [-b_R + t_1^{1/2} , \infty)$ before time $t_1$, so we can find $p_1 >0$ depending only on $\gamma$ such that $\BB P\left[ \tau_1 < t_1 \,|\, \mathring Z|_{[t_1,1]}\right] \geq p_1$. Similarly, we can find $p_2 >0$ depending only on $\gamma$ such that $\BB P\left[ \tau_2 > t_2 \,|\,\mathring Z_{t_1}|_{[0,t_2]} \right] \geq p_2$. Then $\BB P\left[ \tau_1 < t_1 ,\, \tau_2 > t_2 \right] \geq p_1p_2$. The regular conditional law of $\mathring Z|_{[\tau_1 , \tau_2]}$ given $\mathring Z|_{[0,\tau_1]}$ and $\mathring Z|_{[\tau_2,1]}$ is that of a correlated Brownian bridge from $\mathring Z_{\tau_1}$ to $\mathring Z_{\tau_2}$ conditioned on the event that it stays in $[-b_L , \infty)\times [-b_R , \infty)$ on the time set $[\tau_1 , t_1] \cup [t_2 , \tau_2]$. On the event $\{\tau_1 < t_1\} \cap \{\tau_2 > t_2\}$, the first (resp.\ second) endpoint of this Brownian bridge lies at distance at least $t_1^{1/2}$ (resp. $(1-t_2)^{1/2}$) from the boundary of $[-b_L , \infty)\times [-b_R , \infty)$. Since $t_2 - t_1 \leq t_1 \wedge (1-t_2)$, it follows that
\eqbn
\BB P\left[\mathring Z|_{[ t_1 , t_2 ]} \subset [-b_L , \infty)\times [-b_R , \infty) \,|\, \tau_1 < t_1 ,\, \tau_2 > t_2 \right] \geq p_3
\eqen
for some $p_3 >0$ depending only on $\gamma$. The statement of the lemma follows.
\end{proof}

\begin{proof}[Proof of Lemma~\ref{prop-inf-cond}]
Let $t_1 , t_2 \in (0,1)$ with $t_2 - t_1 \leq t_1 \wedge (1-t_2)$.
Let $\mathring Z$ and $\mathring E$ be as in Lemma~\ref{prop-mid-inf-pos} with $b , w$ as in the lemma and our given choice of $t_1,t_2$.
Also let $\mathring{\mcl G}^\ep$ be defined in the same manner as the structure graph $\mcl G^\ep|_{(0,1]}$ with $\mathring Z$ in place of $Z|_{[0,1]}$ and for $C >|w|^2$ let $\mathring F_C$ be defined as in~\eqref{eqn-bridge-cont} with $\mathring Z$ in place of $\wt Z$. 
The law of $\wt Z$ is the same as the conditional law of $\mathring Z$ given $\mathring E$, so by Lemma~\ref{prop-mid-inf-pos}, 
\eqb \label{eqn-bridge-diam-compare}
\BB E\left[\op{diam}\left(\wt{\mcl G}^\ep|_{[t_1,t_2]} \right) \BB 1_{\wt F_C} \right] 
=\BB E\left[\op{diam}\left(\mathring{\mcl G}^\ep|_{[t_1,t_2]} \right) \BB 1_{\mathring F_C} \,|\, \mathring E \right]
  \preceq \BB E\left[\op{diam}\left(\mathring{\mcl G}^\ep|_{[t_1,t_2]} \right) \BB 1_{\mathring F_C} \right]  
\eqe
with the implicit constant depending only on $\gamma$. 

By the Markov property, for $(z_1,z_2) \in \BB R^2$ the conditional law of $\mathring Z|_{[t_1,t_2]}$ given $\{(\mathring Z_{t_1}  ,\mathring Z_{t_2}) = (z_1,z_2) \}$ is that of a Brownian bridge from $z_1$ to $z_2$. By Lemma~\ref{prop-cond-diameter} and scale invariance, if $|z_2-z_1| \leq C$ and $n\in\BB N$ with $2^{-n} \leq \ep$, then
\eqbn
 \BB E\left[\op{diam}\left(\mathring{\mcl G}^\ep|_{[t_1,t_2]} \right) \BB 1_{\mathring F_C} \,|\, (\mathring Z_{t_1}  ,\mathring Z_{t_2}) = (z_1,z_2)   \right]    
 \preceq (1\vee C)^2 (\log \ep^{-1})^2 \BB E\left[ \op{diam} \left( \mcl G^{2^{-n}}|_{(0,1]}  \right) \right] 
\eqen
with the implicit constant depending only on $\gamma$. On the other hand, if $(\mathring Z_{t_1}  ,\mathring Z_{t_2}) = (z_1,z_2)$ and $|z_2 - z_1| > C$, then $\mathring F_C$ does not occur, so the above conditional expectation is 0. By plugging this into~\eqref{eqn-bridge-diam-compare} and integrating over $(z_1,z_2)$, we obtain 
\eqb \label{eqn-bridge-diam-t}
 \BB E\left[\op{diam}\left(\wt{\mcl G}^\ep|_{[t_1,t_2]} \right) \BB 1_{\wt F_C} \right]  \preceq   (1\vee C)^2 (\log \ep^{-1})^2 \BB E\left[ \op{diam} \left( \mcl G^{2^{-n}}|_{(0,1]}  \right) \right] .
\eqe 

To conclude, we write
\eqbn
\op{diam}\left(\wt{\mcl G}^\ep \right) \leq \sum_{k=2}^n \op{diam}\left(\wt{\mcl G}^\ep|_{[2^{-k} , 2^{-k+1} ]} \right) + \sum_{k=2}^n \op{diam}\left(\wt{\mcl G}^\ep|_{[1- 2^{-k+1} , 1- 2^{-k } ]} \right) + 1  ,
\eqen
multiply by $\BB 1_{\wt F_C}$, then take expectations of both sides and apply~\eqref{eqn-bridge-diam-t} to bound the expectation of each term on the right side. 
\end{proof}

\subsection{Proof of Proposition~\ref{prop-diam-given-bdy}}
\label{sec-cond-diam-proof}

Let $\ul t_L$ (resp. $\ul t_R$) be the time at which $L$ (resp. $R$) attains its infimum on the interval $[0,1]$. Also let $\frk t_L ,\frk t_R \in [0,1]$ with $\frk t_L \leq \frk t_R$ and let $r = (r_L , r_R) \in\BB R^2$ with $r_L \geq -\ul a_L$ and $r_R \geq -\ul a_R$. Let $E = E(a, \frk t_L , \frk t_R, r) $ be the (zero-probability) event that
\eqbn
\ul t_L = \frk t_L ,\quad \ul t_R = \frk t_R,\quad Z_{\frk t_L} = (-\ul a_L , r_R )  ,\quad Z_{\frk t_R} = ( r_L , -\ul a_R ) ,\, \quad \op{and} \quad Z_1 = (\ol a_L - \ul a_L , \ol a_R - \ul a_R) .
\eqen
See Figure~\ref{fig-cond-bm-decomp} for an illustration.
Note that $E \subset \{\Delta^Z_{(0,1]} = a \}$ and if $ \Delta^Z_{(0,1]} = a  $ and $\ul t_L \leq \ul t_R$, then the event $E$ occurs for some choice of $\frk t_L , \frk t_R$, and $r$. By symmetry between $L$ and $R$ it suffices to bound the regular conditional expectation of $ \op{diam}\left( \mcl G^{2^{-n}}|_{(0,1]} \right)$ given $E$. 
 
If we condition on $E$, then the regular conditional law of $Z|_{[0,\frk t_L]}$ is that of a Brownian bridge from 0 to $(-\ul a_L , r_L)$ in time $\frk t_L$ conditioned to stay in $[-\ul a_L , \infty) \times [-\ul a_R, \infty)$; the regular conditional law of $Z|_{[\frk t_L ,\frk t_R]}$ is that of a Brownian bridge from $(-\ul a_L , r_L)$ to $( r_R , -\ul a_R)$ in time $\frk t_R - \frk t_L$ conditioned to stay in $[-\ul a_L , \infty) \times [-\ul a_R, \infty)$; and the regular conditional law of $Z|_{[\frk t_L ,\frk t_R]}$ is that of a Brownian bridge from $( r_R , -\ul a_R )$ to $(\ol a_L - \ul a_L , \ol a_R - \ul a_R)$ in time $\frk t_R - \frk t_L$ conditioned to stay in $[-\ul a_L , \infty) \times [-\ul a_R, \infty)$. 

The diameter of $\mcl G^{2^{-n}}|_{(0,1]}$ is at most the sum of the diameters of its restrictions to $(0,\frk t_L]$, $(\frk t_L , \frk t_R]$, and $(\frk t_R , 1]$. If any of these intervals has length $\leq 2^{-n}$, then the corresponding restriction has diameter either 0 or 1. On the other hand, if the event $F_n$ of Lemma~\ref{prop-bm-cont} occurs and $\frk t_L \geq 2^{-n}$, then it must be the case that
\eqbn
 \sup_{s_1,s_2\in [0,\frk t_L]} |Z_{s_1} - Z_{s_2}| \leq  n \frk t_L^{1/2}   .
\eqen
Similar statements hold for $(\frk t_L , \frk t_R]$ and $(\frk t_R , 1]$. Hence if $F_n$ occurs and we re-scale one of these three intervals whose length is at least $2^{-n}$ to have unit length, the event $\wt F_C$ of~\eqref{eqn-bridge-cont} occurs with $C = n$ and the restriction of $Z$ to this interval (appropriately re-scaled) in place of $\wt Z$. 
By Lemma~\ref{prop-inf-cond} applied in each of the three intervals, we obtain
\eqb \label{eqn-diam-given-times}
\BB E\left[ \op{diam} \left( \mcl G^{2^{-n}}|_{(0,1]} \right) \BB 1_{F_n} \,|\ E\right] \preceq n^5 \BB E\left[ \op{diam}\left( \mcl G^{2^{-n}}|_{(0,1]} \right) \right] .   
\eqe  
If we average~\eqref{eqn-diam-given-times} over all choices of $\frk t_L , \frk t_R$, and $r$, we obtain~\eqref{eqn-diam-given-bdy}.  \qed

\begin{figure}[ht!]
 \begin{center}
\includegraphics[scale=.8]{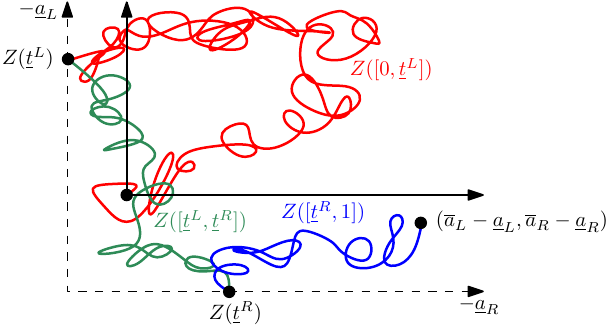} 
\caption[Decomposition of the Brownian path in the proof of Proposition~\ref{prop-diam-given-bdy}]{The path $Z|_{[0,1]}$ conditioned on $\{ \Delta^Z_{[0,1]} = a\}$, decomposed into three segments as in the proof of Proposition~\ref{prop-diam-given-bdy}. If we condition on the event $\{ \Delta^Z_{[0,1]} = a\}$ as well as the times $\ul t^L$ and $\ul t^R$ which separate the three segments and the values of $Z(\ul t^L)$ and $Z(\ul t^R)$ then the conditional law of each segment is a conditioned Brownian motion to which Lemma~\ref{prop-inf-cond} applies. }\label{fig-cond-bm-decomp}
\end{center}
\end{figure}

\section{Existence of an exponent via subadditivity}
\label{sec-subadditivity}

In this section we will prove the existence of the exponent $\chi$ in Theorem~\ref{thm-chi-exists} when $\ep$ is restricted to powers of 2. We will also prove a concentration estimate which says that the diameter of $\mcl G^{2^{-n}}|_{(0,1]}$ is very unlikely to be too much larger than its expected value, which will be used to prove Theorem~\ref{thm-dist-bound}. Throughout this section, we fix $\gamma \in (0,2)$ and for $n\in\BB N$ we write
\eqb \label{eqn-diam-def}
D_n := \op{diam}\left( \mcl G^{2^{ -n}}|_{(0,1]} \right).
\eqe 
 
The first main result of this section is a version of Theorem~\ref{thm-chi-exists} with $\ep$ restricted to powers of 2, which will be proven via a subadditivity argument. 

\begin{prop} \label{prop-metric-sub}
The limit
\eqbn
\chi := \lim_{n\rta \infty} \frac{\log_2 \BB E\left[ D_n \right]}{n} 
\eqen 
exists and (with $\xi_-$ as in~\eqref{eqn-chi-bound}) we have
\eqbn
\chi \geq \xi_- \vee \left(1 - \frac{2}{\gamma^2} \right) .
\eqen 
\end{prop}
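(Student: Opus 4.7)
The strategy is to establish an approximate sub-multiplicative estimate of the form $\BB E[D_{n+m}] \preceq m^5\, \BB E[D_n]\BB E[D_m]$ valid in a suitable range of $(n,m)$, deduce existence of $\chi := \lim_{n\to\infty} (\log_2 \BB E[D_n])/n$ from the restricted Fekete-type lemma (Lemma~\ref{prop-restricted-sub}), and finally extract the lower bound $\chi \geq \xi_- \vee(1-2/\gamma^2)$ from~\eqref{eqn-diam-lower-mean} of Proposition~\ref{prop-diam-lower}.

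For the sub-multiplicativity, for $n,m\in\BB N$ and $x\in(0,1]_{2^{-n}\BB Z}$ I would introduce the sub-interval $J_x := (x-2^{-n},x]$ and the sub-structure graph $\mcl S_x := \mcl G^{2^{-n-m}}|_{J_x}$. Because adjacency of $x,y$ in $\mcl G^{2^{-n}}|_{(0,1]}$ means the cells $\eta([x-2^{-n},x])$ and $\eta([y-2^{-n},y])$ share a non-trivial boundary arc, at least one sub-cell of $\mcl S_x$ must be adjacent in $\mcl G^{2^{-n-m}}$ to some sub-cell of $\mcl S_y$. Concatenating in-cell geodesics along any $\mcl G^{2^{-n}}|_{(0,1]}$-geodesic then yields the pointwise estimate
\eqb \label{eqn-plan-pointwise}
D_{n+m} \leq D_n + D_n \cdot \max_{z\in(0,1]_{2^{-n}\BB Z}} \op{diam}(\mcl S_z).
\eqe

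By the Markov property of $Z$, conditional on $\mcl A_n := \sigma(\{\Delta^Z_{J_x}\}_x)$ the segments $\{Z|_{J_x}\}_x$ are independent, and after the Brownian scaling $t\mapsto 2^n(t-(x-2^{-n}))$, $z\mapsto 2^{n/2}z$ each carries the conditional law of $Z|_{[0,1]}$ given $\Delta^Z_{[0,1]} = 2^{n/2}\Delta^Z_{J_x}$. Lemma~\ref{prop-adjacency-length} then gives that $D_n$ is $\mcl A_n$-measurable while $\{\mcl S_x\}_x$ are conditionally independent given $\mcl A_n$. Introducing the good event $G_m^x := \{2^{n/2}|\Delta^Z_{J_x}|\le m\}$ together with the event that the continuity conclusion $F_m$ of Lemma~\ref{prop-bm-cont} holds for the rescaled restriction of $Z$ to $J_x$, Gaussian tails together with Lemma~\ref{prop-bm-cont} yield $\BB P[(G_m^x)^c]\preceq e^{-c_1 m^2}$, and Proposition~\ref{prop-diam-given-bdy} (with parameter $m$, after rescaling) gives
\eqbn
\BB E\!\left[\op{diam}(\mcl S_x)\,\BB 1_{G_m^x}\,\big|\, \mcl A_n\right] \preceq m^5\, \BB E[D_m].
\eqen
Combining~\eqref{eqn-plan-pointwise} with conditional independence, and using the crude bound $\op{diam}(\mcl S_x)\le 2^m$ on bad cells together with a union bound over the $2^n$ cells and the trivial estimate $D_{n+m}\leq 2^{n+m}$, produces
\eqbn
\BB E[D_{n+m}] \preceq m^5\, \BB E[D_n]\BB E[D_m] + 2^{2n+m}\, e^{-c_1 m^2}.
\eqen
For $m\geq C\sqrt{n}$ with $C = C(\gamma)$ large enough the error term is $o(1)$, so setting $a_n := \log_2 \BB E[D_n]$, the resulting restricted sub-additive relation $a_{n+m}\leq a_n + a_m + O(\log m)$ (valid for $m\gtrsim \sqrt n$) falls within the scope of the modified Fekete lemma (Lemma~\ref{prop-restricted-sub}), yielding existence of $\chi := \lim_{n\to\infty} a_n/n$. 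Specializing~\eqref{eqn-diam-lower-mean} to $\ep = 2^{-n}$ gives $\BB E[D_n]\geq 2^{n(\xi_-\vee(1-2/\gamma^2)) - o(n)}$, hence $\chi\geq \xi_-\vee(1-2/\gamma^2)$.

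The main difficulty lies in controlling the $\max_z$ in~\eqref{eqn-plan-pointwise} uniformly over the $2^n$ cells: the cell-wise estimate from Proposition~\ref{prop-diam-given-bdy} must be paired with Gaussian tails on $\Delta^Z_{J_x}$ and $Z|_{J_x}$ strong enough to beat both the union bound and the trivial $2^m$ bound on the complement. This is exactly why one obtains sub-multiplicativity only in the restricted range $m\gtrsim \sqrt n$, and also why the classical Fekete lemma must be replaced by its restricted variant.
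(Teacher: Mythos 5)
Your overall architecture is the same as the paper's (approximate sub-multiplicativity of $\BB E[D_n]$, the restricted Fekete lemma, and Proposition~\ref{prop-diam-lower} for the lower bound on $\chi$), and the pointwise bound~\eqref{eqn-plan-pointwise} obtained by concatenating in-cell paths along a coarse geodesic is correct. But there is a genuine gap in how you integrate it: you bound $\BB E[\op{diam}(\mcl S_x)\,\BB 1_{G_m^x}\mid\mcl A_n]\preceq m^5\BB E[D_m]$ \emph{cell by cell}, yet what appears in~\eqref{eqn-plan-pointwise} is $\max_{z}\op{diam}(\mcl S_z)$ over $2^n$ cells. The Gaussian tails on $\Delta^Z_{J_x}$ and the event of Lemma~\ref{prop-bm-cont} control the probability that a cell's boundary data is atypical, but on the good event $\op{diam}(\mcl S_x)$ is still only controlled in conditional \emph{expectation}; it can be as large as $2^m$, and Markov's inequality applied to a single cell gives $\BB P[\op{diam}(\mcl S_x)>t]\preceq m^5\BB E[D_m]/t$, which cannot survive a union bound over $2^n$ cells at any useful threshold $t$. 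So the step from the cell-wise expectation bound to $\BB E[D_n\cdot\max_z\op{diam}(\mcl S_z)]\preceq m^5\BB E[D_n]\BB E[D_m]$ does not follow from the tools you cite, and your error term $2^{2n+m}e^{-c_1m^2}$ is not justified.

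The paper closes exactly this gap with two devices you are missing. First, it never takes a max over cells at the level of expectations: it fixes a pair of endpoints $y_0,y_1$ in an $\mcl H^n$-measurable way and bounds $\BB E[\op{dist}(y_0,y_1;\mcl G^{2^{-n-m}})\mid\mcl H^n]$ by a \emph{sum} of conditional expectations along a coarse geodesic (Lemma~\ref{prop-dist-scale}), so linearity of expectation replaces the max. Second, to pass from a fixed pair back to the diameter (whose optimizing pair at scale $n+m$ need not be related to the one at scale $n$; see Remark~\ref{remark-subdivide-conc}), it proves a concentration estimate (Proposition~\ref{prop-subdivide-conc}) via Azuma's inequality for the martingale obtained by revealing the subdivisions of the coarse cells one distance-layer at a time, the key point being that each increment is at most $2^m$ (Lemma~\ref{prop-mart-inc}); the resulting Gaussian concentration of width $\asymp 2^{3m/2}D_n^{1/2}$ is strong enough to absorb the union bound over all pairs of endpoints. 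A secondary consequence: the true error term is $2^{\zeta n+3m/2}\BB E[D_n^{1/2}]$, and absorbing it into $\BB E[D_n]\BB E[D_m]$ requires the a priori polynomial lower bound of Proposition~\ref{prop-diam-lower} \emph{and} forces the upper restriction $m\le\lambda n$ in the sub-multiplicativity (with $\lambda$ as in~\eqref{eqn-diam0-ratio}), which is the other half of why Lemma~\ref{prop-restricted-sub} rather than the classical Fekete lemma is needed — your proposal only accounts for the lower restriction $m\gtrsim n^{p}$.
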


Our other main result is a concentration inequality which says that $D_n$ is at most $2^{(\chi + o_n(1)) n}$ with overwhelming probability. 

\begin{prop} \label{prop-upper-conc}
Let $\chi$ be as in Proposition~\ref{prop-metric-sub}. There is a constant $c>0$ depending only on $\gamma$ such that for each $u\in (0,1)$ and each $n\in\BB N$, we have
\eqb \label{eqn-upper-conc}
\BB P\left[ D_n  >  2^{(\chi + u) n}    \right] \preceq   \exp\left( - c u^2  n^2 \right) 
\eqe 
with the implicit constant depending only on $u$ and $\gamma$. In particular, 
\eqb \label{eqn-upper-conc-moment}
\lim_{n\rta\infty} \frac{ \log_2 \BB E\left[ D_n^p \right] }{n} \leq  \chi p  ,\quad \forall p > 0 .
\eqe 
\end{prop}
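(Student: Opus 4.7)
The plan is to derive the tail bound~\eqref{eqn-upper-conc} by iterating the subadditive structure behind Proposition~\ref{prop-metric-sub} dyadically, using Proposition~\ref{prop-diam-given-bdy} together with the conditional independence of sub-cells given their boundary lengths; the moment bound~\eqref{eqn-upper-conc-moment} will then follow by integration against the tail.

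The central decomposition, which is the same one used for the subadditive argument behind Proposition~\ref{prop-metric-sub}, is as follows. Given a path $P$ in $\mcl G^{2^{-n}}|_{(0,1]}$ of length $D_n$, refining each traversed cell into $2^m$ sub-cells and concatenating internal paths produces the bound
\eqbn
D_{n+m} \leq \sum_{i=1}^{D_n} Y_i,
\eqen
where $Y_i$ is the diameter of the scale-$2^{-m}$ sub-graph inside the $i$-th traversed cell (after Brownian rescaling by $2^{n/2}$). By the Markov property of $Z$, conditionally on the $\sigma$-algebra generated by the boundary-length vectors $\{\Delta^Z_{[x-2^{-n},x]}\}_{x \in (0,1]_{2^{-n}\BB Z}}$ the $Y_i$ are independent; on the regularity event $F_n$ from Lemma~\ref{prop-bm-cont} each rescaled boundary-length vector has norm $\preceq n$, and Proposition~\ref{prop-diam-given-bdy} yields $\BB E\!\left[Y_i\,\BB 1_{F_n} \mid \text{bdry lengths}\right] \preceq (n+m)^5\,\BB E[D_m]$.

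For the tail bound I will induct over dyadic scales $n_k := 2^k n_0$. The base scale $n_0 = n_0(u)$ is taken large enough that $\BB E[D_{n_0}] \leq 2^{(\chi + u/C)n_0}$ for an appropriate constant $C = C(\gamma)$, which via Markov's inequality gives a polynomial-in-$2^{n_0}$ tail at scale $n_0$. At the inductive step, I condition on the scale-$n_k$ boundary-length $\sigma$-algebra and apply a Chernoff/Bernstein bound to the conditionally independent sum $\sum_{i=1}^{D_{n_k}} Y_i$: since $D_{n_k}$ is exponential in $n_k$, the effective conditional sample size is enormous, overwhelming the $(2n_k)^5$ correction from Proposition~\ref{prop-diam-given-bdy} and the slight shift in means. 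Iterating this for $k \sim \log_2(n/n_0)$ doublings sharpens the tail at each step to reach the form $\exp(-c u^2 n^2)$ at scale $n$; for non-dyadic $n$, I reduce to the nearest dyadic scale via the monotonicity in Lemma~\ref{prop-dist-mono}.

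Finally,~\eqref{eqn-upper-conc-moment} is routine: for each $p, u > 0$,
\eqbn
\BB E[D_n^p] \leq 2^{p(\chi+u)n} + p\int_{2^{(\chi+u)n}}^\infty t^{p-1}\,\BB P[D_n > t]\,dt,
\eqen
and~\eqref{eqn-upper-conc} makes the integral negligible compared to the first term, so $\log_2 \BB E[D_n^p] / n \leq p(\chi+u) + o_n(1)$; sending $u \downarrow 0$ yields the claim. I expect the main obstacle to be the Chernoff step in the dyadic induction: to feed it, one needs not only the conditional mean bound from Proposition~\ref{prop-diam-given-bdy} but also a matching conditional tail bound on each $Y_i$, which should be obtainable by extending the Radon--Nikodym arguments of Section~\ref{sec-cond-diam} to $p$-th moments or to deviations, together with careful bookkeeping so that the polynomial corrections from Proposition~\ref{prop-diam-given-bdy} do not accumulate badly over the $\log_2(n/n_0)$ scales.
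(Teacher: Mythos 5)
Your skeleton — decompose $D_{n+m}$ over the cells of a coarse geodesic, use the conditional independence of the sub-cell data given the boundary-length $\sigma$-algebra together with Proposition~\ref{prop-diam-given-bdy} for the conditional mean, concentrate, and iterate over scales — is the paper's strategy. But the dyadic doubling scheme $n_{k+1}=2n_k$ is a genuine gap: the concentration step fails when $m\asymp n$. Whether you use Chernoff for the conditionally independent $Y_i$ (which are in fact trivially bounded by $2^m$, so you do not need the extra tail estimates you worry about) or the paper's Azuma argument (Proposition~\ref{prop-subdivide-conc}, whose martingale increments are deterministically $\leq 2^m$ by Lemma~\ref{prop-mart-inc}), the resulting fluctuation width for $\sum_{i\le D_n}Y_i$ is of order $2^{cm}D_n^{1/2}$ with $c\ge 1$. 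For this to be dominated by the main term $n^5\,\BB E[D_m]\,D_n\approx 2^{\chi m}D_n$ you need roughly $2^{(c-\chi)m}\ll D_n^{1/2}\approx 2^{\chi n/2}$, i.e.\ $m\lesssim \frac{\chi}{2(c-\chi)}\,n$. Since $\chi\le 1/2$, this is violated at $m=n$: with doubling, the noise swamps the signal and the induction does not close. This is precisely why the paper proves only a \emph{restricted} submultiplicativity (Lemma~\ref{prop-exp-diam0}, valid for $m\le\lambda n$ with $\lambda$ small and explicit) and needs the restricted Fekete lemma (Lemma~\ref{prop-restricted-sub}) in the first place.

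The paper's iteration instead takes $n_k=n-k\lfloor\zeta n\rfloor$ with $\zeta\asymp u$, so each step has $m=\lfloor\zeta n\rfloor$ (safely inside the admissible window) and there are only $k_*=O(1/\zeta)$ steps, so the $n^5$ corrections accumulate to a harmless $n^{O(1/u)}=2^{o_n(n)}$ factor (Lemma~\ref{prop-cond-diam0} and the proof in Section~\ref{sec-upper-conc}). This also pins down where $\exp(-cu^2n^2)$ comes from, which your sketch leaves vague: it is not the concentration step (whose failure probability is far smaller) but the Brownian regularity events $\wh F_{n_k,\lfloor\zeta n\rfloor}$ needed to invoke Proposition~\ref{prop-diam-given-bdy}, which fail with probability $\preceq e^{-c_1 m^2+n}=e^{-c_1\zeta^2 n^2+n}$ by Lemma~\ref{prop-bm-cont}. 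Two smaller points: since $D_{n+m}$ is a supremum over pairs, the per-pair concentration must be combined with a union bound over the $\preceq 2^{2(n+m)}$ pairs (or, for the subadditivity itself, with the measurable selection discussed in Remark~\ref{remark-subdivide-conc}) before it controls the diameter; and your derivation of~\eqref{eqn-upper-conc-moment} from~\eqref{eqn-upper-conc} is fine, though the paper shortcuts it using $D_n\le 2^n$.
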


To prove Proposition~\ref{prop-metric-sub}, we start in Section~\ref{sec-fekete} by stating a variant of Fekete's subadditivity lemma for a sequence of non-negative real numbers $\{a_n\}_{n\in\BB N}$ where the subadditivity relation is only required to hold for $m \leq \lambda n$ (for $\lambda \in (0,1)$ a fixed constant) but $a_n$ is required to be sub-linear. The proof of this lemma is elementary, and is given in Appendix~\ref{sec-fekete-proof}. 

In Section~\ref{sec-cond-conc}, we prove a concentration estimate which says that for $m,n \in\BB N$ with $m$ sufficiently small relative to $n$, the distance between any two vertices of $\mcl G^{2^{-n-m}}|_{(0,1]}$ is unlikely to differ too much from its conditional expectation given $\{\Delta_{[x-2^{-n}  ,x]}^Z \,:\, x\in (0,1]_{2^{-n}\BB Z}\}$ (which determines $\mcl G^{2^{-n}}|_{(0,1]}$). This lemma implies in particular that we can choose a pair of vertices of $\mcl G^{2^{-n-m}}|_{(0,1]}$ whose distance is likely to be close to $D_{n+m}$ in a manner which is measurable with respect to $\{\Delta_{[x-2^{-n}  ,x]}^Z \,:\, x\in (0,1]_{2^{-n}\BB Z}\}$. In Section~\ref{sec-sub-proof}, we will show (using the estimate of Section~\ref{sec-cond-conc}) that the sequence $a_n = \log_2 \BB E[D_n]$ satisfies the hypotheses of the subaddivity lemma of Section~\ref{sec-fekete}, and thereby prove Proposition~\ref{prop-metric-sub}. In Section~\ref{sec-upper-conc} we will deduce Proposition~\ref{prop-upper-conc} from Proposition~\ref{prop-metric-sub} and the concentration estimate of Section~\ref{sec-cond-conc}.

\subsection{A variant of Fekete's subadditivity lemma} 
\label{sec-fekete}

One of the main inputs in the proof of Proposition~\ref{prop-metric-sub} is the following variant of Fekete's subadditivity lemma.  

\begin{lem} \label{prop-restricted-sub}
Fix $\lambda \in (0,1)$, $C > 0$, and $p \in (0,1)$. 
Let $\{a_n\}_{n\in\BB N}$ be a sequence of non-negative real numbers which satisfies the restricted subadditivity condition
\eqb \label{eqn-restricted-sub}
a_{n+m} \leq a_n+ a_m + C n^p ,\quad \forall n ,m \in\BB N \:\op{with}\: n^p \leq m \leq \lambda n  
\eqe 
plus the additional condition
\eqb \label{eqn-sub-upper}
a_n \leq C n ,\quad \forall n \in\BB N .
\eqe 
Then the limit $\lim_{n\rta\infty} a_n/n$ exists and is finite. 
\end{lem}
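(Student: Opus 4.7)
Set $L := \liminf_{n\to\infty} a_n/n$, which lies in $[0,C]$ by \eqref{eqn-sub-upper} and is therefore finite. It suffices to prove $\limsup_{n\to\infty} a_n/n \le L$, which together with the definition of $L$ implies that the limit exists and is finite. Fix $\epsilon > 0$; by definition of $\liminf$, choose arbitrarily large $m_0$ with $a_{m_0}/m_0 \le L + \epsilon$. The plan is a Fekete-style iteration tailored to the restricted range in \eqref{eqn-restricted-sub}.

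The central mechanism is the following consequence of \eqref{eqn-restricted-sub}: if $N = n+m$ with $m \in [n^p, \lambda n]$ and both $a_n/n, a_m/m \le L + \epsilon'$, then
\[
\frac{a_N}{N} \;\le\; L + \epsilon' + \frac{C n^p}{N} \;\le\; L + \epsilon' + C N^{p-1}.
\]
Since $p < 1$, the error $CN^{p-1}$ decays to zero as $N \to \infty$, and when accumulated over a geometric sequence of scales the total error is controlled by a convergent geometric series of order $O(M_0^{p-1})$ starting from scale $M_0$.

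The argument then has two stages. \emph{Stage 1 (seed interval).} Produce an initial interval $[M_0, M_1]$ with $M_1 \ge (1+\lambda) M_0$ on which $a_n/n \le L + c_1 \epsilon$. This uses multiple good values $m_0 < m_0' < \cdots$ (all supplied by $\liminf = L$) combined through repeated applications of \eqref{eqn-restricted-sub}. \emph{Stage 2 (geometric extension).} By induction, assuming $a_n/n \le L + c_2 \epsilon$ holds on $[M_0, M_k]$, extend to $[M_0, M_{k+1}]$ with $M_{k+1} = \lfloor (1+\lambda) M_k \rfloor$: for any $N \in (M_k, M_{k+1}]$ with $N$ sufficiently large, one can split $N = n + m$ with $n \in [N/(1+\lambda), M_k]$ and $m = N - n \in [n^p, \lambda n] \cap [M_0, M_k]$, since the constraint $n^p \le N - n \le \lambda n$ has a nontrivial solution compatible with $n \le M_k$ once the growth rate $1+\lambda$ between consecutive scales is available. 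The displayed inequality then gives $a_N/N \le L + c_2 \epsilon + CN^{p-1}$. Summing these per-scale errors yields a total accumulated error of $O(M_0^{p-1})$, which becomes arbitrarily small by taking $M_0$ large enough. Combining the two stages yields $a_n/n \le L + c_3\epsilon$ for all sufficiently large $n$, and sending $\epsilon \to 0$ concludes the proof.

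The main obstacle is Stage 1: the $\liminf$ only guarantees isolated good values, and a single good $m_0$ combined via \eqref{eqn-restricted-sub} with a piece $m \in [m_0^p, \lambda m_0]$ that a priori is bounded only by $a_m \le Cm$ is insufficient to produce an interval of good values at the $L + O(\epsilon)$ level. The remedy is to interleave several good values at a hierarchy of scales, using successive applications of \eqref{eqn-restricted-sub} to fill in the gaps throughout the seed interval; the condition $p<1$ is essential here, ensuring that the cumulative $Cn^p$ errors are of lower order than $\sum m$ and vanish upon normalization as $m_0 \to \infty$.
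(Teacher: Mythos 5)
Your Stage 2 (geometric extension of an already-good interval) is sound: once $a_n/n\le L+\epsilon'$ holds on an interval $[M_0,M_k]$ with $M_k/M_0$ large enough, the split $n\approx N/(1+\lambda)$, $m\approx \lambda N/(1+\lambda)$ is admissible for every $N\in(M_k,(1+\lambda)M_k]$ with $N\succeq M_0$, and the per-scale errors $CN^{p-1}$ do sum to $O(M_0^{p-1})$ along the geometric sequence of scales. The gap is Stage 1, and it is not a technicality. The $\liminf$ only supplies isolated good scales, and the binary decomposition $N=n+m$ in~\eqref{eqn-restricted-sub} forces the smaller piece to satisfy $m\geq n^p$; hence to cover an entire interval of values $N$ you need good control of $a_m$ for an entire interval of values $m$ at a comparable (only polynomially smaller) scale. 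Pairwise combination of isolated good values therefore only ever produces more isolated good values (sums along admissible binary trees), never an interval -- this is a genuine chicken-and-egg obstruction, and ``interleaving good values at a hierarchy of scales'' does not resolve it. As you yourself note, plugging the trivial bound $a_m\le Cm$ into the small piece destroys the $L+O(\epsilon)$ bound whenever $C>L$.

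The missing idea is to iterate~\eqref{eqn-restricted-sub} so as to chop $n$ into $\sim n/m$ pieces all of the \emph{same} good size $m$, plus one remainder of size $O(\lambda^{-1}m)$ handled by the linear bound~\eqref{eqn-sub-upper}. This yields $a_n \le \frac{n}{m}a_m + C(\lambda^{-1}+1)m + C\frac{n^{1+p}}{m}$ (the paper's Lemma~\ref{prop-sub-split}), i.e.\ $a_n/n\le a_m/m + O(m/n) + O(n^p/m)$, which is $a_m/m+o(1)$ uniformly for $m$ in the window $n^{p+\delta}\le m\le n^{1-\delta}$. A \emph{single} good scale $m_0$ then controls the whole interval $[m_0^{1+\delta}, m_0^{(1-\delta)/p}]$, which is exactly the seed your Stage 1 needs; alternatively it makes Stage 2 unnecessary altogether. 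The paper organizes this differently: rather than working upward from the $\liminf$, it uses the displayed inequality to show that $b_k=a_{n_k}/n_k$ is decreasing up to a summable correction along any subsequence with $n_k^q\le n_{k+1}\le n_k^{\wh q}$ (so $\lim_k b_k$ exists), proves via the same inequality that $\limsup_n a_n/n$ is bounded by any such subsequential limit (whence the limit is subsequence-independent), and then patches subsequences together with a purely combinatorial lemma (Lemma~\ref{prop-sparse-subsequence}). Either organization works, but both hinge on the multi-piece iteration that your write-up lacks.
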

 
The proof of Lemma~\ref{prop-restricted-sub} is elementary but takes a couple of pages so is given in Appendix~\ref{sec-fekete-proof}. 
The main point of Lemma~\ref{prop-restricted-sub} is that the subadditivity relation~\ref{eqn-restricted-sub} is only required to hold for $n^{p} \leq  m\leq \lambda n$. Without this restriction, the lemma is an easy consequence of Fekete's lemma and its generalization due to de Bruijn-Erd\"os~\cite{debuijn-erdos-sub} (even without the hypothesis~\eqref{eqn-sub-upper}).

\subsection{Conditioned concentration bound}
\label{sec-cond-conc}

 For $n\in\BB N$, let  
\eqb \label{eqn-subdivide-filtration}
\mcl H^n :=  \sigma\left(\Delta_{[x-2^{-n} ,x]}^Z \,:\, x \in (0,1]_{2^{-n}\BB Z} \right) ,
\eqe 
so that, by Lemma~\ref{prop-adjacency-length}, the graph $\mcl G^{2^{-n }}|_{(0,1]}$ is $\mcl H^n$-measurable.  
In this subsection we will prove the following concentration bound, which says that distances in $\mcl G^{2^{-n-m}}|_{(0,1]}$ are unlikely to differ very much from their expected values given $\mcl H^n$.  

\begin{prop} \label{prop-subdivide-conc}
Let $n,m\in\BB N$ and let $\mcl H^n$ be the $\sigma$-algebra defined in~\eqref{eqn-subdivide-filtration}. Let $y_0 , y_1 \in (0,1]_{2^{-n-m} \BB Z}$ be chosen in a $\mcl H^n$-measurable manner. 
Then for $t>0$ we have
\eqb \label{eqn-subdivide-conc}
\BB P\left[ \left| \op{dist}\left(y_0 , y_1 ; \mcl G^{2^{-n-m}}|_{(0,1]} \right) -  \BB E\left[\op{dist}\left(y_0 , y_1 ; \mcl G^{2^{-n-m}}|_{(0,1]} \right) \,|\, \mcl H^n \right]  \right| > t   \,|\, \mcl H^n \right] \leq 2 \exp\left(- \frac{t^2}{ 2^{3m+1} D_n   } \right) .
\eqe  
\end{prop}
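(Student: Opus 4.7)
The plan is to apply an Azuma--Hoeffding concentration inequality after viewing $\op{dist}(y_0, y_1; \mcl G^{2^{-n-m}}|_{(0,1]})$ as a function of conditionally independent Brownian motion segments over the $2^{-n}$-scale intervals. First I would observe that given $\mcl H^n$, the shifted segments $W_x := \{Z_{x-2^{-n}+s} - Z_{x-2^{-n}}\}_{s\in[0,2^{-n}]}$ for $x \in (0,1]_{2^{-n}\BB Z}$ are conditionally independent: the unconditional Brownian motion has independent increments on disjoint intervals, and the conditioning $\sigma$-algebra $\mcl H^n = \sigma(\Delta^Z_{[x-2^{-n},x]} : x)$ factors into a product of sub-$\sigma$-algebras, each measurable with respect to a single $W_x$. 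By Lemma~\ref{prop-adjacency-length} applied at scale $2^{-n-m}$, the graph $\mcl G^{2^{-n-m}}|_{(0,1]}$ is a measurable function of $(W_x)_x$, so we can write $f(W) := \op{dist}(y_0, y_1; \mcl G^{2^{-n-m}}|_{(0,1]})$ as a function of conditionally independent inputs, and the hypothesis that $y_0, y_1$ are $\mcl H^n$-measurable means they can be treated as deterministic after conditioning.

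The key quantitative input is a bounded-differences estimate: if $W$ and $W'$ agree on all coordinates except $x$ and share the same boundary length vector $\Delta^Z_{[x-2^{-n},x]}$, then $|f(W) - f(W')| \leq C \cdot 2^m$ for an absolute constant $C$. The argument is a rerouting one. Because $\Delta^Z_{[x-2^{-n},x]}$ is preserved, the cell $I_x := [x-2^{-n},x]$ has the same adjacencies to its two $2^{-n}$-scale neighbors in $\mcl G^{2^{-n}}|_{(0,1]}$ under both configurations, and each such adjacency corresponds to at least one cross-boundary adjacency in $\mcl G^{2^{-n-m}}|_{(0,1]}$, though possibly through different boundary sub-cells of $I_x$. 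Starting from a geodesic from $y_0$ to $y_1$ in the $W$-configuration, I would replace each maximal excursion of the path into $I_x$ by an intra-cell detour in $\mcl G^{2^{-n-m}}|_{I_x}$ under $W'$, entering and exiting through whichever boundary sub-cells achieve the relevant local infima in the $W'$-configuration. Since $\mcl G^{2^{-n-m}}|_{I_x}$ has only $2^m$ vertices, each such detour has length at most $2^m$.

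Finally I would apply Azuma--Hoeffding to the Doob martingale $M_k = \BB E[f(W) \mid \mcl H^n, W_{x_1}, \ldots, W_{x_k}]$, enumerating the cells in some $\mcl H^n$-measurable order. The bounded-differences estimate gives $|M_k - M_{k-1}| \leq C\cdot 2^m$ almost surely, and a naive summation over all $2^n$ cells gives $\sum_k c_k^2 = O(2^{n+2m})$, which is too weak. To extract the $D_n$ factor, I would use the $\mcl H^n$-measurable upper bound $f(W) \leq 2^m D_n$ (from iterating Lemma~\ref{prop-dist-mono} $m$ times, noting $D_n$ is $\mcl H^n$-measurable) together with an $\mcl H^n$-measurable choice of geodesic $P^*$ of length $\leq D_n$ in $\mcl G^{2^{-n}}|_{(0,1]}$ between the $2^{-n}$-cells of $y_0$ and $y_1$. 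The key claim is that only cells in an $O(1)$-neighborhood of $P^*$ have nonzero bounded-difference constant, since any refined-graph geodesic projects to a walk in $\mcl G^{2^{-n}}|_{(0,1]}$ whose length $\leq 2^m D_n$ constrains it to stay near $P^*$. Combined with a careful accounting of the possible multiplicity of the path's excursions into each such cell, this gives $\sum_k c_k^2 = O(2^{3m} D_n)$, and Azuma--Hoeffding yields the proposition.

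The main technical obstacle will be cleanly justifying the reduction to $O(D_n)$ effective cells in the Azuma sum. Cells $I_x$ far from $P^*$ can in principle influence $f$ through long-range adjacencies of $\mcl G^{2^{-n-m}}|_{(0,1]}$ coming from the infima of $L$ and $R$ over intervals crossing $I_x$. Crucially, however, those long-range adjacencies are themselves governed at scale $2^{-n}$ by $\mcl H^n$, so any refined-graph shortcut through a cell $I_x$ with $x$ off the geodesic would correspond to an $\mcl H^n$-measurable adjacency pattern already present in $\mcl G^{2^{-n}}|_{(0,1]}$, and changes in $W_x$ alone cannot create a new such pattern. Making this rigorous requires carefully combining the rerouting estimate with the invariance of $\mcl G^{2^{-n}}|_{(0,1]}$ under changes of the $W_x$'s preserving their boundary length vectors.
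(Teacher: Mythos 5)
Your overall strategy (Doob martingale over the cell refinements plus Azuma, with a $2^m$ rerouting bound for the increments) matches the paper's, but the way you organize the martingale has a gap that your proposal does not close. You reveal the refined cells one at a time in a fixed, $\mcl H^n$-measurable order and then try to argue that only $O(D_n)$ of the $2^n$ coordinates carry a nonzero bounded-difference constant, namely those near a coarse geodesic $P^*$. This localization claim is false. A geodesic in $\mcl G^{2^{-n-m}}|_{(0,1]}$ has length up to $2^mD_n$, and its projection to $\mcl G^{2^{-n}}|_{(0,1]}$ is therefore a coarse walk of length up to $2^mD_n$ from $x_0$ to $x_1$; such a walk is in no way confined to an $O(1)$-neighborhood of $P^*$ — it can visit any cell $x$ with $\op{dist}(x_0,x)+\op{dist}(x,x_1)\lesssim 2^mD_n$, a set whose cardinality you cannot control by $D_n$. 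Your closing heuristic (that long-range refined adjacencies are "already present" at scale $2^{-n}$) only shows that the refined geodesic projects to a coarse walk; it does not prevent that walk from straying far from $P^*$, so changing a cell far from $P^*$ can genuinely change the refined distance. With a fixed ordering you are stuck with $\sum_k c_k^2 \asymp 2^{n+2m}$, which does not give \eqref{eqn-subdivide-conc}.

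The paper avoids this by exploring the cells \emph{adaptively}, in layers: $V_{k+1}\setminus V_k$ is the set of not-yet-subdivided cells at minimal distance from $y_0$ in the hybrid graph $G_k$, and each martingale step reveals a whole layer. Since the distance from $y_0$ to the unsubdivided region increases by at least $1$ per step, the number of steps is at most $D_{n+m}\le 2^mD_n$; and because the distance from $y_0$ to the newly revealed layer is $\mcl H^n_{k-1}$-measurable (hence the same in any two realizations of that layer), the rerouting — replace the initial segment of the path up to its last visit to the new layer by a path of the same length plus at most $2^m$ steps inside one coarse cell — bounds each increment by $2^m$. That is where the factor $D_n$ in the exponent comes from; it is a bound on the number of martingale steps, not on the number of influential cells. (A secondary point: even your single-cell bounded-difference constant is not $C\,2^m$ as stated, since a geodesic can make up to $2^m$ separate excursions into one coarse cell and per-excursion surgery then costs up to $2^{2m}$; one must instead cut at the first entrance and last exit of the cell.)
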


\begin{remark} \label{remark-subdivide-conc} 
Proposition~\ref{prop-subdivide-conc} is needed for the proofs of both Proposition~\ref{prop-metric-sub} and~\ref{prop-upper-conc}. The relevance to Proposition~\ref{prop-upper-conc} is clear. 
The relevance to Proposition~\ref{prop-metric-sub} is that Proposition~\ref{prop-subdivide-conc} allows us to choose $y_0 , y_1 \in (0,1]_{2^{-n-m} \BB Z}$ in a $\mcl H^n$-measurable manner in such a way that $\op{dist}(y_0,y_1; \mcl G^{2^{-n-m}}|_{(0,1]})$ is likely to be close to $D_{n+m}$ (namely, we choose $y_0$ and $y_1$ so as to maximize $\BB E[\op{dist}(y_0 , y_1 ; \mcl G^{2^{-n-m}}|_{(0,1]} )\,|\, \mcl H^n ]$). \end{remark}

\begin{figure}[ht!]
 \begin{center}
\includegraphics[scale=.65]{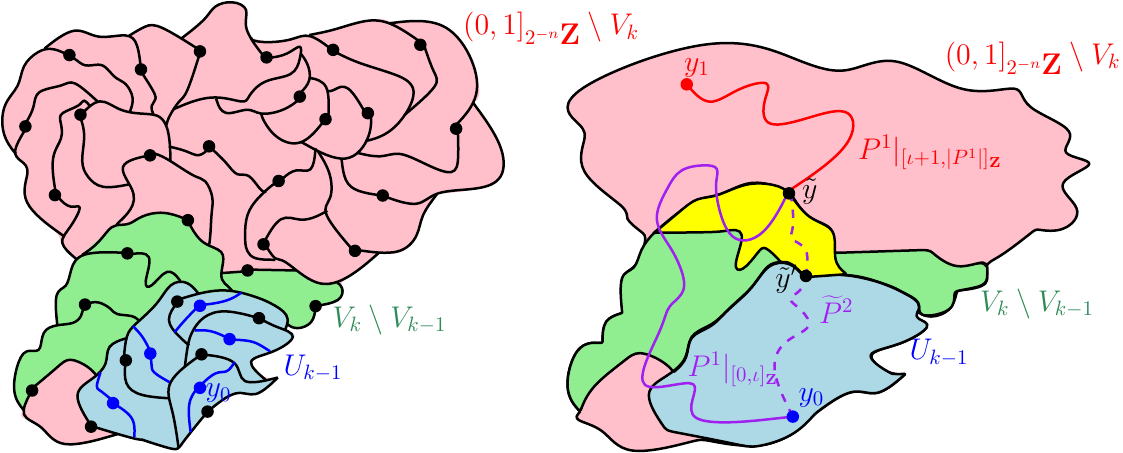}  
\caption[Illustration of the proof of Proposition~\ref{prop-subdivide-conc}]{\textbf{Left:} The graph $G_{k-1}$ used in the proof of Proposition~\ref{prop-subdivide-conc} in the case when $m=1$ (the case for $m \geq 2$ is similar, but cells are subdivided into $2^m$, rather than 2, pieces). Vertices in $V_{k}\setminus V_{k-1}$ (which correspond to green cells) are the elements of $\mcl G^{2^{-n}}|_{(0,1]}$ which have not yet been subdivided, but which lie at minimal distance to $y_0$ in $G_{k-1}$ among all such cells. The graph $G_{k}$ is obtained from $G_{k-1}$ by subdividing each of the green cells.
\textbf{Right:} Illustration of the proof of Lemma~\ref{prop-mart-inc}. Shown is the set $\eta([0,1])$, without all cell subdivisions shown explicitly. 
The graphs $\frk G^1$ and $\frk G^2$ agree except with regard to how the cells in the light green region are subdivided. Given a path $P^1$ from $y_0$ to $y_1$ in $\frk G^1$ (solid purple and red lines) we consider the last time $\iota$ that $P^1$ exits $V_k\setminus V_{k-1}$. By the definition~\eqref{eqn-split-set-def} of $V_k$ and since the cell of $\mcl G^{2^{-n}}$ containing $P^1(\iota)$ (shown in yellow) contains $2^m$ cells of $\mcl G^{2^{-n-m}}$, we can find a path $\wt P^2$ from $y_0$ to $P^1(\iota+1)$ in $\frk G^2$ with length at most $\iota+2^m$ (dotted purple line). We then concatenate this path with the part of $P^1$ traced after time $\iota$ (red line).
}\label{fig-subdivide}
\end{center}
\end{figure}

Proposition~\ref{prop-subdivide-conc} will eventually be extracted from Azuma's inequality. To this end, we first construct a sequence of graphs $G_k$ which interpolate between $\mcl G^{2^{-n}}$ and $\mcl G^{2^{-n-m}}$ and show that the conditional expectation of $ \op{dist}\left(y_0 , y_1 ; \mcl G^{2^{-n-m}}|_{(0,1]} \right)$ given (slightly more information than) $G_k$ and its conditional expectation given the analogous information for $G_{k+1}$ differ by at most $2^m$ (Lemma~\ref{prop-mart-inc}). See Figure~\ref{fig-subdivide}, left, for an illustration. 
 
Fix $n , m\in\BB N$ and $y_0 , y_1 \in  (0,1]_{2^{-n-m}\BB Z}$ as in the statement of Proposition~\ref{prop-subdivide-conc}. Let $x_0 , x_1 \in (0,1]_{2^{-n}\BB Z}$ be chosen so that $y_0 \in (x_0-2^{-n} , x_0]_{2^{-n-m}\BB Z}$ and $y_1 \in (x_1 - 2^{-n} , x_1]_{2^{-n-m}\BB Z}$. 

Let $G_0 := \mcl G^{2^{-n}}|_{(0,1]}$ and $V_1  := \{x_0\}$.
Inductively, suppose $k \in \BB N$ and a graph $G_{k-1}$ as well as a set $V_k \subset   (0,1]_{2^{-n}\BB Z}$ have been defined. Let 
\eqb \label{eqn-subdivided-set}
U_k := \left\{x - j 2^{-m-n} \,:\, x \in V_k, \, j \in [0,2^m-1]_{\BB Z} \right\}  \subset (0,1]_{2^{-n-m}\BB Z}.
\eqe 
Let $G_k$ be the graph whose vertex set is 
$ \left( (0,1]_{2^{-n}\BB Z}  \setminus V_k \right) \sqcup U_k$
with adjacency defined as follows. If $y,y' \in  (0,1]_{2^{-n }\BB Z} \setminus V_k$ (resp. $y,y' \in U_k$), then $y $ and $y'$ are connected by an edge in $G_k$ if and only if they are connected by an edge in $\mcl G^{2^{-n }}|_{(0,1]}$ (resp. $\mcl G^{2^{-n -m}}|_{(0,1]}$). If $y \in  (0,1]_{2^{-n}\BB Z} \setminus V_k$ and $y' \in U_k$, then $y$ and $y'$ are connected by an edge in $G_k$ if any only if the cells $\eta ([y-2^{-n} , y])$ and $\eta ([y'-2^{-n-m} , y'])$ share a non-trivial boundary arc. That is, $G_k$ is a hybrid of $\mcl G^{2^{-n}}|_{(0,1]}$ and $\mcl G^{2^{-n-m}}|_{(0,1]}$ where elements of $(0,1]_{2^{-n}\BB Z} \setminus V_k$  correspond to intervals of length $2^{-n}$ and elements of $U_k$ correspond to intervals of length $2^{-n-m}$. Let 
\eqb \label{eqn-split-set-def}
V_{k+1} := V_k \cup \left\{ x\in (0,1]_{2^{-n}\BB Z} \setminus V_k \,:\, \op{dist}(y_0 , x; G_k) = \op{dist}\left(y_0 , (0,1]_{2^{-n}\BB Z} \setminus V_k ; G_k \right) \right\}   .
\eqe     
 
Let $\mcl H^n_0 := \mcl H^n$, as in~\eqref{eqn-subdivide-filtration}. For $x \in (0,1]_{2^{-n}\BB Z}$, let 
\eqb \label{eqn-subdivide-bm-def}
A_x := \left\{ \Delta^Z_{[x - (j+1) 2^{-m-n} , x - j 2^{-m-n} ]} \,:\, x \in V_k, \, j \in [0,2^m-1]_{\BB Z} \right\}
\eqe 
so that the random $2^{m+2}$-tuples $A_x$ are conditionally independent given $\mcl H_0^n$ and together determine all of the graphs $G_k$ (recall Lemma~\ref{prop-adjacency-length}). Also let
\eqbn
 \mcl H_k^n := \mcl H_0^n \vee \sigma\left(A_x \,:\, x \in V_k \right) ,
\eqen
so that $G_k$ and $V_{k+1}$ are $\mcl H_k^n$-measurable.

Let
\eqbn
K :=  \inf\left\{k\in \BB N \,:\, G_k = \mcl G^{2^{-n-m}}|_{(0,1]} \right\} .
\eqen
Note that $G_k = G_K = \mcl G^{2^{-n-m}}|_{(0,1]_{\BB Z}}$ for each $k\geq K$. 
The graph distance from $y_0$ to $(0,1]_{2^{-n}\BB Z}\setminus V_k$ in $G_k$ increases by at least 1 whenever $k$ increases by 1. Consequently, 
\eqb \label{eqn-max-increment} 
K \leq  D_{n+m}  \leq 2^m D_n .
\eqe 
For $k\geq 0$, let
\begin{align} \label{eqn-subdivide-mart}
 M_k &:= \BB E\left[    \op{dist}\left(y_0 , y_1 ; \mcl G^{2^{-n-m}}|_{(0,1]} \right)    \,|\, \mcl H_k^n \right]  .
\end{align}
Then $M$ is a $(\mcl H_k^n)$-martingale ($n$ fixed) and $M_k = \op{dist}\left(y_0 , y_1 ; \mcl G^{2^{-n-m}}|_{(0,1]} \right) $ for $k\geq K$. 
The following lemma is the main ingredient in the proof of Proposition~\ref{prop-subdivide-conc}. 

\begin{lem} \label{prop-mart-inc}
For each $k\in \BB N$, we have $M_k - M_{k-1} \leq 2^m$. 
\end{lem}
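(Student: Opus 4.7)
The plan is to prove the stronger statement $|M_k - M_{k-1}| \leq 2^m$, from which the displayed one-sided bound follows immediately (and which is what is actually needed for the Azuma bound giving~\eqref{eqn-subdivide-conc}). The reduction goes as follows. The set $V_k$ is $\mcl H^n_{k-1}$-measurable directly from the recursive definition~\eqref{eqn-split-set-def}, and by the Markov property of $Z$ together with Lemma~\ref{prop-adjacency-length} the tuples $\{A_x\}_{x\in(0,1]_{2^{-n}\BB Z}}$ are conditionally independent given $\mcl H^n_0$. Consequently the martingale increment $M_k - M_{k-1}$ is pointwise bounded in absolute value by the $\mcl H^n_{k-1}$-a.s.\ oscillation of the random variable $F := \op{dist}(y_0,y_1;\mcl G^{2^{-n-m}}|_{(0,1]})$ as the tuple $(A_x)_{x\in V_k\setminus V_{k-1}}$ ranges over its support, with all other randomness frozen. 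So it would suffice to show: for any two realizations $\omega, \omega'$ which agree on $\mcl H^n_{k-1}$ and on $A_{x'}$ for every $x' \notin V_k \setminus V_{k-1}$, one has $|F(\omega) - F(\omega')| \leq 2^m$.

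To establish this pointwise bound I will first record two structural facts that hold no matter how cells in $V_k \setminus V_{k-1}$ are subdivided. First, within any $\mcl G^{2^{-n}}$-cell $x$, the $2^m$ dyadic subcells $\eta([x - j\cdot 2^{-n-m}, x - (j-1)\cdot 2^{-n-m}])$ for $j=1,\dots,2^m$ form a chain of length $2^m-1$ in $\mcl G^{2^{-n-m}}$ via the time-consecutive case of~\eqref{eqn-inf-adjacency}. Second, whenever $x \in V_k \setminus V_{k-1}$ and $x'$ is a cell not in $V_k \setminus V_{k-1}$ that is adjacent to $x$ in $\mcl G^{2^{-n}}$, any subcell of $x'$ sharing a non-trivial boundary arc with $\eta([x-2^{-n},x])$ is adjacent in $\mcl G^{2^{-n-m}}$ to at least one subcell of $x$; moreover whether this adjacency exists depends only on $\mcl H^n_0$-measurable data plus the subdivision of $x'$, hence is the same in $\omega$ and $\omega'$. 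Given these, I would take a geodesic $P$ of length $F(\omega)$ in $\mcl G^{2^{-n-m}}(\omega)$ and build from it a path in $\mcl G^{2^{-n-m}}(\omega')$ from $y_0$ to $y_1$ of length at most $F(\omega) + 2^m$ by replacing the portion of $P$ lying inside cells of $V_k \setminus V_{k-1}$ with a traversal through the guaranteed subcell chain, attached to the exterior of $P$ via fact (ii).

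The hard part will be to argue that the combined cost of these reroutings is at most $2^m$, rather than scaling with the number of cells of $V_k\setminus V_{k-1}$ that $P$ visits. The reason this should succeed is that $V_k \setminus V_{k-1}$ is a single BFS frontier layer of $G_{k-1}$: by~\eqref{eqn-split-set-def} these cells lie at $G_{k-1}$-distance exactly $d_{k-1}:=\op{dist}(y_0,(0,1]_{2^{-n}\BB Z}\setminus V_{k-1};G_{k-1})$ from $y_0$, while cells not in $V_k$ have $G_{k-1}$-distance at least $d_{k-1}+1$. Projecting a $\mcl G^{2^{-n-m}}$-geodesic to its trace on the $\mcl G^{2^{-n}}$-cells yields a walk in $G_{k-1}$, and BFS monotonicity will force this walk to cross the frontier layer effectively once, so that the needed rerouting can be realized by a single subcell-chain traversal of length at most $2^m-1$. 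The technical work lies in translating this intuition from projected walks back to actual $\mcl G^{2^{-n-m}}$-paths and in splicing the rerouted frontier crossing into the (invariant) exterior portion of $P$ using the adjacency preservation of fact (ii).
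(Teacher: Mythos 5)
Your reduction to a deterministic two-realization statement is exactly the paper's: freeze $\mcl H_{k-1}^n$ and the tuples $A_{x'}$ for $x'\notin V_k\setminus V_{k-1}$, use conditional independence to reduce the martingale increment to the oscillation of $\op{dist}(y_0,y_1;\mcl G^{2^{-n-m}}|_{(0,1]})$ over resamplings of $(A_x)_{x\in V_k\setminus V_{k-1}}$, and then prove a pointwise bound of $2^m$ by path surgery. You have also correctly identified the one structural fact that makes the constant $2^m$ (rather than $2^m$ times the number of visited frontier cells) possible, namely that $V_k\setminus V_{k-1}$ is a single BFS layer of $G_{k-1}$ at distance $d_{k-1}$ from $y_0$.

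However, the surgery itself is where your sketch has a genuine gap. Your plan is to reroute \emph{each} excursion of the geodesic $P$ into the newly subdivided cells, and you assert that ``BFS monotonicity will force this walk to cross the frontier layer effectively once.'' That assertion is not justified and is not true as stated: the layer structure lives in the coarse graph $G_{k-1}$, while $P$ is a geodesic in the fine graph $\mcl G^{2^{-n-m}}$, whose distances do not respect the $G_{k-1}$ BFS layers. A fine-scale geodesic can perfectly well enter and leave cells of $V_k\setminus V_{k-1}$ many times (e.g.\ alternating between a $V_{k-1}$ cell and several distinct frontier cells), and a visit-by-visit rerouting would then cost $2^m$ per visit. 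So the ``hard part'' you flag is not merely technical; the strategy as described does not close.

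The paper resolves this with a different, and cleaner, surgery: instead of repairing each visit, it looks only at the \emph{last} index $\iota$ at which $P^1$ visits $U_k\setminus U_{k-1}$, discards the entire prefix $P^1|_{[0,\iota]}$, and rebuilds a path to (a neighbor of) $P^1(\iota+1)$ from scratch in the other realization $\frk G^2$. The BFS-layer property is used here, but pointed the other way: since the last-visited frontier cell $\wt x\in V_k\setminus V_{k-1}$ lies at $G_{k-1}$-distance $d_{k-1}\leq \iota$ from $y_0$, and since $\frk G^1$, $\frk G^2$, and $G_{k-1}$ all agree on $U_{k-1}$, one can reach some subcell $\wt y'$ of $\wt x$ in $\frk G^2$ in at most $\iota$ steps; the $2^m$ subcells of $\wt x$ form a chain, so one then reaches the required subcell $\wt y$ (adjacent in $\frk G^2$ to $P^1(\iota+1)$, by your fact (ii)) in at most $2^m$ further steps. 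Concatenating with the untouched suffix $P^1|_{[\iota+1,|P^1|]}$, which avoids $U_k\setminus U_{k-1}$ and hence is identical in both graphs, gives a path of length at most $|P^1|+2^m$. The single-crossing phenomenon you were hoping for is thus achieved by fiat — only the last crossing matters because everything before it is thrown away — rather than by a monotonicity argument. To complete your proof you would need to replace the reroute-every-visit plan with this last-visit-and-discard construction (or supply some other mechanism that prevents the cost from accumulating over multiple visits).
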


For the proof of Lemma~\ref{prop-mart-inc}, we recall that a \emph{realization} of a random variable $X$ is an element of the support of the law of $X$. A realization of a $\sigma$-algebra is a realization of a set of random variables which generate it.
 
The idea of the proof is as follows. Suppose given $k\in\BB N$ and condition on realizations of $\mcl H_{k-1}^n$ and $\left\{ A_x \,:\, x \in (0,1]_{2^{-n } \BB Z} \setminus V_k   \right\} $. If we are given two different realizations of $\left\{ A_x \,:\, x \in V_k \setminus V_{k-1} \right\} $ (which correspond to two different ways to subdivide the cells corresponding to elements of $V_k\setminus V_{k-1}$ into $2^{ m}$ pieces) we obtain two different possible realizations of $G_K$ and hence two different realizations of $\op{dist}\left(y_0 , y_1 ; \mcl G^{2^{-n-m}}|_{(0,1]} \right)$. We will argue that these two realizations differ by at most $2^m$, which will come from the fact that each cell of $\mcl G^{2^{-n}}$ is divided into $2^m$ pieces. Averaging over all realizations of $\left\{ A_x \,:\, x \in (0,1]_{2^{-n } \BB Z} \setminus V_k   \right\} $ will show that changing the information in $\mcl H_k^n$ while leaving the information in $\mcl H_{k-1}^n$ fixed can change the value of $M_k$ by at most $2^m$, which will imply the statement of the lemma. 
We now proceed with the details.

\begin{proof}[Proof of Lemma~\ref{prop-mart-inc}]
Let $k\in\BB N$. Throughout the proof we assume that we have conditioned on a realization of $\mcl H_{k-1}^n$, which determines realizations of $\mcl H^n$ and of $V_k$ and $V_{k-1}$. Let $\frk A^1$ and $\frk A^2$ be two realizations of 
\eqb \label{eqn-mid-cells}
\left\{ A_x \,:\, x \in V_k \setminus V_{k-1} \right\}   
\eqe 
which are compatible with our given realizations of $\mcl H_{k-1}^n$.
Note that $\mcl H_k^n$ is generated by $\mcl H_{k-1}^n$ and the random vectors~\eqref{eqn-mid-cells}, so $\frk A^1$ and $\frk A^2$ together with our given realization of $\mcl H_{k-1}^n$ determine two possible realizations of $\mcl H_k^n$. 

Let $\frk X$ be a realization of  
\eqb \label{eqn-last-cells}
\left\{ A_x \,:\, x \in (0,1]_{2^{-n } \BB Z} \setminus V_k   \right\}   
\eqe 
which is compatible with our given realizations of $\mcl H_{k-1}^n$.
The $\sigma$-algebra $\mcl H^{n+m} = \mcl H_K^n$ is generated by $\mcl H_{k-1}^n$ and the random vectors~\eqref{eqn-mid-cells} and~\eqref{eqn-last-cells}. For $i\in \{1,2\}$, let $\frk G^i = \frk G(\frk A^i , \frk X)$ be the realization of $G_K = \mcl G^{2^{-n-m}}|_{(0,1]}$ which is determined by $\frk A^i$, $\frk X$, and our given realization of $\mcl H_{k-1}^n$.  
Also let $ \frk D(\frk A^i , \frk X)$ be the corresponding realization of $\op{dist}\left(y_0 , y_1 ; \mcl G^{2^{-n-m}}|_{(0,1]} \right)$. 

The random vectors~\eqref{eqn-last-cells} are conditionally independent from $\mcl H_k^n$ given $\mcl H^n_{k-1}$. Consequently, on the event 
\eqbn
\left\{ \left\{ A_x \,:\, x \in V_k \setminus V_{k-1} \right\}   = \frk A^i \right\} 
\eqen
for $i\in \{1,2\}$, the quantity $ M_k$ is obtained by integrating
$ \frk D(\frk A^i, \frk X)$ over all possible realizations $\frk X $ as above with respect to the conditional law of $\left\{ A_x \,:\, x \in (0,1]_{2^{-n } \BB Z} \setminus V_k   \right\} $ given $\mcl H_{k-1}^n$. Furthermore, this conditional law does not depend on $\frk A^i$. Therefore, to prove the lemma, it suffices to show that 
\eqb  \label{eqn-dist-realization}
\left|\frk D(\frk A^1, \frk X) - \frk D(\frk A^2, \frk X)  \right| \leq 2^m . 
\eqe  

The proof of~\eqref{eqn-dist-realization} is entirely deterministic. 
See Figure~\ref{fig-subdivide}, right, for an illustration. 
Let $P^1 : [0,|P^1|] \rta \mcl V(\frk G^1) = (0,1]_{2^{-n-m}\BB Z}$ be a path in $\frk G^1$ from $y_0  $ to $y_1  $ (Definition~\ref{def-path}). 
We will construct a path $P^2$ in $\frk G^2$ from $y_0$ to $y_1$ whose length is at most $|P^1|+2^m$.
If $P^1$ does not pass through $U_k \setminus U_{k-1}$ (defined as in~\eqref{eqn-subdivided-set}), then since the restrictions of $\frk G^1$ and $\frk G^2$ to $(0,1]_{2^{-n-m}\BB Z}\setminus (U_k\setminus U_{k-1})$ agree, we can just take $P^2 = P^1$. Hence we can assume without loss of generality that $P^1$ passes through $U_k\setminus U_{k-1}$. Let
\alb
\iota  :=   \sup\left\{ i \in [0,|P^1| ]_{\BB Z} \,:\, P^1(i) \in U_k\setminus U_{k-1} \right\}   .
\ale 
Then either $\iota = |P^1|$ or the cell $\eta([P^1(\iota+1) - 2^{-n-m} , P^1(\iota +1 )])$ shares a non-trivial boundary arc with $\eta([x-2^{-n} , x])$ for some $x\in U_k$. 
In the former case, we set $\wt y = P^1(\iota) = y_1$. 
In the latter case, we can choose $\wt y \in U_k $ which is adjacent to $P(\iota +1)$ in $\frk G^2$. 

Let $\wt x \in V_k$ be chosen so that $\wt y\in (\wt x- 2^{-n} ,\wt x]_{2^{-n-m}\BB Z}$. By the definition~\eqref{eqn-split-set-def} of $ V_k$ and since the restrictions of $\frk G^2$ and $G_{k-1}$ to $U_{k-1}$ agree, we can choose $\wt y'\in  (\wt x- 2^{-n} ,\wt x]_{2^{-n-m}\BB Z}$ such that
\eqbn
 \op{dist}\left(y_0  , \wt y' ; \frk G^2   \right) =   \op{dist}\left(y_0  , (0,1]_{2^{-n}\BB Z} \setminus V_{k-1} ;    \frk G^2 \right) =  \op{dist}\left(y_0 , (0,1]_{2^{-n}\BB Z}\setminus V_{k-1} ; G_{k-1}  \right)   .
\eqen 
Since the restrictions of $\frk G^1$ and $G_{k-1}$ to $ U_{k-1}$ agree, we have
\eqbn
\op{dist}\left(y_0 , \wt y' ; \frk G^2 \right)   
= \op{dist}\left(y_0 , (0,1]_{2^{-n}\BB Z}\setminus V_{k-1}  ; \frk G^1 \right) 
\leq \iota . 
\eqen
Since $\wt y$ and $\wt y'$ both belong to $(\wt x- 2^{-n} ,\wt x]_{2^{-n-m}\BB Z}$, it follows that $\wt y$ lies at $\frk G^2$-graph distance at most $2^m$ from $\wt y'$.  
Consequently, we can find a path $\wt P^2$ from $y_0$ to $\wt y$ in $\frk G^2$ of length at most $\iota+2^m$. Let $P^2$ be the concatenation of $\wt P^2$ and $P^1|_{[\iota+1 , |P^1|]}$. Since $P^1|_{[\iota+1 , |P^1|]}$ does not contain any vertex in $U_k\setminus U_{k-1}$, it follows that $P^2$ is a path from $y_0$ to $y_1 $ in $\frk G^2$ of length at most $|P^1|+ 2^m$. By symmetry of $\frk G^1$ and $\frk G^2$, we infer that~\eqref{eqn-dist-realization} holds.  
\end{proof}

\begin{proof}[Proof of Proposition~\ref{prop-subdivide-conc}]
By Lemma~\ref{prop-mart-inc} and Azuma's inequality, we infer that for $t>0$, 
\eqbn
\BB P\left[ \left|  M_{2^m D_n } -  M_0  \right| > t  \,|\, \mcl H^n_0 \right] \leq 2 \exp\left( - \frac{t^2}{ 2^{3m+1} D_n     } \right) .
\eqen
In light of~\eqref{eqn-max-increment}, this implies~\eqref{eqn-subdivide-conc}.  
\end{proof}

\subsection{Proof Proposition~\ref{prop-metric-sub}}
\label{sec-sub-proof}

In this subsection we will deduce Proposition~\ref{prop-metric-sub} by checking the hypotheses of Proposition~\ref{prop-restricted-sub} with $a_n = \log_2 \BB E[D_n]$. Throughout this section and the next, we define the event $F_n$ for $n\in\BB N$ as in Lemma~\ref{prop-bm-cont} (recall that this event also appear in Proposition~\ref{prop-restricted-sub}). For $x,y \in \BB R$ with $x<y$, we let $F_n(x,y)$ be the event that $F_n$ occurs with the re-scaled Brownian motion $t\mapsto (y-x)^{-1/2} (Z_{t (y-x) + x} - Z_x)$ in place of $Z$. We set
\eqb \label{eqn-bm-cont-union}
\wh F_{n,m} := \bigcap_{x \in (0,1]_{2^{-n}\BB Z}} F_m(x-2^{-n} , x) .
\eqe  
By Lemma~\ref{prop-bm-cont} and the union bound, there exist universal constants $c_0 , c_1>0$ such that 
\eqb \label{eqn-bm-cont-union-prob}
\BB P\left[ \wh F_{n,m}^c \right] \leq c_0 e^{-c_1 m^2 + n}
\eqe

\begin{lem} \label{prop-dist-scale}
 Let $n,m \in \BB N$. Let $\mcl H^n$ be the $\sigma$-algebra from~\eqref{eqn-subdivide-filtration} and let $\wh F_{n,m}$ be as in~\eqref{eqn-bm-cont-union}. Let $y_0 ,y_1 \in (0,1]_{2^{-n-m}\BB Z}$ be chosen in a $\mcl H^n$-measurable manner. Also let $x_0 , x_1 \in (0,1]_{2^{-n}\BB Z}$ be chosen so that $y_0 \in (x_0 - 2^{-n} , x_0]$ and $y_1 \in (x_1-2^{-n} ,x_1]$. Then
\eqbn
\BB E\left[\op{dist}\left(y_0 ,y_1; \mcl G^{2^{-n-m}}|_{(0,1]} \right) \BB 1_{\wh F_{n,m}} \,|\, \mcl H^n \right] \preceq n^5 \BB E\left[ D_m \right] \op{dist}\left(x_0 ,x_1; \mcl G^{2^{-n}}|_{(0,1]} \right) ,
\eqen
with the implicit constant depending only on $\gamma$. 
\end{lem}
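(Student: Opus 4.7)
The plan is to construct a path in $\mcl G^{2^{-n-m}}|_{(0,1]}$ from $y_0$ to $y_1$ by following a geodesic in the coarser graph $\mcl G^{2^{-n}}|_{(0,1]}$ from $x_0$ to $x_1$ and traversing each visited cell at the finer scale. Since $\mcl G^{2^{-n}}|_{(0,1]}$ is $\mcl H^n$-measurable by Lemma~\ref{prop-adjacency-length}, I would pick such a geodesic $P:[0,L]_{\BB Z}\to(0,1]_{2^{-n}\BB Z}$ of length $L:=\op{dist}(x_0,x_1;\mcl G^{2^{-n}}|_{(0,1]})$ in an $\mcl H^n$-measurable way. Because the non-trivial boundary arc shared by two adjacent cells of $\mcl G^{2^{-n}}$ is tiled by shared arcs of their $2^{-n-m}$-subcells, there is at least one pair of adjacent subcells across every transition of $P$, so concatenating paths of length at most $\op{diam}(\mcl G^{2^{-n-m}}|_{(P(i)-2^{-n},P(i)]})$ inside each visited cell yields a path from $y_0$ to $y_1$ in $\mcl G^{2^{-n-m}}|_{(0,1]}$ of total length at most $\sum_{i=0}^{L}\bigl(\op{diam}(\mcl G^{2^{-n-m}}|_{(P(i)-2^{-n},P(i)]})+1\bigr)$.

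Next I would control each summand in $\mcl H^n$-conditional expectation. By Lemma~\ref{prop-adjacency-length} and the independence of increments of $Z$, the restrictions $Z|_{[P(i)-2^{-n},P(i)]}$ are mutually conditionally independent given $\mcl H^n$ with each conditional law determined by $\Delta^Z_{[P(i)-2^{-n},P(i)]}$. Brownian scaling (time by $2^n$, space by $2^{n/2}$) identifies this conditional law with that of $Z|_{[0,1]}$ conditioned on $\Delta^Z_{[0,1]}=2^{n/2}\Delta^Z_{[P(i)-2^{-n},P(i)]}$, and simultaneously turns $\mcl G^{2^{-n-m}}|_{(P(i)-2^{-n},P(i)]}$ into $\mcl G^{2^{-m}}|_{(0,1]}$ for the rescaled process. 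On $F_m(P(i)-2^{-n},P(i))$ the norm of this rescaled vector is at most a constant multiple of $m$, so Proposition~\ref{prop-diam-given-bdy} (applied at scale $2^{-m}$ with parameter $\preceq m\leq n$ in the regime of interest) gives
$$\BB E\bigl[\op{diam}\bigl(\mcl G^{2^{-n-m}}|_{(P(i)-2^{-n},P(i)]}\bigr)\,\BB 1_{F_m(P(i)-2^{-n},P(i))}\,\big|\,\mcl H^n\bigr]\preceq n^5\,\BB E[D_m].$$

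To conclude I would multiply the first-step path bound by $\BB 1_{\wh F_{n,m}}$, use $\wh F_{n,m}\subset\bigcap_i F_m(P(i)-2^{-n},P(i))$ from definition~\eqref{eqn-bm-cont-union}, sum the per-cell estimate over $i\in[0,L]_{\BB Z}$, and absorb the ``$+1$'' terms into the diameters (which are at least $1$). Since $L$ is $\mcl H^n$-measurable, this yields the claimed inequality. The main technical obstacle is the careful identification of the $\mcl H^n$-conditional law of the rescaled Brownian segment with the conditioned law appearing in Proposition~\ref{prop-diam-given-bdy}, including the conditional independence of segments across different $2^{-n}$-intervals; once this identification is pinned down, the remainder is a deterministic concatenation of geodesic pieces together with the uniform regularity event $\wh F_{n,m}$ that makes the per-cell conditioning bound apply simultaneously in every cell.
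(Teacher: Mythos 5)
Your proposal is correct and follows essentially the same route as the paper: choose an $\mcl H^n$-measurable geodesic in $\mcl G^{2^{-n}}|_{(0,1]}$, bound the fine-scale distance by the sum of fine-scale diameters of the cells it visits, and control each summand's conditional expectation via Brownian scaling together with Proposition~\ref{prop-diam-given-bdy}, using that on $\wh F_{n,m}$ the rescaled boundary length vector has norm $O(m)$. The paper states the per-cell bound more tersely (it writes the concatenation inequality directly, as in Lemma~\ref{prop-dist-mono}), but the substance is identical.
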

\begin{proof}
Let $P : [1,|P|]_{\BB Z}\rta (0,1]_{2^{-n}\BB Z}$ be a path in $\mcl G^{2^{-n}}|_{(0,1]}$ from $x_0$ to $x_1$ with $|P| = \op{dist}\left(x_0 ,x_1; \mcl G^{2^{-n}}|_{(0,1]} \right)$, chosen in some $\mcl H^n$-measurable manner. Then  
\eqb \label{eqn-dist-sum}
\op{dist}\left(y_0 ,y_1; \mcl G^{2^{-n-m}}|_{(0,1]} \right) \BB 1_{\wh F_{n,m}} \leq \sum_{i=1}^{|P|} \op{diam} \left( \mcl G^{2^{-n-m}}|_{(P(i) - 2^{-n} , P(i)]} \right) \BB 1_{F_m(P(i)-2^{-n} , P(i))} .
\eqe 
The conditional law given $\mcl H^n$ of each of the restricted Brownian motions $Z|_{(P(i) - 2^{-n} , P(i)]}$ is the same as its conditional law given $\Delta^Z_{(P(i) - 2^{-n} , P(i)]}$. By Proposition~\ref{prop-diam-given-bdy} and scale invariance, we find that the conditional expectation given $\mcl H^n$ of each term in the sum on the right in~\eqref{eqn-dist-sum} is $\preceq n^5 \BB E\left[ D_m \right]$. 
\end{proof}

We next transfer from the distance estimate of Lemma~\ref{prop-dist-scale} to a diameter estimate using Proposition~\ref{prop-subdivide-conc}. 

\begin{lem} \label{prop-cond-diam0}
For $\zeta>0$, there are constants $b_0 , b_1 > 0$, depending only on $\zeta$ and $\gamma$, such that for $n,m \in \BB N$ with $  m \leq 2^{\zeta n}$, 
\eqb \label{eqn-cond-diam0-prob}
\BB P\left[  D_{n+m}  > b_1    n^5 \BB E\left[D_m \right] D_n  + 2^{\zeta n  + 3m/2 } D_n^{1/2}  ,\, \wh F_{n,m} \,|\, \mcl H^n \right]  \preceq \exp\left(-b_0 2^{2\zeta n} \right)
\eqe 
and
\eqb \label{eqn-cond-diam0-exp}
\BB E\left[ D_{n+m} \BB 1_{\wh F_{n,m}} \,|\, \mcl H^n \right]  \preceq n^5   \BB E\left[ D_m \right] D_n + 2^{\zeta n  + 3m/2 } D_n^{1/2}
\eqe 
with deterministic implicit constants depending only on $\zeta$ and $\gamma$.
\end{lem}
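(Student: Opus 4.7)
The plan is to combine the conditional mean bound of Lemma~\ref{prop-dist-scale} with the Azuma-type concentration of Proposition~\ref{prop-subdivide-conc}, then union bound over all pairs of vertices of $\mcl G^{2^{-n-m}}|_{(0,1]}$. The double-exponential tail from Proposition~\ref{prop-subdivide-conc} will absorb the $2^{2(n+m)}$ pairs, since the hypothesis $m\le 2^{\zeta n}$ keeps this quantity at most $2^{O(2^{\zeta n})}$.

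Fix a pair $(y_0,y_1)\in(0,1]_{2^{-n-m}\BB Z}^2$ and let $(x_0,x_1)$ be the cells of $\mcl G^{2^{-n}}|_{(0,1]}$ containing them. First, I would apply Lemma~\ref{prop-dist-scale} together with $\op{dist}(x_0,x_1;\mcl G^{2^{-n}}|_{(0,1]})\le D_n$ to get
\[\BB E\bigl[\op{dist}(y_0,y_1;\mcl G^{2^{-n-m}}|_{(0,1]})\,\BB 1_{\wh F_{n,m}}\bigm|\mcl H^n\bigr]\preceq n^5\BB E[D_m]\,D_n=:M_1.\]
Then I would apply Proposition~\ref{prop-subdivide-conc} to the same pair with $t:=2^{\zeta n+3m/2}D_n^{1/2}$, noting that this choice yields $t^2/(2^{3m+1}D_n)=2^{2\zeta n-1}$ and hence
\[\BB P\bigl[\bigl|\op{dist}(y_0,y_1;\cdot)-\BB E[\op{dist}(y_0,y_1;\cdot)\mid\mcl H^n]\bigr|>t\bigm|\mcl H^n\bigr]\le 2\exp(-2^{2\zeta n-1}).\]
Since $2^{2(n+m)}\le 2^{O(2^{\zeta n})}$ is dominated by $\exp(2^{2\zeta n-1})$ for $n$ large (and absorbed into the implicit constant for small $n$), a union bound over all pairs shows that on an $\mcl H^n$-conditional event of probability at least $1-c'\exp(-b_0 2^{2\zeta n})$, the above concentration estimate holds for every pair simultaneously.

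On this event intersected with $\wh F_{n,m}$, using that $\op{dist}(y_0,y_1)\BB 1_{\wh F_{n,m}}=\op{dist}(y_0,y_1)$ on $\wh F_{n,m}$ and taking the maximum over pairs gives
\[D_{n+m}\BB 1_{\wh F_{n,m}}\le \max_{(y_0,y_1)}\BB E[\op{dist}(y_0,y_1;\cdot)\mid\mcl H^n]+t.\]
To convert the right-hand side into $b_1 M_1+t$, I would decompose
\[\BB E[\op{dist}\mid\mcl H^n]=\BB E[\op{dist}\,\BB 1_{\wh F_{n,m}}\mid\mcl H^n]+\BB E[\op{dist}\,\BB 1_{\wh F_{n,m}^c}\mid\mcl H^n],\]
bound the first term by $M_1$ via Lemma~\ref{prop-dist-scale}, and bound the second by $2^{n+m}\BB P[\wh F_{n,m}^c\mid\mcl H^n]$, with the latter controlled via the conditional independence (given $\mcl H^n$) of the Brownian motions on the different sub-intervals $(x-2^{-n},x]$, combined with conditioned-Brownian-motion continuity estimates analogous to those of Section~\ref{sec-cond-bridge}. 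This proves \eqref{eqn-cond-diam0-prob}. The expectation bound \eqref{eqn-cond-diam0-exp} then follows by integrating \eqref{eqn-cond-diam0-prob} against the trivial bound $D_{n+m}\le 2^{n+m}$ on the complementary event; the resulting contribution $2^{n+m}\exp(-b_0 2^{2\zeta n})$ is negligible because $m\le 2^{\zeta n}$ forces $n+m\ll 2^{2\zeta n}$ for $n$ large.

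The main technical obstacle is the last step above: producing a pointwise bound on $\BB P[\wh F_{n,m}^c\mid\mcl H^n]$ that is uniformly small (say of order $M_1/2^{n+m}$ or better) in the realization of $\mcl H^n$. This amounts to showing that the sub-interval Brownian motions retain the Hölder-type regularity defining $F_m$ even after conditioning on arbitrary boundary length vectors, which refines the conditioned-continuity estimates underlying Proposition~\ref{prop-diam-given-bdy}; everything else in the argument is a routine assembly of Lemma~\ref{prop-dist-scale}, Proposition~\ref{prop-subdivide-conc}, and a union bound tuned to the hypothesis $m\le 2^{\zeta n}$.
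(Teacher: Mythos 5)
Your skeleton is exactly the paper's proof of this lemma: Lemma~\ref{prop-dist-scale} for the conditional mean, Proposition~\ref{prop-subdivide-conc} with $t=2^{\zeta n+3m/2}D_n^{1/2}$ (so that $t^2/(2^{3m+1}D_n)=2^{2\zeta n-1}$), a union bound over the at most $2^{2(n+m)}\le 2^{2n+2^{\zeta n+1}}$ pairs which is absorbed into $\exp(-b_0 2^{2\zeta n})$ precisely because $m\le 2^{\zeta n}$, and finally \eqref{eqn-cond-diam0-exp} obtained from \eqref{eqn-cond-diam0-prob} by integrating against the trivial bound $D_{n+m}\le 2^{n+m}\preceq \exp(b_0 2^{2\zeta n})$. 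All of this matches the paper step for step.

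The one place where you hedge --- converting $\sup_{y_0,y_1}\BB E[\op{dist}(y_0,y_1;\cdot)\mid\mcl H^n]$ into $b_1 n^5\BB E[D_m]D_n$, which forces you to control $\BB E[\op{dist}\,\BB 1_{\wh F_{n,m}^c}\mid\mcl H^n]$ --- is exactly the step the paper's own proof passes over in silence: in the display \eqref{eqn-diam<sup} the paper bounds the conditional expectation \emph{without} the indicator by citing Lemma~\ref{prop-dist-scale}, which only bounds the conditional expectation \emph{with} the indicator inserted. So you have not missed an idea that the paper supplies; you have correctly isolated the one non-routine point. However, the repair you sketch --- a bound on $\BB P[\wh F_{n,m}^c\mid\mcl H^n]$ of order $2^{-(n+m)}$ \emph{uniformly} over realizations of $\mcl H^n$ --- is provably unavailable. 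The $\sigma$-algebra $\mcl H^n$ records the boundary-length vectors $\Delta^Z_{[x-2^{-n},x]}$, and on realizations where some entry of some vector is near (or beyond) the oscillation threshold $m 2^{-n/2}$ implicit in $F_m(x-2^{-n},x)$, the conditional probability of $F_m(x-2^{-n},x)^c$ is bounded below by something like $e^{-c m^2}$ (and equals $1$ when the threshold is exceeded); for fixed $m$ and large $n$ this dwarfs $2^{-(n+m)}$, and such realizations occur with positive probability. A genuine completion therefore has to do something else: either restrict to a good $\mcl H^n$-measurable event on which the boundary lengths are controlled (in the spirit of the event $F_n$ of Lemma~\ref{prop-bm-cont} appearing in Proposition~\ref{prop-diam-given-bdy}) and carry the complementary $\mcl H^n$-measurable bad event through the statement and its applications, or bound $\BB E[\op{dist}\,\BB 1_{\wh F_{n,m}^c}\mid\mcl H^n]$ by a conditional second-moment argument rather than by the crude $2^{n+m}\,\BB P[\wh F_{n,m}^c\mid\mcl H^n]$. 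As it stands, your write-up (like the paper's) leaves this step open, and the specific route you propose for closing it does not work.
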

\begin{proof}
Proposition~\ref{prop-subdivide-conc} and a union bound imply that there is a constant $b_0  > 0$ depending only on $\gamma$ and $\zeta$ such that the following is true. Except on an event of conditional probability $\preceq \exp\left(-b_0 2^{ 2 \zeta n} \right)$ given $\mcl H^n$, we have 
\eqbn
  \sup_{y_0, y_1 \in (0,1]_{2^{-n-m}\BB Z}} \left| \op{dist}\left(y_0 ,y_1; \mcl G^{2^{-n-m}}|_{(0,1]} \right) - \BB E\left[\op{dist}\left(y_0 ,y_1; \mcl G^{2^{-n-m}}|_{(0,1]} \right) \,|\, \mcl H^n \right] \right| \leq 2^{\zeta n + 3m/2} D_n^{1/2}     .
\eqen
By combining this with Lemma~\ref{prop-dist-scale}, we find that it holds except on an event of conditional probability $\preceq \exp(-b_0 2^{2 \zeta n})$ given $\mcl H^n$ that either $\wh F_{n,m}^c$ occurs or
\begin{align} \label{eqn-diam<sup}
D_{n+m} &\leq   \sup_{y_0, y_1 \in (0,1]_{2^{-n-m}\BB Z}} \BB E\left[ \op{dist}\left(y_0 ,y_1; \mcl G^{2^{-n-m}}|_{(0,1]} \right) \,|\, \mcl H^n \right] + 2^{\zeta n  + 3m/2 } D_n^{1/2} \notag \\
&\leq b_1 n^5 \BB E\left[D_m \right] D_n  + 2^{\zeta n  + 3m/2 } D_n^{1/2}  
\end{align}
for an appropriate constant $b_1>0$ as in the statement of the lemma. 
This immediately implies~\eqref{eqn-cond-diam0-prob}. 
Since $D_{n+m} \leq 2^{n+m} \preceq \exp\left( b_0 2^{2 \zeta n} \right)$, we can take the conditional expectations given $\mcl H^n$ of both sides of~\eqref{eqn-diam<sup} to obtain~\eqref{eqn-cond-diam0-exp}. 
\end{proof}

The following lemma shows that $\log_2 D_n$ satisfies the restricted subadditivity condition in Lemma~\ref{prop-restricted-sub}.

\begin{lem} \label{prop-exp-diam0}
Fix  
\eqb \label{eqn-diam0-ratio}
\lambda  \in \left(0 ,  \frac{2}{8+6\sqrt 2 \gamma + 3\gamma^2} \right) .
\eqe 
For each $n,m\in\BB N$ with $n^{2/3} \leq m \leq \lambda  n$, we have
\eqbn
\BB E\left[ D_{n+m}   \right]  \preceq n^5   \BB E\left[ D_m \right] \BB E\left[D_n \right] 
\eqen
with implicit constant depending only on $\lambda$ and $\gamma$. 
\end{lem}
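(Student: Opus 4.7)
The plan is to take expectations in the bound~\eqref{eqn-cond-diam0-exp} of Lemma~\ref{prop-cond-diam0} and then show that the extra error term $2^{\zeta n + 3m/2} D_n^{1/2}$ can be absorbed into the main term $n^5 \BB E[D_m] D_n$ under the stated constraint on $\lambda$, with the help of the \emph{lower} bound on $\BB E[D_n]$ coming from Proposition~\ref{prop-diam-lower}.

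First, I would split $\BB E[D_{n+m}] = \BB E[D_{n+m} \BB 1_{\wh F_{n,m}}] + \BB E[D_{n+m} \BB 1_{\wh F_{n,m}^c}]$. The contribution from the bad event is easy to dispose of: using the deterministic bound $D_{n+m} \leq 2^{n+m}$ together with $\BB P[\wh F_{n,m}^c] \leq c_0 e^{-c_1 m^2 + n}$ from~\eqref{eqn-bm-cont-union-prob}, the hypothesis $m \geq n^{2/3}$ makes the exponent $-c_1 m^2 + n$ dominate $(n+m)\log 2$, so this piece decays super-polynomially in $n$ and is trivially smaller than $n^5 \BB E[D_m] \BB E[D_n]$ for $n$ large.

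Next, I would apply~\eqref{eqn-cond-diam0-exp} for a $\zeta > 0$ to be selected, take expectations, and use Jensen's inequality in the form $\BB E[D_n^{1/2}] \leq \BB E[D_n]^{1/2}$. The first term on the right side of~\eqref{eqn-cond-diam0-exp} immediately gives the desired $n^5 \BB E[D_m] \BB E[D_n]$, so everything boils down to showing
\[
2^{\zeta n + 3m/2} \BB E[D_n]^{1/2} \;\preceq\; n^5 \BB E[D_m] \BB E[D_n] .
\]
To bound the left side from above by the right side, I would apply the lower bound $\BB E[D_k] \geq 2^{k (\xi_- - o_k(1))}$ of~\eqref{eqn-diam-lower-mean} (with $\ep = 2^{-k}$) separately to $\BB E[D_m]$ and to the ``missing half'' of $\BB E[D_n]$, yielding $\BB E[D_m]\BB E[D_n]^{1/2} \geq 2^{m\xi_- + n\xi_-/2 - o(n)}$ since $m \geq n^{2/3} \to \infty$.

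The main obstacle, and the place where the explicit constraint~\eqref{eqn-diam0-ratio} on $\lambda$ enters, is verifying that $\zeta, \delta > 0$ can be chosen so that the required comparison holds at leading order. Using $m \leq \lambda n$, the inequality reduces to $\zeta + (3/2 - \xi_-)\lambda < \xi_-/2$, i.e.\ $\lambda < \xi_-/(3 - 2\xi_-)$. A direct calculation with $\xi_-^{-1} = 2 + \gamma^2/2 + \sqrt 2 \gamma$ shows
\[
\frac{\xi_-}{3 - 2\xi_-} \;=\; \frac{2}{8 + 6\sqrt 2\,\gamma + 3\gamma^2},
\]
which matches precisely the upper bound on $\lambda$ in~\eqref{eqn-diam0-ratio}. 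Hence one can pick $\zeta > 0$ strictly less than $\xi_-/2 - (3/2 - \xi_-)\lambda$ and a matching $\delta > 0$ in the lower bound, and the comparison holds for all sufficiently large $n$ (the $5 \log_2 n$ slack easily accommodates the $o(n)$ errors); small $n$ are absorbed into the implicit constant.
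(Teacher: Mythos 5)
Your proposal is correct and follows essentially the same route as the paper: take expectations in~\eqref{eqn-cond-diam0-exp}, dispose of $\wh F_{n,m}^c$ via~\eqref{eqn-bm-cont-union-prob} and $m\geq n^{2/3}$, apply Jensen to $\BB E[D_n^{1/2}]$, and absorb the error term $2^{\zeta n+3m/2}\BB E[D_n]^{1/2}$ using the lower bound of Proposition~\ref{prop-diam-lower}, with the constraint $(3/2-\xi_-)\lambda<\xi_-/2$ yielding exactly~\eqref{eqn-diam0-ratio}. The only cosmetic quibble is that the $o(n)$ errors are absorbed by the strictly positive linear margin from choosing $\zeta$ below the threshold, not by the $5\log_2 n$ slack, but your argument already provides that margin.
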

\begin{proof}
By taking expectations of both sides of the estimate~\eqref{eqn-cond-diam0-exp} of Lemma~\ref{prop-cond-diam0}  we obtain that for each $\zeta>0$,
\begin{align} \label{eqn-exp-diam1}
\BB E\left[ D_{n+m}  \right]  
&\preceq   n^5   \BB E\left[ D_m \right]  \BB E\left[ D_n \right] + 2^{\zeta n/2 + 3m/2 } \BB E\left[ D_n^{1/2} \right] + 2^{n+m} \BB P\left[ \wh F_{n,m}^c \right]\notag \\
&\preceq     \BB E\left[ D_n \right]  \BB E\left[ D_m \right] \left( n^5   + \frac{ 2^{\zeta n  + 3m/2 } }{ \BB E\left[ D_n \right]^{1/2} \BB E\left[D_m\right]} \right) .
\end{align}
Here we use that $D_{n+m} \leq 2^{n+m} \preceq 1/\BB P[\wh F_{n,m}^c]$ (recall~\eqref{eqn-bm-cont-union-prob} and the assumption that $m \geq n^{2/3}$) and we apply Jensen's inequality to bring an exponent of $1/2$ outside of the expectation. 

To show that the last factor in~\eqref{eqn-exp-diam1} is $\preceq n^5$, let $\alpha>0$ with
\eqb \label{eqn-alphabound}
\alpha < \frac{1}{2 + \gamma^2/2 + \sqrt 2\gamma}.
\eqe
By the lower bounds for $\BB E[D_m]$ and $\BB E[D_n]$ from Proposition~\ref{prop-diam-lower} and~\eqref{eqn-exp-diam1}, for each $\zeta>0$ we have
\eqb \label{eqn-use-lower-exponent}
\BB E\left[ D_{n+m}  \right]   \preceq \BB E\left[ D_n \right]  \BB E\left[ D_m \right] \left( n^5 +  2^{(\zeta - \alpha/2) n  + (3/2 - \alpha) m } \right) . 
\eqe 
If $m\leq \lambda n$ and we choose $\zeta$ sufficiently small and $\alpha$ sufficiently close to the right side of~\eqref{eqn-alphabound} then $(\zeta - \alpha/2) n  + (3/2 - \alpha) m < 0$, so the right side of~\eqref{eqn-use-lower-exponent} is $\preceq n^5 \BB E\left[ D_n \right]  \BB E\left[ D_m \right]$.
\end{proof}

\begin{proof}[Proof of Proposition~\ref{prop-metric-sub}]
By Lemma~\ref{prop-exp-diam0}, we find that the hypotheses of Lemma~\ref{prop-restricted-sub} are satisfied for $\lambda$ as in~\eqref{eqn-diam0-ratio}, $p = 2/3$, $a_n = \log_2 \BB E[D_n]$, and some $C \geq 1$. Consequently, Lemma~\ref{prop-restricted-sub} implies the existence of the limit defining $\chi$. The lower bound for $\chi$ is immediate from Proposition~\ref{prop-diam-lower}. 
\end{proof}

\subsection{Proof of Proposition~\ref{prop-upper-conc}}
\label{sec-upper-conc}

In this subsection we will deduce Proposition~\ref{prop-upper-conc} from the earlier results of this subsection. We continue to use the notations~\eqref{eqn-subdivide-filtration} and~\eqref{eqn-bm-cont-union}. The basic idea of the proof is to iterate the estimate of Lemma~\ref{prop-cond-diam0} applied with suitably chosen values of $n$ and $m$.

\begin{proof}[Proof of Proposition~\ref{prop-upper-conc}]
Fix $u \in (0,1)$. Let $\lambda$ be the constant from~\eqref{eqn-diam0-ratio} and let 
\eqbn
\zeta \in \left( \frac{u}{8\lambda^{-1} \chi^{-1} + 4} , \frac{u}{4\lambda^{-1} \chi^{-1} + 4} \right) .
\eqen
Also let $k_* = \lfloor (\lambda \zeta)^{-1} -1\rfloor $ be the largest $k\in\BB N$ for which 
\eqb \label{eqn-conc-upper-stop}
1 - k \zeta \geq (\lambda^{-1} \vee 4\chi^{-1} )  \zeta. 
\eqe
For each $k \in [0,k_*]_{\BB Z}$, define 
\eqbn
 n_k := n - k \lfloor \zeta n \rfloor . 
\eqen 
We will bound $D_{n_{k-1}}$ in terms of $D_{n_k}$ and iterate to get a bound for $D_n  $.

By Proposition~\ref{prop-metric-sub}, there exists a function $\phi  : [0,\infty) \rta \BB R$ with $\lim_{t\rta\infty} t^{-\alpha } \phi(t) = 0$ for each $\alpha >0$ such that
\eqb \label{eqn-error-function}
\BB E\left[ D_n \right] = \phi(2^n) 2^{\chi n} ,\quad \forall n\in\BB N. 
\eqe 
By Lemma~\ref{prop-cond-diam0} (and since $n_k\leq n$) we can find constants $b_0   , b_1 > 0$ depending only on $\zeta$, $\lambda$, and $\gamma$ such that for each $k \in [1,k_*]_{\BB Z}$, we have 
\eqb \label{eqn-upper-conc-prob0}
\BB P\left[ D_{n_{k-1}}  >  D_{n_k} \left( b_1 n^5 \phi(2^{  \zeta n}) 2^{\chi \zeta n} + 2^{2 \zeta n } D_{n_k}^{-1/2} \right)      ,\, \wh F_{n_k , \lfloor \zeta n \rfloor } \right]  \preceq   \exp\left( - b_0  2^{ \zeta (1- k \zeta) n  } \right)  
\eqe  
where here $ \wh F_{n_k , \lfloor \zeta n \rfloor }$ is the regularity event from~\eqref{eqn-bm-cont-union}. 
By iterating the estimate~\eqref{eqn-upper-conc-prob0} $k_*$ times we find that the following is true. Let 
\eqbn
E := \left\{ D_n  \leq D_{n_k}  \prod_{j=1}^{k } \left( b_1 n^5 \phi(2^{  \zeta n}) 2^{\chi \zeta n} + 2^{2 \zeta n } D_{n_j}^{-1/2} \right)       ,\, \forall k \in [1,k_*]_{\BB Z}\right\} .
\eqen
Then 
\begin{align}  \label{eqn-upper-conc-prob1}
 \BB P\left[  E^c \cap  \bigcap_{k = 1}^{k_*} \wh F_{n_k , \lfloor \zeta n \rfloor}  \right]  
 \preceq    \exp\left( - b_0  2^{\zeta^2  n } \right) .  
\end{align}  
By~\eqref{eqn-bm-cont-union-prob} and the union bound,
\eqb \label{eqn-bm-cont-big-union}
\BB P\left[ \bigcap_{k = 1}^{k_*} \wh F_{n_k , \lfloor \zeta n \rfloor} \right] \preceq  \exp\left( - c u^2  n^2 \right)  
\eqe 
for $c >0$ a constant depending only on $\gamma$ (here we recall that $\zeta \succeq u$). Therefore,
$\BB P\left[ E^c \right] \preceq  \exp\left( -c u^2  n^2 \right)$.

We will complete the proof of~\eqref{eqn-upper-conc} by showing that if $E$ occurs and $n$ is sufficiently large, then $D_n \leq 2^{(\chi + u) n}$.  
Suppose to the contrary that $E $ occurs and $D_n > 2^{(\chi + u) n}$. Let $k_0$ be the smallest $k\in [1,k_*-1]_{\BB Z}$ for which $D_{n_k} \geq 2^{\chi (1- k \zeta) n}$ or $k_0 =k_*$ if no such $k$ exists. By definition of $E$, we have
\eqb \label{eqn-conc-upper-end0}
D_n  \leq  D_{n_{k_0} }  \left( b_1 n^5 \phi(2^{ \zeta n}) 2^{\chi \zeta n} + 2^{2\zeta n}   \right)   \prod_{j=1}^{k_0 -1}\left( b_1 n^5 \phi(2^{ \zeta n}) 2^{\chi \zeta n} + 2^{(2 \zeta  - \chi (1  - j \zeta)/ 2) n }   \right)       .
\eqe  
By~\eqref{eqn-conc-upper-stop}, $2^{2 \zeta  - \chi (1  - j \zeta)/ 2 } \leq 1$ for each $ j \in [1,k_*]_{\BB Z}$.   
From this and~\eqref{eqn-conc-upper-end0}, we infer that
\begin{align} \label{eqn-upper-conc-end0}
D_n  &\leq  D_{n_{k_0}} \left( b_1 n^5 \phi(2^{ \zeta n}) 2^{\chi \zeta n} + 2^{2\zeta n}   \right)   \prod_{j=1}^{k_0-1 }\left( b_1 n^5 \phi(2^{ \zeta n}) 2^{\chi \zeta n}  + 1  \right) \notag \\
&\leq ((b_1 +1) n)^{5 k_*} \phi(2^{\zeta n})^{5 k_*} 2^{(\chi   \zeta  k_0 + 2\zeta ) n} D_{n_{k_0}} . 
\end{align}  
We have $((b_1 +1) n)^{5 k_*} \phi(2^{\zeta n})^{5 k_*} \leq 2^{o_n(n)}$ (at a rate depending on $\zeta$), so for large enough $n$,
\eqb \label{eqn-upper-conc-end1}
D_n \leq 2^{(\chi   \zeta  k_0 + 3\zeta) n} D_{n_{k_0} } .
\eqe 
If $k_0 < k_*$, then $D_{n_{k_0} } \leq 2^{\chi (1- k_0 \zeta) n}$ so by~\eqref{eqn-upper-conc-end1}, $D_n \leq 2^{(\chi + 3\zeta) n} \leq 2^{(\chi + u) n}$. If $k_0 = k_*$, then $D_{n_{k_0} } \leq 2^{(1-\zeta k_*) n + 1} \leq 2^{(C+1) \zeta n + 1}$ for $C= (\lambda^{-1} \vee 4\chi^{-1} )$ so by~\eqref{eqn-upper-conc-end1} and our choice of $\zeta$ we have $D_n \leq 2^{(\chi   + (C+4) \zeta) n} \leq 2^{(\chi + u) n}$ for large enough $n$. Hence if $n$ is chosen sufficiently large (depending only on $\zeta$) then on $E$ we have $D_n \leq 2^{(\chi + u) n}$, so by our above estimate for $\BB P[E^c]$,~\eqref{eqn-upper-conc} holds. The moment bound~\eqref{eqn-upper-conc-moment} is immediate from~\eqref{eqn-upper-conc} and the fact that $D_n \leq 2^n$. 
\end{proof}

\section{General distance estimates}
\label{sec-general-dist}

In this section we will prove some extensions of Propositions~\ref{prop-metric-sub} and~\ref{prop-upper-conc} which allow us to conclude Theorems~\ref{thm-chi-exists} and~\ref{thm-dist-bound}. Throughout, we let $\chi$ be the exponent from Proposition~\ref{prop-metric-sub}. 

We start in Section~\ref{sec-gen-exp} by upgrading from Proposition~\ref{prop-metric-sub} to an estimate which gives a lower bound for the expected $\mcl G^{2^{-n}}|_{(0,1]}$-distance between a \emph{fixed} pair of points in $(0,1]_{2^{-n}\BB Z}$ instead of just the expected diameter. The proof is elementary and is based on Proposition~\ref{prop-upper-conc} together with the Payley-Zygmund inequality and the triangle inequality. In Section~\ref{sec-no2-dist} we transfer from bounds for distances in $\mcl G^{2^{-n}}$ to bounds for distances in $\mcl G^\ep$ for possibly non-dyadic values of $\ep$ using a slightly more sophisticated version of the arguments in Section~\ref{sec-structure-graph-distance}. In Section~\ref{sec-general-gamma-proof}, we conclude the proofs of the aforementioned theorems.

\subsection{Expected distance between uniformly random or fixed points}
\label{sec-gen-exp}

In this subsection we will transfer our diameter estimate Proposition~\ref{prop-metric-sub} to an estimate for expected distances between particular pairs of vertices in $\mcl G^{2^{-n}}|_{(0,1]}$. 

\begin{prop} \label{prop-gen-exp}
With $\chi$ as in Proposition~\ref{prop-metric-sub}, 
\eqb \label{eqn-gen-exp}
\lim_{n\rta\infty } \frac{\log_2 \BB E\left[X_n \right] }{ n }  = \chi ,
\eqe 
where $X_n$ is either of the following two random variables.
\begin{enumerate}
\item $X_n  =  \op{dist}\left( x_0^n , x_1^n ; \mcl G^{2^{-n}}|_{(0,1]} \right)$ where $x_0^n $ is chosen in some $\mcl G^{2^{-n}}|_{(0,1]}$-measurable manner and $x_1^n$ is sampled uniformly from $(0,1]_{2^{-n}\BB Z}$, independently from $\mcl G^{2^{-n}}|_{(0,1]}$. \label{item-gen-exp-uniform}
\item $X_n =   \op{dist}\left( 2^{-n} , 1 ; \mcl G^{2^{-n}}|_{(0,1]} \right) $. \label{item-gen-exp-0-1} 
\end{enumerate}
\end{prop}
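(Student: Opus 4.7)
The plan is to prove matching upper and lower bounds on $\log_2 \BB E[X_n]/n$ in each of the two cases.

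The upper bound $\limsup \log_2 \BB E[X_n]/n \leq \chi$ is immediate in both cases, since $X_n \leq D_n = \op{diam}(\mcl G^{2^{-n}}|_{(0,1]})$ and Proposition~\ref{prop-metric-sub} gives $\log_2 \BB E[D_n]/n \to \chi$.

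For the lower bound in case~(\ref{item-gen-exp-uniform}), I would choose $x_0^n$ to be a vertex (graph-measurable, with ties broken measurably) that maximizes $\sum_{y \in (0,1]_{2^{-n}\BB Z}} \op{dist}(\cdot, y; \mcl G^{2^{-n}}|_{(0,1]})$. For any diameter-realizing pair $(y_0^*, y_1^*)$, the triangle inequality gives $\op{dist}(y_0^*, y) + \op{dist}(y_1^*, y) \geq D_n$ for every $y$; summing over $y$ shows that at least one of $y_0^*, y_1^*$ has $\sum_y \op{dist}(\cdot, y) \geq 2^{n-1} D_n$, so the chosen $x_0^n$ inherits this lower bound. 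Since $x_1^n$ is independent of the graph and uniform on $2^n$ vertices, this yields $\BB E[X_n \mid \mcl G^{2^{-n}}|_{(0,1]}] \geq D_n/2$, and hence $\BB E[X_n] \geq \BB E[D_n]/2 = 2^{\chi n + o(n)}$.

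For the lower bound in case~(\ref{item-gen-exp-0-1}) with the fixed pair $(2^{-n}, 1)$, the argument is more delicate. My plan uses Brownian translation invariance together with a chaining argument. For integer $N \geq 1$ and $\ep = 2^{-n}$, consecutive integer vertices $k$ and $k + \ep$ are always adjacent in $\mcl G^\ep$ (an easy check from~\eqref{eqn-inf-adjacency}, since $[k,k]$ is a single point), so
\begin{equation*}
\op{dist}(\ep, N; \mcl G^\ep|_{(0,N]}) \leq (N-1) + \sum_{k=0}^{N-1} \op{dist}(k+\ep, k+1; \mcl G^\ep|_{(k, k+1]}).
\end{equation*}
By translation invariance of $Z$ each summand on the right has the same law as $X_n$, while by Brownian scaling the left side equals $X_{n+j}$ in distribution when $N = 2^j$. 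Taking expectations gives $\BB E[X_{n+j}] \leq 2^j \BB E[X_n] + 2^j$ for all $j \geq 0$, which combined with the asymptotics of $\BB E[D_n]$ from Proposition~\ref{prop-metric-sub} forces the sequence $\BB E[X_n]$ to grow essentially like $2^{\chi n}$.

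The main obstacle is converting this into a genuine matching lower bound $\BB E[X_n] \geq 2^{(\chi - o(1))n}$. Here my plan is to apply case~(\ref{item-gen-exp-uniform}) inside the enlarged graph $\mcl G^{2^{-n}}|_{(0, 2^j]}$ (which, by scaling, has the same law as $\mcl G^{2^{-n-j}}|_{(0,1]}$) to produce a graph-measurable pair of vertices whose expected distance in that sub-graph is of order $2^{\chi(n+j) + o(n+j)}$, and then use Paley-Zygmund together with~\eqref{eqn-upper-conc-moment} to localize the diameter-achieving pair into prescribed sub-intervals $(k, k+1]$ with non-negligible probability. Conditioning on the endpoints via Proposition~\ref{prop-diam-given-bdy} and invoking translation invariance allows the distance across each such sub-interval to be compared to $X_n$. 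The technically hardest step will be the conditioning, since Proposition~\ref{prop-diam-given-bdy} produces polynomial losses in $n$ which must be shown to be absorbed into the $o(n)$ error; once this is handled, combining the resulting lower bound on $\BB E[X_{n+j}]$ with the sub-multiplicativity above and letting $j$ be chosen appropriately should complete the argument.
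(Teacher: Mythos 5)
The upper bound and the triangle-inequality observation in case~(1) are fine, but your proof of case~(1) establishes a strictly weaker statement than the proposition. The claim is that~\eqref{eqn-gen-exp} holds for \emph{any} $\mcl G^{2^{-n}}|_{(0,1]}$-measurable choice of $x_0^n$ (and it is later applied, e.g.\ in the deduction of case~(2), with a \emph{deterministic} $x_0^n$), whereas your argument only works for the particular maximizer of $y \mapsto \sum_{y'} \op{dist}(y,y')$. For a general or deterministic $x_0^n$ it is not true that $\BB E[\op{dist}(x_0^n,x_1^n)\mid \mcl G^{2^{-n}}|_{(0,1]}] \succeq D_n$: if all but a tiny fraction of the $2^n$ vertices were clustered within $o(D_n)$ of $x_0^n$, the average distance to a uniform vertex could be far smaller than the diameter. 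Ruling this out is the real content of case~(1); the paper does it by combining a Paley--Zygmund lower bound on $\BB P[D_n \geq 2^{(\chi-u)n}]$ (Lemma~\ref{prop-weak-pos}, which needs the second-moment bound~\eqref{eqn-upper-conc-moment}) with a uniform lower bound on the cardinality of \emph{every} metric ball of radius $2^m$ (Lemma~\ref{prop-min-ball}, itself deduced from Proposition~\ref{prop-upper-conc} applied to all dyadic subintervals). Your proposal contains no substitute for this volume lower bound, so this is a genuine gap.

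For case~(2) you have also missed the short route: since $\op{dist}(x,y;\mcl G^{2^{-n}}|_{(0,1]}) \leq \op{dist}(x,y;\mcl G^{2^{-n}}|_{(x,y]})$ and Lemmas~\ref{prop-dist-mono} and~\ref{prop-dist-mono-no2} bound the latter's expectation by $n\,\BB E[\op{dist}(2^{-n},1;\mcl G^{2^{-n}}|_{(0,1]})]$, averaging over the uniform $x_1^n$ reduces case~(2) directly to case~(1) with a deterministic $x_0^n$ (which is exactly why the ``for all choices'' version of case~(1) is needed). Your alternative plan has further problems: the sub-multiplicativity $\BB E[X_{n+j}] \leq 2^j\BB E[X_n] + 2^j$ points the wrong way for a lower bound on $\BB E[X_n]$ and contributes nothing; the localization-and-chaining step is only sketched and, as you note, not carried out; and the appeal to Proposition~\ref{prop-diam-given-bdy} is unnecessary, since the restricted graphs $\mcl G^{2^{-n}}|_{(m,m+1]}$ over disjoint unit intervals are determined by disjoint Brownian increments and hence are already independent. (A correctly executed version of your localization idea is essentially the paper's later proof of~\eqref{eqn-no2-dist-lower} in Proposition~\ref{prop-no2-dist}, which however takes case~(2) of the present proposition as input.)
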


Throughout this subsection, we define the diameter $D_n$ as in~\eqref{eqn-diam-def}.  
For the proof of Proposition~\ref{prop-gen-exp} we will need several lemmas, which are all straightforward consequences of Propositions~\ref{prop-metric-sub} and~\ref{prop-upper-conc} from the previous section. Our first lemma tells us in particular that the probability that $D_n$ is smaller than its expected value by more than an exponential factor decays slower than any exponential function.

\begin{lem} \label{prop-weak-pos}
Let $\{X_n\}_{n\in\BB N}$ be a sequence of random variables such that $X_n \leq D_n$ a.s.\ and $\BB E\left[X_n \right] = 2^{(\chi + o_n(1))n}$. 
For each $u\in\BB N$, 
\eqb \label{eqn-weak-pos}
\BB P\left[ X_n \geq 2^{(\chi - u) n } \right] \geq 2^{-o_n(n)} 
\eqe 
at a rate depending only the law of the $X_n$'s, $u$, and $\gamma$.
\end{lem}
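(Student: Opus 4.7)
The plan is to use a Paley--Zygmund type argument that pits the hypothesized expectation $\E[X_n] = 2^{(\chi+o_n(1))n}$ against the moment bound on $D_n$ coming from Proposition~\ref{prop-upper-conc}. By~\eqref{eqn-upper-conc-moment} applied with $p=2$, we have $\E[D_n^2] \leq 2^{(2\chi+o_n(1))n}$, and since $X_n \leq D_n$ a.s., we also control the upper tail of $X_n$.

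More precisely, write $\delta_n \to 0$ so that $\E[X_n] = 2^{(\chi+\delta_n)n}$, and write $\epsilon_n \to 0$ so that $\E[D_n^2] \leq 2^{(2\chi+\epsilon_n)n}$. Fix $u>0$ and split
\begin{equation*}
\E[X_n] = \E\!\left[X_n\,\BB 1_{\{X_n < 2^{(\chi-u)n}\}}\right] + \E\!\left[X_n\,\BB 1_{\{X_n \geq 2^{(\chi-u)n}\}}\right].
\end{equation*}
The first term is at most $2^{(\chi-u)n}$. For the second, bound $X_n$ by $D_n$ and apply Cauchy--Schwarz to obtain
\begin{equation*}
\E\!\left[X_n\,\BB 1_{\{X_n \geq 2^{(\chi-u)n}\}}\right] \leq \E[D_n^2]^{1/2}\,\BB P\!\left[X_n \geq 2^{(\chi-u)n}\right]^{1/2} \leq 2^{(\chi+\epsilon_n/2)n}\,\BB P\!\left[X_n \geq 2^{(\chi-u)n}\right]^{1/2}.
\end{equation*}

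For $n$ sufficiently large (depending on $u$ and on the rate at which $\delta_n \to 0$) we have $2^{(\chi-u)n} \leq \tfrac12 \E[X_n]$, so combining the above displays yields
\begin{equation*}
\tfrac12\, 2^{(\chi+\delta_n)n} \leq 2^{(\chi+\epsilon_n/2)n}\,\BB P\!\left[X_n \geq 2^{(\chi-u)n}\right]^{1/2},
\end{equation*}
which rearranges to $\BB P[X_n \geq 2^{(\chi-u)n}] \geq \tfrac14 \cdot 2^{(2\delta_n - \epsilon_n)n} = 2^{-o_n(n)}$. This gives~\eqref{eqn-weak-pos}, and the rate depends only on $u$, $\gamma$, and the $o_n(1)$ rate coming from the hypothesis on $\E[X_n]$ together with the rate in Proposition~\ref{prop-upper-conc} (which depends only on $\gamma$). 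There is no serious obstacle here; the only subtlety is that one must use Proposition~\ref{prop-upper-conc} to obtain the second moment bound on $D_n$ with matching exponent $2\chi$, which is precisely what~\eqref{eqn-upper-conc-moment} delivers.
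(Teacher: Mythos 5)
Your argument is correct and is essentially the paper's proof: the paper cites the Paley--Zygmund inequality (in the form $\BB P[X_n \geq \tfrac12\BB E[X_n]] \geq \BB E[X_n]^2/(4\BB E[D_n^2])$) together with the second moment bound $\BB E[D_n^2] \leq 2^{(2\chi+o_n(1))n}$ from~\eqref{eqn-upper-conc-moment}, which is exactly the inequality you derive by hand via the split and Cauchy--Schwarz. No substantive difference.
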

\begin{proof}
By the Payley-Zygmund inequality and since $X_n\leq D_n$ a.s., 
\eqb \label{eqn-weak-pos-payley}
\BB P\left[ X_n \geq \frac12 \BB E[X_n] \right] \geq \frac{\BB E[X_n]^2}{4\BB E[D_n^2]} .
\eqe 
By~\eqref{eqn-upper-conc-moment} of Proposition~\ref{prop-upper-conc}, $\BB E[D_n^2] \leq 2^{-(2\chi + o_n(1)}$ so~\eqref{eqn-weak-pos} follows from~\eqref{eqn-weak-pos-payley} and our assumption on $\BB E[X_n]$. 
\end{proof}

Next we  transfer the estimate of Proposition~\ref{prop-upper-conc} to an estimate for the size of metric balls in $\mcl G^{2^{-n}}|_{(0,1]}$.

\begin{lem} \label{prop-min-ball}
Let $\chi$ be as in Proposition~\ref{prop-metric-sub} and let $\zeta \in (0,\chi/2)$.
For $n\in\BB N$, let $E_n  = E_n(\zeta)$ be the event that  
\eqbn
\# \mcl B_{2^m} \left( x ; \mcl G^{2^{-n}}|_{(0,1]} \right) \geq 2^{\frac{m}{\chi + \zeta}  } , 
\quad \forall  x \in (0,1]_{2^{-n}\BB Z} ,
\quad \forall m \in [\zeta n , n]_{\BB Z} .
\eqen
There is a constant $c > 0$ depending only on $\gamma$ such that for $n\in\BB N$, 
\eqbn
\BB P\left[ E_n^c \right] \preceq \exp\left( - c \zeta^2 n^2 \right) 
\eqen
with the implicit constant depending only on $\zeta$ and $\gamma$.
\end{lem}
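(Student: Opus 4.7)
The plan is to reduce the ball lower bound to a diameter upper bound on an appropriately chosen dyadic sub-interval containing $x$, and then invoke Proposition~\ref{prop-upper-conc} together with a union bound. The key monotonicity observation is that if $I \subset (0,1]$ is an interval containing $x \in (0,1]_{2^{-n}\BB Z}$ with $\op{diam}(\mcl G^{2^{-n}}|_I) \leq 2^m$, then every $y \in I \cap 2^{-n}\BB Z$ satisfies $\op{dist}(x,y; \mcl G^{2^{-n}}|_{(0,1]}) \leq \op{dist}(x,y;\mcl G^{2^{-n}}|_I) \leq 2^m$, so all of $I \cap 2^{-n}\BB Z$ is contained in $\mcl B_{2^m}(x; \mcl G^{2^{-n}}|_{(0,1]})$. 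Hence it suffices to exhibit, for every $x$ and every $m \in [\zeta n,n]_{\BB Z}$, a sub-interval of $(0,1]$ containing $x$ with at least $2^{m/(\chi+\zeta)}$ vertices and with diameter at most $2^m$ in the restricted graph.

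For each such $m$, set $k(m) := \lceil m/(\chi+\zeta) \rceil$ and partition $(0,1]$ into $2^{n-k(m)}$ disjoint dyadic intervals of length $2^{-(n-k(m))}$, so that each contains exactly $2^{k(m)}$ vertices of $(0,1]_{2^{-n}\BB Z}$ and each $x$ lies in exactly one such interval. By Brownian scale invariance (applied directly to the adjacency condition~\eqref{eqn-inf-adjacency}), the restricted graph $\mcl G^{2^{-n}}|_I$ on each such interval $I$ has the same law as $\mcl G^{2^{-k(m)}}|_{(0,1]}$, and therefore its diameter has the same law as $D_{k(m)}$. Applying Proposition~\ref{prop-upper-conc} with $u=\zeta$ gives
\[
\BB P\left[ D_{k(m)} > 2^{(\chi+\zeta) k(m)} \right] \preceq \exp\left( - c\, \zeta^2 k(m)^2 \right),
\]
and since $(\chi+\zeta) k(m) \geq m$, the left-hand event contains $\{D_{k(m)} > 2^m\}$.

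Now union-bound over the $2^{n-k(m)}$ dyadic intervals at level $k(m)$ and over the at most $n$ values of $m \in [\zeta n,n]_{\BB Z}$, using that for $m \geq \zeta n$ we have $k(m) \geq \zeta n/(\chi+\zeta)$ and $(\chi+\zeta)^2 \leq (3\chi/2)^2$ by the hypothesis $\zeta < \chi/2$. The term $-c\zeta^2 k(m)^2$ is quadratic in $n$ on this range, whereas the union bound contributes at most $n \cdot 2^n$, which is only exponential in $n$; hence for sufficiently large $n$ (depending on $\zeta,\gamma$) the concentration dominates and one obtains the stated bound of the form $\preceq \exp(-c'\zeta^2 n^2)$, with the implicit constant absorbing the polynomial and exponential-in-$n$ prefactors by shrinking the constant in the exponent appropriately. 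The main subtlety is the balancing of the union-bound loss against the sub-Gaussian tail; in particular it is crucial that the lower bound $m \geq \zeta n$ forces $k(m)$ to be a positive fraction of $n$, which is exactly where the hypothesis $\zeta>0$ is used.
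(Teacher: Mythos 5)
Your strategy coincides with the paper's: reduce the ball lower bound to a diameter upper bound on a dyadic sub-interval containing $x$ whose vertex count is about $2^{m/(\chi+\zeta)}$, control that diameter via Proposition~\ref{prop-upper-conc}, and union bound over scales and intervals. The monotonicity of distances under restriction and the scaling identity $\op{diam}(\mcl G^{2^{-n}}|_I) \eqD D_{k(m)}$ are both correct, and your single-partition-per-scale bookkeeping is a legitimate (arguably cleaner) variant of the paper's left- and right-intervals anchored at every vertex.

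There is, however, a concrete error in the central deduction. With $k(m) = \lceil m/(\chi+\zeta)\rceil$ you have $(\chi+\zeta)k(m) \geq m$, hence $2^{(\chi+\zeta)k(m)} \geq 2^m$ and therefore $\{D_{k(m)} > 2^{(\chi+\zeta)k(m)}\} \subseteq \{D_{k(m)} > 2^m\}$ --- the reverse of the containment you assert. Proposition~\ref{prop-upper-conc} thus bounds the probability of an event \emph{smaller} than the one you must exclude, namely $\{D_{k(m)} > 2^m\}$ (the event that prevents $I\cap 2^{-n}\BB Z$ from lying in $\mcl B_{2^m}(x)$), and the estimate does not transfer. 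The repair is standard: take $k(m) := \lfloor m/(\chi+\zeta/2)\rfloor$ and apply Proposition~\ref{prop-upper-conc} with $u=\zeta/2$, so that $(\chi+\zeta/2)k(m) \leq m$ gives $D_{k(m)} \leq 2^m$ off the bad event, while for $m \geq \zeta n$ and $n$ large the slack $m\left(\frac{1}{\chi+\zeta/2}-\frac{1}{\chi+\zeta}\right) \succeq \zeta^2 n$ absorbs the rounding and yields $k(m) \geq m/(\chi+\zeta)$, i.e.\ $2^{k(m)} \geq 2^{m/(\chi+\zeta)}$ as required; your union bound then goes through unchanged, with small $n$ absorbed into the implicit constant. (One caveat you share with the lemma as stated: since the ball is a subset of $(0,1]_{2^{-n}\BB Z}$, the conclusion forces $m/(\chi+\zeta)\leq n$, and your partition only exists when $k(m)\leq n$; so the argument, like the paper's, really covers only $m$ up to about $(\chi+\zeta)n$, which is all that is used downstream in Proposition~\ref{prop-gen-exp}.)
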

\begin{proof}  
For $r \in [\chi \zeta n - 1 , n]_{\BB Z}$ and $x \in (0 ,1]_{2^{-n}\BB Z}$, let
\eqbn
E_r(x) := \left\{ \op{diam} \left( \mcl G^{2^{-n}}|_{(x - 2^{r-n} , x + 2^{r-n}] \cap [0,1]}   \right) \leq 2^{(\chi + \zeta) r} \right\}  \quad\textrm{and}\quad\wt E_n := \bigcap_{r = \lfloor \chi \zeta n \rfloor}^n   \bigcap_{x \in (0 ,1]_{2^{-n}\BB Z}} E_r (x)  .
\eqen
By Proposition~\ref{prop-upper-conc} and the union bound, we can find $ c > 0$ depending only on $\gamma$ such that $\BB P\left[ \wt E_n^c \right] \preceq \exp\left( -   c \zeta^2 n^2 \right) $,
with the implicit constant depending only on $\zeta$ and $\gamma$. 
Now we will show that $\wt E_n \subset E_n$. Suppose that $\wt E_n$ occurs and we are given $x\in (0,1]_{2^{-n}\BB Z}$ and $m\in [ \zeta n   , n]_{\BB Z}$. Set $r := \lceil m /(\chi - \zeta) \rceil$. Since $E_r(x)$ occurs, each element of $(x - 2^{r-n} , x + 2^{r-n}]_{2^{-n}\BB Z} \cap (0,1]_{2^{-n}\BB Z}$ lies at graph distance at most $2^{(\chi + \zeta) r} \leq 2^m$ from $x$ in $\mcl G^{2^{-n}}|_{(0,1]}$. Therefore,  
\eqbn
\# \mcl B_{2^m} \left( x ; \mcl G^{2^{-n}}|_{(0,1]} \right) \geq 2^r \geq 2^{\frac{m}{\chi + \zeta} } . \qedhere
\eqen
\end{proof}

\begin{proof}[Proof of Proposition~\ref{prop-gen-exp}]
It is clear that $X_n \leq D_n$ for each of the two possible choices of $X_n$ in the statement of the lemma, so for each such choice we only need to prove that the limit in~\eqref{eqn-gen-exp} is at least $\chi$. We treat the two cases separately. \\

\noindent\textit{Case~\ref{item-gen-exp-uniform}.} Fix $u,\zeta \in (0,\chi/2)$ and let $E_n = E_n(\zeta)$ be as in Lemma~\ref{prop-min-ball}. 
Suppose that $\{ D_n \geq 2^{(\chi - u)n } \} \cap E_n$ occurs. Since $\{ D_n \geq 2^{(\chi - u)n } \}$, for any given choice of $x_0^n \in (0,1]_{2^{-n}\BB Z}$, there is a $y_0^n \in(0,1]_{2^{-n}\BB Z}$ with 
\eqbn
\op{dist}\left( x_0^n , y_0^n ; \mcl G^{2^{-n}}|_{(0,1]} \right) \geq 2^{(\chi - u) n - 1} .
\eqen 
Let $m := \lfloor (\chi - u) n \rfloor - 2$. By the triangle inequality,
\eqbn
\op{dist}\left( x_0^n , y  ; \mcl G^{2^{-n}}|_{(0,1]} \right) \geq  2^{(\chi - u) n - 2} ,\quad \forall y \in   \mcl B_{2^m} \left(y_0^n ; \mcl G^{2^{-n}}|_{(0,1]} \right) .
\eqen
Since $E_n$ occurs, there are at least $ 2^{m/(\chi +\zeta) -1 } \succeq 2^{n (\chi-u)/(\chi+\zeta) }$ elements of $\mcl B_{2^m} \left( y_0^n  ; \mcl G^{2^{-n}}|_{(0,1]} \right)$. Since $x_1^n$ is sampled uniformly from $(0,1]_{2^{-n}\BB Z}$, we infer that
\eqb \label{eqn-uniform-points}
\BB P\left[ \op{dist}\left( x_0^n , x_1^n ; \mcl G^{2^{-n}}|_{(0,1]} \right) \geq 2^{(\chi - u) n - 2} \,|\, \mcl G^{2^{-n}}|_{(0,1]} \right] \BB 1_{\{ D_n \geq 2^{(\chi - u)n } \} \cap E_n} \succeq 2^{-\left( \frac{\chi - u}{\chi + \zeta} - 1  \right) n}  \BB 1_{\{ D_n \geq 2^{(\chi - u)n } \} \cap E_n} 
\eqe  
with the implicit constant depending only on $u$, $\zeta$, and $\gamma$.   
By Lemmas~\ref{prop-weak-pos} and~\ref{prop-min-ball}, for sufficiently large $n\in\BB N$ we have
\eqbn
\BB P\left[ D_n \geq 2^{(\chi - u)n } ,\, E_n  \right] \geq 2^{-o_n(n)}  .
\eqen
Taking the expectation of both sides of~\eqref{eqn-uniform-points} now yields
\eqbn
 \BB E\left[  \op{dist}\left( x_0^n , x_1^n ; \mcl G^{2^{-n}}|_{(0,1]} \right) \right] \geq 2^{ \left(\chi - u +  \frac{\chi - u}{\chi + \zeta} - 1  - o_n(1) \right) n} .
\eqen
We obtain the lower bound in~\eqref{eqn-gen-exp} for $X_n = \op{dist}\left( x_0^n , x_1^n ; \mcl G^{2^{-n}}|_{(0,1]} \right)$ by sending $u\rta 0$ and $\zeta \rta 0$. \\

\noindent\textit{Case~\ref{item-gen-exp-0-1}.} By Lemmas~\ref{prop-dist-mono} and~\ref{prop-dist-mono-no2}, for each fixed $x , y \in (0,1]_{2^{-n}\BB Z}$, we have
\eqbn
\BB E\left[ \op{dist}\left( x,y ; \mcl G^{2^{-n}}|_{(0,1]} \right)  \right] 
\leq  \BB E\left[ \op{dist}\left(x , y ; \mcl G^{2^{-n}}|_{(x,y]} \right) \right]   
\leq n \BB E\left[ \op{dist}\left( 2^{-n}  , 1 ; \mcl G^{2^{-n}}|_{(0,1]} \right) \right] .
\eqen
Therefore for a deterministic choice of $x_0^n \in (0,1]_{2^{-n}\BB Z}$ and a uniformly random choice of $x_1^n  \in (0,1]_{2^{-n}\BB Z}$ we have by case~\ref{item-gen-exp-uniform} that 
\eqbn
\BB E\left[ \op{dist}\left(2^{-n} , 1 ; \mcl G^{2^{-n}}|_{(0,1]} \right) \right] \geq \frac{1}{n}  \BB E\left[  \op{dist}\left( x_0^n , x_1^n ; \mcl G^{2^{-n}}|_{(0,1]} \right) \right]  \geq 2^{(\chi +o_n(1)) n} .
\eqen
This proves the lower bound in~\eqref{eqn-gen-exp} for $X_n = \op{dist}\left( 2^{-n}  , 1 ; \mcl G^{2^{-n}}|_{(0,1]} \right)$.  
\end{proof}
 
\subsection{Non-dyadic cell counts}
\label{sec-no2-dist}

In this subsection we will extend the results of Sections~\ref{sec-subadditivity} and~\ref{sec-gen-exp} to the case when the number of cells in the structure graph we are considering may not be a power of 2. By Brownian scaling it suffices to consider a general integer number of cells with unit mass.

\begin{prop} \label{prop-no2-dist} 
There is a constant $c> 0$, depending only on $\gamma$, such that for each $u > 0$ and each $N\in\BB N$,
\eqb \label{eqn-no2-dist-upper}
\BB P\left[ \op{diam} \left( \mcl G^1|_{(0,N]} \right) > N^{\chi + u} \right] \preceq \exp\left( - c u^2 (\log N)^2 \right) 
\eqe
with the implicit constant depending only on $u$ and $\gamma$. Furthermore, for each $u >0$ and each $N\in\BB N$,
\eqb \label{eqn-no2-dist-lower}
\BB P\left[ \op{dist}\left(1, N ; \mcl G^1|_{(0,N]} \right) \geq N^{\chi - u} \right] \geq N^{-o_N(1)} 
\eqe 
at a rate depending only on $u$ and $\gamma$. 
\end{prop}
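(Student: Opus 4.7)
Both bounds reduce to the dyadic results of Propositions~\ref{prop-upper-conc} and~\ref{prop-gen-exp} via the binary decomposition and comparison lemmas of Section~\ref{sec-structure-graph-distance}.

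For~\eqref{eqn-no2-dist-upper}, the plan is to use the binary decomposition $N=\sum_{j=1}^{k} 2^{n_j}$ with $n_1<\cdots<n_k=m:=\lfloor \log_2 N\rfloor$ and $k\leq m+1$: partition $(0,N]_{\BB Z}$ into consecutive blocks $I_j$ with $|I_j|=2^{n_j}$, so that $\op{diam}(\mcl G^1|_{(0,N]})\leq \sum_j \op{diam}(\mcl G^1|_{I_j})$, each summand having by Brownian scaling the distribution of $D_{n_j}$. Proposition~\ref{prop-upper-conc} with $u/2$ applied to each of the $k\leq m+1$ blocks, together with a union bound, yields that off an event of probability $\lesssim m\exp(-c u^2 m^2)\lesssim \exp(-c' u^2 (\log N)^2)$ one has $\sum_j D_{n_j}\leq (m+1)\cdot 2^{(\chi+u/2)m}\leq N^{\chi+u}$ for $N$ large.

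For~\eqref{eqn-no2-dist-lower}, the plan is Paley--Zygmund applied to $X:=\op{dist}(1,N;\mcl G^1|_{(0,N]})$. Since $X\leq \op{diam}(\mcl G^1|_{(0,N]})$, integrating the tail bound just established in~\eqref{eqn-no2-dist-upper} yields $\BB E[X^2]\leq N^{2\chi+o_N(1)}$, so it suffices to produce a matching first-moment lower bound $\BB E[X]\geq N^{\chi-o_N(1)}$; Paley--Zygmund then gives $\BB P[X\geq \tfrac12\BB E[X]]\geq \BB E[X]^2/(4\BB E[X^2])\geq N^{-o_N(1)}$, which after absorbing constants into $u$ is the claim. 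To produce the first-moment lower bound, set $n:=\lfloor \log_2 N\rfloor$ so $2^n\leq N<2^{n+1}$ and compare $X$ with the dyadic quantity $Y:=\op{dist}(1,2^n;\mcl G^1|_{(0,2^n]})$, which by Proposition~\ref{prop-gen-exp}(\ref{item-gen-exp-0-1}) satisfies $\BB E[Y]=2^{(\chi+o_n(1))n}\geq N^{\chi-o_N(1)}$. Coupling both $\mcl G^{1/N}|_{(0,1]}$ and $\mcl G^{1/2^n}|_{(0,1]}$ to the same Brownian motion $Z$ (using $X\eqD\op{dist}(1/N,1;\mcl G^{1/N}|_{(0,1]})$ and $Y\eqD\op{dist}(1/2^n,1;\mcl G^{1/2^n}|_{(0,1]})$), I would show $Y\leq C X$ a.s.\ for some absolute constant $C$ via a projection argument in the spirit of Lemma~\ref{prop-dist-mono}, sending $y\in (0,1]\cap (1/N)\BB Z$ to $\pi(y):=\lceil y\cdot 2^n\rceil/2^n$, whence $\BB E[X]\geq \BB E[Y]/C$. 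For pairs $y_1<y_2$ of \emph{non-straddling} vertices (those with $y_i-1/N\geq \pi(y_i)-1/2^n$), the intervals appearing in the adjacency condition~\eqref{eqn-inf-adjacency} at scale $1/N$ are nested inside (resp.\ contain) those at scale $1/2^n$ in precisely the directions compatible with the inequality, so $\mcl G^{1/N}$-adjacency implies $\mcl G^{1/2^n}$-adjacency of the projections; the endpoints $y=1/N$ and $y=1$ are themselves non-straddling.

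The main obstacle is the treatment of straddling vertices, where one of the two nestings fails. Since $N/2^n\in[1,2)$ each $\mcl G^{1/2^n}$-cell contains at most two $\mcl G^{1/N}$-cells (with at most one of them straddling), and consecutive $\mcl G^{1/2^n}$-cells are always adjacent (the condition~\eqref{eqn-inf-adjacency} holds trivially when $|x_1-x_2|=1/2^n$, since both cell-interval infima are at most $L_{x_1}$). The remedy is therefore to repair the projected path by inserting at most one extra $\mcl G^{1/2^n}$-vertex at each straddling crossing, giving a projected walk from $1/2^n$ to $1$ of length at most $2X$ (and hence $Y\leq 2X$, yielding $C=2$). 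The careful adjacency-preservation analysis at straddling configurations, in particular for the long-range $\mcl G^{1/N}$-edges where both endpoints' cells may extend beyond their projection cells, is the technical crux; once checked, combining with the second-moment bound from~\eqref{eqn-no2-dist-upper} and Paley--Zygmund concludes the proof.
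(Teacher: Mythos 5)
Your proof of the upper bound \eqref{eqn-no2-dist-upper} is exactly the paper's: binary decomposition of $N$, independence of the blocks, stochastic domination of each block diameter by $D_m$ (this step uses Lemma~\ref{prop-dist-mono} and should be cited, but it is routine), and a union bound against Proposition~\ref{prop-upper-conc}.

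For the lower bound \eqref{eqn-no2-dist-lower} you take a genuinely different route, and the step you yourself flag as the "technical crux" is a real gap, not a formality. The difficulty with comparing $\mcl G^{1/N}$ and $\mcl G^{1/2^n}$ is that the cells do not nest: when a fine cell $[y_1-1/N,y_1]$ protrudes to the left of its projected coarse cell $[x_1-2^{-n},x_1]$, the coarse "middle interval" $[x_1', x_2'-2^{-n}]$ appearing in \eqref{eqn-inf-adjacency} (for either candidate coarse vertex $x_1'\in\{x_1, x_1-2^{-n}\}$) absorbs portions of the fine cells, where $L$ can dip below the minimum of the coarse end-cell on the \emph{other} side; thus neither inequality in \eqref{eqn-displacement-cond} follows by simple inclusion of intervals, and one is forced into a case analysis on where the minima of $L$ over $[y_1-1/N,y_1]$ and $[y_2-1/N,y_2]$ are attained relative to the grid $2^{-n}\BB Z$. (A careful case analysis does appear to show that \emph{some} pair among the at most four candidate coarse vertices is adjacent, so your $Y\le CX$ is likely salvageable, but "insert one extra vertex per straddling crossing" is not a proof of it, and note that the paper's own Lemma~\ref{prop-dist-mono} only ever compares consecutive dyadic scales precisely so that the cells nest exactly.) The paper sidesteps all of this: with $2^n\le N\le 2^{n+1}$ and a small $\zeta>0$, it tiles $(0,2^{\lfloor(1+\zeta)n\rfloor}]$ by $k\asymp 2^{\zeta n}$ translates of $(0,N]$ plus a short remainder, so that $\op{dist}(1,2^{\lfloor(1+\zeta)n\rfloor};\cdot)\le \sum_{j=1}^k X_j+Y$ with the $X_j$ i.i.d.\ copies of $\op{dist}(1,N;\mcl G^1|_{(0,N]})$; the left side is $\ge 2^{(1+\zeta)(\chi-v)n}$ with probability $2^{-o(n)}$ by Proposition~\ref{prop-gen-exp} and Lemma~\ref{prop-weak-pos}, the remainder is negligible by \eqref{eqn-no2-dist-upper}, so by pigeonhole and a union bound over $j$ one copy exceeds $k^{-1}2^{(1+\zeta)(\chi-v)n-1}$, costing only a factor $2^{-\zeta n}$ in probability which vanishes as $\zeta\to 0$. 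This never compares structure graphs at incommensurable scales. Your Paley--Zygmund reduction and second-moment bound are fine (and mirror Lemma~\ref{prop-weak-pos}); the missing ingredient is the first-moment lower bound $\BB E[X]\ge N^{\chi-o_N(1)}$, which is exactly what the paper's tiling argument replaces. Note also that one cannot get this first moment by subtracting two consecutive dyadic estimates, since each is known only up to $2^{o(n)}$ multiplicative errors — this is presumably why the paper uses $2^{\zeta n}$ copies rather than two.
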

\begin{proof}
We first deduce the upper bound~\eqref{eqn-no2-dist-upper} from Proposition~\ref{prop-upper-conc} in a similar manner to the proof of Lemma~\ref{prop-dist-mono-no2}. 
Let $m := \lfloor \log_2 N \rfloor$. 
Choose $n_1 , \dots , n_k \in [0 , m]_{\BB Z}$ with $n_1 < \dots < n_k$ and $N = \sum_{j=1}^k 2^{n_j}$. We can write $(0,T]_{\BB Z} = \bigsqcup_{j=1}^k I_j$, where $I_1,\dots , I_j$ are disjoint and each $I_j$ is the intersection of $\BB Z$ with some interval and satisfies $\# I_j = 2^{n_j}$. Then
$\op{diam} \left( \mcl G^1|_{(0,N]} \right) \leq \sum_{j=1}^k \op{diam} \left( \mcl G^1|_{I_j} \right)$.

The random variables $\op{diam} \left( \mcl G^1|_{I_j} \right)$ are independent and by Lemma~\ref{prop-dist-mono} (along with translation and scale invariance) each is stochastically dominated by a random variable with the same law as $\mcl G^{2^{-m}}|_{(0,1]}$. We thus obtain the estimate~\eqref{eqn-no2-dist-upper} from Proposition~\ref{prop-upper-conc} and a union bound. 

To prove~\eqref{eqn-no2-dist-lower}, fix $\zeta \in (0,1)$ and choose $n\in\BB N$ such that $2^{n } \leq N \leq 2^{ n + 1}$. 
We will prove an upper bound for $\op{dist}\left(1 , 2^{\lfloor (1+\zeta) n\rfloor} ; \mcl G^1|_{(0 ,2^{\lfloor (1+\zeta) n\rfloor}]} \right)$ in terms of $\op{dist}\left(1, N ; \mcl G^1|_{(0,N]} \right)$ by decomposing $[1 ,2^{\lfloor (1+\zeta) n\rfloor}]$ as a disjoint union of intervals of length $N$ plus a small error interval of length less than $N$, over which the diameter of the structure graph is negligible. 

Let $k := \lfloor N^{-1} 2^{\lfloor (1+\zeta) n \rfloor} \rfloor$ and note that $k \geq 2^{\zeta n-1}$. For $j \in [1, k]_{\BB Z}$, let 
\eqbn
X_j := \op{dist}\left( (j-1) N + 1 ,    j N ; \mcl G^1|_{((j-1) N  , jN]} \right) .
\eqen
Also let
\eqbn
Y := \op{dist}\left(k N , 2^{\lfloor(1+\zeta) n \rfloor} ; \mcl G^1|_{(k N , 2^{\lfloor(1+\zeta) n \rfloor}]} \right) .
\eqen
Then the random variables $X_1 , \dots , X_k$ are iid, each has the same law as $\op{dist}\left(1, N ; \mcl G^1|_{(0,N]} \right)$, and 
\eqb \label{eqn-macro-dist-sum}
\op{dist}\left(1 , 2^{\lfloor (1+\zeta)n \rfloor } ; \mcl G^1|_{(0, 2^{\lfloor (1+\zeta) n \rfloor}]} \right) \leq  \sum_{j=1}^k X_j   +Y .
\eqe 

Let $v \in (0, u/2)$ be chosen so that $(1+\zeta)(\chi - v) > (1+\zeta/2) \chi$. 
By Proposition~\ref{prop-gen-exp} and Lemma~\ref{prop-weak-pos}, for each $v > 0$ it holds with probability at least $ 2^{-o_n(n)}  $ that 
\eqb \label{eqn-macro-dist}
 \op{dist}\left(1 , 2^{\lfloor (1+\zeta)n \rfloor } ; \mcl G^1|_{(0, 2^{\lfloor (1+\zeta) n \rfloor}]} \right) \geq  2^{ (1+\zeta)(\chi - v) n} .
\eqe 
Furthermore, by~\eqref{eqn-no2-dist-upper} it holds with probability at least $1-o_n^\infty(2^n)$ that
\eqb \label{eqn-last-inc-small}
Y \leq 2^{(1+\zeta/2)\chi n} .
\eqe
By our choice of $v$, for large enough $n$ the right side of~\eqref{eqn-macro-dist} is at least twice the right side of~\eqref{eqn-last-inc-small}. By~\eqref{eqn-macro-dist-sum}, for large enough $N$, whenever both~\eqref{eqn-macro-dist} and~\eqref{eqn-last-inc-small} occur, there must exist $j \in [1,k]_{\BB Z}$ such that $X_j \geq k^{-1} 2^{(1+\zeta)(\chi-v) n-1}$. By symmetry and the union bound, for each sufficiently large $N$,
\eqbn
\BB P\left[ X_1 \geq  2^{((1+\zeta)(\chi - v) - \zeta) n-1} \right] \geq  2^{-(\zeta + o_n(1)) n}.
\eqen
Since $v < u/2$, sending $\zeta$ to 0 yields~\eqref{eqn-no2-dist-lower}.
\end{proof}

\subsection{Proof of Theorems~\ref{thm-chi-exists} and Theorem~\ref{thm-dist-bound}}
\label{sec-general-gamma-proof}

We now conclude the proofs of Theorems~\ref{thm-chi-exists} and~\ref{thm-dist-bound}. 

\begin{proof}[Proof of Theorem~\ref{thm-chi-exists}]
By Proposition~\ref{prop-no2-dist} and scale invariance,
$\BB E\left[ \op{diam}\left( \mcl G^\ep|_{(0,1]} \right) \right] = \ep^{-\chi + o_\ep(1)}$.
Therefore~\eqref{eqn-chi-exists} holds. The lower bound for $\chi$ follows from Proposition~\ref{prop-diam-lower}. By Lemma~\ref{prop-bdy-count} we have $\chi \leq 1/2$ (since $\bdy_\ep (0,1])$ contains a path from $\ep$ to $1$ in $\mcl G^\ep|_{(0,1]}$).  
\end{proof}

\begin{proof}[Proof of Theorem~\ref{thm-dist-bound}]
The estimate~\eqref{eqn-dist-upper0} follows from~\eqref{eqn-no2-dist-upper} of Proposition~\ref{prop-no2-dist}. In the case when $s = 0$ and $t=1$, the estimate~\eqref{eqn-dist-lower0} follows from~\eqref{eqn-no2-dist-lower}. It remains to prove~\eqref{eqn-dist-lower0} for general $s,t \in [0,1]$ with $s< t$. 
To this end, fix such an $s$ and $t$ and let $x_s^\ep \approx s$ and $x_t^\ep \approx t$ be as in the theorem statement. 
We will prove~\eqref{eqn-dist-lower0} by showing that $\mcl G^\ep|_{(0,1]}$ has ``pinch points" at $x_s^\ep$ and $x_t^\ep$ on an event of probability decaying slower than an arbitrarily small power of $\ep$, in which case $\op{dist}(x_s^\ep,x_t^\ep ; \mcl G^\ep|_{(0,1]})$ is close to $\op{dist}(x_s^\ep,x_t^\ep ; \mcl G^\ep|_{(x_s^\ep,x_t^\ep]})$.

Fix $u , \zeta > 0$ and for $\ep > 0$, let $E_\ep$ be the event that the following is true. 
\begin{enumerate}
\item $\op{dist}\left( x_s^\ep , x_t^\ep ; \mcl G^\ep|_{[x_s^\ep , x_t^\ep]} \right) \geq 2\ep^{-\chi  +\zeta }$. \label{item-st-lower}
\item Let $y_s^\ep$ be the closest element of $(0,1]_{\ep\BB Z}$ to $x_s^\ep + \ep^\zeta$ and let $y_t^\ep$ be the closest element of $(0,1]_{\ep\BB Z}$ to $x_t^\ep - \ep^\zeta$. Then $\op{diam} \left( \mcl G^\ep|_{[x_s^\ep , y_s^\ep]} \right)$ and $\op{diam} \left(\mcl G^\ep |_{[y_t^\ep , x_t^\ep]} \right)$ are each at most $\ep^{-(1-\zeta) (\chi -\zeta)}$. \label{item-st-upper}
\item In the notation of Definition~\ref{def-interval-inf}, each of $\ul \Delta^L_{[x_s^\ep , y_s^\ep]}$, $\ul \Delta^R_{[x_s^\ep , y_s^\ep]}$, $\ol \Delta^L_{[y_t^\ep , x_t^\ep]}$, and $\ol \Delta^R_{[y_t^\ep , x_t^\ep]}$ is at least $\ep^{\zeta (1/2+\zeta)}$. \label{item-st-bdy}
\end{enumerate}
By scale invariance and the case when $s = 0$ and $t=1$, the probability that condition~\ref{item-st-lower} holds is at least $\ep^{o_\ep(1)}$. By~\eqref{eqn-dist-upper0}, the probability that condition~\ref{item-st-upper} fails to hold is of order $o_\ep^\infty(\ep)$. By standard estimates for Brownian motion, the probability that condition~\ref{item-st-bdy} fails to hold decays polynomially as $\ep\rta 0$. Therefore, $\BB P\left[E_\ep \right] \geq \ep^{o_\ep(1)}$. 

Let $F_\ep$ be the event that each of the following four quantities is at most $\ep^{\zeta (1/2 + \zeta)}$:
$\ol{\Delta}^L_{[0, x_s^\ep]}$,   
$\ol{\Delta}^R_{[0, x_s^\ep]}$,   
$\ul{\Delta}^L_{[x_t^\ep , 1]},$
and $\ul{\Delta}^R_{[x_t^\ep , 1]}$.
By Lemma~\ref{prop-cone-prob} and the Markov property,  
\eqb \label{eqn-st-prob}
\BB P\left[ F_\ep \cap E_\ep \right] \geq \ep^{\frac{8}{\gamma^2} \zeta(1/2 + \zeta) + o_\ep(1)} .
\eqe 
Suppose now that $E_\ep \cap F_\ep$ occurs. By condition~\ref{item-st-bdy} in the definition of $E_\ep$ and the definition of $F_\ep$, the only elements of $[x_s^\ep ,x_t^\ep]_{\ep\BB Z}$ which are adjacent to an element of $(0,x_s^\ep)_{\ep\BB Z}$ (resp. $(x_t^\ep ,1]_{\ep\BB Z}$) are those in $[x_s^\ep , y_s^\ep]_{\ep\BB Z}$ (resp. $[y_t^\ep , x_t^\ep]_{\ep\BB Z}$). Furthermore, no element of $(0,x_s^\ep)_{\ep\BB Z}$ is adjacent to an element of $(x_t^\ep ,1]_{\ep\BB Z}$. 
By conditions~\ref{item-st-lower} and~\ref{item-st-upper} in the definition of $E_\ep$, 
the distance from $(0,x_s^\ep)_{\ep\BB Z}$ to $(x_t^\ep ,1]_{\ep\BB Z}$ in $\mcl G^\ep|_{(0,1]}$ is at least $2\ep^{-\chi + \zeta} - 2\ep^{(1-\zeta) (\chi - \zeta)}$, which is at least $  \ep^{-\chi + \zeta} \geq \ep^{-\chi  +u}$ for small enough $\ep$. 
Since $\zeta$ can be made arbitrarily small, the estimate~\eqref{eqn-dist-lower0} follows from~\eqref{eqn-st-prob}.  
\end{proof}

\appendix

\section{Proofs of some technical results}
\label{sec-technical}

Here we collect the proofs of some technical results which are used in the main body of the paper, but whose proofs are somewhat different in flavor than the main argument.

\subsection{Basic estimates for the $\gamma$-LQG measure of a quantum cone}
\label{sec-lqg-estimate}

Here we record some basic estimates for the $\gamma$-LQG measure associated with an $\alpha$-quantum cone which are used in Section~\ref{sec-exponent-bound}. In practice, we will always take $\alpha=\gamma$ but it is no more difficult to treat the case of general $\alpha \in (0,Q]$. We first have a basic lower bound for the $\gamma$-LQG mass of a small ball centered at 0.

\begin{lem} \label{prop-ball-mass}
Let $h$ be a whole-plane GFF, normalized so that its circle average over $\bdy \BB D$ is 0 or let $\alpha \in (0,Q]$ and let $h$ be a circle average embedding of a $\alpha$-quantum cone in $(\BB C ,   0, \infty)$ (recall Section~\ref{sec-lqg-prelim}). For $r \in (0,1)$ and $p > 0$, 
\eqb \label{eqn-ball-mass-bound}
\BB P\left[ \mu_h(B_\ep(z)) < \ep^{2+\gamma^2/2 +p}  \right] \leq   \ep^{ \frac{p^2}{2\gamma^2} + o_\ep(1) } ,\quad \forall z\in B_r(0) ,\quad \forall \ep \in (0,1)
\eqe 
with the rate of convergence of the $o_\ep(1)$ depending only on $p$ and $r$.
\end{lem}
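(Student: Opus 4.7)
The plan is to reduce the quantum cone case to the whole-plane GFF case, and then apply the standard circle-average decomposition of $\mu_h(B_\ep(z))$ to convert the estimate into a Gaussian tail bound.

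For the reduction, the circle-average embedding of the $\alpha$-quantum cone satisfies $h|_{\BB D} \eqD (h^G - \alpha\log|\cdot|)|_{\BB D}$ with $h^G$ a whole-plane GFF with zero circle average over $\bdy\BB D$ (Section~\ref{sec-lqg-prelim}). By the definition of the LQG measure via circle-average regularization, $\mu_h(dw) = |w|^{-\alpha\gamma}\mu_{h^G}(dw)$ on $\BB D$. For $z \in B_r(0)$ and $\ep > 0$ small enough that $B_\ep(z) \subset \BB D$, the factor $|w|^{-\alpha\gamma} \geq 1$ on $B_\ep(z)$, so $\mu_h(B_\ep(z)) \geq \mu_{h^G}(B_\ep(z))$. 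Hence it suffices to prove~\eqref{eqn-ball-mass-bound} in the case where $h$ is a whole-plane GFF.

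Fixing now a whole-plane GFF $h$ with zero circle average over $\bdy\BB D$, apply the LQG coordinate change formula \cite[Proposition 2.1]{shef-kpz} to $\phi(w) := \ep w + z$ together with the scale- and translation-invariance of the GFF modulo additive constant to obtain
\[
\mu_h(B_\ep(z)) = \ep^{\gamma Q}\, e^{\gamma h_\ep(z)}\, X_z^\ep,
\]
where $\gamma Q = 2 + \gamma^2/2$, $h_\ep(z)$ is the circle average of $h$ over $\bdy B_\ep(z)$, and $X_z^\ep := \mu_{\hat h}(B_1(0))$ for $\hat h(w) := h(\ep w + z) - h_\ep(z)$. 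The field $\hat h$ has the law of a whole-plane GFF with zero circle average over $\bdy\BB D$; in particular the law of $X_z^\ep$ does not depend on $(z, \ep)$, and $X_z^\ep$ has finite moments of every order $q \in [0, 4/\gamma^2)$ by standard Gaussian multiplicative chaos bounds.

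Since $\gamma Q = 2 + \gamma^2/2$, the event $\{\mu_h(B_\ep(z)) < \ep^{2+\gamma^2/2+p}\}$ is equivalent to $\{e^{\gamma h_\ep(z)} X_z^\ep < \ep^p\}$. For any $\delta > 0$ and any $q \in (0, 4/\gamma^2)$, splitting on whether $X_z^\ep$ is larger than $\ep^{-\delta}$ and applying Markov's inequality gives
\[
\BB P\left[\mu_h(B_\ep(z)) < \ep^{2+\gamma^2/2+p}\right] \leq \BB P\left[h_\ep(z) < -\tfrac{p+\delta}{\gamma}\log\ep^{-1}\right] + \BB E[(X_z^\ep)^q]\,\ep^{\delta q}.
\]
The circle average $h_\ep(z)$ is Gaussian with mean bounded uniformly in $z \in B_r(0)$ and variance $\log\ep^{-1} + O_r(1)$ (cf.\ \cite[Section 3.1]{shef-kpz}), so the first term is at most $\ep^{(p+\delta)^2/(2\gamma^2) + o_\ep(1)}$ by the standard Gaussian tail bound. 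Choosing $q < 4/\gamma^2$ sufficiently close to $4/\gamma^2$ and $\delta > 0$ with $\delta q \geq p^2/(2\gamma^2)$, and using $(p+\delta)^2 \geq p^2$, both terms are at most $\ep^{p^2/(2\gamma^2) + o_\ep(1)}$, yielding~\eqref{eqn-ball-mass-bound}. The main technical point requiring care is the uniformity in $z \in B_r(0)$ of the Gaussian parameters for $h_\ep(z)$ and of the moment bound for $X_z^\ep$; once these are established, the Gaussian tail argument is routine.
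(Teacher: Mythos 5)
Your overall strategy — reduce to the whole-plane GFF by discarding the log singularity, write $\mu_h(B_\ep(z)) = \ep^{\gamma Q} e^{\gamma h_\ep(z)} X_z^\ep$ with $X_z^\ep$ a GMC mass whose law does not depend on $(z,\ep)$, and then run a Gaussian tail bound on $h_\ep(z)$ — is the same as the paper's (the paper packages the second step as a citation to \cite[Lemma 3.12]{ghm-kpz} rather than deriving the exact scaling relation, but the content is identical). However, there is a genuine error in how you control the factor $X_z^\ep$.

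The event $\left\{ e^{\gamma h_\ep(z)} X_z^\ep < \ep^p \right\}$ is a \emph{lower}-tail event for the product, so the relevant decomposition is
\[
\left\{ e^{\gamma h_\ep(z)} X_z^\ep < \ep^p \right\} \subset \left\{ e^{\gamma h_\ep(z)} < \ep^{p-\delta} \right\} \cup \left\{ X_z^\ep < \ep^{\delta} \right\},
\]
since if both factors are at least $\ep^{p-\delta}$ and $\ep^{\delta}$ respectively then the product is at least $\ep^p$. Your inclusion, namely $\left\{ e^{\gamma h_\ep(z)} X_z^\ep < \ep^p \right\} \subset \left\{ e^{\gamma h_\ep(z)} < \ep^{p+\delta} \right\} \cup \left\{ X_z^\ep > \ep^{-\delta} \right\}$, is false: take an outcome with $e^{\gamma h_\ep(z)} = \ep^{p+\delta}$ and $X_z^\ep = \ep$; then the product is $\ep^{p+\delta+1} < \ep^p$ but neither of your two events occurs. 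Correspondingly, the Markov bound $\BB E[(X_z^\ep)^q]\,\ep^{\delta q}$ with $q \in (0,4/\gamma^2)$ controls the \emph{upper} tail $\BB P[X_z^\ep > \ep^{-\delta}]$, which is irrelevant here: a large GMC mass makes the ball heavier, not lighter. What you actually need is the lower-tail bound $\BB P[X_z^\ep < \ep^{\delta}] \leq \BB E[(X_z^\ep)^{-q}]\,\ep^{\delta q}$, which requires finiteness of \emph{negative} moments of the GMC mass of the unit ball. These are indeed finite for every order (this is standard, and is exactly what the paper's cited lemma encodes), so the proof is repaired by replacing your split with the one above, using negative moments, and noting that the Gaussian term then gives exponent $(p-\delta)^2/(2\gamma^2)$, which recovers $p^2/(2\gamma^2) + o_\ep(1)$ upon sending $\delta \rta 0$. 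As written, though, the key step does not go through.
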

\begin{proof}
If $h$ is a whole-plane GFF on $\BB C$, normalized so that its circle average over $\bdy \BB D$ is 0, then the restriction of $h-\gamma \log|\cdot|$ to $B_1(0)$ agrees in law with the restriction to $\BB D$ of the circle average embedding of a $\gamma$-quantum cone (see, e.g., the discussion just after~\cite[Definition 4.9]{wedges}).
Adding the function $-\gamma \log |\cdot|$ can only increase the $\gamma$-LQG measure of subsets of $\BB D$, so it suffices to prove~\eqref{eqn-ball-mass-bound} in the case when $h$ is a whole-plane GFF. 

Let $h_\ep(\cdot)$ be the circle average process for $h$. By~\cite[Lemma 3.12]{ghm-kpz} (c.f.~\cite[Lemma 4.5]{shef-kpz}), for each $u \in (0,p)$ we have
\eqbn
\BB P\left[ \mu_h(B_\ep(z)) \leq \ep^{2+\gamma^2/2+ u} e^{ \gamma h_\ep(z)} \right] = o_\ep^\infty(\ep) .
\eqen
Furthermore, $h_\ep(z)$ is a centered Gaussian random variable with variance at most $\log \ep^{-1} + O_\ep(1)$~\cite[Section 3.1]{shef-kpz}, so the Gaussian tail bound implies
\eqbn
\BB P\left[ h_\ep(z) \leq \frac{p - u}{\gamma} \log \ep \right] \leq \ep^{\frac{(p-u)^2}{2\gamma^2} }  .
\eqen
The statement of the lemma follows upon sending $u\rta 0$. 
\end{proof}

If we fix the radius of the ball, we obtain a stronger lower bound for the LQG measure. 

\begin{lem} \label{prop-quantum-mass-lower}
Suppose we are in the setting of Lemma~\ref{prop-ball-mass}. For each fixed $r \in (0,1]$ and each $\ep \in (0,1)$,  
\eqb 
\BB P\left[ \mu_h(B_r(0) ) < \ep \right] = o_\ep^\infty(\ep)    
\eqe  
at a rate depending on $r$. 
\end{lem}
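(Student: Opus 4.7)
The plan is to reduce the statement to a moment bound for the quantum area of the unit disk for a whole-plane GFF, and then apply Chebyshev's inequality with arbitrarily large exponent.

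First I would reduce to the whole-plane GFF case. In the quantum-cone setting, the circle-average embedding can be written as $h = h^0 - \alpha \log |\cdot|$ on $\BB D$, where $h^0$ is a whole-plane GFF normalized so that $h^0_1(0) = 0$. Since $\alpha \geq 0$ and $\log|z| \leq 0$ for $|z| \leq 1$, the function $-\alpha \log|\cdot|$ is pointwise nonnegative on $B_1(0)$, so $\mu_h(B_r(0)) \geq \mu_{h^0}(B_r(0))$ for $r \in (0,1]$. Hence it suffices to prove the bound for the whole-plane GFF $h$ with $h_1(0)=0$.

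Next I would use the radial-lateral decomposition of the whole-plane GFF together with LQG scale invariance. Writing $h(z) = h_{|z|}(0) + h^\circ(z)$ where $\{h_{e^{-t}}(0)\}_{t \in \BB R}$ is a two-sided standard Brownian motion and $h^\circ$ is an independent ``lateral'' field with vanishing circle average at every radius, the Markov property at scale $r$ combined with the LQG coordinate change~\eqref{eqn-lqg-coord} gives
\eqbn
\mu_h(B_r(0)) \stackrel{d}{=} r^{\gamma Q} e^{\gamma G} X,
\eqen
where $G \sim N(0, \log r^{-1})$ is independent of $X \stackrel{d}{=} \mu_h(B_1(0))$. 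For any $q > 0$, taking $-q$-moments yields
\eqbn
\BB E[\mu_h(B_r(0))^{-q}] = r^{-\gamma Q q - \gamma^2 q^2/2} \, \BB E[X^{-q}].
\eqen

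The key ingredient is then that $\BB E[\mu_h(B_1(0))^{-q}] < \infty$ for every $q > 0$. This is a standard consequence of Kahane's theory of Gaussian multiplicative chaos (see, e.g.,~\cite{rhodes-vargas-review}) and can be established by comparing $h$ to an exactly scale-invariant log-correlated field and then iterating the scale decomposition above to bound negative moments recursively. Granting this, Chebyshev's inequality yields
\eqbn
\BB P[\mu_h(B_r(0)) < \ep] \leq \ep^q \, \BB E[\mu_h(B_r(0))^{-q}] \preceq_r \ep^q
\eqen
for every $q > 0$, which is precisely the $o_\ep^\infty(\ep)$ decay asserted.

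The main obstacle will be the finite-negative-moment bound for $\mu_h(B_1(0))$; while it is well-known in the GMC literature, a self-contained proof requires invoking Kahane-type convexity arguments across dyadic scales. Once that is granted, the rest of the argument is an elementary combination of the scaling identity, the Gaussian moment generating function, and Chebyshev's inequality.
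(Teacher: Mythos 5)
Your proposal is correct and follows essentially the same route as the paper: both arguments first reduce to the whole-plane GFF by noting that adding $-\alpha\log|\cdot|$ only increases the measure on $B_1(0)$, and both then separate the circle average $h_r(0)$ from a normalized mass and show each factor has a superpolynomial lower tail. The only difference is packaging: you invoke finiteness of all negative moments of the GMC mass of the unit disk together with exact scale invariance and Chebyshev, whereas the paper directly cites the lower-tail estimate of \cite[Lemma 4.5]{shef-kpz} at scale $r$ and a Gaussian tail bound for $e^{\gamma h_r(0)}$ — these are equivalent inputs, so either is acceptable.
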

\begin{proof} 
As in the proof of Lemma~\ref{prop-ball-mass}, it suffices to prove the statement in the case of the whole-plane GFF. 
It is easy to see from~\cite[Lemma 4.5]{shef-kpz} (see, e.g., the proof of~\cite[Lemma 3.12]{ghm-kpz}) that in this case $\BB P\left[ \mu_{h}(B_r(0)) < \ep^{1/2} e^{\gamma h^G_r(0)}  \right] =  o_\ep^\infty(\ep)$, where $h_r(0)$ is the circle average of $h$ over $\bdy B_r(0)$. Since $h_r(0)$ is Gaussian with variance $\log r^{-1}$, we also have $\BB P\left[e^{\gamma h_r(0)}  < \ep^{1/2} \right] = o_\ep^\infty(\ep)$. 
\end{proof}

To complement the above lemmas, we also have an upper for the $\gamma$-LQG mass of a ball centered at 0. The proof in this case is more difficult since the logarithmic singularity at the origin increases the $\gamma$-LQG measure.

\begin{lem} \label{prop-quantum-mass-upper}
Let $\alpha < Q$ (with $Q$ as in~\eqref{eqn-lqg-coord}) and let $h$ be a circle average embedding of an $\alpha$-quantum cone in $(\BB C , 0 , \infty)$. For $0 < p  < \min\{\frac{4}{\gamma^2} ,\frac{2}{\gamma}(Q-\alpha)\} $ and $\ep \in (0,1]$,  
\eqb \label{eqn-quantum-mass-upper}
\BB E\left[ \mu_h\left(B_\ep(0) \right)^p \right]  \preceq \ep^{   p \left( 2 + \frac{\gamma^2}{2} - \alpha \gamma  \right)  -  \frac{\gamma^2 p^2}{2}    } 
\eqe 
with the implicit constant depending only on $\alpha$ and $\gamma$.
\end{lem}

A similar, but stronger, estimate than Lemma~\ref{prop-quantum-mass-upper} is proven for the quantum sphere in~\cite[Lemma 3.10]{dkrv-lqg-sphere} (the measure studied in~\cite{dkrv-lqg-sphere} is proven to be equivalent to the $\gamma$-LQG measure associated with quantum sphere in~\cite{ahs-sphere}). 
Rather than trying to deduce Lemma~\ref{prop-quantum-mass-upper} from this estimate, we give a direct proof. 

\begin{proof}[Proof of Lemma~\ref{prop-quantum-mass-upper}]
Let $\rng h := h + \alpha \log|\cdot|$, so that by our choice of embedding $\rng h|_{\BB D}$ agrees in law with the restriction to $\BB D$ of a whole-plane GFF. For $r > 0$, let $\rng h_r(0)$ be the circle average of $\rng h$ over $\bdy B_r(0)$. Also let $ \rng h^r := \rng h(r \cdot)   -\rng h_r(0)$. Then $\rng h^r|_{\BB D} \eqD \rng h|_{\BB D}$ and $\rng h^r|_{\BB D}$ is independent from $\rng h_r(0)$.  

For $k\in \BB N_0$ let $A_k $ be the annulus $B_{e^{-k }}(0) \setminus B_{e^{-k-1}}(0)$. By~\cite[Proposition 2.1]{shef-kpz}, 
\allb \label{eqn-annulus-mass}
\mu_h(A_k) 
&= \exp\left\{- k \left( 2 + \frac{\gamma^2}{2} - \alpha \gamma  \right) + \gamma \rng h_{e^{-k}} (0) \right\} \int_{A_0} |z|^{-\alpha \gamma}   \, d\mu_{\rng h^{ e^{-k} }}(z)  .
\alle
The random variable $\rng h_{e^{-k}} (0)$ is Gaussian with variance $k$~\cite[Section 3.1]{shef-kpz}, so for $p > 0$ we have
\eqb \label{eqn-exp-circle-moment}
\BB E\left[ \exp\left(  \gamma p  \rng h_{e^{-k}} (0) \right)  \right] = e^{\gamma^2 p^2 k/2 } .
\eqe 
By~\cite[Theorem 2.11]{rhodes-vargas-review} and since $\rng h^{ e^{-k}}|_{\BB D} \eqD \rng h|_{\BB D}$, for each $p \in (0, 4/\gamma^2]$, 
\eqb \label{eqn-annulus-moment}
\BB E\left[ \left( \int_{A_0} |z|^{-\alpha \gamma}   \, d\mu_{\rng h^{ e^{-k} }}(z) \right)^p \right]  
\preceq \BB E\left[   \mu_{\rng h^{  e^{-k}}}(\BB D)^p \right] 
\preceq 1 .
\eqe 
For $0 < p  < \min\{1 ,\frac{2}{\gamma}(Q-\alpha)\} $, the function $x\mapsto x^p$ is concave, hence subadditive, so summing~\eqref{eqn-annulus-mass} over all $k \geq \lfloor \log \ep^{-1} \rfloor$ and applying~\eqref{eqn-exp-circle-moment} and~\eqref{eqn-annulus-moment} (and recalling the independent of $\rng h^r|_{\BB D}$ and $\rng h_r(0)$) gives
\alb
\BB E\left[ \mu_h\left(B_\ep(0) \right)^p \right] 
\leq \sum_{k=\lfloor \log \ep^{-1} \rfloor}^\infty \BB E\left[  \mu_h(A_k)^p  \right]
&\preceq \sum_{k=\lfloor \log \ep^{-1} \rfloor}^{\infty} \exp\left\{ -  k  \left(   p \left( 2 + \frac{\gamma^2}{2} - \alpha \gamma  \right)  -  \frac{\gamma^2 p^2}{2}   \right)   \right\} \\
&\preceq \ep^{   p \left( 2 + \frac{\gamma^2}{2} - \alpha \gamma  \right)  -  \frac{\gamma^2 p^2}{2}     } .
\ale 
In the case when $1 \leq p < \min\left\{ \frac{4}{\gamma^2} , \frac{2}{\gamma}(Q-\alpha) \right\}$,~\eqref{eqn-quantum-mass-upper} follows from a similar calculation with the triangle inequality for the $L^p$ norm used in place of sub-additivity. 
\end{proof}

Finally, we record an estimate for the amount of time a space-filling SLE curve parametrized by $\gamma$-LQG mass takes to fill in the unit disk.

\begin{lem} \label{prop-sle-segment}
Let $\alpha < Q$ and $h$ be a circle average embedding of an $\alpha$-quantum cone. Let $\eta$ be an independent whole-plane space-filling SLE$_{\kappa'}$ from $\infty$ to $\infty$ sampled independently from $h$, then parametrized by $\gamma$-quantum mass with respect to $h$. 
There exists $c=c(\alpha,\gamma) > 0$ such that 
\eqb \label{eqn-sle-segment}
\BB P\left[ \BB D\subset \eta([-M,M]) \right] \geq 1 - O_M(M^{-c})\qquad\textrm{for } M > 1.
\eqe 
\end{lem}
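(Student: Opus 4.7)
The plan is to combine the scale invariance of the $\alpha$-quantum cone at its marked point $0$ with a polynomial estimate for the Euclidean geometry of fixed-size space-filling SLE cells, namely \cite[Proposition 6.2]{hs-euclidean} (which is the estimate flagged right before the statement of the lemma).

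First I would reformulate the event in Euclidean terms. Since $\eta : \BB R \to \BB C$ is a bijection up to null sets, $\BB D \subset \eta([-M,M])$ is equivalent to $B_1(0) \subset \eta([-M,M])$, i.e., to the Euclidean distance from $\eta(0) = 0$ to the outer boundary of the cell $\eta([-M,M])$ being at least $1$. The problem is thus one of showing that this distance is unlikely to be $< 1$ when the cell has quantum mass $2M$.

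Next I would invoke the scale invariance of the $\alpha$-quantum cone: for each $M > 1$ there is a random scaling factor $\rho_M > 0$, depending only on $h$, such that
\[
\tilde h(\cdot) := h(\rho_M \cdot) + Q \log \rho_M + \gamma^{-1} \log M, \qquad \tilde \eta(t) := \rho_M^{-1} \eta(t/M),
\]
satisfy $(\tilde h , \tilde \eta ) \eqD (h, \eta)$; see \cite[Proposition 4.13(ii)]{wedges}. Concretely one takes $\rho_M := \sup\{s > 0 : h_s(0) + Q \log s = -\gamma^{-1} \log M\}$, which is a.s.\ well-defined because, by the Brownian description of the circle-average process on the cone, $u \mapsto h_{e^{-u}}(0) - (Q-\alpha) u$ is a standard Brownian motion on $[0, \infty)$ that drifts to $-\infty$. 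A first-passage argument for this drifted Brownian motion combined with Gaussian tail bounds then gives constants $a = a(\alpha,\gamma) > 0$ and $c_0 = c_0(\alpha, \gamma) > 0$ with
\[
\BB P\bigl[\rho_M \geq M^{-a}\bigr] \leq O\bigl(M^{-c_0}\bigr),
\]
so that $\rho_M \approx M^{-1/(\gamma(Q-\alpha))}$ in a precise polynomial sense.

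Finally I would transfer \cite[Proposition 6.2]{hs-euclidean}, which gives $\BB P[B_\delta(0) \not\subset \eta([-1,1])] \leq O(\delta^{c'})$ for some $c' = c'(\kappa, \gamma) > 0$, to the $M$-scale event using the above rescaling. Writing the event $\BB D \subset \eta([-M,M])$ in the rescaled coordinates and using $(\tilde h , \tilde \eta) \eqD (h, \eta)$ produces an event about the inclusion of a small Euclidean ball inside a unit-scale segment of $\eta$ whose radius is controlled by $\rho_M$; combining the tail bound on $\rho_M$ with the Euclidean estimate of \cite{hs-euclidean} yields the bound $\BB P[\BB D \not\subset \eta([-M,M])] \leq O(M^{-c})$ with $c := \tfrac12 \min(c_0 , a c')$.

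The main obstacle is the bookkeeping of the scale-invariance reduction in the last step: the scale invariance of the quantum cone is one-sided (zooming in preserves the law, while zooming out to arbitrarily large scales does not), so the event about the $M$-cell must be carefully identified with an event about a cell of $\tilde \eta$ before invoking $(\tilde h, \tilde \eta) \eqD (h, \eta)$. Once this identification is made and the polynomial input from \cite{hs-euclidean} is in hand, the remaining estimates are routine Brownian-motion computations.
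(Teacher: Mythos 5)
Your overall strategy (reduce to a unit quantum-mass cell via the scale invariance of the quantum cone, then apply a polynomial estimate at unit scale) could in principle be made to work, but as written it has a genuine gap: the unit-scale input you attribute to \cite[Proposition 6.2]{hs-euclidean} is not what that proposition provides. That result is a purely Euclidean statement about $\eta$ viewed modulo parametrization --- it bounds the \emph{Lebesgue area} of the segment of $\eta$ traced between the times it starts and finishes filling $\BB D$ --- and it knows nothing about the quantum-mass parametrization. The estimate you actually need, $\BB P\left[ B_\delta(0)\not\subset\eta([-1,1]) \right]\leq O(\delta^{c'})$ with $[-1,1]$ a quantum-time interval, additionally requires an \emph{upper} bound, holding with polynomially high probability, on the $\gamma$-LQG mass of a small Euclidean neighborhood of the origin: one must rule out that the curve spends more than one unit of quantum time covering the (Euclidean-small) region it traverses while filling $B_\delta(0)$. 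That measure bound is non-trivial because of the $\alpha$-log singularity of $h$ at $0$; it is the content of Lemma~\ref{prop-quantum-mass-upper} and of step \eqref{eqn-sle-segment-quantum} in the paper's proof, and it is absent from your proposal (your only field estimate concerns the circle-average process defining $\rho_M$, which does not control $\mu_h$). Once this ingredient is added, the scale-invariance detour buys nothing: the paper argues directly at scale $M$, using \cite[Proposition 6.2]{hs-euclidean} together with \cite[Lemma 3.6]{ghm-kpz} to get $\eta([T_-,T_+])\subset B_{M^b}(0)$ with probability $1-O(M^{-b c_1})$ (where $T_\pm$ are the start/finish times of filling $\BB D$), and then the mass bound $\mu_h(B_{M^b}(0))\leq M$ to conclude $[T_-,T_+]\subset[-M,M]$.

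A secondary issue is that your rescaling formulas point the wrong way: with $\wt\eta(t)=\rho_M^{-1}\eta(t/M)$ the mass-$2M$ cell of $\eta$ corresponds to the mass-$2M^2$ cell of $\wt\eta$. You want the zoom-out normalization, under which $\wt\eta([-1,1])=\rho^{-1}\eta([-M,M])$ with $\rho$ \emph{large}, of order $M^{1/(\gamma(Q-\alpha))}$, so that $\rho^{-1}\BB D$ is a small ball. (The scale invariance of \cite[Proposition 4.13]{wedges} is available in both directions, so the one-sidedness you worry about is not the obstruction; the obstruction is the missing quantum-mass bound above.)
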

\begin{proof}
By~\cite[Proposition~6.2]{hs-euclidean}, there exists $c_0 = c_0(\gamma) > 0$ such that the following is true. 
If we let $T_- $ (resp.\ $T_+$) be the time at which $\eta$ starts (resp.\ finishes) filling in $\BB D$, then for $R >1$,
\eqbn
\BB P\left[ \op{Area}\left( \eta([T_- , T_+])  \right) \leq R \right] \geq 1 - O_R(R^{-c_0})  . 
\eqen
By this and~\cite[Lemma 3.6]{ghm-kpz}, we infer that there exists $c_1  >c_0$ such that
\eqb \label{eqn-sle-segment-euc}
\BB P\left[ \op{Area}\left( \eta([T_- , T_+])  \right) \subset B_R(0) \right] \geq 1 - O_R(R^{-c_1})  . 
\eqe
By Lemma~\ref{prop-quantum-mass-upper} and the scaling property of the $\gamma$-quantum cone~\cite[Proposition 4.11]{wedges} (see also~\cite[Lemma 2.2]{gm-spec-dim}) there exists $b = b(\alpha,\gamma ) >0$ and $c_2 =c_2(\alpha,\gamma)>0$ such that for $M>1$, 
\eqb \label{eqn-sle-segment-quantum}
\BB P\left[ \mu_h(B_{M^b}(0)) \leq M \right] \geq 1 - O_M(M^{-c_2}) .
\eqe
We conclude~\eqref{eqn-sle-segment} with $c = c_2 \wedge (b c_1)$ by combining~\eqref{eqn-sle-segment-euc} (with $R = M^b$) and~\eqref{eqn-sle-segment-quantum}.
\end{proof}

\subsection{Proof of Proposition~\ref{prop-kpz0}}
\label{sec-kpz}

In this subsection we will prove the KPZ relation Proposition~\ref{prop-kpz0}. In fact, we will prove the following slightly more general statement, whose proof is no more difficult.

\begin{prop} \label{prop:kpz}
The statement of Proposition~\ref{prop-kpz0} is true with $h$ replaced by a whole-plane GFF normalized so that its circle average over $\bdy \BB D$ is 0 or a circle average embedding of an $\alpha$-quantum cone for $\alpha  < Q$. In the case of the whole-plane GFF, one in fact has the slightly stronger estimate
	\eqb
	\limsup_{\ep\rta 0} \frac{\log \BB E[N^\ep]}{\log \ep^{-1}} \leq \wh d_\gamma .
	\label{eq:kpz10}
	\eqe
\end{prop}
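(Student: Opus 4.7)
The strategy is to establish the whole-plane GFF statement directly by combining the classical Shef-KPZ bound on dyadic squares with the cell regularity estimate \cite[Proposition 3.4]{ghm-kpz}, and then to reduce the $\alpha$-quantum cone case to the GFF case by a multi-scale argument around the origin, in the spirit of Lemma~\ref{prop-line-kpz}. The advantage of treating the GFF case first is that one may work in a fixed bounded domain $D' \supset D$ on which $h$ has nice scaling/absolute continuity properties, and postpone until the very end the handling of the $-\alpha\log|\cdot|$ singularity.

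\textbf{Whole-plane GFF case.} Fix $\zeta \in (0,1)$ small, and let $\mcl D_\ep$ be the collection of dyadic squares $S$ in a fixed open set $D' \supset D$ which satisfy $\mu_h(S) \in [\ep,\ep^{1-\zeta}]$ and $S \cap X \neq \emptyset$. A standard adaptation of the argument in \cite[Proposition 1.6]{shef-kpz} (where one replaces $X$ by its independent random variant and uses the KPZ relation~\eqref{eqn-kpz0}) shows that
\eqbn
\BB E\left[ \#\mcl D_\ep \right] \leq \ep^{-\wh d_\gamma - o_\ep(1)} .
\eqen
Next, as in Step~1 of the proof of Lemma~\ref{prop-cell-diam-count}, invoke \cite[Proposition 3.4]{ghm-kpz}: except on an event of probability $o_\ep^\infty(\ep)$, every cell $\eta([x-\ep,x])$ intersecting $D$ has Euclidean diameter in the window $[\ep^{1/(2+\gamma^2/2)+\zeta} , \ep^{1/(2+\gamma^2/2)-\zeta}]$ and contains a Euclidean ball of comparable diameter. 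On this good event, each such cell is covered by a bounded number of dyadic squares whose $\mu_h$-mass lies in $[\ep, \ep^{1-O(\zeta)}]$, so $N^\ep \leq C\, \#\mcl D_\ep$ with the implicit constant independent of $\ep$. Taking expectations, letting $\zeta\rta 0$, and noting that the bad event contributes $o_\ep^\infty(\ep)$ to $\BB E[N^\ep]$ (since trivially $N^\ep \leq \mu_h(D')/\ep + O(1)$ has moments of all orders by \cite[Theorem 2.11]{rhodes-vargas-review}), one obtains~\eqref{eq:kpz10}. The tail bound~\eqref{eqn-kpz-prob} then follows from Markov's inequality.

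\textbf{$\alpha$-quantum cone case and main obstacle.} For a circle-average embedding one has $h|_{\BB D} = h^G - \alpha\log|\cdot|$, with $h^G|_{\BB D}$ distributed as the restriction of a whole-plane GFF (circle average $0$ over $\bdy\BB D$). Since $X \subset D$ is bounded but may approach the origin, decompose into dyadic annuli $A_k = B_{e^{-k+1}}(0) \setminus B_{e^{-k}}(0)$ for $k \geq 0$ and, on each $A_k$, apply the LQG coordinate change~\eqref{eqn-lqg-coord} after rescaling by $e^k$ to convert $h|_{A_k}$ into (a translate of) a GFF-distributed field; the quantum scale $\ep$ is multiplied by $\exp(\gamma(Q-\alpha)k - \gamma h^G_{e^{-k}}(0))$, which lies in $[\ep^{1+u/2},\ep^{1-u/2}]$ except on a Gaussian event of probability $\preceq \ep^{cu^2}$ for some $c = c(\gamma,\alpha)>0$. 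Summing the GFF bound across $O(\log \ep^{-1})$ annuli and using the annular tail introduces only a polylogarithmic factor and a polynomial loss $\ep^{-u/2}$, which is absorbed into $\ep^{-\wh d_\gamma - u}$ after relabeling $u$. The principal technical difficulty throughout is the transfer from dyadic squares to space-filling SLE cells, because for $\kappa\in (4,8)$ a cell can be non-simply-connected and exhibit bottlenecks, so a priori one cell might span many dyadic squares or vice versa; \cite[Proposition 3.4]{ghm-kpz} provides the crucial two-sided regularity that forces the two notions of dimension to agree up to $\ep^{o_\ep(1)}$, but one still needs a careful union bound over the polynomially many cells meeting $D$ to conclude that the exceptional cells contribute negligibly to $\BB E[N^\ep]$.
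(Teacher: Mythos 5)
There is a genuine gap in your whole-plane GFF argument, at the step where you pass from dyadic squares to space-filling SLE cells. First, the claim that (off an event of probability $o_\ep^\infty(\ep)$) \emph{every} cell $\eta([x-\ep,x])$ meeting $D$ has Euclidean diameter confined to the window $[\ep^{1/(2+\gamma^2/2)+\zeta},\ep^{1/(2+\gamma^2/2)-\zeta}]$ is false, and it is not what \cite[Proposition 3.4]{ghm-kpz} says: that proposition only asserts that a segment of $\eta$ contains a Euclidean ball of radius comparable (up to an $o(1)$ exponent) to its \emph{own} diameter, whatever that diameter happens to be. The diameters of quantum-mass-$\ep$ cells genuinely spread over a polynomial range of scales $\ep^{\alpha}$ — this is precisely the content of Lemma~\ref{prop-cell-diam-count} and the nontrivial exponent $f(\alpha)$ in~\eqref{eqn-cell-diam-exponent} — so your covering squares cannot all be taken at a single scale, and a square of side comparable to the diameter of an atypically large cell will typically contain many cells and have $\mu_h$-mass far exceeding $\ep^{1-\zeta}$, hence will not lie in your collection $\mcl D_\ep$. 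Second, even granting a covering, the inference ``each cell is covered by a bounded number of squares in $\mcl D_\ep$, so $N^\ep \leq C\,\#\mcl D_\ep$'' runs the counting in the wrong direction: to bound $N^\ep$ by $\#\mcl D_\ep$ you must show that each \emph{square} is associated to at most $\ep^{-o_\ep(1)}$ \emph{cells}, since many distinct cells can share the same covering square.

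The paper's proof is built exactly to repair these two points. It modifies the Shef--KPZ box so that a $(\mu_h,\ep)$-box is a dyadic square $S$ whose $\delta$-\emph{neighborhood} $\wt S$ satisfies $\mu_h(\wt S)<\ep$ (Lemma~\ref{prop:boxKPZ} re-proves the KPZ count $\ep^{-\wh d_\gamma+o_\ep(1)}$ for these boxes). Then any cell of mass exactly $\ep$ which meets $S$ cannot be contained in $\wt S$, so it must cross the annulus $\wt S\setminus S$; on a regularity event where all boxes meeting $D$ have side length $\geq \ep^K$ (Lemma~\ref{prop:maxmeasure}) and \cite[Proposition 3.4]{ghm-kpz} holds, the number of disjoint crossings of such an annulus by $\eta$ is $\leq \delta^{-2u}$, which gives the multiplicity bound $N^\ep\preceq \ep^{-2Ku}\wh N^\ep$ without any uniform control on cell diameters. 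If you want to salvage your square-mass-window approach, you would need to supply an analogous per-square multiplicity bound (e.g.\ cells contained in $S$ are at most $\ep^{-\zeta}$ by mass, and cells crossing $\bdy S$ are controlled by a crossing estimate), at which point you have essentially reconstructed the paper's argument. Your treatment of the $\alpha$-quantum cone via annuli around the origin is a reasonable alternative to the paper's direct coupling with a whole-plane GFF on a domain at positive distance from $0$ (\cite[Lemma 3.10]{ghm-kpz}), but it cannot stand until the GFF case is fixed.
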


We believe that~\eqref{eq:kpz10} is also true for the $\alpha$-quantum cone, but our proof yields only the weaker bound~\eqref{eqn-kpz-prob} in this case. 

For the proof of Proposition~\ref{prop:kpz}, we will use the following notation. For $\delta>0$, let $\frk S_\delta$ be the set of closed squares with side length $\delta$ and endpoints in $\delta\BB Z^2$. For $z\in\BB C$ let $S_\delta(z)$ denote the element of $\frk S_\delta$ which contains $z$; $S_\delta(z)$ is uniquely defined except if one or both of the coordinates of $z$ is a multiple of $\delta$, in which case we make an arbitrary choice between the $\leq 4$ possibilities when defining $S_\delta(z)$. 
We note that in the terminology of Proposition~\ref{prop-kpz0},
\eqb
N^\delta = \#\left\{ S\in \frk S_\delta : S\cap X\not=\emptyset \right\} .
\eqe
   
We will deduce Proposition \ref{prop:kpz} from a variant of the proposition corresponding to an alternative notion of quantum dimension which is closely related to the box-counting dimension considered in \cite{shef-kpz} but involves squares of side length $\delta$ which intersect $X$ and whose \emph{$\delta$-neighborhoods} have quantum mass at most $\ep$, rather than squares which themselves have quantum mass at most $\ep$. Let $X\subset D\subset \BB C$ be a random set as above. Let $h$ be either a whole-plane GFF with additive constant chosen so that the circle average of $h$ over $\bdy\BB D$ is zero, or a zero boundary GFF in a bounded domain $\wt D\subset \BB C$ satisfying $\ol D\subset \wt D$. For $S\in\frk S_\delta$ the $\delta$-neighborhood $\wt S$ of $S$ is defined by
\eqb \label{eqn-square-nbd}
\wt S=\{z\in\BB C\,:\,\op{dist}(z,S)<\delta \}.
\eqe 
We define the \emph{dyadic parent} $S_-$ of $S$ be the unique element of $\frk S_{2\delta}$ containing $S$. 
For $\ep>0$ we define a \emph{$(\mu_h,\ep)$-box} to be a dyadic square $S\in \cup_{k\in\BB Z} \frk S_{2^{-k}}$ which satisfies (in the notation introduced just above) $\mu_h(\wt S)<\ep$ and $\mu_h(\wt S_-)\geq\ep$. In the case of the zero boundary GFF we extend the measure $\mu_h$ to a measure on $\BB C$ by assigning measure 0 to the complement of $\wt D$. Let $\frk S^\ep$ be the set of $(\mu_h,\ep)$-boxes. Since $\mu_h$ is non-atomic, for each $z\in\BB C$ and $\ep>0$ for which none of the coordinates are dyadic, there is a unique square $S^\ep(z) \in \frk S^\ep$ which contains $z$; in the case where one or both of the coordinates is dyadic we define $S^\ep(z)$ uniquely by also requiring that $S^\ep(z)=S_\delta(z)$ for some $\delta=2^{-k}$, $k\in\BB Z$, where $S_\delta(z)$ is defined as in the beginning of this section. Note that the difference between our notion of a $(\mu_h,\ep)$-box, and the notion of a $(\mu_h,\ep)$-box considered in \cite[Section 1.4]{shef-kpz}, is that we consider dyadic squares where the {\emph{neighborhood}} of each square has a certain quantum measure, instead of considering the measure of the squares themselves. See Figure \ref{fig-kpzboxes} for an illustration.

For $\ep>0$ define $\wh N^\ep=\wh N^\ep(X)$ to be the number of $(\mu_h,\ep)$-boxes needed to cover $X$, i.e., 
\eqbn
\wh N^\ep = \#\{S\in {\frk S}^\ep\,:\, S\cap X\neq\emptyset \} .
\eqen
The \emph{box quantum expectation dimension} of $X$, if it exists, is the limit
\eqb 
\lim_{\ep\rta 0} \frac{\log \E[\wh N^\ep]}{\log \ep^{-1}} \in [0,1] .
\label{eq:defbox}
\eqe
The following lemma is a version of \cite[Proposition 1.6]{shef-kpz} with our alternative notion of a $(\mu_h,\ep)$-box. Recall that we assume $h$ is either a whole-plane GFF with unit circle average zero, or a zero boundary GFF.
\begin{lem}
		If the Euclidean expectation dimension $\wh d_0$ of $X$ exists and $X$ is independent of $h$, then the box quantum expectation dimension of $X$ exists and is given by $\wh d_\gamma$, where  $\wh d_\gamma\in[0,1]$ solves \eqref{eqn-kpz0}.
		\label{prop:boxKPZ}
\end{lem}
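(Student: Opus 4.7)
The plan is to adapt the proof of \cite[Proposition 1.6]{shef-kpz} to our modified notion of $(\mu_h,\ep)$-box (where one uses $\mu_h(\wt S) < \ep \leq \mu_h(\wt S_-)$ rather than $\mu_h(S) < \ep \leq \mu_h(S_-)$). Since $X$ is independent of $h$, the expectation factors as
\[
\E[\wh N^\ep] = \sum_{k \geq 0} \sum_{S \in \frk S_{2^{-k}}} \BB P[S \in \frk S^\ep]\, \BB P[S \cap X \neq \emptyset] ,
\]
and the definition of $\wh d_0$ gives $\sum_{S \in \frk S_\delta,\, S\cap D\neq\emptyset} \BB P[S \cap X \neq \emptyset] = \delta^{-\wh d_0 + o_\delta(1)}$. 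Hence everything reduces to estimating the single-square probability $p_\delta(\ep) := \BB P[S_\delta(z_0) \in \frk S^\ep]$, uniformly for $z_0$ in compact subsets of $D$.

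To compute $p_\delta(\ep)$ I would exploit the Brownian-motion structure of the circle-average process. For the whole-plane GFF, $t \mapsto h_{e^{-t}}(z_0)$ evolves as a standard Brownian motion $B_t$ with $B_0 = 0$, and standard LQG concentration (as in \cite[Lemma 4.5]{shef-kpz}, applied to $\wt S$ in place of $S$; $\wt S$ is comparable to a ball of radius $\asymp \delta$ so the same proof goes through) gives
\[
\mu_h(\wt S_\delta(z_0)) = \delta^{2+\gamma^2/2}\, e^{\gamma B_{\log \delta^{-1}} + O_\delta(1)}
\]
up to subpolynomial multiplicative corrections. Setting $s := \log \ep^{-1}$, $t := \log\delta^{-1}$, and $\wt Y_t := (2+\gamma^2/2)\, t - \gamma B_t$ (a Brownian motion with positive drift), the event $\{S_\delta(z_0) \in \frk S^\ep\}$ coincides up to $O(1)$ corrections with the event that $\wt Y$ first crosses level $s$ during the interval $[t, t+\log 2]$. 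By the inverse Gaussian hitting-time density for drifted Brownian motion,
\[
p_\delta(\ep) = \exp\!\left( o(t) - \frac{(s-(2+\gamma^2/2)\, t)^2}{2\gamma^2\, t}\right)
\]
as $s,t\to\infty$ with $t/s$ bounded away from $0$ and $\infty$; contributions from scales outside this regime are exponentially negligible. Substituting $t = \alpha s$ and summing over scales yields $\E[\wh N^\ep] = \ep^{-\sup_{\alpha>0} g(\alpha) + o_\ep(1)}$ with $g(\alpha) = \alpha \wh d_0 - (1-(2+\gamma^2/2)\alpha)^2/(2\gamma^2\alpha)$. A direct calculation shows $\sup_\alpha g(\alpha) = \wh d_\gamma$: indeed $\wh d_\gamma - g(\alpha) = ((c-\gamma^2 \wh d_\gamma)\alpha - 1)^2/(2\gamma^2\alpha)$ with $c = 2+\gamma^2/2$, and equality holds precisely at $\alpha_* = 1/(c - \gamma^2 \wh d_\gamma)$.

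The hard part will be the matching lower bound on $\E[\wh N^\ep]$: an arbitrary cover of $X$ by $(\mu_h,\ep)$-boxes may draw from many scales at once, so to extract the full exponent I need to show that a sub-polynomially small but definite fraction of the dyadic squares at scale $\delta \approx \ep^{\alpha_*}$ which intersect $X$ are actually $(\mu_h,\ep)$-boxes. This follows from Gaussian concentration of $B_{\log\delta^{-1}}$ around its typical value combined with the independence of $h$ and $X$, in the spirit of Sheffield's ``matching'' argument. Passing from $S$ to $\wt S$ costs only a bounded factor in $p_\delta(\ep)$ since $\wt S$ is covered by $O(1)$ squares at scale $\delta$, which is the reason our modified definition yields the same KPZ relation \eqref{eqn-kpz0}. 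Finally, the zero-boundary case follows from the whole-plane case by local absolute continuity on compact subsets of $\wt D$, with the resulting Radon-Nikodym derivative contributing only an $\ep^{o_\ep(1)}$ factor and not affecting the exponent.
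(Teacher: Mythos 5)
Your route is genuinely different from the paper's. The paper gets the lower bound $\liminf_\ep \log\E[\wh N^\ep]/\log\ep^{-1}\geq \wh d_\gamma$ in one line, by observing that every $(\mu_h,\ep)$-box in the new sense is contained in a $(\mu_h,\ep)$-box in the sense of~\cite{shef-kpz}, so $\wh N^\ep$ dominates the box count of~\cite[Proposition 1.6]{shef-kpz}; for the upper bound it reduces to the estimate $\E[\mu_h(\wt S_-^\ep(X))]\leq \ep^{\wh d_\gamma-1+o_\ep(1)}$ via $\ep \wh N^\ep_-\preceq \mu_h(\wt S_-^\ep(X))$ and then runs Sheffield's rooted-measure argument. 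You instead redo the whole large-deviations computation from scratch: by independence $\E[\wh N^\ep]=\sum_{k}\sum_{S\in\frk S_{2^{-k}}}\BB P[S\in\frk S^\ep]\,\BB P[S\cap X\neq\emptyset]$ exactly, and the single-square probability is computed from the circle-average Brownian motion. Your algebra is right: with $c=2+\gamma^2/2$ one has $\wh d_\gamma-g(\alpha)=((c-\gamma^2\wh d_\gamma)\alpha-1)^2/(2\gamma^2\alpha)$ precisely because $\wh d_\gamma$ solves~\eqref{eqn-kpz0}, and the contributions from $\alpha\rta 0$ and $\alpha\rta\infty$ are negligible since $c^2-2\gamma^2\wh d_0=(c-\gamma^2\wh d_\gamma)^2>0$. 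What your approach buys is self-containedness and an explanation of \emph{why} the optimal scale is $\ep^{\alpha_*}$; what it costs is that you must re-justify the concentration $\mu_h(\wt S)=\delta^{c}e^{\gamma h_\delta(z_0)+o(\log\delta^{-1})}$ with error control good enough to survive inside a probability that is itself polynomially small, which is exactly the technical content the paper avoids by quoting~\cite{shef-kpz}. Also note that your worry about "covers drawing from many scales" is moot: $\frk S^\ep$ is a canonical (non-adaptive) partition determined by $h$, so the Fubini identity above is exact and the lower bound on $\E[\wh N^\ep]$ needs only a lower bound on $\BB P[S\in\frk S^\ep]$ at the single scale $\alpha_*$ (a two-sided Gaussian estimate, not a matching argument).

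One step does not work as stated: the passage between the whole-plane and zero-boundary cases via "local absolute continuity, with the Radon--Nikodym derivative contributing an $\ep^{o_\ep(1)}$ factor." The Radon--Nikodym derivative $M$ between the two field laws on a compact subset is a.s.\ finite but unbounded, and $\E_1[\wh N^\ep]=\E_2[M\wh N^\ep]$ cannot be bounded by $\ep^{o_\ep(1)}\E_2[\wh N^\ep]$ without further input (H\"older would require higher moments of $\wh N^\ep$, which is not what the lemma provides). The correct transfer — and the one the paper uses, in the opposite direction — is the harmonic decomposition $h|_{B_R(0)}=h^0+\frk h$: on the event $\{\sup_{B_{R/2}(0)}|\frk h|\leq u\log\delta^{-1}\}$, whose complement has probability $o_\delta^\infty(\delta)$ by~\cite[Lemma 3.12]{ghm-kpz}, the two measures are comparable up to $\delta^{\pm\gamma u}$, and the exceptional event is absorbed using the a priori bound $\wh N^\ep\preceq \ep^{-1}\mu_h(\wt D)$ together with the moment bounds of~\cite[Theorem 2.11]{rhodes-vargas-review}. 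With that substitution (and with the single-square lower bound carried out carefully), your argument closes.
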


\begin{figure} 
	\begin{center}
		\includegraphics[scale=.85]{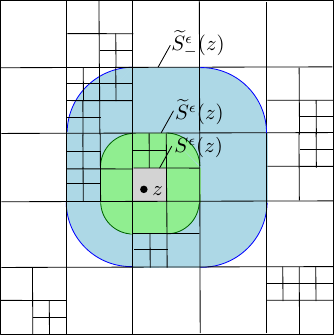}
	\end{center}
	\caption[Dyadic squares used in the proof of Proposition~\ref{prop:kpz}]{The set of $(\mu_h,\ep)$-boxes on the figure is the set of squares which do not contain any smaller squares. The figure illustrates various neighborhoods associated with $z\in\BB C$. The quantum dimension of a random fractal $X$ is defined in terms of the number of squares $S^\ep(z)$ needed to cover $X$. The set $\wt S^\ep(z)$ is a neighborhood of $S^\ep(z)$, while $\wt S^\ep_-(z)$ is a neighborhood of the dyadic parent $S^\ep_-(z)$ (which is not labelled on the figure) of $S^\ep(z)$. The square $S^\ep(z)$ is defined such that $\mu_h(\wt S^\ep)<\ep$ and $\mu_h(\wt S^\ep_-)\geq\ep$.}
	\label{fig-kpzboxes}
\end{figure}

\begin{proof}
	First we consider the case where $h$ is a zero boundary GFF on $\wt D$. It is sufficient to establish the following two inequalities 
	\eqb
	\liminf_{\ep\rta 0} \frac{\log \E[\wh N^\ep]}{\log \ep^{-1}}\geq \wh d_\gamma \quad \op{and} \quad
	\limsup_{\ep\rta 0} \frac{\log \E[\wh N^\ep]}{\log \ep^{-1}}\leq \wh d_\gamma.
	\label{eq:kpzupperlower}
	\eqe
	The first inequality of \eqref{eq:kpzupperlower} is immediate, since the number of boxes $\wh N^\ep$ in our cover is at least as large as the number of boxes in the cover considered in \cite[Proposition 1.6]{shef-kpz}, since each $(\mu_h,\ep)$-box with our definition is contained in a $(\mu_h,\ep)$-box with the definition considered in \cite{shef-kpz}.  
	
	We will now establish the second inequality of \eqref{eq:kpzupperlower}. Let $\frk S^\ep_-$ denote the set of dyadic parents of squares in $\frk S^\ep$. With $\wt S$ as in~\eqref{eqn-square-nbd}, define the following quantum $\ep$-neighborhoods of $X$:
	\eqbn
		\wt S^\ep(X) := \bigcup_ 
		{ S\in\frk S^\ep \,:\, S\cap X\neq\emptyset } \wt S,\qquad
		\wt S_-^{\ep}(X) := \bigcup_{ 
		 S\in\frk S_-^\ep\,:\, S\cap X\neq\emptyset } \wt S.
	\eqen 
For $z\in\BB C$ define $S^\ep_-(z)$ to be the dyadic parent of $S^\ep(z)$, and define $\wt S^\ep(z)$ (resp.\ $\wt S^\ep_-(z)$) to be the $\delta$-neighborhood of $S^\ep(z)$ (resp.\ the $2\delta$-neighborhood of $S_-^\ep(z)$), where $\delta$ is the side length of $S^\ep(z)$. The first step of our proof is to reduce the lemma (for the case of a zero boundary GFF) to proving the following estimate:
	\eqb
	\lim_{\ep\rta 0} \frac{\log\E[\mu_h(\wt S_-^{\ep}(X))]}{\log\ep^{-1}} \leq \wh d_\gamma-1.
	\label{eq:kpz2}
	\eqe

	Let  
	\eqbn
 \wh N^\ep_- = \# \{S \in \frk S_-^{\ep} \,:\, S  \cap X\neq\emptyset \} . 
	\eqen
	By \eqref{eq:kpz1}, which we will explain just below, we see that \eqref{eq:kpz2} is sufficient to prove the lemma:
	\eqb
	\begin{split}
		\lim_{\ep\rta 0} \frac{\log\E[\ep \wh N^\ep]}{\log\ep^{-1}}
		= \lim_{\ep\rta 0} \frac{\log\E[\ep  \wh N_-^\ep]}{\log\ep^{-1}}
		\leq \lim_{\ep\rta 0} \frac{\log\E[\mu_h(\wt S_-^{\ep}(X))]}{\log\ep^{-1}}.
		\label{eq:kpz1}
	\end{split}
	\eqe
The first equality of \eqref{eq:kpz1} follows by $\wh N^\ep_-\leq \wh N^\ep\leq 4 \wh N^\ep_-$. The second estimate of \eqref{eq:kpz1} follows since for any $z\in D$ it holds that $\mu_h(\wt S_-^{\ep}(z))\geq \ep$ and $\mu_h(\wt S^{\ep}_-(z)\cap S)>0$ for at most 9 of the dyadic squares $S\in \frk S_-^{\ep}$ which intersect $X$. 

Our justification of~\eqref{eq:kpz2} will be very brief, since a similar argument can be found in \cite{shef-kpz}. Let $\Theta=\mcl Z^{-1}e^{\gamma h}\,dz\,dh$ be the rooted probability measure defined in~\cite[Section 3.3]{shef-kpz}. Proceeding similarly as in \cite{shef-kpz}, and letting $\delta=\delta(z,\ep)$ denote the (random) side length of $S^\ep(z)$ for $(z,h)\sim\Theta$, we see that
\eqb
\lim_{\ep\rta 0}\frac{\BB E[\mu_h(\wt S^\ep_-(X))]}{\log\ep^{-1}}
= \lim_{\ep\rta 0}\frac{\BB P [\wt S_-^\ep(z)\cap X\neq\emptyset]}{\log\ep^{-1}}
= \lim_{\ep\rta 0}\frac{\BB E [\delta^{2-\wh d_0 }]}{\log\ep^{-1}}
\leq \wh d_\gamma-1.
\label{eq:kpz11}
\eqe
In particular, the first equality of \eqref{eq:kpz11} follows by the argument right after the statement of \cite[Theorem 4.2]{shef-kpz}, and the second equality of \eqref{eq:kpz11} follows by the argument of the first paragraph in the proof of \cite[Theorem 4.2]{shef-kpz}. The last inequality of \eqref{eq:kpz11} follows by using that the dyadic squares $S^\ep(z)$ have a side length which is smaller than or equal to the corresponding dyadic squares considered in \cite[Section 1.4]{shef-kpz}, and that the last inequality of \eqref{eq:kpz11} holds for the dyadic squares considered in \cite{shef-kpz}. The estimate \eqref{eq:kpz11} implies~\eqref{eq:kpz2}, which concludes the proof of the lemma for the case of the zero boundary GFF.

 Next we consider the case where $h$ is a whole-plane GFF with additive constant chosen so that its circle average over $\bdy\BB D$ is 0. Choose $R>0$ such that $D \subset B_{R/4}(0)$. Then $h|_{B_R(0)}=h^0+\frk h$, where $h^0$ is a zero boundary GFF in $B_R(0)$, and $\frk h$ is a harmonic function in $B_R(0)$. By \cite[Lemma 3.12]{ghm-kpz}, and by using that $\frk h(0)$ is the circle average of $h$ around $B_R(0)$, so that $\frk h(0)$ is a centered Gaussian random variable with variance $\log(R)$, for any $u >0$,
\eqbn
\BB P\left[\sup_{z\in B_{R/2}(0)}|\frk h(z)| > u \log\delta^{-1}\right] = o_\delta^\infty(\delta).
\eqen
Hence, except on an event of probability $o_\delta^\infty(\delta)$, for any $A\subset B_{R/2}(0)$, we have $\delta^{\gamma u}\mu_{h^0}(A)\leq \mu_h(A)\leq \delta^{-\gamma u}\mu_{h^0}(A)$. Since $u>0$ is arbitrary, the statement of the lemma for the case of a whole-plane GFF follows from the case of a zero-boundary GFF on $\wt D = B_R(0)$. 
\end{proof}

We will apply the following basic lemma in our proof of Proposition \ref{prop:kpz}.

\begin{lem}
	Let $\mu_h$ be the $\gamma$-LQG measure associated with a whole-plane GFF with additive constant chosen such that the average around $\partial \BB D$ is 0. Let $D\subset \BB C$ be a bounded open set. For $\delta>0$ let $\mcl B_\delta$ be a deterministic collection of at most $ \delta^{-2}$ Euclidean balls of radius $\delta>0$ contained in $D$, and define $A_\delta:=\max_{B\in\mcl B_\delta}\mu_h(B)$. Given any $M>0$ we can find $s =s(M) >0$ such that $\BB P(A_\delta>\delta^{s})\preceq \delta^{sM}$, where the implicit constant is independent of $\delta$.
	\label{prop:maxmeasure}
\end{lem}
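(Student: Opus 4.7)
The plan is to bound $\mu_h(B)$ for each single ball $B \in \mcl B_\delta$ via a Chebyshev/moment argument and then to apply a union bound over the at most $\delta^{-2}$ balls of $\mcl B_\delta$. The key ingredient I would use is the standard moment estimate for the $\gamma$-LQG measure of a small Euclidean ball: for each $q \in (0, 4/\gamma^2)$,
\eqb \label{eqn-lqg-moment-plan}
\BB E\left[\mu_h(B_\delta(z))^q \right] \preceq \delta^{\xi(q)}, \qquad \xi(q) := \left(2+\frac{\gamma^2}{2}\right) q - \frac{\gamma^2}{2} q^2,
\eqe
with implicit constant uniform in $z \in D$ and $\delta \in (0,1)$. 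For a zero-boundary GFF on a domain containing $\ol D$ this is a standard Gaussian multiplicative chaos fact (e.g.\ \cite[Theorem 2.11]{rhodes-vargas-review}). To transfer it to the whole-plane GFF normalized by $h_1(0)=0$, I would decompose $h|_{B_R(0)} = h^0 + \frk h$ on a large ball $B_R(0) \supset \ol D$, where $h^0$ is a zero-boundary GFF and $\frk h$ is an independent harmonic function whose supremum over $D$ has sub-Gaussian tails; then $e^{\gamma \sup_D \frk h}$ has finite moments of all orders and leaves the exponent in~\eqref{eqn-lqg-moment-plan} unchanged. This is essentially the same device used at the end of the proof of Lemma~\ref{prop:boxKPZ}.

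Given~\eqref{eqn-lqg-moment-plan}, Markov's inequality together with a union bound over the $\leq \delta^{-2}$ balls yields
\eqb \label{eqn-union-plan}
\BB P\left[A_\delta > \delta^s \right] \leq \sum_{B \in \mcl B_\delta} \delta^{-s q}\, \BB E\left[\mu_h(B)^q \right] \preceq \delta^{\xi(q) - s q - 2} .
\eqe
A direct computation shows $\xi(1) = \xi(4/\gamma^2) = 2$, and $\xi$ is strictly concave (since $\xi'' \equiv -\gamma^2$). As $\gamma < 2$ forces $4/\gamma^2 > 1$, the interval $(1, 4/\gamma^2)$ is non-empty and $\xi(q) > 2$ on it. Fixing any $q_0 \in (1, 4/\gamma^2)$ and setting $\Delta := \xi(q_0) - 2 > 0$, the estimate~\eqref{eqn-union-plan} becomes $\BB P[A_\delta > \delta^s] \preceq \delta^{\Delta - s q_0}$. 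Given $M > 0$, choosing $s := s(M) := \Delta/(q_0 + M)$ makes the exponent equal to exactly $sM$, which is the claimed bound.

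There is no serious obstacle: the argument is a textbook Chebyshev-plus-union-bound once~\eqref{eqn-lqg-moment-plan} is in hand, and the existence of $q_0 \in (1, 4/\gamma^2)$ uses only the standing assumption $\gamma \in (0,2)$. The only mild subtlety is the whole-plane normalization, handled by the harmonic decomposition described in the first paragraph; all other steps are routine.
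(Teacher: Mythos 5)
Your proposal is correct and follows essentially the same route as the paper: a moment bound for $\mu_h(B)$ with exponent $\left(2+\tfrac{\gamma^2}{2}\right)q-\tfrac{\gamma^2}{2}q^2$, Chebyshev/Markov, a union bound over the $\leq \delta^{-2}$ balls, and the observation that this exponent exceeds $2$ for $q\in(1,4/\gamma^2)$. The only cosmetic difference is that the paper cites a whole-plane moment estimate directly (from~\cite{ghm-kpz}) rather than deriving it via the zero-boundary decomposition as you do.
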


\begin{proof}
	By \cite[Lemma 5.2]{ghm-kpz} 
	 and the Chebyshev inequality, for any $B\in\mcl B_\delta$, $\beta\in[0,4/\gamma^2)$ and $s>0$, we have
	\eqbn
	\BB P\left[ \mu_h(B)>\delta^{s} \right]
	\leq \delta^{f(\beta)-s\beta+o_\delta(1)},
	\eqen
	where $f(\beta)=(2+\frac{\gamma^2}{2})\beta -\frac{\gamma^2}{2}\beta^2$.
	By the union bound,
	\eqbn
	\BB P\left[ A_\delta>\delta^{s} \right] \preceq \delta^{-2} \max_{B\in\mcl B_\delta} \BB P\left[ \mu_h(B)>\delta^{s} \right]
	\leq \delta^{f(\beta)-2-s\beta+o_\delta(1)}.  
	\eqen
	If we choose $\beta \in (1,4/\gamma^2)$ then for small enough $s>0$, we have $f(\beta)-2-s\beta>sM$, which implies the lemma.
\end{proof}

\begin{proof}[Proof of Proposition \ref{prop:kpz}]
First we consider the case where $h$ is a whole-plane GFF normalized as in the statement of the proposition. Fix a large constant $C>0$ to be chosen later, depending only on $\gamma$. For $\ep,u>0$ let $E^u_\ep$ be the event that the following is true.
	\begin{itemize}
		\item[(i)]  All squares $S\in\frk S^\ep$ for which $S\cap D\neq\emptyset$ have Euclidean side length at least $\ep^K$, where $K>0$ is chosen sufficiently large such that the probability of this event is at least $1-\ep^{C}$. Existence of an appropriate $K$ (independent of $\ep,u$) follows from Lemma \ref{prop:maxmeasure} applied with e.g.\ $\delta=10\ep^K$, $M=1000$, and $\mcl B_\delta$ a collection of balls such that each $S\in \wt {\frk S}_{2\delta}$ for which $S\cap D\neq\emptyset$ is contained in a ball in $\mcl B_\delta$, where $\wt{\frk S}_{2\delta}$ is the set of $2\delta$-neighborhoods of boxes in $\frk S_{2\delta}$. \label{item-kpz1}
		\item[(ii)] For any interval $I\subset\BB R$ for which $\delta:=\op{diam}(\eta(I))<\ep^u$ and $\eta(I) \cap D \not=\emptyset$ the set $\eta(I)\subset\BB C$ contains a ball of radius at least $\delta^{1+u}$. \label{item-kpz2}
	\end{itemize}
	By \cite[Proposition 3.4 and Remark 3.9]{ghm-kpz}, the probability of the event in~(ii) is of order $1-o_\ep^\infty(\ep)$, at a rate depending only on $u$ and the diameter of $D$.
	Hence $\BB P\big((E^u_\ep)^c\big)\preceq \ep^{C}$. If the event~(ii) occurs, then for any dyadic box $S$ of side length $\delta\in(0,\ep^K)$, the number of disjoint SLE segments $\eta(I)$ for $I\subset\BB R$ any interval which intersect both $S$ and $\BB C\setminus \wt S$ is bounded by $\delta^{-2u}$ (c.f.~\cite[Lemma 5.1]{ghm-kpz}). Therefore the condition (i) implies that on the event $E^u_\ep$ we have $N^\ep\preceq \ep^{-2K u} \wh N^\ep$. Note that $N^\ep\preceq \mu_h(\wt D) \ep^{-1}$ for $\wt D$ a slightly larger open set containing $\ol D$, so by H\"older's inequality and the moment estimate in~\cite[Theorem 2.11]{rhodes-vargas-review}, we see that $\BB E[{\BB{1}}_{(E_\ep^u)^c} N^\ep]$ decays faster than any power of $\ep$. It follows that
	\eqbn
	\BB E[N^\ep] \preceq
	\BB E[\ep^{-2Ku} \wh N^\ep]
	+ \BB E[{\BB{1}}_{(E_\ep^u)^c} N^\ep] 
	\preceq 
	\ep^{-2K u} \BB E[\wh N^\ep].
	\eqen 
Since $u>0$ was arbitrary, an application of Lemma \ref{prop:boxKPZ} concludes the proof of the proposition for the case of $h$ a whole-plane GFF.

Now we assume $h$ is the circle average embedding of an $\alpha$-quantum cone and that $D$ lies at positive distance from 0. Let $\wt D $ be a slightly larger domain containing $\ol D$ which also lies at positive distance from 0. By~\cite[Lemma 3.10]{ghm-kpz}, we can couple $h$\ with an instance of a whole-plane GFF $h^G$ (normalized as above) satisfying the following property. There is a constant $c = c(\gamma , \alpha) >0$ such that for each $u > 0$, it holds except on an event of probability $\preceq \ep^{ cu}$ that $\ep^{u/3} \mu_h(A) \leq \mu_{h^G}(A) \leq \ep^{-u/3} \mu_h(A)$ for each $A\subset \wt D$. Let $N^\ep$ (resp.\ $N^\ep_G$) denote the number of boxes in~\eqref{eq:kpz15} when the field is $h$ (resp.\ $h^G$). By the coupling between $h$ and $h^G$, except on an event of probability $\ep^{cu}$, we have $N^\ep\leq 10 N^{\ep^{1+u/3}}_G$. We conclude the proof of the proposition by using the above result for the whole-plane GFF (and we decrease $c$ in the very last step if necessary):
\eqbn
\BB P[N^\ep>\ep^{-u-d_\gamma}]
\preceq \BB P[N^\ep > 10 N_G^{\ep^{1+u/3}} ]
+ \BB P[10 N^{\ep^{1+u/3}}_G > \ep^{-u-d_\gamma}] \preceq \ep^{cu}. \qedhere
\eqen
\end{proof}

\subsection{Proof of Lemma~\ref{prop-restricted-sub}}
\label{sec-fekete-proof}

In this section we prove our restricted sub-addivity lemma, Lemma~\ref{prop-restricted-sub}. 
The following recursive relation is the key observation for the proof. 

\begin{lem} \label{prop-sub-split}
Assume we are in the setting of Lemma~\ref{prop-restricted-sub}. For each $n,m \in \BB N$ with $ n^p  \leq m \leq \lambda n$, we have 
\eqbn
a_n \leq   \frac{n}{m} a_m  +   C( \lambda^{-1} + 1)  m  + C \frac{n^{1+p} }{m}  .
\eqen
\end{lem}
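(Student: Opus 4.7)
The plan is to iterate the restricted subadditivity relation~\eqref{eqn-restricted-sub}, peeling off one copy of $m$ at a time from $n$, until the remaining quantity becomes comparable to $m$. At that point the linear upper bound~\eqref{eqn-sub-upper} controls what is left.

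Concretely, I would apply~\eqref{eqn-restricted-sub} in the form $a_{n'} \leq a_{n'-m} + a_m + C(n'-m)^p$ for $n' \in \{n, n-m, n-2m, \dots\}$. The two hypotheses of~\eqref{eqn-restricted-sub} at stage $n'$ read $(n'-m)^p \leq m$ and $m \leq \lambda(n'-m)$. The first is automatic at every stage because $n' - m \leq n$ and $m \geq n^p$ by assumption. The second is the binding constraint: it forces $n' \geq m(1 + 1/\lambda)$, so we may continue iterating only as long as the running argument stays above $m(1 + 1/\lambda)$. Setting $j^* := \lfloor n/m - 1/\lambda\rfloor$, this allows $j^*$ successive applications, yielding
\[
a_n \leq a_{n - j^* m} + j^* a_m + j^* C n^p,
\]
where I used that each error term $(n'-m)^p$ is bounded by $n^p$.

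To finish, I would estimate the three pieces. Since $n - j^* m \leq m(1 + 1/\lambda) + m \leq (\lambda^{-1}+2) m$, the hypothesis~\eqref{eqn-sub-upper} gives $a_{n - j^* m} \leq C(\lambda^{-1}+2) m$. The coefficient $j^*$ in front of $a_m$ is at most $n/m$, and the final error term is at most $(n/m) C n^p = C n^{1+p}/m$. Combining these bounds yields
\[
a_n \leq \frac{n}{m} a_m + C(\lambda^{-1} + 2) m + C \frac{n^{1+p}}{m},
\]
which is the claimed inequality up to the constant in the middle term (absorbed into $C(\lambda^{-1}+1)$ by an innocuous constant adjustment, or by tracking the optimal $j^*$ slightly more carefully).

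The argument is essentially bookkeeping: no step is difficult. The only point requiring care is verifying that the hypotheses of~\eqref{eqn-restricted-sub} remain valid at every stage of the iteration, which as noted reduces to monitoring the lower bound on the shrinking argument $n' - m$ against $m/\lambda$.
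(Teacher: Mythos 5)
Your proposal is correct and follows essentially the same argument as the paper: iterate the restricted subadditivity relation $k_*=\lfloor n/m-\lambda^{-1}\rfloor$ times, bound each error term by $Cn^p$, and control the remainder $a_{n-k_*m}$ via the linear hypothesis~\eqref{eqn-sub-upper}. The only (self-acknowledged) slack is your bound $n-j^*m\leq(\lambda^{-1}+2)m$; maximality of $j^*$ gives $n-(j^*+1)m<\lambda^{-1}m$ and hence $n-j^*m<(\lambda^{-1}+1)m$, which yields the stated constant exactly.
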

\begin{proof}
Let $k_* := \lfloor n/m - \lambda^{-1} \rfloor$ be the largest $k\in\BB N$ for which $n - km \geq \lambda^{-1} m$. Note that $k_* \leq n/m -  \lambda^{-1}  $. By the subaddivity hypothesis~\eqref{eqn-restricted-sub}, for each $k \in [0,k_*]_{\BB Z}$ we have 
\eqbn
a_{n-(k-1)m}  \leq a_m +   a_{n-k m}    + C(n-km)^p . 
\eqen
By iterating this estimate $k_*$ times we get
\eqb \label{eqn-sub-split'}
a_n \leq       k_* a_m +  a_{n-k_* m}  +   C k_* n^p             .
\eqe  
We have $k_* \leq n/m$ and by maximality of $k_*$ we have $n - k_* m \leq (\lambda^{-1} + 1) m$ so our sub-linearity hypothesis~\eqref{eqn-sub-upper} implies $a_{n-k_* m}  \leq   C ( \lambda^{-1} + 1)  m$. Thus the statement of the lemma follows from~\eqref{eqn-sub-split'}.
\end{proof}

\begin{lem} \label{prop-sparse-subsequence}
Let $f  ,g  : \BB N \rta \BB N$ be non-decreasing functions and suppose there exists $n_0\in\BB N$ such that $f(n) > n$ and $g(n) \geq f(f(n))$ for $n\geq n_0$. Let $\{b_n\}_{n\in\BB N}$ be a sequence of real numbers and suppose there exists a $\chi  >0$ with the following property. For each sequence $\{n_k\}_{k\in\BB N}$ with $n_k\rta\infty$ and $f(n_k)  \leq n_{k+1} \leq g(n_k) $ for each $k\in\BB N$, we have $\lim_{k\rta\infty} b_{n_k} = \chi$. Then $\lim_{n\rta\infty} b_n = \chi$. 
\end{lem}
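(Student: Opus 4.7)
\textbf{Proof plan for Lemma~\ref{prop-sparse-subsequence}.} The plan is to argue by contradiction: if $b_n$ does not converge to $\chi$, we will fabricate a single admissible sequence $\{n_k\}$ (i.e.\ satisfying $f(n_k) \leq n_{k+1} \leq g(n_k)$ and $n_k \to \infty$) that nevertheless contains infinitely many ``bad'' terms along which $b_n$ stays bounded away from $\chi$, contradicting the hypothesis.

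First I would fix $\ep > 0$ and a subsequence $\{m_j\}$ with $m_j \to \infty$ and $|b_{m_j} - \chi| \geq \ep$. The candidate sequence $\{n_k\}$ will be built greedily: set $n_1 := n_0$, and given $n_k$, set $n_{k+1} := m_j$ whenever there exists some (not yet used) $m_j$ lying in the window $[f(n_k), g(n_k)]$, choosing the smallest such $m_j$; otherwise set $n_{k+1} := f(n_k)$. Admissibility is automatic by construction, and $n_k \to \infty$ because $n_{k+1} \geq f(n_k) > n_k$.

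The heart of the argument is showing that this greedy sequence picks up infinitely many $m_j$'s. If, to the contrary, the construction defaults to $n_{k+1} = f(n_k)$ for all sufficiently large $k \geq k_0$, then from $k_0$ onward $\{n_k\}$ is just the forward $f$-orbit of $n_{k_0}$. Here the hypothesis $g(n) \geq f(f(n))$ is essential: it forces the consecutive windows $[f(n_k), g(n_k)]$ and $[f(n_{k+1}), g(n_{k+1})] = [f(f(n_k)), g(f(n_k))]$ to overlap, so their union covers every integer $\geq f(n_{k_0})$. But then all sufficiently large $m_j$ would actually have fallen into some $[f(n_k), g(n_k)]$ and would have been selected, contradicting the assumption that we stopped selecting $m_j$'s.

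Having established that infinitely many $m_j$'s occur as terms of $\{n_k\}$, the hypothesis of the lemma yields $b_{n_k} \to \chi$, while along the subsequence of $k$'s at which $n_{k+1} = m_j$ we have $|b_{n_{k+1}} - \chi| \geq \ep$, a contradiction. The only delicate step I anticipate is verifying the covering property in the paragraph above and handling the mild edge case that the greedy sequence could ``skip past'' an $m_j$ before catching it; the non-decrease of $f$ and $g$, together with $f(n) > n$, ensure that once $n_k$ exceeds $m_j$ we never return, so it is enough that $m_j$ belongs to some window $[f(n_k), g(n_k)]$ before being overshot, which is precisely what the overlap argument guarantees.
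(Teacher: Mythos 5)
Your proposal is correct and is essentially the paper's argument: both proofs run by contradiction and thread a single admissible sequence through infinitely many ``bad'' indices $m_j$ by filling the gaps with iterates of $f$, with the hypothesis $g(n)\geq f(f(n))$ guaranteeing that consecutive windows $[f(n_k),g(n_k)]$ overlap so the next bad index can always be reached. The only cosmetic difference is that the paper first passes to a sparse sub-subsequence with $m_{j+1}\geq g(m_j)$ and interpolates explicitly with $f$-iterates, whereas your greedy rule performs that sparsification implicitly.
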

\begin{proof}
For $r \in \BB N$, let $f^r$ and $g^r$ be the $n$-fold compositions of $f$ and $g$, respectively.  

 Suppose that $\{m_j \}_{j\in\BB N}$ is an increasing sequence of positive integers with $m_1 \geq n_0$ and $m_{j+1} \geq g(m_j)$ for each $j\in\BB N$. We claim that $\lim_{j\rta\infty} b_{m_j} = \chi$. To see this, we will construct a sequence $\{n_k\}_{k\in\BB N}$ with $f(n_k)  \leq n_{k+1} \leq g(n_k) $ for each $k\in\BB N$ such that $\{m_j\}_{j\in\BB N}$ is a subsequence of $\{n_k\}_{k\in\BB N}$. Let $r_1  = 1$ and for $j\geq 2$, let $r_j $ be chosen so that $f^{r_j}(m_{j-1} ) \leq m_j < f^{r_j+1}(m_{j-1})$. Such an $r_j$ exists since $m_{j-1} \geq n_0$ so $f^r(m_{j-1}) \geq f^{r-1}(m_{j-1}) + 1$ for $r \in \BB N$, whence $\lim_{r\rta\infty} f^r(m_{j-1}) = \infty$. 
 
Since $m_j \geq g(m_{j-1}) \geq f^2(m_{j-1})$ we have $r_j \geq 2$. Therefore $f^{r_j-1}(m_{j-1} ) \geq m_{j-1} \geq n_0$. By definition of $r_j$ and $g(n)\geq f(f(n))$ for $n\geq n_0$, 
\eqb \label{eqn-subsequence-last}
g(f^{r_j-1}(m_{j-1}) ) \geq  f^{r_j+1}(m_{j-1})  \geq m_j .
\eqe 

For $j\in\BB N$, let $k_j := \sum_{i=1}^j r_j$. Let $n_1 := m_1$. For $j \geq 2$ and $k \in (k_{j-1}   , k_j)_{\BB Z}$, let $n_{k} := f^{k-k_{j-1}}(m_{j-1})$. Let $n_{k_j} := m_j$. We claim that $f(n_k)  \leq n_{k+1} \leq g(n_k) $ for each $k\in\BB N$. Indeed, given $k\in\BB N$, let $j \in\BB N$ be chosen so that $k \in [k_{j-1} , k_j-1]_{\BB Z}$. If $k \not=k_j-1$, then we have $n_{k+1} = f(n_k)$, so clearly the desired inequalities hold in this case. If $k = k_j-1$, then we have $n_{k+1} = m_j$ and $n_k = f^{k_j-k_{j-1} -1}(m_{j-1}) = f^{r_j-1}(m_{j-1})$. By~\eqref{eqn-subsequence-last} we have $g(n_k) \geq n_{k+1}$ and by definition of $r_j$ we have $f(n_k) \leq n_{k+1}$, as required. Since $\lim_{k\rta\infty} b_{n_k} = \chi$ (by hypothesis) we also have $\lim_{j\rta\infty} b_{m_j} = \chi$. 

We now argue that $\lim_{n\rta\infty} b_n = \chi$. If not, we can find an increasing sequence $m_j \rta\infty$ and $\ep > 0$ such that $|b_{m_j} - \chi| \geq \ep$ for each $j\in\BB N$. By passing to a subsequence we can arrange that $m_1 \geq n_0$ and $m_{j+1} \geq g(m_j)$ for each $j\in\BB N$. Then the claim above implies that $\lim_{j\rta\infty} b_{m_j} = \chi$, which is a contradiction.
\end{proof}

\begin{proof}[Proof of Lemma~\ref{prop-restricted-sub}] 
Fix $q \in (1,p^{-1/4})$ and $\wh q \in (q^2 ,p^{-1/2})$. For $n\in\BB N$ let $f(n) := \lceil n^q \rceil$ and $g(n):= \lfloor n^{\wh q } \rfloor$. Observe that $f$ and $g$ satisfy the hypotheses of Lemma~\ref{prop-sparse-subsequence}. 
By Lemma~\ref{prop-sparse-subsequence} it suffices to show that there is a $\chi \in\BB R$ such that for each sequence $\{n_k\}_{k\in\BB N}$ with $n_1 \geq 2$ and $ n_k^q  \leq n_{k+1} \leq  n_k^{\wh q} $ for each $k\in\BB N$, we have $\lim_{k\rta\infty} a_{n_k}/n_k = \chi$. 

Fix such a sequence $\{n_k\}_{k\in\BB N}$ and let $b_k := a_{n_k}/n_k$. 
Since $\wh q < p^{-1 }$, there is a $k_0 \in \BB N$ such that for $k\geq k_0$, we have $n_{k+1}^{p } \leq n_k \leq \lambda n_{k+1}$. By Lemma~\ref{prop-sub-split}, for $k\geq k_0$ we have
\eqbn
a_{n_{k+1} } \leq   \frac{n_{k+1} }{n_k} a_{n_k}  +   C ( \lambda^{-1} + 1)  n_k + C \frac{n_{k+1}^{p+1} }{n_k} 
\eqen
Dividing by $n_{k+1}$ gives
\eqb \label{eqn-subsequence-mono1}
b_{k+1} \leq b_k + u_k ,\quad \op{where} \quad u_k := C(\lambda^{-1} + 1) \frac{n_k}{n_{k+1}} +  \frac{n_{k+1}^p}{n_k}  .
\eqe  
Since $n_1 \geq 2$ and $n_k^q \leq n_{k+1} \leq n_k^{\wh q}$ for each $k\in\BB N$ we have $n_k \geq 2^{q^{k-1}}$ for each $k\in\BB N$ and
\eqbn
u_k \leq C ( \lambda^{-1} + 1)  n_k^{-(q-1)} +   n_k^{-(1- \wh q p )} \leq O_k(1) \left( 2^{-(q-1) q^{k-1}} + 2^{-(1-\wh q p) q^{k-1}} \right) .
\eqen 
Since $1 < q < \wh q <  p^{-1/2}$, this is summable. 
 Let
\eqbn
\wt b_k := b_k -    \sum_{j=1}^{k -1} u_j      \quad \op{and} \quad  \qquad \beta :=    \sum_{j=1}^\infty u_j .
\eqen
The relation~\eqref{eqn-subsequence-mono1} implies that $\wt b_{k+1} \leq \wt b_k$ for each $k\in\BB N$. Since $\wt b_k \geq -\beta$ for each $k$, we infer that $\lim_{k\rta\infty} \wt b_k$ exists. Hence also 
\eqbn
\chi  := \lim_{k\rta \infty} b_k = \lim_{k\rta\infty} \wt b_k + \beta 
\eqen
exists. It remains to show that the $\chi$ does not depend on the initial choice of sequence $\{n_k\}_{k\in\BB N}$. To this end, it is enough to show that
\eqb \label{eqn-max-exponent}
\limsup_{n\rta\infty} \frac{ a_n }{n }   \leq \chi ,
\eqe 
since then the limiting values $\chi$ arising from two different choices of subsequence must agree by symmetry. 

To prove~\eqref{eqn-max-exponent}, suppose given $n\in\BB N$ with $n\geq n_{k_0+2}$ (with $k_0$ defined as in the beginning of the proof). Let $k \in \BB N$ be the largest integer such that $n_{k+1} \leq n$, and note that $k\geq k_0$. Then our condition on the $n_k$'s implies that $n^{1/\wh q^2} \leq n_k \leq n^{1/q}$. Since $1/q < 1$ and $1/\wh q^2 >p$, Lemma~\ref{prop-sub-split} with $m = n_k$ implies that 
\alb
a_n &\leq  \frac{n}{n_k} a_{n_k}  +   C (\lambda^{-1} + 1) n_k  + C \frac{n^{1+p}}{n_k} \\
& \leq \chi n(1+o_n(1)) + C (\lambda^{-1}+1) n^{1/q} + C n^{1 + p - 1/\wh q^2}   
 = \chi n(1+o_n(1)) .\qedhere
\ale 
\end{proof}

\bibliography{cibiblong,cibib} 

\newcommand{\etalchar}[1]{$^{#1}$}
\def\cprime{$'$}
\begin{thebibliography}{GKMW18}

\bibitem[AB14]{ambjorn-budd-lqg-dist}
J.~{Ambj{\o}rn} and T.~G. {Budd}.
\newblock {Geodesic distances in Liouville quantum gravity}.
\newblock {\em Nuclear Physics B}, 889:676--691, December 2014,
  \arxiv{1405.3424}.

\bibitem[AHS17]{ahs-sphere}
J.~Aru, Y.~Huang, and X.~Sun.
\newblock Two perspectives of the 2{D} unit area quantum sphere and their
  equivalence.
\newblock {\em Comm. Math. Phys.}, 356(1):261--283, 2017, \arxiv{1512.06190}.
  \MR{3694028}

\bibitem[AK16]{andres-heat-kernel}
S.~Andres and N.~Kajino.
\newblock Continuity and estimates of the {L}iouville heat kernel with
  applications to spectral dimensions.
\newblock {\em Probab. Theory Related Fields}, 166(3-4):713--752, 2016,
  \arxiv{1407.3240}. \MR{3568038}

\bibitem[Ald91a]{aldous-crt1}
D.~Aldous.
\newblock The continuum random tree. {I}.
\newblock {\em Ann. Probab.}, 19(1):1--28, 1991. \MR{1085326 (91i:60024)}

\bibitem[Ald91b]{aldous-crt2}
D.~Aldous.
\newblock The continuum random tree. {II}. {A}n overview.
\newblock In {\em Stochastic analysis ({D}urham, 1990)}, volume 167 of {\em
  London Math. Soc. Lecture Note Ser.}, pages 23--70. Cambridge Univ. Press,
  Cambridge, 1991. \MR{1166406 (93f:60010)}

\bibitem[Ald93]{aldous-crt3}
D.~Aldous.
\newblock The continuum random tree. {III}.
\newblock {\em Ann. Probab.}, 21(1):248--289, 1993. \MR{1207226 (94c:60015)}

\bibitem[ANR{\etalchar{+}}98]{ambjorn-spec-dim}
J.~{Ambj{\o}rn}, J.~L. {Nielsen}, J.~{Rolf}, D.~{Boulatov}, and Y.~{Watabiki}.
\newblock {The spectral dimension of 2D quantum gravity}.
\newblock {\em Journal of High Energy Physics}, 2:010, February 1998,
  hep-th/9801099.

\bibitem[Aru15]{aru-kpz}
J.~Aru.
\newblock K{PZ} relation does not hold for the level lines and {SLE$_\kappa$}
  flow lines of the {G}aussian free field.
\newblock {\em Probab. Theory Related Fields}, 163(3-4):465--526, 2015,
  \arxiv{1312.1324}. \MR{3418748}

\bibitem[BBI01]{bbi-metric-geometry}
D.~Burago, Y.~Burago, and S.~Ivanov.
\newblock {\em A course in metric geometry}, volume~33 of {\em Graduate Studies
  in Mathematics}.
\newblock American Mathematical Society, Providence, RI, 2001. \MR{1835418}

\bibitem[Ber07a]{bernardi-dfs-bijection}
O.~Bernardi.
\newblock Bijective counting of {K}reweras walks and loopless triangulations.
\newblock {\em J. Combin. Theory Ser. A}, 114(5):931--956, 2007.

\bibitem[Ber07b]{bernardi-maps}
O.~Bernardi.
\newblock Bijective counting of tree-rooted maps and shuffles of parenthesis
  systems.
\newblock {\em Electron. J. Combin.}, 14(1):Research Paper 9, 36 pp.
  (electronic), 2007, \arxiv{math/0601684}. \MR{2285813 (2007m:05125)}

\bibitem[Ber15]{berestycki-lbm}
N.~Berestycki.
\newblock Diffusion in planar {L}iouville quantum gravity.
\newblock {\em Ann. Inst. Henri Poincar\'e Probab. Stat.}, 51(3):947--964,
  2015, \arxiv{1301.3356}. \MR{3365969}

\bibitem[BGRV16]{grv-kpz}
N.~Berestycki, C.~Garban, R.~Rhodes, and V.~Vargas.
\newblock K{PZ} formula derived from {L}iouville heat kernel.
\newblock {\em J. Lond. Math. Soc. (2)}, 94(1):186--208, 2016,
  \arxiv{1406.7280}. \MR{3532169}

\bibitem[BHS18]{bhs-site-perc}
O.~Bernardi, N.~Holden, and X.~Sun.
\newblock Percolation on triangulations: a bijective path to {L}iouville
  quantum gravity.
\newblock In preparation, 2018.

\bibitem[BJRV13]{bjrv-gmt-duality}
J.~Barral, X.~Jin, R.~Rhodes, and V.~Vargas.
\newblock Gaussian multiplicative chaos and {KPZ} duality.
\newblock {\em Comm. Math. Phys.}, 323(2):451--485, 2013, \arxiv{1202.5296}.
  \MR{3096527}

\bibitem[BS09]{benjamini-schramm-cascades}
I.~Benjamini and O.~Schramm.
\newblock K{PZ} in one dimensional random geometry of multiplicative cascades.
\newblock {\em Comm. Math. Phys.}, 289(2):653--662, 2009, \arxiv{0806.1347}.
  \MR{2506765 (2010c:60151)}

\bibitem[CL14]{curien-legall-plane}
N.~Curien and J.-F. {Le Gall}.
\newblock The {B}rownian plane.
\newblock {\em J. Theoret. Probab.}, 27(4):1249--1291, 2014, \arxiv{1204.5921}.
  \MR{3278940}

\bibitem[dBE52]{debuijn-erdos-sub}
N.~G. de~Bruijn and P.~Erd{\"o}s.
\newblock Some linear and some quadratic recursion formulas. {II}.
\newblock {\em Nederl. Akad. Wetensch. Proc. Ser. A. {\bf 55} = Indagationes
  Math.}, 14:152--163, 1952. \MR{0047162}

\bibitem[DD16]{ding-dunlap-lqg-fpp}
J.~{Ding} and A.~{Dunlap}.
\newblock {Liouville first-passage percolation: subsequential scaling limit at
  high temperature}.
\newblock {\em ArXiv e-prints}, May 2016, \arxiv{1605.04011}.

\bibitem[DG16a]{ding-goswami-lqg-fpp1}
J.~{Ding} and S.~{Goswami}.
\newblock {Liouville first passage percolation: the weight exponent is strictly
  less than 1 at high temperatures}.
\newblock {\em ArXiv e-prints}, May 2016, \arxiv{1605.08392}.

\bibitem[DG16b]{ding-goswami-watabiki}
J.~{Ding} and S.~{Goswami}.
\newblock {Upper bounds on Liouville first passage percolation and Watabiki's
  prediction}.
\newblock {\em ArXiv e-prints}, October 2016, \arxiv{1610.09998}.

\bibitem[DG17]{ding-goswami-fpp-brw}
J.~Ding and S.~Goswami.
\newblock First passage percolation on the exponential of two-dimensional
  branching random walk.
\newblock {\em Electron. Commun. Probab.}, 22:Paper No. 69, 14, 2017,
  \arxiv{1511.06932}. \MR{3742400}

\bibitem[DKRV16]{dkrv-lqg-sphere}
F.~David, A.~Kupiainen, R.~Rhodes, and V.~Vargas.
\newblock Liouville quantum gravity on the {R}iemann sphere.
\newblock {\em Comm. Math. Phys.}, 342(3):869--907, 2016, \arxiv{1410.7318}.
  \MR{3465434}

\bibitem[DMS14]{wedges}
B.~{Duplantier}, J.~{Miller}, and S.~{Sheffield}.
\newblock {Liouville quantum gravity as a mating of trees}.
\newblock {\em ArXiv e-prints}, September 2014, \arxiv{1409.7055}.

\bibitem[DRSV14]{shef-renormalization}
B.~Duplantier, R.~Rhodes, S.~Sheffield, and V.~Vargas.
\newblock Renormalization of critical {G}aussian multiplicative chaos and {KPZ}
  relation.
\newblock {\em Comm. Math. Phys.}, 330(1):283--330, 2014, \arxiv{1212.0529}.
  \MR{3215583}

\bibitem[DS11]{shef-kpz}
B.~Duplantier and S.~Sheffield.
\newblock Liouville quantum gravity and {KPZ}.
\newblock {\em Invent. Math.}, 185(2):333--393, 2011, \arxiv{1206.0212}.
  \MR{2819163 (2012f:81251)}

\bibitem[DW15a]{dw-cones}
D.~Denisov and V.~Wachtel.
\newblock Random walks in cones.
\newblock {\em Ann. Probab.}, 43(3):992--1044, 2015, \arxiv{1110.1254}.
  \MR{3342657}

\bibitem[DW15b]{dw-limit}
J.~{Duraj} and V.~{Wachtel}.
\newblock {Invariance principles for random walks in cones}.
\newblock {\em ArXiv e-prints}, August 2015, \arxiv{1508.07966}.

\bibitem[DZ15]{ding-zhang-fpp-gff}
J.~{Ding} and F.~{Zhang}.
\newblock {Non-universality for first passage percolation on the exponential of
  log-correlated Gaussian fields}.
\newblock {\em {Probability Theory and Related Fields}}, to appear, 2015,
  \arxiv{1506.03293}.

\bibitem[DZ16]{ding-zhang-geodesic-dim}
J.~{Ding} and F.~{Zhang}.
\newblock {Liouville first passage percolation: geodesic dimension is strictly
  larger than 1 at high temperatures}.
\newblock {\em ArXiv e-prints}, October 2016, \arxiv{1610.02766}.

\bibitem[Eva85]{evans-cone}
S.~N. Evans.
\newblock On the {H}ausdorff dimension of {B}rownian cone points.
\newblock {\em Math. Proc. Cambridge Philos. Soc.}, 98(2):343--353, 1985.
  \MR{795899 (86j:60185)}

\bibitem[GH18]{gh-displacement}
E.~Gwynne and T.~Hutchcroft.
\newblock Anomalous diffusion of random walk on random planar maps.
\newblock In preparation, 2018.

\bibitem[GHM15]{ghm-kpz}
E.~{Gwynne}, N.~{Holden}, and J.~{Miller}.
\newblock {An almost sure KPZ relation for SLE and Brownian motion}.
\newblock {\em ArXiv e-prints}, December 2015, \arxiv{1512.01223}.

\bibitem[GHMS17]{kappa8-cov}
E.~Gwynne, N.~Holden, J.~Miller, and X.~Sun.
\newblock Brownian motion correlation in the peanosphere for {$\kappa>8$}.
\newblock {\em Ann. Inst. Henri Poincar\'e Probab. Stat.}, 53(4):1866--1889,
  2017, \arxiv{1510.04687}. \MR{3729638}

\bibitem[GHS17]{ghs-map-dist}
E.~{Gwynne}, N.~{Holden}, and X.~{Sun}.
\newblock {A mating-of-trees approach for graph distances in random planar
  maps}.
\newblock {\em ArXiv e-prints}, November 2017, \arxiv{1711.00723}.

\bibitem[GKMW18]{gkmw-burger}
E.~Gwynne, A.~Kassel, J.~Miller, and D.~B. Wilson.
\newblock Active {S}panning {T}rees with {B}ending {E}nergy on {P}lanar {M}aps
  and {SLE}-{D}ecorated {L}iouville {Q}uantum {G}ravity for {$\kappa > 8$}.
\newblock {\em Comm. Math. Phys.}, 358(3):1065--1115, 2018, \arxiv{1603.09722}.
  \MR{3778352}

\bibitem[GM17a]{gwynne-miller-char}
E.~{Gwynne} and J.~{Miller}.
\newblock {Characterizations of SLE$_{\kappa}$ for $\kappa \in (4,8)$ on
  Liouville quantum gravity}.
\newblock {\em ArXiv e-prints}, January 2017, \arxiv{1701.05174}.

\bibitem[GM17b]{gm-spec-dim}
E.~{Gwynne} and J.~{Miller}.
\newblock {Random walk on random planar maps: spectral dimension, resistance,
  and displacement}.
\newblock {\em ArXiv e-prints}, November 2017, \arxiv{1711.00836}.

\bibitem[GMS17a]{gms-burger-cone}
E.~{Gwynne}, C.~{Mao}, and X.~{Sun}.
\newblock {Scaling limits for the critical Fortuin-Kasteleyn model on a random
  planar map I: cone times}.
\newblock {\em {Annales de {l'}Institut Henri Poincar{\'e}}}, to appear, 2017,
  \arxiv{1502.00546}.

\bibitem[GMS17b]{gms-tutte}
E.~{Gwynne}, J.~{Miller}, and S.~{Sheffield}.
\newblock {The Tutte embedding of the mated-CRT map converges to Liouville
  quantum gravity}.
\newblock {\em ArXiv e-prints}, May 2017, \arxiv{1705.11161}.

\bibitem[GRV14]{grv-heat-kernel}
C.~Garban, R.~Rhodes, and V.~Vargas.
\newblock On the heat kernel and the {D}irichlet form of {L}iouville {B}rownian
  motion.
\newblock {\em Electron. J. Probab.}, 19:no. 96, 25, 2014, \arxiv{1302.6050}.
  \MR{3272329}

\bibitem[GRV16]{grv-lbm}
C.~Garban, R.~Rhodes, and V.~Vargas.
\newblock Liouville {B}rownian motion.
\newblock {\em Ann. Probab.}, 44(4):3076--3110, 2016, \arxiv{1301.2876}.
  \MR{3531686}

\bibitem[GS17]{gms-burger-local}
E.~{Gwynne} and X.~{Sun}.
\newblock {Scaling limits for the critical Fortuin-Kasteleyn model on a random
  planar map II: local estimates and empty reduced word exponent}.
\newblock {\em Electronic Jorunal of Probability}, 22:Paper No. 45, 1--56,
  2017, \arxiv{1505.03375}.

\bibitem[HS16]{hs-euclidean}
N.~{Holden} and X.~{Sun}.
\newblock {SLE as a mating of trees in Euclidean geometry}.
\newblock {\em ArXiv e-prints}, October 2016, \arxiv{1610.05272}.

\bibitem[KMSW15]{kmsw-bipolar}
R.~{Kenyon}, J.~{Miller}, S.~{Sheffield}, and D.~B. {Wilson}.
\newblock {Bipolar orientations on planar maps and {SLE}$_{12}$}.
\newblock {\em ArXiv e-prints}, November 2015, \arxiv{1511.04068}.

\bibitem[KPZ88]{kpz-scaling}
V.~Knizhnik, A.~Polyakov, and A.~Zamolodchikov.
\newblock {Fractal structure of 2D-quantum gravity}.
\newblock {\em {Modern Phys. Lett A}}, 3(8):819--826, 1988.

\bibitem[{Le }07]{legall-topological}
J.-F. {Le Gall}.
\newblock The topological structure of scaling limits of large planar maps.
\newblock {\em Invent. Math.}, 169(3):621--670, 2007, \arxiv{math/0607567}.
  \MR{2336042 (2008i:60022)}

\bibitem[{Le }13]{legall-uniqueness}
J.-F. {Le Gall}.
\newblock Uniqueness and universality of the {B}rownian map.
\newblock {\em Ann. Probab.}, 41(4):2880--2960, 2013, \arxiv{1105.4842}.
  \MR{3112934}

\bibitem[{Le }14]{legall-sphere-survey}
J.-F. {Le Gall}.
\newblock {Random geometry on the sphere}.
\newblock {\em Proceedings of the {ICM}}, 2014, \arxiv{1403.7943}.

\bibitem[LSW17]{lsw-schnyder-wood}
Y.~{Li}, X.~{Sun}, and S.~S. {Watson}.
\newblock {Schnyder woods, SLE(16), and Liouville quantum gravity}.
\newblock {\em ArXiv e-prints}, May 2017, \arxiv{1705.03573}.

\bibitem[Mie09]{miermont-survey}
G.~Miermont.
\newblock Random maps and their scaling limits.
\newblock In {\em Fractal geometry and stochastics {IV}}, volume~61 of {\em
  Progr. Probab.}, pages 197--224. Birkh\"auser Verlag, Basel, 2009.
  \MR{2762678 (2012a:60017)}

\bibitem[Mie13]{miermont-brownian-map}
G.~Miermont.
\newblock The {B}rownian map is the scaling limit of uniform random plane
  quadrangulations.
\newblock {\em Acta Math.}, 210(2):319--401, 2013, \arxiv{1104.1606}.
  \MR{3070569}

\bibitem[Moo28]{moore}
R.~L. Moore.
\newblock Concerning upper semi-continuous collections of continua.
\newblock {\em Trans. Amer. Math. Soc.}, 27(4):416--428, 1928.

\bibitem[MRVZ16]{mrvz-heat-kernel}
P.~Maillard, R.~Rhodes, V.~Vargas, and O.~Zeitouni.
\newblock Liouville heat kernel: regularity and bounds.
\newblock {\em Ann. Inst. Henri Poincar\'e Probab. Stat.}, 52(3):1281--1320,
  2016, \arxiv{1406.0491}. \MR{3531710}

\bibitem[MS15a]{tbm-characterization}
J.~{Miller} and S.~{Sheffield}.
\newblock {An axiomatic characterization of the Brownian map}.
\newblock {\em ArXiv e-prints}, June 2015, \arxiv{1506.03806}.

\bibitem[MS15b]{lqg-tbm1}
J.~{Miller} and S.~{Sheffield}.
\newblock {Liouville quantum gravity and the Brownian map I: The QLE(8/3,0)
  metric}.
\newblock {\em ArXiv e-prints}, July 2015, \arxiv{1507.00719}.

\bibitem[MS15c]{sphere-constructions}
J.~{Miller} and S.~{Sheffield}.
\newblock {Liouville quantum gravity spheres as matings of finite-diameter
  trees}.
\newblock {\em ArXiv e-prints}, June 2015, \arxiv{1506.03804}.

\bibitem[MS16a]{ig3}
J.~{Miller} and S.~{Sheffield}.
\newblock {Imaginary geometry III: reversibility of SLE$_\kappa$ for $\kappa
  \in (4,8)$}.
\newblock {\em Annals of Mathematics}, 184(2):455--486, 2016,
  \arxiv{1201.1498}.

\bibitem[MS16b]{lqg-tbm2}
J.~{Miller} and S.~{Sheffield}.
\newblock {Liouville quantum gravity and the Brownian map II: geodesics and
  continuity of the embedding}.
\newblock {\em ArXiv e-prints}, May 2016, \arxiv{1605.03563}.

\bibitem[MS16c]{lqg-tbm3}
J.~{Miller} and S.~{Sheffield}.
\newblock {Liouville quantum gravity and the Brownian map III: the conformal
  structure is determined}.
\newblock {\em ArXiv e-prints}, August 2016, \arxiv{1608.05391}.

\bibitem[MS16d]{ig1}
J.~Miller and S.~Sheffield.
\newblock Imaginary geometry {I}: interacting {SLE}s.
\newblock {\em Probab. Theory Related Fields}, 164(3-4):553--705, 2016,
  \arxiv{1201.1496}. \MR{3477777}

\bibitem[MS16e]{ig2}
J.~Miller and S.~Sheffield.
\newblock Imaginary geometry {II}: {R}eversibility of {$\operatorname{SLE}\sb
  \kappa(\rho\sb 1;\rho\sb 2)$} for {$\kappa\in(0,4)$}.
\newblock {\em Ann. Probab.}, 44(3):1647--1722, 2016, \arxiv{1201.1497}.
  \MR{3502592}

\bibitem[MS16f]{qle}
J.~Miller and S.~Sheffield.
\newblock Quantum {L}oewner evolution.
\newblock {\em Duke Math. J.}, 165(17):3241--3378, 2016, \arxiv{1312.5745}.
  \MR{3572845}

\bibitem[MS17]{ig4}
J.~Miller and S.~Sheffield.
\newblock Imaginary geometry {IV}: interior rays, whole-plane reversibility,
  and space-filling trees.
\newblock {\em Probab. Theory Related Fields}, 169(3-4):729--869, 2017,
  \arxiv{1302.4738}. \MR{3719057}

\bibitem[Mul67]{mullin-maps}
R.~C. Mullin.
\newblock On the enumeration of tree-rooted maps.
\newblock {\em Canad. J. Math.}, 19:174--183, 1967. \MR{0205882 (34 \#5708)}

\bibitem[RV11]{rhodes-vargas-log-kpz}
R.~Rhodes and V.~Vargas.
\newblock K{PZ} formula for log-infinitely divisible multifractal random
  measures.
\newblock {\em ESAIM Probab. Stat.}, 15:358--371, 2011, \arxiv{0807.1036}.
  \MR{2870520}

\bibitem[RV14a]{rhodes-vargas-review}
R.~Rhodes and V.~Vargas.
\newblock Gaussian multiplicative chaos and applications: {A} review.
\newblock {\em Probab. Surv.}, 11:315--392, 2014, \arxiv{1305.6221}.
  \MR{3274356}

\bibitem[RV14b]{rhodes-vargas-spec-dim}
R.~Rhodes and V.~Vargas.
\newblock Spectral {D}imension of {L}iouville {Q}uantum {G}ravity.
\newblock {\em Ann. Henri Poincar\'e}, 15(12):2281--2298, 2014,
  \arxiv{1305.0154}. \MR{3272822}

\bibitem[Sch00]{schramm0}
O.~Schramm.
\newblock Scaling limits of loop-erased random walks and uniform spanning
  trees.
\newblock {\em Israel J. Math.}, 118:221--288, 2000, \arxiv{math/9904022}.
  \MR{1776084 (2001m:60227)}

\bibitem[She07]{shef-gff}
S.~Sheffield.
\newblock Gaussian free fields for mathematicians.
\newblock {\em Probab. Theory Related Fields}, 139(3-4):521--541, 2007,
  \arxiv{math/0312099}. \MR{2322706 (2008d:60120)}

\bibitem[She16a]{shef-zipper}
S.~Sheffield.
\newblock Conformal weldings of random surfaces: {SLE} and the quantum gravity
  zipper.
\newblock {\em Ann. Probab.}, 44(5):3474--3545, 2016, \arxiv{1012.4797}.
  \MR{3551203}

\bibitem[She16b]{shef-burger}
S.~Sheffield.
\newblock Quantum gravity and inventory accumulation.
\newblock {\em Ann. Probab.}, 44(6):3804--3848, 2016, \arxiv{1108.2241}.
  \MR{3572324}

\bibitem[Shi85]{shimura-cone}
M.~Shimura.
\newblock Excursions in a cone for two-dimensional {B}rownian motion.
\newblock {\em J. Math. Kyoto Univ.}, 25(3):433--443, 1985. \MR{807490
  (87a:60095)}

\bibitem[SS13]{ss-contour}
O.~Schramm and S.~Sheffield.
\newblock A contour line of the continuum {G}aussian free field.
\newblock {\em Probab. Theory Related Fields}, 157(1-2):47--80, 2013,
  \arxiv{math/0605337}. \MR{3101840}

\bibitem[Wat93]{watabiki-lqg}
Y.~Watabiki.
\newblock {Analytic study of fractal structure of quantized surface in
  two-dimensional quantum gravity}.
\newblock {\em Progr. Theor. Phys. Suppl.}, (114):1--17, 1993.
\newblock Quantum gravity (Kyoto, 1992).

\end{thebibliography}
\bibliographystyle{hmralphaabbrv}

\end{document}